\newtheoremstyle{myplain}{}{}{\it}
{0pt}{\scshape}{}{ }{\thmname{#1}\thmnumber{ #2}\thmnote{ (#3)}}
\newtheoremstyle{mydefinition}{}{}{}
{0pt}{\scshape}{}{ }{\thmname{#1}\thmnumber{ #2}\thmnote{ (#3)}}
\theoremstyle{myplain}
    \newtheorem{Def}{Definition}[section]
        \newtheorem{Lem}[Def]{Lemma}
        \newtheorem{theo}[Def]{Theorem}
        \newtheorem{prop}[Def]{Proposition}
        \newtheorem{rem}[Def]{Remark}
        \newtheorem{hyp}[Def]{Hypothesis}
        \newtheorem{cor}[Def]{Corollary}
        \newtheorem{claim}[Def]{Claim}
\numberwithin{equation}{section}
 \DeclareMathOperator{\Id}{Id}
\renewcommand{\Im}{\operatorname{Im}}
\newcommand{\im}{\operatorname{im}}
 \newcommand{\N}{\mathbb{N}}
\newcommand{\Z}{\mathbb{Z}} 
\newcommand{\R}{\mathbb{R}} 
\newcommand{\C}{\mathbb{C}} 
 \renewcommand{\P}{\mathbb{P}} 
 \renewcommand{\Id}{\mathbf{Id}}
\newcommand{\Cont}{\mathcal{C}} 
\renewcommand{\O}{\mathcal{O}} 
\newcommand{\T}{\mathbb{T}} 
\newcommand{\s}{\mathtt{s}} 
\newcommand{\Op}{\mathbf{Op}} 
\newcommand{\COp}{\mathbf{COp}}
\renewcommand{\P}{\mathbf{P}} 
\newcommand{\p}{\mathbf{p}}
\newcommand{\U}{\mathbf{U}}
\newcommand{\aac}[1]{\widetilde{#1}}
\newcommand{\trn}[1]{\left\| {#1} \right\|_{\mathtt{tr}}}
\newcommand{\Neig}{\mathcal{N}}
\newcommand{\D}{\mathcal{D}}
\newcommand{\F}{F} 
\newcommand{\Fd}{\mathcal{F}} 
\newcommand{\dom}{\mathsf{dom}}
\newcommand{\jap}[1]{\left\langle #1\right\rangle}
\newcommand{\abs}[1]{\left| #1\right|}
\newcommand{\norm}[1]{\left\| #1 \right\|}
\newcommand{\torn}[1]{\norm{ #1 }_{\L^2(\T^d)}}
\newcommand{\dualp}[2]{\left( #1 , #2 \right)}
\newcommand{\skp}[2]{\left\langle #1 , #2 \right\rangle}
\newcommand{\dz}{\overline{\partial}}
\renewcommand{\l}{\ell}
\renewcommand{\L}{\mathbf{L}}
\newcommand{\ess}{>_{\mathtt{ess}}}
\newcommand{\adjust}[1]{\underline{#1}}
\newcommand{\adjOP}[1]{\underline{#1}}
\newcommand{\I}{\mathbf{1}}
\newcommand{\compw}{\#}
\newcommand{\comp}[1]{\#_{#1}}
\newcommand{\Sy}{\mathtt{S}} 
\renewcommand{\S}{\mathcal{S}} 
\newcommand{\supp}{\operatorname{supp}}
\newcommand{\sign}{\operatorname{sign}}
\newcommand{\tr}{\operatorname{tr}}
\newcommand{\vol}{\operatorname{vol}_\T}
\renewcommand{\div}{\operatorname{div}}
\newcommand{\contr}{\mathbin{\lrcorner}}
\title{Weyl Asymptotics for Pseudodifferential Operators in a
  Discrete Setting}
\author{Markus Klein \and Enrico Reiß \and Elke Rosenberger}
\address{Universität Potsdam\\ Institut für Mathematik \\
  Karl-Liebknecht-Str. 24-25\\ 14476 Potsdam}
\email{mklein@math.uni-potsdam.de, enreiss@uni-potsdam.de,
  elke.rosenberger@uni-potsdam.de}
\date{\today}
\begin{document}
  \begin{abstract}
  We prove a {\em sharp} Weyl estimate for the number of eigenvalues
  belonging  to a fixed interval of energy of a self-adjoint difference operator acting on $\l^2(\epsilon\Z^d)$ if the associated symplectic
  volume of phase space in $\R^d \times \T^d$ accessible for the Hamiltonian flow of the principal symbol is finite. 
 Here $\epsilon$ is a semiclassical parameter. Our proof depends crucially on the construction of a good semiclassical approximation 
 for the time evolution induced by the self-adjoint operator
on $\l^2(\epsilon\Z^d)$. This extends previous semiclassical results to a broad class of difference operators on a scaled lattice.  
  \end{abstract}

\maketitle


\section{Introduction and main results}

Weyl asymptotics express the leading order of the number of eigenvalues in a certain range of energy of a self-adjoint differential or
pseudodifferential operator in terms of the symplectic volume in phase space which is accessible for the associated Hamiltonian flow
induced by the principal symbol of the operator. By phase space we shall always denote a symplectic space.

 Weyl asymptotics  go back to the classical work of Weyl, see \cite{Weyl1911,Weyl1912}, and have since been refined and generalized 
 in  many papers. These asymptotics are always semiclassical in nature, although they both exist in an appropriate high energy 
 version (as in the original work of Weyl) or a purely semiclassical  version containing a small parameter which in physics terms 
 might be identified with Planck's constant $h$.

In this paper we investigate a discrete version of these estimates for a class of self-adjoint difference operators on the  Hilbert space 
$\l^2(\epsilon\Z^d)$.  Here the lattice spacing 
$\epsilon$ plays the role of the semiclassical parameter $h$, similar to previous work of Klein-Rosenberger, 
see \cite{KR08,KR09,KR11,KR12,KR16,KR18} on the asymptotics of individual eigenvalues in the semiclassical limit for such operators.
Such an operator may be both  written as a superposition of translation operators on the scaled lattice or as a discrete type 
of pseudodifferential operator associated to a symbol $a(x,\xi)$ on phase space which is periodic with respect to the momentum 
variable $\xi$, using a discrete quantisation rule of Weyl-type, namely

\begin{align}
  \label{discrweyl}
  \left(\Op_{\epsilon,\frac{1}{2}}^\T a\right)u(x) := \frac{1}{(2\pi)^d} \sum_{y
    \in \epsilon \Z^d} \int_{\T^d} e^{i(y-x)\xi/\epsilon} a\Bigl(\frac{1}{2}(x
  + y), \xi; \epsilon\Bigr) u(y) d\xi \qquad (x \in \epsilon\Z^d).
\end{align}

For more detail  on these pseudodifferential operators 
(including a rigorous definition providing sense to the possibly diverging sum in the above expression)
and the associated  spaces of symbols used in this paper we refer to our 
Appendix A. For the relation of these operators to a superposition of translation operators see \cite{KR08}. We shall, however, 
stick exclusively to the representation of the relevant operator in the form given in equation (\ref{discrweyl}). This is best adapted to 
the microlocal character of  Weyl asymptotics.

We remark  that at least in our opinion it is not a priori clear what the relevant phase space for these operators actually is. The lattice 
does not have a symplectic cotangent bundle, but our symbols $a(x,\xi)$ are assumed to be functions on $\R^d \times \T^d$ which 
we shall sometimes  consider as  functions on $\R^{2d}$, periodic in $\xi$. It has to be proved that $\R^d \times \T^d$ (which 
is isomorphic to the cotangent bundle $T^* \T^d$, switching the space and momentum variables), actually is the relevant phase 
space for operators of the above type giving correct Weyl asymptotics. While the usual Weyl quantisation  
$\Op_{\epsilon,\frac{1}{2}} a$ (see equation
\eqref{eq:OpaNonDiscrDef} in Appendix A)  of our symbols $a$ gives well defined self-adjoint operators in $L^2(\R^d)$, these 
operators with naturally associated phase space $\R^{2d}$ do not under our assumptions below possess discrete spectrum, and 
the associated symplectic volume of phase space is actually infinite, due to periodicity in $\xi$. Thus the identification of  
$\R^d \times \T^d$ as the relevant phase space for the operators considered in this paper already is an important mathematical 
result. We also emphasise that the manifold $T^*\T^d$ in a geometric sense is a mildly more complicated object compared to 
the simplest possible phase space $\R^{2d}$. Thus it will be natural that we shall have to use basic theorems of analysis 
(e.g. the regular value theorem) in a manifold setting, and recalling the basic properties of the Liouville measure for regular 
hypersurfaces in $T^*\T^d$ is conveniently expressed in a slightly more geometric version (using the interior derivative) compared 
to some standard references for $\R^{2d}$.

 Conceptually this phenomenon of an interplay of the discrete lattice with smooth phase space as a manifold is in accordance with 
 the general results on fine semiclassical asymptotics on individual eigenvalues in our previous papers mentioned above.  
 We recall, for instance, that for a broad class of Hamiltonian functions on the phase space $\R^{2d}$  there is a naturally 
 associated Finsler metric on the configuration space $\R^d$, see \cite{KR08}. The associated geodesic Finsler distance then gives 
 the exponential decay rate for eigenfunctions of the associated difference operator on 
 $\l^2(\epsilon\Z^d)$,  and these precise decay rates are crucial for obtaining sharp tunnelling asymptotics for almost 
 degenerate eigenvalues of these difference operators. In some sense, a similar interplay
between classical mechanics on a smooth phase space and spectral properties of a self-adjoint operator  on the discrete 
{\em configuration space} 
$\epsilon \Z^d$ is also present in this paper. However, while not being unexpected, the identification of the correct phase space 
requires proof.
A first crucial result in this direction is contained in Chapter 3 of the present paper, where the symplectic phase space volume in 
$\R^d \times \T^d$ is related to the product of the counting measure on the scaled lattice $\epsilon \Z^d$ and the natural measure 
on the torus $\T^d$. This product measure arises from trace estimates.

We remark that in general there are many ways to obtain the leading order term  in Weyl asymptotics with only a weak estimate on 
the remainder term, and this also applies to the discrete setting of the present paper.  For partial results  in this direction (i.e. the 
lattice Laplacian with an added smooth potential)  see  \cite{Ka23} and our discussion at the end of Chapter 3.  It is, however, the 
main goal of the present paper to obtain 
in a general setting a {\em  sharp }  Weyl estimate, where the estimate on the remainder is improved by a factor $\epsilon$  
(the lattice spacing which is the relevant semiclassical parameter in our context) compared to the volume term in  leading order. 

Lastly, we recall that our interest in refined spectral asymptotics for difference operators originated from a treatment of metastability 
and the study of the spectrum of generators of Markov chains, where the state space is finite but its cardinality goes to infinity, 
see \cite{begk1}, \cite{begk2}  and the book \cite{bov}. In such a slightly different case (where the state space of the Markov chain 
is not necessarily a lattice) the notion of phase space is much less clear and we do not know of a good analog of sharp 
Weyl asymptotics.

For completeness sake, we mention the work of Nakamura and Tadano on long-range scattering for certain difference operators on 
$\ell^2(\Z^d)$, see \cite{na} and \cite{tad}. This work has similarities to the present work in developing analogies to older work 
on Schrödinger operators in $\R^d$ and working on the phase space $\R^d \times \T^d$. 
The theory, however, is not completely semiclassical (the lattice is not scaled by a semiclassical parameter. On a technical level,  
this allows to assume that the symbol of the operators is initially assumed to be a function  on $\Z^d \times \T^d$, which is 
then extended in a fixed and largely arbitrary way to $\R^d \times \T ^d$. 
In a fully semiclassical setting as in the present paper this does not seem to be possible.  However, it has long been known in the 
theory of Schrödinger operators on $\R^d$  that the potential being long-range requires modifications in scattering theory which 
are closely related to classical mechanics in phase space and asymptotics of the associated Hamilton-Jacobi equation and the 
classical flow, see e.g. \cite{ReSi3}. 

These properties are semiclassical in nature, even for an operator which does not explicitly contain a semiclassical parameter. 
See e.g. \cite{hscat} for the existence of wave operators and \cite{hoermander2009analysis} for an exposition of a very general 
scattering theory with long-range perturbations of an elliptic differential operator going back to work of Agmon, see \cite{a}.  
Even in \cite{ak} on radial Schr{\"o}dinger operators in $\R^d$ precise WKB asymptotics on the solution of the radial  
Schr{\"o}dinger equation (uniformly in a complex domain) are crucial.  

Possibly the most microlocal version of this phenomenon is in the work \cite{ik} where an approximation to the wave operator 
is constructed in the form of a special Fourier integral operator which approximately intertwines the two relevant unitary groups; using 
this as a time-independent modifier then gives a well defined scattering theory. This, however, is different from the semiclassical view 
on difference operators used in the present paper. Going  by analogy, it seems reasonable to expect the following results. 
For difference operators on an unscaled lattice $\Z^d $, possibly generalising the operators in \cite{na} and \cite{tad}, 
there are under appropriate conditions sharp Weyl estimates in the high-energy limit. 

On the other side, for an appropriate class of fully semiclassical difference operators with an existing short-range or long-range 
scattering theory a semiclassical version of the Isozaki-Kitada modifiers could be developed which would then give 
semiclassical expansions of the wave operator and possibly the scattering matrix. In spirit this should be close to the work of 
Robert and Tamura, see \cite{rt} on the  Schr{\"o}dinger operator in $\R^d$. 
It is known that these techniques  adapt well to  situations  which are different in a technical sense while being close conceptually; 
e.g. they have been extended in \cite{kmw} to cover the only partially semiclassical case of the Born-Oppenheimer approximation.
 Finally, we recall the recent paper  \cite{nakam} where it is shown that the set of resonances of a Schr{\"o}dinger operator  
 $- \Delta + V(x)$ in $\R^d$ is approximated in the semiclassical limit by the resonances of its discrete counterpart acting on the 
 scaled lattice $\epsilon \Z^d$.

To express our results in more detail, we shall need the following notation.

Using the notation of the  classes of symbols in  our Appendix A a symbol  $a \in \Sy^0(m,\epsilon_0)(\R^d \times \T^d)$  is called
\textit{$(m,\epsilon_0)$-elliptic} if for some $C > 0$
\begin{align}
    \label{eq:DefElliptic}
    \abs{a(x,\xi;\epsilon)} \geq Cm(x,\xi) \qquad (x \in \R^d,\, \xi
    \in \T^d,\, \epsilon \in (0,\epsilon_0]).
\end{align}

For real-valued symbols $a$ and $S \in \R$ we shall write
\begin{align}
  \label{eq:essLarger}
 a \ess S  \qquad \mbox{iff for
some} \quad R > 0 \qquad     
      \inf_{\substack{x \in \R^d,\, |x| > R,\, \xi \in \T^d
          \\ \epsilon \in (0,\epsilon_0]}} a(x,\xi;\epsilon) > S.
    \end{align}

Then the crucial hypothesis on our symbols which ensures self-adjointness
of the associated operator with its  spectrum   being discrete in an appropriate interval of energy is

\begin{hyp}
  \label{hyp:WRef}
  Let $a \in \Sy^0(m,\epsilon_0)(\R^d \times \T^d)$ be real-valued
  where the order function $m$ takes values only in
  $[1,\infty)$. Assume $a$ to satisfy
  \begin{enumerate}
  \item \label{hyp:WRef_EllItem} $a + i$ is $(m,\epsilon_0)$-elliptic,
  \item \label{hyp:WRef_EllItem2} $a \ess \sup J$ where $J \subset \R$ is a bounded open interval,
  \item $a(x,\xi;\epsilon) \sim \sum_{j=0}^\infty \epsilon^j
    a_j(x,\xi)$.
  \end{enumerate}
\end{hyp}

More precisely,
we shall consider the Hilbert space $\l^2(\epsilon\Z^d)$ of
square-summable functions on the $\epsilon$-scaled lattice
$\epsilon\Z^d$, equipped with the inner product
\begin{align}
  \label{eq:skpl2zd}
  \skp{u}{v} := \sum_{x \in \epsilon\Z^d} \overline{u(x)} v(x) \qquad
  (u,v \in \l^2(\epsilon\Z^d)).
\end{align}
We shall show in Proposition \ref{prop:PSelfAdj} that for any symbol
$a \in \Sy^0(m,\epsilon_0)(\R^d \times \T^d)$ satisfying Hypothesis
\ref{hyp:WRef} (\ref{hyp:WRef_EllItem}) and for any $\epsilon$
sufficiently small we can define the self-adjoint operator
  \begin{align}
    \label{eq:PepsDef}
       \P_\epsilon : \l^2(\epsilon\Z^d) \supset \D_\epsilon \rightarrow
      \l^2(\epsilon\Z^d), \quad u \mapsto \left( \Op^\T_{\epsilon,1/2}
        a \right)u
\end{align}
where $\D_\epsilon := \left( \Op_{\epsilon,1/2}^\T (a + i)
\right)^{-1}\left( \l^2(\epsilon\Z^d) \right)$.

Then the main result of this paper is

\begin{theo}
  \label{theo:RefinedWeyl}
  Let the interval $J$, the order function $m$ and the symbol $a$ with
  leading order symbol $a_0$ satisfy Hypothesis \ref{hyp:WRef}. Let
  $\alpha,\beta \in \R$ with $\alpha < \beta$ and $[\alpha,\beta]
  \subset J$. Suppose that $\alpha$ and $\beta$ are non-critical
  values of $a_0$. Denote by $\Neig([\alpha,\beta];\epsilon)$ the
  number of eigenvalues of $\P_\epsilon$ in $[\alpha,\beta]$. Then
  \begin{align}
    \label{eq:MainTheoremNeig}
    \Neig([\alpha,\beta];\epsilon) = \frac{1}{(2\pi \epsilon)^d}
    \left( \vol \left( a_0^{-1}([\alpha,\beta]) \right) +
      \O(\epsilon)\right) \qquad (\epsilon \downarrow 0).
  \end{align}
  Here, for a measurable set $A \subset \R^d \times \T^d$,
  \begin{align}
    \label{eq:phaseSpaceVol}
    \vol (A) := \int_{ A} dx d\xi.
  \end{align}
  denotes the symplectic volume.
\end{theo}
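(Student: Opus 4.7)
The plan is to sandwich $\Neig([\alpha,\beta];\epsilon) = \tr \I_{[\alpha,\beta]}(\P_\epsilon)$ between traces of smooth cut-offs and compute the latter via a sharp semiclassical trace formula. Proposition \ref{prop:PSelfAdj} combined with Hypothesis \ref{hyp:WRef} (\ref{hyp:WRef_EllItem2}) makes $\P_\epsilon$ self-adjoint with finite-rank spectral projector on $[\alpha,\beta]$, so $\Neig$ indeed equals this trace. For a scale $\delta > 0$ I pick smooth cut-offs $\chi^\delta_\pm \in \Cont_c^\infty(J,[0,1])$ satisfying $\chi^\delta_- \leq \I_{[\alpha,\beta]} \leq \chi^\delta_+$ and $\chi^\delta_+ - \chi^\delta_-$ supported in $[\alpha-\delta,\alpha+\delta]\cup[\beta-\delta,\beta+\delta]$, yielding the two-sided bound $\tr\chi^\delta_-(\P_\epsilon) \leq \Neig([\alpha,\beta];\epsilon) \leq \tr\chi^\delta_+(\P_\epsilon)$.

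For each $\chi\in\Cont_c^\infty(J)$ the central analytic step is to represent $\chi(\P_\epsilon) = \frac{1}{2\pi}\int_\R \widehat\chi(t)\,e^{-it\P_\epsilon/\epsilon}\,dt$ (after a suitable rescaling, or equivalently via Helffer--Sj\"ostrand) and to insert the semiclassical parametrix for the evolution $e^{-it\P_\epsilon/\epsilon}$ announced in the abstract and constructed earlier in the paper --- a Fourier-integral-type operator on $\l^2(\epsilon\Z^d)$ whose phase generates the Hamiltonian flow of $a_0$ on $\R^d\times\T^d\cong T^*\T^d$ --- valid for $|t|\leq T$ with $T$ independent of $\epsilon$. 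Computing the trace via the lattice Weyl calculus of Appendix A, using the identification from Chapter 3 of the trace density as the product of counting measure on $\epsilon\Z^d$ and Haar measure on $\T^d$, and invoking stationary phase on the diagonal should yield the sharp semiclassical trace formula
\begin{align*}
\tr\chi(\P_\epsilon) = \frac{1}{(2\pi\epsilon)^d}\int_{\R^d\times\T^d}\chi(a_0(x,\xi))\,dx\,d\xi + \O(\epsilon^{1-d}).
\end{align*}

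To close the argument I would use that $\alpha,\beta$ are non-critical values of $a_0$: the regular value theorem makes $a_0^{-1}(\alpha)$ and $a_0^{-1}(\beta)$ smooth hypersurfaces in $\R^d\times\T^d$, compact by $a\ess\sup J$ and the ellipticity of Hypothesis \ref{hyp:WRef}, and the coarea formula then yields $\vol(a_0^{-1}([c-\delta,c+\delta]))=\O(\delta)$ for $c\in\{\alpha,\beta\}$ and therefore $\int\chi^\delta_\pm(a_0)\,dx\,d\xi = \vol(a_0^{-1}([\alpha,\beta])) + \O(\delta)$. A Fourier--Tauberian argument of H\"ormander type --- pairing the sharp trace formula applied to an $\epsilon$-scale smoothing of $\I_{[\alpha,\beta]}$ with the weaker by-product bound $\Neig([c-\epsilon,c+\epsilon];\epsilon) = \O(\epsilon^{1-d})$ near $c\in\{\alpha,\beta\}$ --- then converts the sandwich into the sharp asymptotic $\Neig([\alpha,\beta];\epsilon) = (2\pi\epsilon)^{-d}(\vol(a_0^{-1}([\alpha,\beta]))+\O(\epsilon))$.

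The main obstacle is precisely producing the $\O(\epsilon^{1-d})$ remainder in the trace formula: soft resolvent or min-max arguments (cf.\ \cite{Ka23}) yield only $o(\epsilon^{-d})$, one order worse than what Theorem \ref{theo:RefinedWeyl} demands. This forces a genuine microlocal construction of the lattice propagator together with a careful stationary-phase analysis of its kernel on the diagonal, and it is here that the identification $\R^d\times\T^d\cong T^*\T^d$ as the correct symplectic phase space --- the conceptual point emphasised in the introduction --- becomes decisive, since it dictates both the trace density of Chapter 3 and the phase of the parametrix. Non-criticality of $\alpha,\beta$ by contrast is needed only at the final Tauberian step through the $\O(\delta)$ bound on the Liouville measure near the endpoints.
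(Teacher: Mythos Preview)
Your outline is correct and matches the paper's strategy: a smooth sandwich (the paper uses a partition $f_1+f_2+f_3=1$ on $[\alpha,\beta]$ rather than two-sided cut-offs $\chi^\delta_\pm$, but this is cosmetic), the sharp trace formula for the smooth interior part, the no-clustering bound $\Neig([c-\epsilon,c+\epsilon];\epsilon)=\O(\epsilon^{1-d})$ near the endpoints, and a Tauberian step.

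One point of emphasis is slightly off. You locate the role of the parametrix in producing the sharp trace formula $\tr\chi(\P_\epsilon)=(2\pi\epsilon)^{-d}\int\chi(a_0)\,dxd\xi+\O(\epsilon^{1-d})$ for smooth $\chi$. In the paper this formula (Theorem~\ref{theo:TraceAsympExp}) is obtained \emph{without} the propagator parametrix and without stationary phase, purely from the Helffer--Sj\"ostrand functional calculus and the discrete symbolic calculus; the remainder constants, however, depend on finitely many derivatives of $\chi$, which is precisely why the naive sandwich gives only the rough $\O(\epsilon^\nu)$ of Corollary~\ref{cor:WeylRough}. The parametrix (Theorem~\ref{theo:PseudoFourIntApprox}) enters one level deeper: it yields a pointwise-in-$\lambda$ asymptotic for the smoothed spectral density $I_1(\lambda;f,\psi,\epsilon)=\frac{1}{2\pi\epsilon}\tr\int e^{it(\P_\epsilon-\lambda)/\epsilon}\psi(t)f(\P_\epsilon)\,dt$ with $\psi$ of \emph{fixed} small $t$-support, via stationary phase in the variables $(t,a_0)$ over the energy shell (Proposition~\ref{lem:I2Asymp}, Lemma~\ref{Lem:I1Liouv}). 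From this the no-clustering bound (Lemma~\ref{lem:epsSubinterval}) follows, and then the final Tauberian identification of $\sum_{\lambda_j\ge\alpha}f_1(\lambda_j)$ with $\int_\alpha^\infty I_1(\lambda)\,d\lambda$ up to $\O(\epsilon^{1-d})$ proceeds exactly as you describe. So your proposed Fourier representation $\chi(\P_\epsilon)=\frac{1}{2\pi}\int\widehat\chi(t)e^{-it\P_\epsilon/\epsilon}\,dt$ is not the object one actually controls; rather one works with $I_1$, where the time cut-off $\psi$ is fixed independently of $\epsilon$ and the oscillation at scale $1/\epsilon$ makes stationary phase genuinely applicable.
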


Already here we shall point out that this result, by conjugating with the unitary Fourier series expansion
$\Fd_\epsilon: L^2(\T^d) \to \ell^2(\epsilon \Z^d)$ defined 
  by
    \begin{align}\label{def:disFouTr}
      \Fd_{\epsilon} f (x) := \frac{1}{(2\pi)^{d/2}}
      \int_{\T^d} e^{-ix\xi/\epsilon} f(\xi) d\xi \qquad (x \in \epsilon\Z^d)
    \end{align}
and its inverse,  implies a sharp Weyl law for certain 
self-adjoint operators on $L^2(\T^d)$ which also is new to the best of our knowledge.
We shall amplify  further below.

We recall that to the best of our knowledge all results on Weyl asymptotics with a similarly sharp estimate on the remainder do 
require the construction of a good semiclassical approximation to the unitary time evolution operator
(a semiclassical time parametrix). This construction and its application turn out to also be the main technical result in the proof 
of Theorem \ref{theo:RefinedWeyl}. We emphasize that while the general ideas to construct such a parametrix are well known (and 
we have  chosen to follow the construction in the book \cite{DiSj} of Dimassi and Sj{\"o}strand  to some extent), it is in our case crucial 
to explicitly verify that all functions needed for the time parametrix (phase functions as well as amplitudes) are actually periodic in 
the momentum variable $\xi$, i.e. they are well defined on the phase space $\R^d \times \T^d$. This has forced us to recall the 
construction in detail, thus providing a complete proof which may be readily checked by a critical reader. Wherever possible, however, 
we have simply cited
known  results from the literature, simplifying our exposition. E.g., the functional calculus for our discrete operator 
$\Op_{\epsilon,\frac{1}{2}}^\T a$ discussed in Chapter 2 is not developed from scratch but is instead based on the known 
functional calculus for $\Op_{\epsilon,\frac{1}{2}} a$  as given in \cite{DiSj}.

We shall add a few remarks on the history of the subject and the literature.
In the high energy case, sharp Weyl asymptotics go back to H{\"o}rmander's paper \cite{Hoer1968Spectral} using a representation 
of the time parametrix in form of a Fourier integral operator, in a local form. These results were extended in \cite{chaz74} 
and \cite{DuGu75Spectrum} where it was shown that the global theory of Fourier integral operators gives control on all of the 
singularities of the Fourier transform of the spectral measure. This is  further expanded in the book \cite{ivrii1998} of Ivrii. 
See also \cite{hoermander2009analysis} for a short review.  Shubin's book \cite{shu} and the article \cite{shut} might also be helpful.

The standard semiclassical version of our Theorem \ref{theo:RefinedWeyl} is due to Chazarain, see \cite{Chaz1980}, in the case of 
a Schr{\"o}dinger operator with compact resolvent. For the general case see the paper of Helffer and Robert \cite{hero81} and 
the aforementioned book \cite{ivrii1998}. See also the paper \cite{HELFFER1983246} for a version of the functional calculus based 
on the Mellin transform and the book  \cite{Robert1987AutourDL} for an exposition of  sharp  Weyl asymptotics using the 
functional calculus based on the Mellin transform a semiclassical approximation for the time evolution operator.  
A more recent exposition can be found in \cite{DiSj}, where the construction of a pseudodifferential functional 
 calculus is based on the Helffer-Sj{\"o}strand formula involving the resolvent, thus replacing the use of the Mellin transform. 
 We did follow this exposition in the present paper.
 
 We furthermore remark that many of the references stated above study the influence of the finer structure of closed orbits for 
 the Hamiltonian flow on the distribution of eigenvalues, in particular on the existence of two term asymptotics  if the Liouville 
 measure of the closed orbits is zero in the boundary hypersurfaces in phase space or the phenomenon of clustering. 
 In addition, there is a collection of papers which explicitly focus on such problems taking Weyl asymptotics for granted. 
 For instance, there is a connection between integrability of the Hamiltonian flow (and degeneracy of the geodesic length spectrum)
 versus ergodicity of this flow with the phenomenon of clustering of eigenvalues for the corresponding Hamiltonian. 
 Results on clustering go back e.g. to \cite{DuGu75Spectrum} and \cite{cdv1}, while the properties of eigenfunctions in the 
 ergodic case  for the high energy limit were studied by Shnirelman in \cite{sh}), were extended by Colin de Verdière (see e.g. \cite{cdv2})
 and developed in a semiclassical setting for pseudodifferential operators by Helffer, Martinez and Robert in \cite{hmr}. 
 We expect that a similar relation between spectral properties and the fine structure of the Hamiltonian flow is also present for the kind 
 of difference operators studied in this paper.  Proving this, however, is an open problem. 
 
 For completeness sake we mention the recent work of Ivrii \cite{ivmonster} which contains a plethora of interesting results on 
 various  aspects of Weyl asymptotics and relations between the different  types of limits involved, i.e.  semiclassical, high energy 
 limit  and approaching the ionisation threshold (the infimum of the essential spectrum) from below. While formally different, we 
 consider these types of limit as being semiclassical in nature. This follows old folk wisdom from the physics literature on the validity of 
 the {\em correspondence principle} and is (at least in parts) amplified in \cite{ivmonster}.  To the best of our knowledge, many of 
 these topics have not been analysed in a discrete setting as for a class of operators similar to those considered in this paper.

We shall finally comment on a series of papers by Rushansky et al.,  
see e.g.  \cite{RuzhTuru2010,bkr,bcr}, which 
treat operators on $L^2(\T^d)$, $\Cont(\T^d)$ or on an associated series space on $\Z^d$ or $h \Z^d$ using conjugation with the unitary 
Fourier series transform in \eqref{def:disFouTr}.
There actually is substantial overlap in the calculus of discrete pseudodifferential operators where our calculus, as indicated above, has 
actually been developed earlier,  going back to the thesis \cite{thesis}.

 These authors have opted for a partially discrete "phase space" $ \Z^d \times \T^d$, or $h \Z^d \times \T^d$. Thus their symbols are 
 complex valued functions $\sigma$ on $ \Z^d \times \T^d $ or $\sigma_h$ on $h\Z^d \times \T^d$. In this setting only the $t=0$ and 
 $t=1$ quantisation are a priori well defined (since for $x, y$ belonging to the lattice $t x+(1-t)y$ in general does not), and the authors actually 
 only use one of those.  Using that the discrete Fourier series transform $\Fd_{\epsilon}$ (see \eqref{def:disFouTr}) is implicit in our definition 
 of  $\Op_{\epsilon,t}^\T a$
in equation \eqref{eq:OpTa_u_Def} - most notably for $t=0,1$ - one readily checks that for $t=0,1$
and a symbol $a \in \Sy^0(m,\epsilon_0)(\R^{d} \times \T^d)$ one has
\begin{align}
\label{conj}
\Fd_\epsilon  \left(\Op_{\epsilon,t}^\T a\right) \Fd_\epsilon^{-1} = \left(\COp_{\epsilon,1-t} \sigma_\epsilon \right)
\end{align}
where 
$\COp_{\epsilon,1-t} \sigma_\epsilon$                                                                                                                                                                                                                                                                                                                                                                                                                                                                                                                                                                                                                                                                                                                                                                                                                                                                                                                                                                                                                                                                                                                                                                                                                                                                                                                                                                                                                                         
is an operator of the type considered in \cite{bcr} and 
$\sigma_\epsilon(x,\xi) = a(x, -\xi,\epsilon)$
for $x \in \epsilon \Z^d, \xi \in \T^d$ and any fixed $\epsilon$. 

We emphasise, however, that in the framework of \cite{bcr} there is never 
uniformity with respect to the semiclassical parameter in $\sigma_h$, and as a consequence, there are nowhere semiclassical expansions 
in powers of the semiclassical parameter (with remainder estimates uniform in the semiclassical parameter), neither for the symbolic 
calculus or the adjoint or the transposed operator. 
There are asymptotic expansions, but they are always expansions in symbol classes (of a Hörmander $S^m_{\rho, \delta}$ type, with 
$\delta$ strictly smaller than $\rho$), for any fixed $h$.  
Our change of quantisation formula (a semiclassical expansion for the symbol in powers of $\epsilon$ or $h$) is absent as is the 
intertwining formula with the standard quantisations of  symbols in $T^*\R^d$. As far as we can see there is no discussion of 
self-adjointness which is possibly natural in a context where the Weyl quantisation is not available and thus, in particular, there is 
no pseudodifferential spectral calculus (which in the context of \cite{bcr} needs to be developed from scratch, since the intertwining 
property with operators  in the continuum is absent, which forbids, as  in our paper, to use the known properties of the pseudodifferential 
spectral calculus in $T^*\R^d$  as a convenient input). Furthermore, even the leading volume term in the Weyl asymptotics is not only 
not formally defined for a symbol of type $\sigma_h$, but the absence of any uniform control with respect to the semiclassical parameter 
seems to make it impossible to extract the volume term by a limiting procedure. 

Thus, to the best of our knowledge and understanding, there are no standard results for operators on $L^2(\T^d)$ which would imply 
our Theorem on sharp Weyl asymptotics, nor would such results be simple to obtain in the context of a partially discrete phase space, 
since crucial basic techniques do not seem to be available.

On the other hand, given a symbol $\sigma$ in our class $\Sy^0(m,\epsilon_0)(\R^{d} \times \T^d)$,
one may consider the operator $\COp_{\epsilon,t} \sigma_\epsilon$ for                                                                                                                                                                                                                                                                                                                                                                                                                                                                                                                                                                                                                                                                                                                                                                                                                                                                                                                                                                                                                                                                                                                                                                                                                                                                                                                                                                                                                   
$t=0,1$, initially defined  on smooth functions on the torus. Assume that this operator is essentially self-adjoint.
Then, as described above, its self-adjoint realisation in $L^2(\T^d)$ is unitarily equivalent to an operator
$A=\Op^\T_{\epsilon, 1-t} a$ in $\ell^2(\epsilon \Z^d)$ with, in general,  a non-real symbol 
$a\in \Sy^0(m,\epsilon_0)(\R^{d} \times \T^d)$. 
Using our semiclassical change of quantisation formula given in Prop.\ref{prop:SwitchQuant},  
$A$ can be written as the Weyl-quantisation of a real semiclassical Weyl-symbol 
$a^W\in \Sy^0(m,\epsilon_0)(\R^{d} \times \T^d)$, where the $\epsilon$-principal 
symbols of $a^W$ and $a$ coincide, i.e. $a_0 = a_0^W$.
If $a^W$ satisfies Hypothesis \ref{hyp:WRef}(1),(2), then Theorem \ref{theo:RefinedWeyl}
immediately implies sharp Weyl asymptotics for the corresponding self-adjoint operator in $L^2(\T^d)$. 
Furthermore, using some more technical results from our calculus for discrete pseudodifferential operators, in this
setting it is actually sufficient to impose an analog of Hypothesis \ref{hyp:WRef}(2) only
on the principal symbol $a_0 = a_0^W$, which is directly given by the original symbol 
$\sigma$. At least if $\sigma + i$ is assumed to be $m$-elliptic, the initial assumption of 
self-adjointness then gives a real Weyl symbol $a^W$ and $m$-ellipticity gives control on the lower terms in the
asymptotic expansion of $a^W$. Thus, if $a_0 = a_0^W$ satisfies Hypothesis \ref{hyp:WRef}(2), $a^W$ also does for $\epsilon$ sufficiently small. We leave further details to the reader.
To the best of our knowledge this sharp Weyl estimate for  the operator $\COp_{\epsilon,t} \sigma_\epsilon$ on the torus is a new result.

 The outline of this paper is as follows. In Chapter 2 we treat questions of invertibility for our operators on the lattice based on 
 known results for the operators $\Op_{\epsilon,\frac{1}{2}} a$. Combined with a proof of self-adjointness this gives control of 
 the resolvent and a functional calculus, using the results in \cite{DiSj}. In Chapter 3 we develop the necessary trace estimates, 
 which in our discrete setting turn out to be slightly more direct than the corresponding estimates in \cite{DiSj}. We indicate how 
 these preliminary results could be used for  proving weaker Weyl asymptotics (with much less effort).
 Chapter 4 contains the proof of Theorem \ref{theo:RefinedWeyl}. Here we construct a semiclassical time parametrix in terms of 
 functions (phase function and amplitudes) defined on the relevant phase space $\R^d \times \T^d$.  This is the crucial point. 
 We have tried hard  to give a complete exposition proving all our claims while avoiding being tedious in unnecessarily exposing 
 well known results. The judgement on this, of course, is for the reader.
 Finally, in Appendix A we have for the convenience of the reader collected from previous work
 results on the pseudodifferential calculus for operators $\Op_{\epsilon,\frac{1}{2}}^{\T} a$. 
 These results are not new, but crucial for our exposition. Finally, Appendix B contains the results on Poisson summation which we 
 need to control the continuum approximation of discrete sums on the scaled lattice appearing in our proofs. 
 Here we are indebted to discussions with Giacomo di Gesù, see also \cite{gdg2} and \cite{gdg1}.

\section{Invertibility and functional calculus}

In this section, we shall develop a functional calculus for
pseudodifferential operators in the discrete setting based on the
resolvent and the formula by Helffer and Sjöstrand (\cite[Theorem 8.1]{DiSj}). We shall first treat the problem of invertibility in a
general context in Subsection \ref{sec:Invert}. In Subsection
\ref{sec:FuncCalc}, we then construct the self-adjoint realisation
$\P_\epsilon$ and give a functional calculus for $\P_\epsilon$. As far
as possible, we try to derive our statements from the standard theory
of pseudodifferential operators in the non-discrete setting,
e.g. given in \cite[Chapter 8]{DiSj}.  Otherwise we adapt the proofs
to our setting, using previous results in \cite{KR09, KR18}.

\subsection{Invertibility}
\label{sec:Invert}

In this subsection we shall construct the inverse operator of the discrete $t$-quantisation
$\Op_{\epsilon,t}^\T a$ for a given $(m,\epsilon_0)$-elliptic symbol
$a \in \Sy^0(m,\epsilon_0)(\R^d \times \T^d)$ and show that the
inverse also has a representation as a pseudodifferential operator of
discrete type. This result is stated in Proposition
\ref{prop:discrPseudoInvShort}. The proof is reduced to the
non-discrete setting by using Lemma \ref{Lem:InvSymbPeriodic} where
the symbol of the inverse operator in the non-discrete setting is
identified as the symbol of the inverse operator in the discrete
setting.

\begin{Lem}
  \label{Lem:InvSymbPeriodic}
  Let $a \in \Sy^0(m,\epsilon_0)(\R^d \times \T^d)$ and assume
  $\Op_{\epsilon, t} a$ to be invertible as a map $\S(\R^d)
  \rightarrow \S(\R^d)$ for $\epsilon \in (0,\epsilon_0]$ with
    \begin{align}
      \label{eq:InvSymPeriodicCond}
      \left( \Op_{\epsilon,t} a \right)^{-1} = \Op_{\epsilon,t} b_a
    \end{align}
    for some $b_a \in \Sy^0(m^{-1},\epsilon_0)(\R^d \times \R^d)$. Then
    $b_a$ is periodic with respect to $\xi$ and $\Op_{\epsilon,t}^\T a$
    is invertible as a map $\s(\epsilon\Z^d) \rightarrow
    \s(\epsilon\Z^d)$ for $\epsilon \in (0,\epsilon_0]$ with
      \begin{align}
        \label{eq:discrPseudoInvSymbol2}
        \left(\Op_{\epsilon,t}^\T a\right)^{-1} = \Op_{\epsilon,t}^\T
        b_a.
      \end{align}
\end{Lem}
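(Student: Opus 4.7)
The idea is to split the argument into two steps. First I show that $b_a$, a priori a symbol on $\R^d \times \R^d$, is in fact $2\pi$-periodic in $\xi$, so that it belongs to $\Sy^0(m^{-1},\epsilon_0)(\R^d \times \T^d)$. Then I invoke the composition formula for the discrete $t$-quantisation from Appendix A to conclude that $\Op_{\epsilon,t}^\T b_a$ is the two-sided inverse of $\Op_{\epsilon,t}^\T a$ on $\s(\epsilon\Z^d)$.

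For the periodicity step, introduce for each $\ell \in \Z^d$ the unitary modulation operator $(M_\ell u)(x) := e^{2\pi i \ell \cdot x/\epsilon} u(x)$ on $\S(\R^d)$ (the sign chosen consistently with the sign convention for the phase in the oscillatory integral defining $\Op_{\epsilon,t}$). A direct change of variable $\xi \mapsto \xi + 2\pi\ell$ in that oscillatory integral yields the commutation identity
\begin{equation*}
M_\ell^{-1} \circ \Op_{\epsilon,t}(a) \circ M_\ell = \Op_{\epsilon,t}\bigl(a(\cdot,\,\cdot + 2\pi\ell)\bigr),
\end{equation*}
valid for any symbol. Since $a$ is $2\pi$-periodic in $\xi$, the right-hand side equals $\Op_{\epsilon,t}(a)$, so $\Op_{\epsilon,t}(a)$ commutes with $M_\ell$. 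Passing to the inverse, $\Op_{\epsilon,t}(b_a)$ also commutes with $M_\ell$, which by the same identity reads $\Op_{\epsilon,t}\bigl(b_a(\cdot,\,\cdot+2\pi\ell)\bigr) = \Op_{\epsilon,t}(b_a)$. Injectivity of the symbol map for the continuous $t$-quantisation then forces $b_a(\cdot,\,\cdot+2\pi\ell) = b_a$, and the seminorm bounds on $b_a$ in $\Sy^0(m^{-1},\epsilon_0)(\R^d \times \R^d)$ carry over unchanged to $b_a \in \Sy^0(m^{-1},\epsilon_0)(\R^d \times \T^d)$.

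For the invertibility step on $\s(\epsilon\Z^d)$, I apply the composition formula for discrete pseudodifferential operators from Appendix A:
\begin{equation*}
\Op_{\epsilon,t}^\T(a) \circ \Op_{\epsilon,t}^\T(b_a) = \Op_{\epsilon,t}^\T\bigl(a \comp{t} b_a\bigr),
\end{equation*}
where $\comp{t}$ denotes the same twisted symbol composition as in the continuous calculus. Since $\Op_{\epsilon,t}(a) \circ \Op_{\epsilon,t}(b_a) = \Id = \Op_{\epsilon,t}(1)$ on $\S(\R^d)$, injectivity of the continuous symbol map yields $a \comp{t} b_a = 1$; hence $\Op_{\epsilon,t}^\T(a) \circ \Op_{\epsilon,t}^\T(b_a) = \Op_{\epsilon,t}^\T(1) = \Id_{\s(\epsilon\Z^d)}$, and the reverse composition follows by symmetry, giving \eqref{eq:discrPseudoInvSymbol2}. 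The main obstacle I anticipate is essentially bookkeeping: verifying that Appendix A's composition formula applies to the pair $(a,b_a)$ once $b_a$ has been moved into the periodic class, and that the order function $m$ is invariant under $\xi$-translations by $2\pi\Z^d$ (automatic here, since $m$ is by assumption already defined on $\R^d \times \T^d$ and hence periodic in $\xi$, so that $b_a(\cdot,\,\cdot+2\pi\ell)$ lies in the same symbol class as $b_a$). Beyond that, the argument rests only on the modulation identity and the injectivity of the continuous symbol map.
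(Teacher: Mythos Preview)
Your proof is correct and the periodicity step is essentially identical to the paper's: both conjugate by the modulation $M_\gamma(x)=e^{i\gamma x/\epsilon}$ for $\gamma\in 2\pi\Z^d$, observe that $\Op_{\epsilon,t}a$ commutes with $M_\gamma$ by periodicity of $a$, pass to the inverse, and invoke injectivity of the symbol map to conclude $b_a(\cdot,\cdot+\gamma)=b_a$.

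The one genuine (minor) difference is in the second step. The paper deduces \eqref{eq:discrPseudoInvSymbol2} by applying the restriction formula \eqref{eq:RestrForm} directly to the identity $\Op_{\epsilon,t}a\circ\Op_{\epsilon,t}b_a=\Id$ on $\S(\R^d)$: since $r_\epsilon$ intertwines the continuous and discrete quantisations and is surjective onto $\s(\epsilon\Z^d)$, one reads off $\Op_{\epsilon,t}^\T a\circ\Op_{\epsilon,t}^\T b_a=\Id$ immediately. You instead go through the symbolic calculus, arguing that $a\comp{t}b_a=1$ in the continuous setting and that the same twisted product governs the discrete composition (Proposition~\ref{prop:SharpProd}). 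This is valid---the formula for $\comp{t}$ is indeed identical in both settings---but it costs you the extra verification that the continuous and discrete $\comp{t}$ agree on periodic symbols, whereas the restriction-formula route sidesteps the symbolic calculus entirely. Either way the conclusion is the same.
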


\begin{proof}
  Let $\gamma \in 2\pi\Z^d$ and define the shifted symbol
  $b_a^\gamma(x,\xi;\epsilon) := b_a (x, \xi +
  \gamma;\epsilon)$. By a straightforward calculation, using the
  definition of the $t$-quantisation in
  formula \eqref{eq:OpaNonDiscrDef},
  one obtains
  \begin{align}
    \Op_{\epsilon,t} b_a^\gamma = M_\gamma \circ
    \Op_{\epsilon,t} b_a \circ M_{-\gamma},
  \end{align}
  where following the usual slight abuse of notation $M_\gamma (x) := e^{i\gamma x /\epsilon}$ denotes the corresponding multiplication operator.
  Using periodicity of the symbol $a$, we get
  \begin{align}
    \Op_{\epsilon,t} a \circ M_\gamma = M_\gamma \circ \Op_{\epsilon,t} a.
  \end{align}
  We therefore conclude
  \begin{align}
    \label{eq:InverseSymbolEquation}
    \Op_{\epsilon,t} a \circ \Op_{\epsilon, t} b_a^\gamma = \Id =
    \Op_{\epsilon,t} b_a^\gamma \circ \Op_{\epsilon, t}a .
  \end{align}
  So, by uniqueness of the inverse operator
  \begin{align}
    \Op_{\epsilon,t} b_a = \Op_{\epsilon,t} b_a^\gamma.
  \end{align}
  From the standard theory (see \cite[Chapter 7]{DiSj},
  considering the Schwartz kernel of a general operator $\S(\R^d)
  \rightarrow \S'(\R^d)$) it follows that $b_a =
  b_a^\gamma$, i.e. $b_a$ is periodic with respect to $\xi$. 

 Thus $b_a\in \Sy^0(m^{-1},\epsilon_0)(\R^d \times \T^d)$ which allows to apply
  the restriction formula \eqref{eq:RestrForm} to
  \eqref{eq:InverseSymbolEquation} to get
  \eqref{eq:discrPseudoInvSymbol2}.
\end{proof}

\begin{prop}
  \label{prop:discrPseudoInvShort}
  Let the symbol $a \in \Sy^0(m,\epsilon_0)(\R^d \times \T^d)$ be
  $(m,\epsilon_0)$-elliptic. There is some $\epsilon_1 \in
  (0,\epsilon_0]$ such that for some neighbourhood $\mathcal{A}$ of
    $a$, the operator $\Op_{\epsilon, t}^\T \tilde{a}$ is invertible as
    a map $\s(\epsilon\Z^d) \rightarrow \s(\epsilon\Z^d)$ for
    $\tilde{a} \in \mathcal{A}$, $\epsilon \in (0,\epsilon_1]$, $t \in
      [0,1]$ with
      \begin{align}
        \label{eq:discrPseudoInvSymbol}
    \left( \Op_{\epsilon,t}^\T \tilde{a} \right)^{-1} =
    \Op_{\epsilon,t}^{\T} b_{\tilde{a}}
  \end{align}
  for some $b_{\tilde{a}} \in \Sy^0(m^{-1},\epsilon_1)(\R^d \times
  \T^d)$. Here, the neighbourhood $\mathcal{A}$ is considered with
  respect to the Fréchet topology induced by the seminorms $\left\|
  \cdot \right\|_\alpha$ defined in \eqref{eq:FrechetSNSym}.
\end{prop}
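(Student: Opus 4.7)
The plan is to reduce the discrete statement to the continuous one and then invoke Lemma \ref{Lem:InvSymbPeriodic}. More concretely, I would first establish the analogous invertibility result for the standard $t$-quantisation $\Op_{\epsilon,t}\tilde{a}$ acting on $\S(\R^d)$, producing an inverse of the form $\Op_{\epsilon,t} b_{\tilde{a}}$ with $b_{\tilde{a}} \in \Sy^0(m^{-1},\epsilon_1)(\R^d \times \R^d)$, uniformly in $\tilde{a}$ over a Fréchet neighbourhood of $a$. Since $\tilde{a}$ is periodic in $\xi$, Lemma \ref{Lem:InvSymbPeriodic} then forces $b_{\tilde{a}}$ to be periodic in $\xi$ as well, and the restriction formula \eqref{eq:RestrForm} immediately yields formula \eqref{eq:discrPseudoInvSymbol}.

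The continuous inversion is the main technical step, but it is essentially standard and can be quoted from \cite[Chapter 8]{DiSj}. I would proceed in two stages. First, construct a parametrix as a Borel sum $b^{(N)} \sim \sum_{j=0}^{N} \epsilon^j b_j$, where $b_0 = 1/\tilde{a}$ is well defined and lies in $\Sy^0(m^{-1},\epsilon_0)$ precisely because of the ellipticity bound $|\tilde{a}| \geq C m$, and where the higher $b_j$ are determined recursively by the composition formula so that
\begin{align}
\Op_{\epsilon,t}\tilde{a}\circ \Op_{\epsilon,t} b^{(N)} = \Id + R_N,\qquad \Op_{\epsilon,t} b^{(N)}\circ \Op_{\epsilon,t}\tilde{a} = \Id + \tilde{R}_N,
\end{align}
with remainders of order $\epsilon^{N+1}$ in suitable symbol seminorms.

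In the second stage, for $\epsilon$ sufficiently small $\Id + R_N$ and $\Id + \tilde{R}_N$ are invertible by Neumann series on $\S(\R^d)$, and a Beals-type characterisation (or equivalently the standard argument in \cite{DiSj}) shows that their inverses are again $t$-quantisations of symbols in $\Sy^0(1,\epsilon_1)$. Composition then produces a two-sided inverse of $\Op_{\epsilon,t}\tilde{a}$ of the form $\Op_{\epsilon,t} b_{\tilde{a}}$ with $b_{\tilde{a}} \in \Sy^0(m^{-1},\epsilon_1)$, which is exactly the hypothesis required to apply Lemma \ref{Lem:InvSymbPeriodic}.

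The principal obstacle, and the only point requiring real care, is uniformity in $\tilde{a}$ across a Fréchet neighbourhood $\mathcal{A}$ of $a$. The ellipticity condition $|\tilde a|\geq (C/2) m$ is an open condition determined by finitely many of the seminorms $\|\cdot\|_\alpha$ in \eqref{eq:FrechetSNSym}, so for a suitably small $\mathcal{A}$ the constant in the ellipticity bound is preserved up to a harmless factor. Every ingredient of the parametrix construction (division by $\tilde a$, the recursive formulas for the $b_j$, the remainder estimates and the Neumann inversion) depends continuously on only finitely many seminorms of $\tilde a$; therefore both the threshold $\epsilon_1$ and the symbol seminorms of $b_{\tilde a}$ can be chosen uniformly on $\mathcal{A}$. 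This uniformity is what will be needed later when perturbing $a$ (for example replacing $a$ by $a-z$ for $z$ in a compact set) in the construction of the resolvent and the Helffer--Sjöstrand functional calculus.
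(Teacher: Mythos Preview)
Your proposal is correct and follows essentially the same route as the paper: reduce to the continuous setting via the parametrix/Neumann-series argument of \cite[Chapter~8]{DiSj}, use Beals' characterisation to identify the inverse as a pseudodifferential operator, verify uniform ellipticity on a Fr\'echet neighbourhood $\mathcal{A}$, and then apply Lemma~\ref{Lem:InvSymbPeriodic}. The only cosmetic difference is that the paper uses the first-order parametrix $1/\tilde a$ directly (writing $\Op_{\epsilon,t}\tilde a\circ\Op_{\epsilon,t}(1/\tilde a)=1+\epsilon\,\Op_{\epsilon,t}\tilde\rho$ and inverting $1+\epsilon\,\Op_{\epsilon,t}\tilde\rho$ by Neumann series), whereas you first build the full asymptotic parametrix $b^{(N)}$; both lead to the same conclusion.
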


\begin{proof}
  For the non-discrete setting, it is shown in \cite[Chapter 8]{DiSj}
  that for $\epsilon \in (0,\epsilon_1]$ with $\epsilon_1$
    sufficiently small, the operator $\Op_{\epsilon,t} a$ is
    invertible with
    \begin{align}
      \label{eq:OpaInvOpbCont}
    \left( \Op_{\epsilon,t} a \right)^{-1} = \Op_{\epsilon,t} b_a
  \end{align}
  for some $b_a \in \Sy^0(m^{-1},\epsilon_1)(\R^{2d})$. This is implicitly the punchline of the 
  discussion in \cite[p.100]{DiSj}. The proof is
  based on the Neumann series construction for $1 + \epsilon
  \Op_{\epsilon,t} \rho$ for $\epsilon$ small (using the semiclassical Beals characterisation of pseudodifferential 
  operators to control the symbol of the inverse $\bigl(1 + \epsilon
  \Op_{\epsilon,t} \rho\bigr)^{-1}$), where the symbol $\rho
  \in \Sy^0(1,\epsilon_0)(\R^d \times \R^d)$ is characterised by
  \begin{align} \label{eq:OpaInvOprho}
    \Op_{\epsilon,t} a \circ \Op_{\epsilon,t} 1/a = 1 +
    \epsilon \Op_{\epsilon,t} \rho.
  \end{align}
  Here, $1/a \in \Sy^0(m^{-1},\epsilon_1)(\R^{2d})$ since $a$ is
  $(m,\epsilon_0)$-elliptic. 
  
  To show the invertibility of $\Op_{\epsilon,t} \tilde{a}$ for $\tilde{a}$ in some neighbourhood
  $\mathcal{A}$ of $a$, we first remark that 
  $\mathcal{A}$ can be chosen such that any $\tilde{a} \in
  \mathcal{A}$ is $(m,\epsilon_0)$-elliptic with the same constant:
  Since $a$ is $(m,\epsilon_0)$-elliptic, we have $|a| \geq Cm$ for
  some $C >0$. Assuming that $\left\| a - \tilde{a} \right\|_0 <
  C/2$, we get
  \begin{align}
    |\tilde{a}| \geq |a| - |a - \tilde{a}| \geq Cm - Cm/2 = Cm/2.
  \end{align}
Thus \eqref{eq:OpaInvOprho} holds for $a, \rho$ replaced by $\tilde{a}\in \mathcal{A}$ and some $\tilde{\rho}$.   
  Using uniform estimates for the remainder
  terms in the symbolic calculus and the Theorem of
  Calder{\'o}n-Vaillancourt in the non-discrete setting (see e.g. \cite{Mart}), one easily verifies that there is a constant $C>0$ such that
  $\|\Op_{\epsilon,t}\tilde{\rho}\|\leq C$ for all $\tilde{a}\in \mathcal{A}$. Using the Beals characterisation again, this 
  shows that \eqref{eq:OpaInvOpbCont} holds with $a, b_a$ 
  replaced by $\tilde{a}, b_{\tilde{a}}$ for any $\tilde{a} \in \mathcal{A}$, possibly after
  shrinking $\epsilon_1$. 
  
  To conclude the proof of the statement
  \eqref{eq:discrPseudoInvSymbol}, we apply Lemma
  \ref{Lem:InvSymbPeriodic} with $a$ replaced by $\tilde{a}$.
\end{proof}

\subsection{Functional calculus}
\label{sec:FuncCalc}

For a symbol $a$ satisfying the ellipticity condition in Hypothesis
\ref{hyp:WRef} (\ref{hyp:WRef_EllItem}), we construct the unique
self-adjoint realisation of $\Op_{\epsilon,1/2}^\T a$ in Proposition
\ref{prop:PSelfAdj}. We remark that ellipticity is actually only
needed for fixed $\epsilon$ and not in the uniform sense of
\eqref{eq:DefElliptic}.  We recall from the appendix (see
\eqref{eq:OpSDual}) that $\Op_{\epsilon,1/2}^\T a$ can be extended to
a continuous operator $\s'(\epsilon\Z^d) \rightarrow
\s'(\epsilon\Z^d)$.
  
\begin{prop}
  \label{prop:PSelfAdj}
  Assume the symbol $a$ to satisfy Hypothesis \ref{hyp:WRef}
  (\ref{hyp:WRef_EllItem}).  Then, for $\epsilon > 0$ sufficiently
  small, the operator
      \begin{align}
        \label{eq:PdiscrDef}
    \P_\epsilon : \l^2(\epsilon\Z^d) \supset \D_\epsilon
    \rightarrow \l^2(\epsilon\Z^d), \quad u \mapsto \left(
      \Op_{\epsilon,1/2}^{\T} a \right)u,
  \end{align}
  where $\D_\epsilon := \left( \Op_{\epsilon,1/2}^\T (a + i)
  \right)^{-1}\left( \l^2(\epsilon\Z^d) \right)$, is well-defined and
  self-adjoint.
\end{prop}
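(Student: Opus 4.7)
The strategy is to combine the invertibility result of Proposition \ref{prop:discrPseudoInvShort} with the standard criterion for self-adjointness: a symmetric operator $A$ on a Hilbert space is self-adjoint precisely when $\ran(A\pm iI)$ equals the whole space.

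First, since $a$ is real the identity $|a-i| = |a+i|$ shows that $a-i$ is $(m,\epsilon_0)$-elliptic alongside $a+i$. Proposition \ref{prop:discrPseudoInvShort} then provides, for $\epsilon$ sufficiently small, pseudodifferential operators $R_\pm := \Op_{\epsilon,1/2}^\T b_\pm$ with $b_\pm \in \Sy^0(m^{-1},\epsilon_1)(\R^d \times \T^d)$ inverting $\Op_{\epsilon,1/2}^\T(a\pm i)$ on $\s(\epsilon\Z^d)$. Because $m\geq 1$ the symbols $b_\pm$ are bounded, so the $\l^2$-boundedness of zero-order discrete Weyl operators recorded in Appendix A extends $R_\pm$ to bounded operators on $\l^2(\epsilon\Z^d)$. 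Consequently $\D_\epsilon = R_+\bigl(\l^2(\epsilon\Z^d)\bigr)$, which agrees with the preimage definition in the statement after extending $\Op_{\epsilon,1/2}^\T(a+i)$ to $\s'(\epsilon\Z^d)$, contains $\s(\epsilon\Z^d)$ and is therefore dense, and for every $u\in\D_\epsilon$ the image $\P_\epsilon u = \Op_{\epsilon,1/2}^\T(a+i)\,u - iu$ indeed lies in $\l^2(\epsilon\Z^d)$.

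Next I would establish symmetry of $\P_\epsilon$. On test elements $u,v \in \s(\epsilon\Z^d)$ this follows from the adjoint formula for the discrete Weyl quantization in Appendix A, which for the real symbol $a$ gives $\bigl(\Op_{\epsilon,1/2}^\T a\bigr)^* = \Op_{\epsilon,1/2}^\T a$. To promote symmetry from $\s(\epsilon\Z^d)$ to the full domain $\D_\epsilon$, I would use that each $u\in\D_\epsilon$ can be written as $u = R_+\tilde u$ with $\tilde u\in\l^2(\epsilon\Z^d)$; approximating $\tilde u$ by $\tilde u_n\in\s(\epsilon\Z^d)$ and setting $u_n := R_+\tilde u_n \in \s(\epsilon\Z^d)$ (using that $R_+$ preserves $\s(\epsilon\Z^d)$) yields $u_n \to u$ and $\P_\epsilon u_n = \tilde u_n - iu_n \to \P_\epsilon u$ in $\l^2(\epsilon\Z^d)$, so the identity $\skp{\P_\epsilon u_n}{v_n} = \skp{u_n}{\P_\epsilon v_n}$ passes to the limit by continuity of the inner product.

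Finally I would verify the two range conditions. The equality $\ran(\P_\epsilon + iI) = \l^2(\epsilon\Z^d)$ is built into the construction of $\D_\epsilon$ via $R_+$. For the remaining equality, given $w \in \l^2(\epsilon\Z^d)$ I would set $u_0 := R_- w$ and check $u_0 \in \D_\epsilon$ by the computation $\Op_{\epsilon,1/2}^\T(a+i)\,u_0 = w + 2iu_0 \in \l^2(\epsilon\Z^d)$, after which $(\P_\epsilon - iI)u_0 = w$. The main technical delicacy will be the symmetry extension from $\s(\epsilon\Z^d)$ to $\D_\epsilon$: it genuinely rests on the $\l^2$-boundedness of $R_+$ together with the operator identity $\Op_{\epsilon,1/2}^\T(a+i)\circ R_+ = \Id$ on $\l^2(\epsilon\Z^d)$, obtained by extending the Schwartz-class identity by density. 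Once symmetry is established, the self-adjointness criterion closes the argument.
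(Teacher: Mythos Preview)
Your proposal is correct and follows essentially the same route as the paper: invoke Proposition \ref{prop:discrPseudoInvShort} for both $a+i$ and $a-i$, use Calder{\'o}n--Vaillancourt to land $\D_\epsilon$ in $\l^2(\epsilon\Z^d)$, establish symmetry on $\s(\epsilon\Z^d)$ and extend by graph-norm density via the bounded inverse $R_+$, and close with the range criterion $\ran(\P_\epsilon \pm i) = \l^2(\epsilon\Z^d)$. The only cosmetic differences are that the paper verifies $\P_\epsilon(\D_\epsilon)\subset \l^2$ via the sharp product $a\compw b^+ \in \Sy^0(1,\epsilon_1)$ rather than your decomposition $\P_\epsilon u = \Op^\T_{\epsilon,1/2}(a+i)u - iu$, and it handles the second range condition by showing the two domains $\Op^\T_{\epsilon,1/2}(a\pm i)^{-1}(\l^2)$ coincide (via a bounded bijection $\mathbf{Q}_\epsilon$) rather than directly checking $R_- w \in \D_\epsilon$; both variants are equivalent.
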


\begin{proof}
  Since $a + i$ is $(m,\epsilon_0)$-elliptic, $a-i$ is also
  $(m,\epsilon_0)$-elliptic. Due to Proposition
  \ref{prop:discrPseudoInvShort} we find $\epsilon_1 \in
  (0,\epsilon_0]$ such that there are $b^+, b^- \in
    \Sy^0(m^{-1},\epsilon_1)(\R^d \times \T^d)$ with
    \begin{align}
      \label{eq:SelfAdjInv}
    \left( \Op_{\epsilon,1/2}^\T (a \pm i) \right)^{-1} =
    \Op_{\epsilon,1/2}^\T b^{\pm}
  \end{align}
  for any $\epsilon \in (0,\epsilon_1]$. Since $m \geq 1$, we have $b^+ \in \Sy^{0}(1,\epsilon_1)(\R^d
    \times \T^d)$. So by \eqref{eq:SelfAdjInv} and Proposition
    \ref{prop:discrCaldVail}
    \begin{align}
      \D_\epsilon = \left( \Op_{\epsilon,1/2}^\T b^{+} \right) \left(
      \l^2(\epsilon\Z^d) \right) \subset \l^2(\epsilon\Z^d).
    \end{align}
    The operator $\Op_{\epsilon,1/2}^\T a$ maps $\D_\epsilon$ into
    $s'(\epsilon\Z^d)$. We check that actually
    \begin{align}
      \label{eq:OpDl2}
      \left(\Op_{\epsilon,1/2}^\T a\right) (\D_\epsilon) \subset
      \l^2(\epsilon\Z^d).
    \end{align}
    Let $u$ be an element of the lhs of \eqref{eq:OpDl2}, so 
    \begin{align}
      u = \left( \left(\Op_{\epsilon,1/2}^\T a\right) \circ \left(
      \Op_{\epsilon,1/2}^\T b^{+} \right) \right) v
    \end{align}
    for some $v \in \l^2(\epsilon\Z^d)$. Then by Proposition
    \ref{prop:SharpProd}, setting $\compw := \compw_{\frac{1}{2}}$,
    \begin{align}
      u = \left( \Op_{\epsilon,1/2}^\T a \compw b^+ \right) v,
    \end{align}
    where $a \compw b^+ \in \Sy^{0}(1,\epsilon_1)(\R^d \times
    \T^d)$. So $u \in \l^2(\epsilon\Z^d)$ by Proposition
    \ref{prop:discrCaldVail}. This proves that $\P_\epsilon$ is
    well-defined.

    We shall check that $\P_\epsilon$ is self-adjoint by applying the
    basic criterion of self-adjointness (\cite[Theorem VIII.3]{ReSi}):
    $\P_\epsilon$ is self-adjoint if $\P_\epsilon$ is symmetric and
    \begin{align}
      \label{eq:PSelfAdjCrit}
      \left(\P_\epsilon \pm i\right)(\D_\epsilon) = \l^2(\epsilon\Z^d).
    \end{align}

    Using $a = \overline{a}$, one shows by a straightforward
    computation similar to \eqref{eq:OpExtSDualPrep} that the Weyl
    quantisation $\Op_{\epsilon,1/2}^\T a$ is symmetric on
    $\s(\epsilon\Z^d)$, i.e.
    \begin{align}
      \label{eq:PSymmetry}
      \skp{\left(\Op_{\epsilon,1/2}^\T a\right) u}{v} =
      \skp{u}{\left(\Op_{\epsilon,1/2}^\T a\right)v} \qquad \mbox{for
      } u,v \in \s(\epsilon\Z^d).
    \end{align}
    We claim that $\s(\epsilon\Z^d)$ is dense in $\D_\epsilon$ with
    respect to the graph norm induced by $\Op_{\epsilon,1/2}^\T a$. As
    a consequence, the relation \eqref{eq:PSymmetry} extends to any
    $u, v \in \D_\epsilon$, which shows that $\P_\epsilon$ is
    symmetric.  We prove the claim. For this purpose let $u \in
    \D_\epsilon$. We shall construct a sequence of functions $u_j \in
    \s(\epsilon\Z^d)$ with
    \begin{align}
      \label{eq:GraphNormLimits}
      u_j \rightarrow u \quad \mbox{ and } \quad
      \left(\Op_{\epsilon,1/2}^\T a\right) u_j \rightarrow
      \left(\Op_{\epsilon,1/2}^\T a \right) u \quad \mbox{ in }
      \l^2(\epsilon\Z^d).
    \end{align}
    By the definition of $\D_\epsilon$ and $b^+$, there is $w \in \l^2(\epsilon\Z^d)$ with $u = \left( \Op_{\epsilon,1/2}^\T b^+
    \right) w$. Since $s(\epsilon\Z^d)$ is dense in
    $\l^2(\epsilon\Z^d)$, there is a sequence of functions $w_j \in
    \s(\epsilon\Z^d)$ with $w_j \rightarrow w$ in
    $\l^2(\epsilon\Z^d)$. Defining $u_j := \left(
    \Op_{\epsilon,1/2}^\T b^+ \right) w_j \in \s(\epsilon\Z^d)$, we
    have $u_j \rightarrow u$ in $\l(\epsilon\Z^d)$ by Proposition
    \ref{prop:discrCaldVail}. Furthermore, by Proposition
    \ref{prop:SharpProd}, we have $\left(\Op_{\epsilon,1/2}^\T
    a\right)u_j = \left( \Op_{\epsilon,1/2}^\T a \compw b^+ \right)
    w_j$ with $a \compw b^+ \in \Sy^0(1,\epsilon_1)(\R^d \times
    \T^d)$. Applying again Proposition \ref{prop:discrCaldVail}, this
    implies the second limit statement in \eqref{eq:GraphNormLimits}.

    It remains to check \eqref{eq:PSelfAdjCrit}. We claim that
    \begin{align}
      \label{eq:OpaPiOpaMi}
      \left(\Op_{\epsilon,1/2}^\T (a+i)\right)^{-1}
      \left(\l^2(\epsilon\Z^d)\right) = \left(\Op_{\epsilon,1/2}^\T
      (a-i)\right)^{-1} \left(\l^2(\epsilon\Z^d)\right).
    \end{align}
    As a consequence, \eqref{eq:PSelfAdjCrit} follows immediately from
    the definition of $\D_\epsilon$. Note that
    $\Op_{\epsilon,1/2}^\T(a-i)$ is invertible for $\epsilon \in
    (0,\epsilon_1]$ due to the choice of $\epsilon_1$ in
      \eqref{eq:SelfAdjInv}. In order to check \eqref{eq:OpaPiOpaMi},
      we write
      \begin{align}
        \label{eq:OpaPiOpaMiTransf}
        \left( \Op_{\epsilon,1/2}^\T (a+i) \right)^{-1} = \left(
        \Op_{\epsilon,1/2}^\T (a-i) \right)^{-1} \circ
        \mathbf{Q}_\epsilon \qquad \mbox{on } \s(\epsilon\Z^d)
      \end{align}
      with
      \begin{align}
        \mathbf{Q}_\epsilon := \left( \Op_{\epsilon,1/2}^\T (a-i)
        \right) \circ \left( \Op_{\epsilon,1/2}^\T (a+i) \right)^{-1}.
      \end{align}
      Applying Proposition \ref{prop:SharpProd} to $\mathbf{Q}_\epsilon$, we have
      \begin{align}
        \mathbf{Q}_\epsilon = \Op_{\epsilon,1/2}^\T \left( (a-i)
        \compw b^+ \right), \qquad \mbox{where } (a-i) \compw b^+ \in
        \Sy^0(1,\epsilon_1)(\R^d \times \T^d).
      \end{align}
      By Proposition \ref{prop:discrCaldVail}, $\mathbf{Q}_\epsilon$
      has a continuous extension onto $\l^2(\epsilon\Z^d)$. Since the
      operator $\mathbf{Q}_\epsilon$ is bijective on
      $\s(\epsilon\Z^d)$, its extension is bijective on
      $\l^2(\epsilon\Z^d)$. Combined with \eqref{eq:OpaPiOpaMiTransf},
      this gives \eqref{eq:OpaPiOpaMi}.
\end{proof}

We remark that the arguments in the proof of Proposition
\ref{prop:PSelfAdj} also show that the operator $\Op_{\epsilon,1/2}^\T
a$ on $\s(\epsilon\Z^d)$ is essentially self-adjoint.

Given $f \in \Cont_0^\infty (\R)$, we shall call a function $\aac{f}
\in \Cont_0^\infty (\C)$ an \textit{almost analytic extension} of $f$
if $\left. \aac{f} \right|_{\R} = f$ and if there are constants $C_N >
0$ such that
\begin{align}
  \label{eq:aacf}
  \abs{\dz \aac{f}}(z) \leq C_N |\Im z|^N \qquad (z \in \C)
\end{align}
for any $N \in \N$, where $\dz = (\partial_x + i\partial_y)/2$. The
function $\aac{f}$ can be constructed using an adaptation of the Borel
construction (see \cite{hoermander1968LecNotes}) or the Fourier
transform (see \cite{mather1971}). The almost analytic extension is
needed for the Helffer-Sjöstrand formula cited in Theorem
\ref{theo:FuncCalc}.

\begin{theo}
  \label{theo:FuncCalc}
  Let $\mathbf{A}$ be a self-adjoint operator on a Hilbert space
  $\mathcal{H}$. Let $f \in \Cont_0^\infty(\R)$ and let $\aac{f} \in
  \Cont_0^\infty(\C)$ be an almost analytic extension of $f$. Then
  \begin{align}
    \label{eq:FuncCalc}
    f(\mathbf{A}) = - \frac{1}{\pi} \int_\C \dz \aac{f} (z)
    (z-\mathbf{A})^{-1} L(dz)
  \end{align}
  ($L(dz)$ denoting the Lebesgue measure on $\C$).
\end{theo}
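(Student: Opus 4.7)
The strategy is to reduce the operator identity, via the spectral theorem, to a scalar identity which is then a direct consequence of the well-known distributional identity $\dz(1/(z-w))=\pi\delta_w$ in $\mathcal{D}'(\C)$ (equivalently, the Cauchy--Pompeiu formula applied to $\aac{f}\in\Cont_0^\infty(\C)$).

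First I would verify that the right-hand side of \eqref{eq:FuncCalc} is a well-defined bounded operator on $\mathcal{H}$. Self-adjointness of $\mathbf{A}$ yields the standard resolvent estimate $\norm{(z-\mathbf{A})^{-1}}\leq |\Im z|^{-1}$ for $\Im z\neq 0$. Combining this with \eqref{eq:aacf} for $N=1$ gives $\abs{\dz\aac{f}(z)}\,\norm{(z-\mathbf{A})^{-1}}\leq C_1$ uniformly on $\C\setminus\R$. Since $\aac{f}$ has compact support, the integral converges as a Bochner integral in the operator-norm topology, and the resulting operator is bounded uniformly in terms of seminorms of $\aac{f}$.

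Second, I would use the spectral theorem to write $(z-\mathbf{A})^{-1}=\int_\R(z-\lambda)^{-1}\,dE_{\mathbf{A}}(\lambda)$, where $E_\mathbf{A}$ is the spectral measure of $\mathbf{A}$. Inserting this into the right-hand side of \eqref{eq:FuncCalc} and applying Fubini (justified by the uniform bound from the previous step) reduces the operator identity to the pointwise scalar statement
\begin{align*}
f(\lambda)=-\frac{1}{\pi}\int_\C \frac{\dz\aac{f}(z)}{z-\lambda}\,L(dz)\qquad(\lambda\in\R),
\end{align*}
after which the definition $f(\mathbf{A})=\int_\R f(\lambda)\,dE_\mathbf{A}(\lambda)$ closes the argument.

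Third, the scalar identity follows immediately from the fundamental distributional identity $\dz(1/(z-w))=\pi\delta_w$: testing against $\aac{f}\in\Cont_0^\infty(\C)$ with $w=\lambda$ yields
\begin{align*}
\aac{f}(\lambda)=-\frac{1}{\pi}\int_\C \frac{\dz\aac{f}(z)}{z-\lambda}\,L(dz),
\end{align*}
and $\aac{f}(\lambda)=f(\lambda)$ by the defining property $\aac{f}|_\R=f$ of an almost analytic extension. Absolute integrability of the integrand on the right is guaranteed by \eqref{eq:aacf} (the factor $|\Im z|^{N}$ compensates the singularity of $1/(z-\lambda)$ for $N\geq 1$).

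The main technical point is really only the interchange of the $\C$-integral with the spectral measure, which rests on the uniform bound from the first step; everything else is standard. I expect the presentation to follow \cite[Ch.~8]{DiSj} essentially verbatim, with no new difficulty specific to the discrete setting, since the statement is formulated for an abstract self-adjoint operator on a Hilbert space.
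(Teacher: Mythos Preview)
Your argument is correct and is the standard proof of the Helffer--Sj\"ostrand formula. Note, however, that the paper does not actually give its own proof of this theorem: it is merely cited as a known result from \cite[Theorem 8.1]{DiSj} (see the sentence preceding the statement), so there is nothing to compare against.
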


\begin{rem}
  \label{rem:IntegralDef}
  Using the estimates $\norm{(z - \mathbf{A})^{-1}} \leq |\Im z|^{-1}$
  and \eqref{eq:aacf}, the integrand in \eqref{eq:FuncCalc} may be
  considered as a compactly supported continuous function on $\C$ with
  values in the Banach space of bounded operators on
  $\mathcal{H}$. The integral in \eqref{eq:FuncCalc} then exists as a
  version of a Banach space valued Riemann integral. For the special
  case of integrands on $\R$, this is treated in \cite[p. 11]{ReSi}
  and \cite{dieu2008}. The integral in \eqref{eq:FuncCalc} also exists
  as a weak Lebesgue integral and as a Bochner integral (see
  e.g. \cite{Yos78, AmEsch09}).

  Taking the first or the last point of view, we remark that for a
  compactly supported continuous integrand on $\C$ with values in any
  Banach space $\mathcal{B}$, the integral exists in
  $\mathcal{B}$. While here we take $\mathcal{B}$ as the space of
  bounded operators, we shall in the next section also apply this
  statement with $\mathcal{B}$ being the space of trace class
  operators.
\end{rem}

In Theorem \ref{theo:FuncCalcNonDiscr}, we collect in the form of a
condensed theorem some useful results from \cite[Chapter 8]{DiSj},
which are statements on the functional calculus for
$\tilde{\P}_\epsilon$ based on the Helffer-Sjöstrand formula. We
remark that Hypothesis \ref{hyp:WRef} (\ref{hyp:WRef_EllItem}),
assumed in Theorem \ref{theo:FuncCalcNonDiscr}, is a special case of
the Hypothesis in \cite[Chapter 8]{DiSj} for the validity of the
functional calculus since we additionally assume periodicity for the
symbol $a$.

\begin{theo}
  \label{theo:FuncCalcNonDiscr}
  Assume the symbol $a$ to satisfy Hypothesis \ref{hyp:WRef}
  (\ref{hyp:WRef_EllItem}). For some $\epsilon_1$ sufficiently small and $f \in
  \Cont_0^\infty(\R)$, we have for any $\epsilon \in (0,\epsilon_1]$:
  \begin{enumerate}
  \item One can define the self-adjoint operator
    \begin{align}
      \label{eq:PTildeDef}
      \tilde{\P}_\epsilon : \L^2(\R^d) \supset \tilde{\D}_\epsilon
      \rightarrow \L^2(\R^d), \quad u \mapsto \left(
      \Op_{\epsilon,1/2} a \right)u,
    \end{align}
    where $\tilde{\D}_\epsilon :=
      \left( \Op_{\epsilon,1/2} (a + i) \right)^{-1}\left( \L^2(\R^d)
      \right)$.
  \item \label{item:FuncCalcNonDiscr1} For each $z \in \C$ with $\Im z
    \neq 0$ there is a unique symbol $b_{z-a} \in
    \Sy^0(m^{-1},\epsilon_1)(\R^{2d})$ with
    \begin{align}
      \label{eq:bzCharact}
      \left( z - \tilde{\P}_\epsilon \right)^{-1} = \Op_{\epsilon,1/2}
      b_{z-a}.
    \end{align}
    For some $C > 0$ there are constants $C_{\alpha,\beta} > 0$
    ($\alpha,\beta \in \N^d$) such that for all $z \in \C$ with $|z|
    \leq C$ and $\Im z \neq 0$
    \begin{align}
      \label{eq:bzEstim}
      |\partial_x^\alpha \partial_\xi^\beta b_{z-a}(x,\xi;\epsilon)| \leq
      C_{\alpha,\beta} \max \left( 1, \frac{\epsilon^{1/2}}{|\Im z|}
      \right)^{2d+1} |\Im z|^{-(|\alpha| + |\beta|)-1}.
    \end{align}
  \item \label{item:FuncCalcNonDiscr2} $f(\tilde{\P}_\epsilon) = \Op_{\epsilon,1/2} c$ where $c$
    given by
    \begin{align}
      \label{eq:FuncCalcSymb}
      c(x,\xi;\epsilon) = -\frac{1}{\pi} \int_\C \dz \aac{f} (z) b_{z-a}
      (x,\xi; \epsilon) L(dz) \qquad (x,\xi \in \R^d)
    \end{align}
    is an element of $\Sy^0(m^{-k},\epsilon_1)(\R^{2d})$ for
    any $k \in \N_0$ and $\aac{f}$ is an almost analytic extension of
    $f$.
  \item \label{item:FuncCalcNonDiscr3} If $a(x,\xi;\epsilon) \sim \sum_{j=0}^\infty \epsilon^j
    a_j(x,\xi)$, then the symbol $c$ has an asymptotic expansion
    $c(x,\xi;\epsilon) \sim \sum_{j=0}^\infty \epsilon^j c_j(x,\xi)$
    where the symbols $c_j \in \Sy^0(m^{-1},\epsilon_1)(\R^{2d})$ can
    be chosen as
    \begin{align}
      \label{eq:FuncCalcAsympcj}
      c_j(x,\xi) = \frac{1}{(2j)!} \left. \partial_t^{2j}
      (q_j(x,\xi,t)f(t)) \right|_{t = a_0(x,\xi)} \qquad (x,\xi \in
      \R^d)
    \end{align}
    where $q_j$ are polynomials of the form
    \begin{align}
      \label{eq:FuncCalcAsympcj2}
      q_j(x,\xi,z) = \sum_{k=0}^{2j} q_{j,k}(x,\xi) z^k \qquad (x,\xi
      \in \R^d)
    \end{align}
    where $q_{j,k} \in \Cont^\infty(\R^d \times \R^d)$. In particular,
    $c_0 = f \circ a_0$ and  $c_1 = (f' \circ a_0) a_1$. The Fréchet
    seminorms of the remainder terms associated with the asymptotic
    expansion of $c$ only depend linearly on finitely many derivatives
    of $f$ and on $a$.
  \end{enumerate}
\end{theo}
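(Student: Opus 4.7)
The plan is to obtain all four statements by reducing to the corresponding results in \cite[Chapter 8]{DiSj}, where the calculus for $\Op_{\epsilon,1/2} a$ with symbols on $\R^{2d}$ is developed in detail. Since Hypothesis \ref{hyp:WRef}(\ref{hyp:WRef_EllItem}) is strictly stronger than the ellipticity assumption used in \cite[Chapter 8]{DiSj} (our symbol in addition is periodic in $\xi$, so in particular belongs to $\Sy^0(m,\epsilon_0)(\R^{2d})$ after restriction), every result cited there applies directly. The only novelty compared to the pure citation would be a bookkeeping of which Fréchet seminorms of $a$ control which estimates, so that the uniformity statements in (\ref{item:FuncCalcNonDiscr1}) and (\ref{item:FuncCalcNonDiscr3}) are transparent.

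For (1), self-adjointness follows by an argument strictly parallel to the proof of Proposition \ref{prop:PSelfAdj}, using that $a \pm i$ is $(m,\epsilon_0)$-elliptic and applying the non-discrete analogue of Proposition \ref{prop:discrPseudoInvShort} (which is what \cite[Chapter 8]{DiSj} actually proves) to invert $\Op_{\epsilon,1/2}(a \pm i)$ on $\S(\R^d)$, combined with the non-discrete Calderón-Vaillancourt theorem and the density of $\S(\R^d)$ in $\tilde{\D}_\epsilon$ with respect to the graph norm. For (2), the construction of the resolvent symbol $b_{z-a}$ proceeds in two steps: first one writes $\Op_{\epsilon,1/2}(z-a) \circ \Op_{\epsilon,1/2} \frac{1}{z-a} = 1 + \epsilon \Op_{\epsilon,1/2} \rho_z$ with $\rho_z$ a symbol depending polynomially on $1/(z-a)$; then one inverts the right-hand side by a Neumann series whose convergence is controlled via the semiclassical Beals characterisation. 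Tracking the $z$-dependence in the Beals seminorms, with the elementary estimate $|1/(z-a)| \leq |\Im z|^{-1}$, produces precisely the bound \eqref{eq:bzEstim}. Uniqueness of $b_{z-a}$ is immediate from injectivity of the Weyl quantisation on tempered symbols.

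For (3), the Helffer-Sjöstrand formula \eqref{eq:FuncCalc} applied to $\mathbf{A} = \tilde{\P}_\epsilon$ gives $f(\tilde{\P}_\epsilon)$ as a Banach-space valued integral. Substituting \eqref{eq:bzCharact} and exchanging the integral with the Weyl quantisation (which is legitimate since the integrand is compactly supported and continuous in $z$, and $\Op_{\epsilon,1/2}$ is a continuous map on symbols) yields $f(\tilde{\P}_\epsilon) = \Op_{\epsilon,1/2} c$ with $c$ defined by \eqref{eq:FuncCalcSymb}. That $c \in \Sy^0(m^{-k},\epsilon_1)$ for every $k$ follows by combining \eqref{eq:aacf} with \eqref{eq:bzEstim}: for each seminorm of $c$ we pick $N$ in \eqref{eq:aacf} large enough to absorb the negative power of $|\Im z|$ from \eqref{eq:bzEstim}, and the compact support of $\aac{f}$ makes the $z$-integral finite. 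The rapid decay in $m$ comes from iterating the identity $(z-a)b_{z-a} = 1 + \O(\epsilon)$-symbol inside the integral and integrating by parts in $z$ against $\dz \aac{f}$.

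For (4), the hardest and most calculational step, one constructs a formal asymptotic expansion $b_{z-a}(x,\xi;\epsilon) \sim \sum_{j=0}^\infty \epsilon^j b_{z-a,j}(x,\xi)$ with
\begin{align*}
b_{z-a,j}(x,\xi) = \sum_{k=1}^{2j+1} \frac{q_{j,k-1}(x,\xi)}{(z-a_0(x,\xi))^k},
\end{align*}
where the $q_{j,k}$ are polynomials in derivatives of the $a_\ell$ obtained from the stationary-phase expansion of the Moyal product, and proves it is genuinely asymptotic by estimating the remainders through the Beals calculus as in part (2). Substituting into \eqref{eq:FuncCalcSymb} and applying the basic Stokes/Green identity
\begin{align*}
-\frac{1}{\pi}\int_\C \dz \aac{f}(z)\, (z-t)^{-k-1}\, L(dz) = \frac{f^{(k)}(t)}{k!}
\end{align*}
to each term with $t = a_0(x,\xi)$, then re-summing over $k$, yields \eqref{eq:FuncCalcAsympcj}--\eqref{eq:FuncCalcAsympcj2}; the leading values $c_0 = f \circ a_0$ and $c_1 = (f' \circ a_0)a_1$ are read off from $b_{z-a,0} = 1/(z-a_0)$ and $b_{z-a,1} = a_1/(z-a_0)^2 + \cdots$. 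The main obstacle is the symbolic bookkeeping of the formal expansion of $b_{z-a}$ and the verification that remainder seminorms depend only linearly on finitely many derivatives of $f$; this is the estimate carried out in \cite[Chapter 8]{DiSj}, from which we take it without reproof.
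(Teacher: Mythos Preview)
Your proposal is correct and aligned with the paper's treatment: the paper does not prove Theorem~\ref{theo:FuncCalcNonDiscr} at all but simply states it as a condensed collection of results from \cite[Chapter~8]{DiSj}, remarking that Hypothesis~\ref{hyp:WRef}(\ref{hyp:WRef_EllItem}) is a special case of the hypotheses there. Your sketch of how the arguments in \cite{DiSj} actually run (Neumann series with Beals characterisation for the resolvent symbol, Helffer--Sj\"ostrand integral for $f(\tilde{\P}_\epsilon)$, formal parametrix expansion plus the Cauchy--Green identity for the $c_j$) is accurate and more detailed than what the paper provides, but both you and the paper ultimately treat this theorem as an imported black box.
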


In the following, for any $\epsilon > 0$ sufficiently small, we let
the operators $\P_\epsilon$, $\tilde{\P}_\epsilon$ be as in Theorem
\ref{theo:FuncCalcNonDiscr} and Proposition \ref{prop:PSelfAdj}.

From Theorem \ref{theo:FuncCalcNonDiscr}, we derive a functional
calculus for $\P_\epsilon$.

\begin{cor}\label{cor:cperiodic}
  Assume the symbol $a$ to satisfy Hypothesis \ref{hyp:WRef}
  (\ref{hyp:WRef_EllItem}). Let further $f \in
  \Cont_0^\infty(\R)$. Then the symbol $c$ in \eqref{eq:FuncCalcSymb}
  and the functions $c_j, q_j, q_{j,k}$ in \eqref{eq:FuncCalcAsympcj}
  and \eqref{eq:FuncCalcAsympcj2} are periodic with respect to $\xi$
  and for any $\epsilon > 0$ sufficiently small, we have
    \begin{align}
      \label{eq:FuncCalcDiscr}
      f(\P_\epsilon) = \Op_{\epsilon,1/2}^\T c.
    \end{align}
\end{cor}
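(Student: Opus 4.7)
The plan is to reduce everything to the non-discrete statements in Theorem \ref{theo:FuncCalcNonDiscr} by first establishing periodicity in $\xi$ of all the relevant symbols, and then applying the restriction formula together with Lemma \ref{Lem:InvSymbPeriodic} to pass from $L^2(\R^d)$ to $\l^2(\epsilon\Z^d)$.

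First I would show that for each $z \in \C$ with $\Im z \neq 0$ the resolvent symbol $b_{z-a}$ is periodic in $\xi$. Indeed, for $\epsilon$ small enough $\Op_{\epsilon,1/2}(a-z)$ is invertible on $\S(\R^d)$ with inverse $\Op_{\epsilon,1/2}(-b_{z-a})$, and since $a-z \in \Sy^0(m,\epsilon_0)(\R^d \times \T^d)$ is periodic in $\xi$, Lemma \ref{Lem:InvSymbPeriodic} applies and yields $b_{z-a} \in \Sy^0(m^{-1},\epsilon_1)(\R^d \times \T^d)$, together with
\begin{align*}
  \left(z - \P_\epsilon\right)^{-1} = \Op_{\epsilon,1/2}^\T b_{z-a}
\end{align*}
on $\s(\epsilon\Z^d)$ (and hence on $\l^2(\epsilon\Z^d)$ after the usual density/continuity argument, using Proposition \ref{prop:discrCaldVail} and the fact that $m \geq 1$). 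Periodicity of $c$ in $\xi$ then follows immediately from the defining integral \eqref{eq:FuncCalcSymb}, since translation by $\gamma \in 2\pi\Z^d$ in the $\xi$-variable may be pulled under the integral by dominated convergence combined with the uniform pointwise estimate \eqref{eq:bzEstim}.

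For the periodicity of $c_j$, $q_j$ and $q_{j,k}$ I would revisit the construction underlying \eqref{eq:FuncCalcAsympcj}: the $q_{j,k}$ arise as explicit polynomials in derivatives of $a_0, a_1, \ldots, a_j$ (coming from the Neumann-type expansion of $b_{z-a}$ in powers of $\epsilon$ in \cite[Chapter 8]{DiSj}, combined with the Cauchy--Pompeiu formula applied to $\aac{f}$). Since every $a_j$ is periodic in $\xi$ by Hypothesis \ref{hyp:WRef}, the same holds for the $q_{j,k}$, hence for the $q_j$, and finally — by evaluation at $t = a_0(x,\xi)$, which is itself periodic — for each $c_j$. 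Alternatively, once one knows that $c$ and $c - \sum_{j=0}^{N-1}\epsilon^j c_j = \O(\epsilon^N)$ in the relevant symbol class are both periodic for every $N$, an inductive argument extracts each $c_j$ as a periodic symbol by dividing by $\epsilon^j$ and taking $\epsilon \downarrow 0$.

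Finally, to establish \eqref{eq:FuncCalcDiscr} I would apply the Helffer--Sjöstrand formula of Theorem \ref{theo:FuncCalc} to the self-adjoint operator $\P_\epsilon$ (whose self-adjointness is Proposition \ref{prop:PSelfAdj}), obtaining
\begin{align*}
  f(\P_\epsilon) = -\frac{1}{\pi} \int_\C \dz \aac{f}(z)\, \Op_{\epsilon,1/2}^\T b_{z-a}\, L(dz),
\end{align*}
where the integrand is a compactly supported continuous function of $z$ with values in the bounded operators on $\l^2(\epsilon\Z^d)$ (thanks to \eqref{eq:aacf}, \eqref{eq:bzEstim} and the discrete Calder\'on--Vaillancourt Proposition \ref{prop:discrCaldVail} applied to $b_{z-a}$). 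The quantization map $b \mapsto \Op_{\epsilon,1/2}^\T b$ is continuous from the Fr\'echet space of periodic symbols into $\mathcal{L}(\l^2(\epsilon\Z^d))$ and is linear, so it commutes with the Banach-space-valued integral in the sense of Remark \ref{rem:IntegralDef}. Interchanging the integral with $\Op_{\epsilon,1/2}^\T$ identifies the right-hand side with $\Op_{\epsilon,1/2}^\T c$ for $c$ given by \eqref{eq:FuncCalcSymb}. The main obstacle I expect is not any single step but the careful verification that the construction of $q_{j,k}$ in \cite{DiSj} manifestly preserves $\xi$-periodicity; once that bookkeeping is in place, everything else reduces to the already-established discrete symbolic calculus.
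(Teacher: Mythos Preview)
Your argument is correct and close in spirit to the paper's, but the final step takes a different route. For periodicity of $b_{z-a}$ (via Lemma \ref{Lem:InvSymbPeriodic}) and hence of $c$ and the $c_j,q_j,q_{j,k}$, you and the paper do essentially the same thing; the paper is simply more terse, saying that these functions are ``induced by $b_{z-a}$'' and therefore periodic.

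The difference is in establishing \eqref{eq:FuncCalcDiscr}. The paper does not apply the Helffer--Sj\"ostrand formula directly to $\P_\epsilon$ and then commute the integral through $\Op_{\epsilon,1/2}^\T$. Instead it uses the restriction map $r_\epsilon$ and the intertwining identity $r_\epsilon \circ (z-\tilde{\P}_\epsilon)^{-1} = (z-\P_\epsilon)^{-1}\circ r_\epsilon$ on $\S(\R^d)$ (a consequence of \eqref{eq:RestrForm} and Lemma \ref{Lem:InvSymbPeriodic}) to push the entire functional calculus through $r_\epsilon$: one gets $f(\P_\epsilon)\circ r_\epsilon = r_\epsilon \circ f(\tilde{\P}_\epsilon) = r_\epsilon \circ \Op_{\epsilon,1/2} c = \Op_{\epsilon,1/2}^\T c \circ r_\epsilon$, and since $r_\epsilon$ is surjective onto $\s(\epsilon\Z^d)$ this gives the claim. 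This avoids any discussion of commuting a Fr\'echet-space-valued integral with the quantization map: the already-proved continuous identity $f(\tilde{\P}_\epsilon)=\Op_{\epsilon,1/2}c$ from Theorem \ref{theo:FuncCalcNonDiscr} absorbs that work. Your route is equally valid but requires the (standard, but not entirely trivial) justification you sketch; the paper's route trades that for one more use of the restriction formula.
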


\begin{proof}
  Since $a$ is periodic with respect to $\xi$, we conclude from Lemma
  \ref{Lem:InvSymbPeriodic} that the symbol $b_{z-a}$ characterised by
  \eqref{eq:bzCharact} is periodic with respect to $\xi$ and that
  \begin{align}
    \label{eq:zPInvbzDiscr}
    \left( z - \P_\epsilon \right)^{-1} = \Op_{\epsilon,1/2}^\T 
    b_{z-a}
    \qquad (\Im z \neq 0).
  \end{align}
  Since the functions $c, c_j, q_j, q_{j,k}$, defined in Theorem
  \ref{theo:FuncCalcNonDiscr}, are induced by $b_{z-a}$, they are all
  periodic with respect to $\xi$. Using the restriction mapping
  $r_\epsilon$ and the restriction formula from Proposition
  \ref{prop:RestrForm}, we derive from \eqref{eq:zPInvbzDiscr} that
  \begin{align}
    \label{eq:rzP}
    r_\epsilon \circ \left( z - \tilde{\P}_\epsilon \right)^{-1} =
    \left( z - \P_\epsilon \right)^{-1} \circ r_\epsilon \qquad
    \mbox{on } \S(\R^d).
  \end{align}
  Combining the identity \eqref{eq:rzP} with Theorem
  \ref{theo:FuncCalc} and using Theorem \ref{theo:FuncCalcNonDiscr}
  (\ref{item:FuncCalcNonDiscr2}) and the restriction formula, we
  obtain
  \begin{align}
    f(\P_\epsilon) \circ r_\epsilon = r_\epsilon \circ
    f(\tilde{\P}_\epsilon) = r_\epsilon \circ \Op_{\epsilon,1/2} c =
    \Op_{\epsilon,1/2}^\T c \circ r_\epsilon \qquad \mbox{on }
    \S(\R^d).
  \end{align}
  This proves \eqref{eq:FuncCalcDiscr}.
\end{proof}

If $a \in \Sy^0(m,\epsilon_0)(\R^d \times \T^d)$ is real-valued with
$a \ess S$ for some $S \in \R$, then by \eqref{eq:essLarger} we can
always modify $a$ to a real-valued symbol $\adjust{a} \in
\Sy^0(m,\epsilon_0)(\R^d \times \T^d)$ with
\begin{align}
  \label{eq:adjustDef}
      \inf_{\substack{x \in \R^d,\, \xi \in \T^d \\ \epsilon \in
          (0,\epsilon_0]}} \adjust{a}(x,\xi;\epsilon) > S
\end{align}
by changing it on some ball $|x| < const$ not depending on $\epsilon$
for $\epsilon \in (0,\epsilon_0]$. We call $\adjust{a}$ an
  $S$-adjustment of $a$.

\begin{Lem}
  \label{lem:FuncCalcAdjust} Assume the symbol $a$ to satisfy Hypothesis \ref{hyp:WRef} (1) and (2). Let
  $\adjust{a}$ be a $\left(\sup J \right)$-adjustment of $a$. Then for
  any $\epsilon > 0$ sufficiently small the operator
  \begin{align}
    \adjOP{\P}_\epsilon : \l^2(\epsilon\Z^d) \supset \D_\epsilon
    \rightarrow \l^2(\epsilon\Z^d), \quad u \mapsto \left(
      \Op_{\epsilon,1/2}^\T \adjust{a} \right)u,
  \end{align}
  is well-defined and self-adjoint, where $\D_\epsilon := \left( \Op_{\epsilon,1/2}^\T (a + i)
  \right)^{-1}\left( \l^2(\epsilon\Z^d) \right)$ 
  coincides with the domain of $\P_\epsilon $ given in Proposition \ref{prop:PSelfAdj}. Moreover, for $f \in
  \Cont_0^\infty(J)$ we have
  \begin{align}
    \label{eq:FuncCalcAdjust}
    f(\P_\epsilon) = -\frac{1}{\pi} \int_\C \dz \aac{f}(z) (z -
    \P_\epsilon)^{-1} (\P_\epsilon - \adjOP{\P}_\epsilon) (z -
    \adjOP{\P}_\epsilon)^{-1} L(dz)
  \end{align}
  where $\aac{f}$ is an almost analytic extension of $f$.
\end{Lem}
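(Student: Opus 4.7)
The plan is to prove the two assertions separately. For self-adjointness, observe that since $\adjust{a}$ coincides with $a$ outside a fixed ball $|x|<\const$ independent of $\epsilon$, the difference $\adjust{a}-a$ is supported on this bounded set with uniformly bounded derivatives of all orders, and hence lies in $\Sy^0(1,\epsilon_0)(\R^d\times\T^d)$. By the Calder\'on--Vaillancourt theorem (Prop.\ \ref{prop:discrCaldVail}), $B_\epsilon:=\Op_{\epsilon,1/2}^\T(\adjust{a}-a)$ is bounded on $\ell^2(\epsilon\Z^d)$ and, being the Weyl quantisation of a real symbol, is symmetric and hence self-adjoint on all of $\ell^2(\epsilon\Z^d)$. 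Consequently $\adjOP{\P}_\epsilon=\P_\epsilon+B_\epsilon$ is well-defined on $\D_\epsilon$ and, as a bounded self-adjoint perturbation of $\P_\epsilon$, self-adjoint on $\D_\epsilon$ by Kato--Rellich.

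Applying the Helffer--Sj\"ostrand formula (Theorem \ref{theo:FuncCalc}) to both $\P_\epsilon$ and $\adjOP{\P}_\epsilon$ and using the resolvent identity $(z-\P_\epsilon)^{-1}-(z-\adjOP{\P}_\epsilon)^{-1}=(z-\P_\epsilon)^{-1}(\P_\epsilon-\adjOP{\P}_\epsilon)(z-\adjOP{\P}_\epsilon)^{-1}$ reduces \eqref{eq:FuncCalcAdjust} to the vanishing $f(\adjOP{\P}_\epsilon)=0$, which by the spectral theorem is equivalent to $\supp f$ being disjoint from the spectrum of $\adjOP{\P}_\epsilon$. To establish this spectral gap, fix a bounded open interval $I\subset\R$ with $\supp f\subset I$ and $\sup\overline I<\sup J+\delta$, where $\delta>0$ is the uniform constant from \eqref{eq:adjustDef}. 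For every $\lambda\in\overline I$ the symbol $\adjust{a}-\lambda$ is uniformly bounded below by the positive constant $\sup J+\delta-\sup\overline I$, and for $|x|$ large coincides with $a-\lambda$, which grows like $m$ (as follows from the $(m,\epsilon_0)$-ellipticity of $a+i$ combined with $a\ess\sup J$, ensuring $a\geq cm$ for some $c>0$ at infinity); hence $\adjust{a}-\lambda$ is $(m,\epsilon_0)$-elliptic uniformly in $\lambda\in\overline I$. Proposition \ref{prop:discrPseudoInvShort}, applied to the compact family $\{\adjust{a}-\lambda:\lambda\in\overline I\}$ in the Fr\'echet topology of symbols, then produces a single $\epsilon_1>0$ and, for each such $\lambda$, a symbol $b_\lambda\in\Sy^0(m^{-1},\epsilon_1)(\R^d\times\T^d)$ such that $\Op_{\epsilon,1/2}^\T b_\lambda$ inverts $\Op_{\epsilon,1/2}^\T(\adjust{a}-\lambda)$ on $\s(\epsilon\Z^d)$.

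To conclude, the sharp product formula (Prop.\ \ref{prop:SharpProd}) shows that $\Op_{\epsilon,1/2}^\T b_\lambda$ extends to a bounded operator $\ell^2(\epsilon\Z^d)\to\D_\epsilon$ genuinely inverting $\adjOP{\P}_\epsilon-\lambda$ on $\D_\epsilon$: the range lies in $\D_\epsilon$ because $(a+i)\compw b_\lambda\in\Sy^0(1,\epsilon_1)$ gives a bounded operator on $\ell^2$ by Calder\'on--Vaillancourt, while the symbol identity $(\adjust{a}-\lambda)\compw b_\lambda=1$ delivers both sides of the inverse relation. Hence $\overline I$ lies in the resolvent set of $\adjOP{\P}_\epsilon$, so $\supp f$ is disjoint from its spectrum and $f(\adjOP{\P}_\epsilon)=0$, completing the proof. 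I expect the main obstacle to be exactly this last step: matching the algebraic inverse supplied by Proposition \ref{prop:discrPseudoInvShort} on Schwartz sequences with the genuine resolvent of $\adjOP{\P}_\epsilon$ on $\ell^2(\epsilon\Z^d)$ with its intrinsic domain $\D_\epsilon$, together with organising the uniformity of $\epsilon_1$ in the parameter $\lambda\in\overline I$.
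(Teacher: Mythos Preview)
Your proof is correct and follows essentially the same route as the paper: establish self-adjointness of $\adjOP{\P}_\epsilon$ on $\D_\epsilon$, show $\overline{J}$ (or your $\overline{I}$) lies in the resolvent set via ellipticity of $\adjust{a}-\lambda$ and Proposition~\ref{prop:discrPseudoInvShort} with a compactness argument in $\lambda$, and then combine the resolvent identity with the Helffer--Sj\"ostrand formula and $f(\adjOP{\P}_\epsilon)=0$.

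The one genuine (if minor) difference is your treatment of self-adjointness. The paper re-applies Proposition~\ref{prop:PSelfAdj} to $\adjust{a}$, obtaining self-adjointness on the a~priori different domain $\adjOP{\D}_\epsilon=(\Op_{\epsilon,1/2}^\T(\adjust{a}+i))^{-1}(\ell^2)$, and then argues separately that $\adjOP{\D}_\epsilon=\D_\epsilon$ via boundedness of $\Op_{\epsilon,1/2}^\T(a-\adjust{a})$. Your Kato--Rellich argument is more direct and gets both conclusions at once. For the ellipticity step the paper's splitting into the regions $\{Cm\geq 2K\}$ and $\{Cm<2K\}$ (with $K$ bounding $|\lambda-i|$) is slightly cleaner than your appeal to ``$a\geq cm$ at infinity'', which is true but needs the sign of $a$ to be pinned down via $a\ess\sup J$ before the ellipticity of $a+i$ can be converted into a one-sided lower bound; the paper's argument avoids this by working with $|\adjust{a}+i|$ directly. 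The passage from the algebraic inverse on $\s(\epsilon\Z^d)$ to the genuine resolvent on $\D_\epsilon$, which you flag as the expected obstacle, is handled by exactly the mechanism you describe (bounded extension via $(a+i)\compw b_\lambda\in\Sy^0(1)$ and the $\s'$-continuity of the operator identities); the paper simply asserts this step.
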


\begin{proof}
  Since $\adjust{a}$ differs from $a$ only on some ball $|x| < const$,
  we may apply Proposition \ref{prop:PSelfAdj} with $a$ replaced by
  $\adjust{a}$ to get that for any $\epsilon > 0$ sufficiently small
  the operator
  \begin{align}
    \adjOP{\P}_\epsilon : \l^2(\epsilon\Z^d) \supset
    \adjOP{\D}_\epsilon \rightarrow \l^2(\epsilon\Z^d), \quad u
    \mapsto \left( \Op_{\epsilon,1/2}^\T \adjust{a} \right)u,
  \end{align}
  where $\adjOP{\D}_\epsilon := \left( \Op_{\epsilon,1/2}^\T
  (\adjust{a} + i) \right)^{-1} \left( \l^2(\epsilon\Z^d) \right)$, is
  well-defined and self-adjoint. Since the difference
  \begin{align}
    \Op^\T_{\epsilon,1/2} (a+i) - \Op^\T_{\epsilon,1/2} (\adjust{a}+i)
    =\Op^\T_{\epsilon,1/2} (a - \adjust{a})
  \end{align}
  is a bounded operator $\l^2(\epsilon\Z^d) \rightarrow
  \l^2(\epsilon\Z^d)$ (see Proposition \ref{prop:discrCaldVail}), it
  actually follows that $\D_\epsilon = \adjOP{\D}_\epsilon$.

  We now prove that, for any $\epsilon > 0$ sufficiently small, the
  operator $\adjOP{\P}_\epsilon$ has no spectrum in $\overline{J}$.

      We first check that $\lambda - \adjust{a}$ is
      $(m,\epsilon_0)$-elliptic for $\lambda \in \overline{J}$. Since
      $\adjust{a} + i$ is $(m,\epsilon_0)$-elliptic, there is some $C
      > 0$ such that $|\adjust{a} + i| \geq Cm$. Since $J$ is bounded,
      there is some $K > 0$ with $|\lambda - i| \leq K$ for $\lambda
      \in \overline{J}$. So, on the set of points $(x,\xi)$ where $Cm(x,\xi)
      \geq 2K$, we have
      \begin{align}
        \label{eq:lambdaEllip1}
        |\lambda - \adjust{a}| \geq |\adjust{a} + i| - |\lambda - i|
        \geq Cm - K \geq Cm/2.
      \end{align}
      Since $\adjust{a}$ is a $(\sup J)$-adjustment of $a$, there is
      some $\delta > 0$ with $|\lambda - \adjust{a}| \geq \delta$ for
      $\lambda \in \overline{J}$. So, for $(x,\xi)$ with $Cm(x,\xi) <
      2K$, we have
      \begin{align}
        \label{eq:lambdaEllip2}
        |\lambda - \adjust{a}| > \frac{\delta Cm}{2K}.
      \end{align}
      Ellipticity of $\lambda - \adjust{a}$ now follows from
      \eqref{eq:lambdaEllip1} and \eqref{eq:lambdaEllip2}.

      For each $\lambda \in \overline{J}$, we may now apply
      Proposition \ref{prop:discrPseudoInvShort} to get some
      $\epsilon(\lambda) > 0$ and some neighbourhood
      $\mathcal{U}_\lambda$ of $\lambda$ such that
      $\Op^\T_{\epsilon,1/2} ( \tilde{\lambda} - \adjust{a} )$ is
      invertible for any $\tilde{\lambda} \in \mathcal{U}_\lambda$ and
      $\epsilon \in (0,\epsilon(\lambda)]$. Furthermore,
        \begin{align}
          \label{eq:adjPlambda}
        \left( \Op^\T_{\epsilon,1/2} ( \tilde{\lambda} - \adjust{a} )
        \right)^{-1} = \Op_{\epsilon,1/2}^\T b_{\tilde{\lambda} -
          \adjust{a}}
      \end{align}
      with the symbol $b_{\tilde{\lambda}- \adjust{a}} \in
      \Sy^0(m^{-1},\epsilon(\lambda))(\R^d \times \T^d)$ characterised
      by \eqref{eq:discrPseudoInvSymbol}. Since $\overline{J}$ is
      compact, we find some $\epsilon_1 \in (0,\epsilon_0]$, such that
        the operators $\Op_{\epsilon,1/2}^\T (\lambda - \adjust{a})$
        are invertible for any $\lambda \in \overline{J}$ and any
        $\epsilon \in (0,\epsilon_1]$ with inverse operators given by
          \eqref{eq:adjPlambda}. Since the operator
          $\Op_{\epsilon,1/2}^\T b_{\lambda-\adjust{a}} :
          \l^2(\epsilon\Z^d) \rightarrow \D_\epsilon$ is the resolvent
          of $\adjOP{\P}_\epsilon$ at $\lambda \in \overline{J}$,
          $\lambda$ does not belong to the spectrum of
          $\adjOP{\P}_\epsilon$ for $\epsilon \in (0,\epsilon_1]$.

  Finally, we prove the representation formula
  \eqref{eq:FuncCalcAdjust}. For $z \in \C$ with $|\Im z| > 0$ and
  $\epsilon > 0$ sufficiently small, we have the resolvent equation
  \begin{align}
    \left(z - \P_\epsilon\right)^{-1} = \left(z -
      \adjOP{\P}_\epsilon\right)^{-1} + \left(z-
      \P_\epsilon\right)^{-1} \left(\P_\epsilon -
      \adjOP{\P}_\epsilon\right) \left(z-
      \adjOP{\P}_\epsilon\right)^{-1},
  \end{align}
  which combined with Theorem \ref{theo:FuncCalc} yields
  \begin{align}
    f(\P_\epsilon) = f(\adjOP{\P}_\epsilon) - \frac{1}{\pi} \int_\C
    \dz \aac{f}(z) (z-\P_\epsilon)^{-1} (\P_\epsilon -
    \adjOP{\P}_\epsilon) (z-\adjOP{\P}_\epsilon)^{-1} L(dz)
  \end{align}
  for $f \in \Cont_0^\infty(\R)$ and an almost analytic extension
  $\aac{f}$ of $f$. If $\supp f \subset J$, then
  $f(\adjOP{\P}_\epsilon) = 0$ since $\adjOP{\P}_\epsilon$ has no
  spectrum in $\overline{J}$ for any $\epsilon > 0$ sufficiently
  small.
\end{proof}

\section{Trace estimates}
\label{sec:TraceEst}

In the first subsection of this section we state and prove a general trace class criterion for integral operators which gives a 
convenient criterion for difference operators of the type considered in this paper to be trace class. 
In the second subsection we then perform a localisation in energy via functional calculus of difference operators satisfying our Hypothesis 
\ref{hyp:WRef} and obtain trace asymptotics for appropriate functions of these operators. 
This is an important first step to extract the leading order term - the Weyl term - of eigenvalue asymptotics. 
In the final section we apply these trace asymptotics to obtain some rough Weyl asymptotics 
before developing the more advanced theory in Section 4 using a good semiclassical time parametrix.

\subsection{General trace class criteria}

We recall that for a compact operator $A$ on a separable Hilbert space
$\mathcal{H}$, the singular values $s_i(A)$ of $A$ are defined to be
the eigenvalues of the positive operator $(AA^*)^{1/2}$, where $A^*$
denotes the adjoint operator of $A$. $A$ is called to be of trace
class if the trace norm
\begin{align}
  \trn{A} := \sum_{i} s_i(A)
\end{align}
is finite. The space of trace class operators is complete with respect
to the trace norm and forms a two-sided ideal in the space of bounded
operators on $\mathcal{H}$. In particular,
\begin{align}
  \label{eq:TraceIdealEstim}
  \trn{A_1 A_2} &\leq \trn{A_1} \norm{A_2}, & \trn{A_2 A_1} &\leq \trn{A_1} \norm{A_2} 
\end{align}
if $A_1$ is of trace class and $A_2$ is bounded. If $A$ is of trace
class, the trace of $A$, defined by
\begin{align}\label{traceA}
  \tr{A} := \sum_{i} \skp{e_i}{Ae_i} \qquad \mbox{ for any orthonormal
    basis $(e_i)$ of } \mathcal{H},
\end{align}
is absolutely convergent and does not depend on the choice of the
orthonormal basis $(e_i)$. For more information on trace class
operators, see e.g. \cite{GohK69,SimonTI,ReSi,GeVi64} and the short summary
in \cite[chapter 9]{DiSj}.

From \cite{Stinespring1958} we recall that, given an orthonormal basis
$(e_i)$ of $\mathcal{H}$, a bounded operator $A$ is of trace class if
\begin{align}\label{traceconditionA}
  \sum_{i} \norm{Ae_i} < \infty.
\end{align}
In this case
\begin{align}\label{traceestimateA}
  \trn{A} \leq \sum_{i} \norm{Ae_i}.
\end{align}

In our context, we shall consider operators on the separable Hilbert
space $\mathcal{H} = \l^2(\epsilon\Z^d)$, equipped with the norm
$\norm{\cdot}$ induced by the inner product \eqref{eq:skpl2zd}. Here,
an orthonormal basis is given by the canonical basis $(e_x)_{x \in
  \epsilon\Z^d}$ with $e_x(y) = \delta_{xy}$ being the Kronecker delta
for $x, y \in \epsilon\Z^d$.

Define formally 
\begin{align}
    \label{eq:IntOpAeps}
    (\mathbf{A}_\epsilon u)(x) = \sum_{y \in
      \epsilon\Z^d} k_\epsilon(x,y) u(y),\qquad u\in \l^2(\epsilon\Z^d),
\end{align}
with kernel $k_\epsilon \in \l^2(\epsilon\Z^d \times
\epsilon\Z^d)$ (note that  $\l^2(\epsilon\Z^d \times
\epsilon\Z^d)$  is isomorphic to $\l^2(\epsilon\Z^d) \otimes
\l^2(\epsilon\Z^d)$). Then $\mathbf{A}_\epsilon$ is a Hilbert-Schmidt operator 
and in particular it is a bounded operator on $\l^2 (\epsilon \Z^d)$.

Applying \eqref{traceA}, \eqref{traceconditionA} and \eqref{traceestimateA} to $\mathbf{A}_\epsilon$
 we get
\begin{prop}
  \label{prop:TraClCri}
  The integral operator $\mathbf{A}_\epsilon$ defined in \eqref{eq:IntOpAeps}
  is of trace class if
  \begin{align}
    \label{eq:TraClCond}
    \sum_{y \in \epsilon\Z^d} \norm{k_\epsilon(\cdot, y)} < \infty.
  \end{align}
  In this case
  \begin{align}
    \label{eq:TraClEstimateGeneral}
    \trn{\mathbf{A}_\epsilon} \leq \sum_{y \in \epsilon\Z^d}
    \norm{k_\epsilon(\cdot,y)} \qquad \mbox{and} \qquad \tr
    \mathbf{A}_\epsilon = \sum_{x \in \epsilon\Z^d} k_\epsilon(x,x).
  \end{align}
\end{prop}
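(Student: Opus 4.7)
The plan is to reduce everything directly to the three cited facts \eqref{traceA}, \eqref{traceconditionA}, and \eqref{traceestimateA} by testing the operator against the canonical orthonormal basis $(e_x)_{x \in \epsilon\Z^d}$ of $\l^2(\epsilon\Z^d)$. The boundedness of $\mathbf{A}_\epsilon$ is already in hand from the Hilbert--Schmidt observation preceding the proposition, so Stinespring's criterion is directly applicable.

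The first step is purely computational: for any $y \in \epsilon\Z^d$, using $e_y(z) = \delta_{yz}$ in the defining formula \eqref{eq:IntOpAeps}, we get
\begin{align*}
  (\mathbf{A}_\epsilon e_y)(x) = \sum_{z \in \epsilon\Z^d} k_\epsilon(x,z) e_y(z) = k_\epsilon(x,y),
\end{align*}
so that $\mathbf{A}_\epsilon e_y = k_\epsilon(\cdot, y)$ as an element of $\l^2(\epsilon\Z^d)$, hence $\|\mathbf{A}_\epsilon e_y\| = \|k_\epsilon(\cdot,y)\|$. The hypothesis \eqref{eq:TraClCond} therefore reads exactly as the Stinespring condition $\sum_y \|\mathbf{A}_\epsilon e_y\| < \infty$; applying \eqref{traceconditionA} and \eqref{traceestimateA} immediately yields that $\mathbf{A}_\epsilon$ is of trace class and furnishes the trace norm estimate in \eqref{eq:TraClEstimateGeneral}.

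For the trace formula, I would use the basis-independent definition \eqref{traceA} with the same canonical basis $(e_x)$. A direct computation with the inner product \eqref{eq:skpl2zd} gives
\begin{align*}
  \skp{e_x}{\mathbf{A}_\epsilon e_x} = \sum_{z \in \epsilon\Z^d} \overline{e_x(z)} (\mathbf{A}_\epsilon e_x)(z) = (\mathbf{A}_\epsilon e_x)(x) = k_\epsilon(x,x),
\end{align*}
and summing over $x \in \epsilon\Z^d$ delivers the second identity in \eqref{eq:TraClEstimateGeneral}; absolute convergence of the series is guaranteed by \eqref{traceA} since $\mathbf{A}_\epsilon$ has already been shown to be trace class.

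Since the statement is essentially a direct transcription of three general facts from the trace class theory once the action on the canonical basis is identified, there is no real obstacle; the only thing worth flagging is to make sure the canonical basis argument is carried out on the correct factor of the kernel (the function $y \mapsto k_\epsilon(\cdot, y)$ rather than $x \mapsto k_\epsilon(x,\cdot)$), which is dictated by the convention for the inner product \eqref{eq:skpl2zd}.
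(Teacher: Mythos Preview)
Your proposal is correct and is exactly the approach the paper takes: the paper introduces the canonical basis $(e_x)_{x\in\epsilon\Z^d}$ immediately before the proposition and then simply says ``Applying \eqref{traceA}, \eqref{traceconditionA} and \eqref{traceestimateA} to $\mathbf{A}_\epsilon$ we get'' the result, leaving the computation $\mathbf{A}_\epsilon e_y = k_\epsilon(\cdot,y)$ and $\skp{e_x}{\mathbf{A}_\epsilon e_x}=k_\epsilon(x,x)$ implicit. You have spelled out precisely these details, so there is nothing to add.
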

\noindent Using $\norm{k_\epsilon(\cdot,y)} \leq
\norm{k_\epsilon(\cdot,y)}_{\l^1(\epsilon\Z^d)}$, we see that, in
particular, the condition \eqref{eq:TraClCond} is fulfilled if
\begin{align}
  \label{eq:TraClCondSpec}
  |k_\epsilon (x,y)| \leq C \jap{(x,y)}^{-2d-\delta} \qquad \mbox{ for
    some $C, \delta > 0$ and any $x, y \in \epsilon\Z^d$}.
\end{align}

We use Proposition \ref{prop:TraClCri} to derive a trace class
criterion, an estimate for the trace norm and a trace formula for pseudodifferential operators of discrete type. Here,
due to the discrete version of the Theorem of Calder{\'o}n-Vaillancourt
(Proposition \ref{prop:discrCaldVail}), for a bounded symbol $a$ we
consider $\Op^\T_{\epsilon,t} a$, defined in \eqref{eq:OpTa_u_Def}, as
a bounded operator on $\l^2(\epsilon\Z^d)$.

\begin{prop}
  \label{prop:TraClCriPDO}
  Let $a \in \Sy^0(m,\epsilon_0)(\R^d \times \T^d)$. The operator
  $\Op_{\epsilon,t}^\T a$ is of trace class for $\epsilon \in
  (0,\epsilon_0]$, $t \in [0,1]$ if
    \begin{align}
      \label{eq:TraClCondPDO}
      m(x,\xi) = \jap{x}^{-d-\delta} \qquad \mbox{ for some } \delta > 0. 
    \end{align}
  In this case
  \begin{align}
      \label{eq:TraClEstPDO}
      \trn{\Op_{\epsilon,t}^\T a} \leq \frac{1}{(2\pi)^{d/2}}\sum_{x
        \in \epsilon\Z^d} \torn{a_0(x,\cdot;\epsilon)} < \infty,    
  \end{align}
  where $a_0$ is the symbol of $\Op_{\epsilon,t}^\T a$ in
  $(t=0)$-quantisation (see Proposition \ref{prop:SwitchQuant}), and
  \begin{align}
    \label{eq:TracePDO}
    \tr \left( \Op_{\epsilon,t}^\T a \right) = \frac{1}{(2\pi)^d}
    \sum_{x \in \epsilon\Z^d} \int_{\T^d} a(x,\xi;\epsilon)
    d\xi.
  \end{align}
\end{prop}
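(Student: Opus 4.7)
The plan is to apply the general trace-class criterion of Proposition \ref{prop:TraClCri}, having first expressed $\Op^\T_{\epsilon,t} a$ as an integral operator whose kernel rows can be identified with Fourier coefficients. First I would use the change of quantisation formula (Proposition \ref{prop:SwitchQuant}) to pass to the $0$-quantisation, producing a symbol $a_0 \in \Sy^0(m,\epsilon_0)(\R^d \times \T^d)$ with $\Op^\T_{\epsilon,t} a = \Op^\T_{\epsilon,0} a_0$. Written in $0$-quantisation, the operator has integral kernel
\begin{align*}
  k_\epsilon(x,y) = \frac{1}{(2\pi)^d} \int_{\T^d} e^{i(y-x)\xi/\epsilon} a_0(y,\xi;\epsilon)\, d\xi,
\end{align*}
and since $(y-x)/\epsilon \in \Z^d$, this identifies $k_\epsilon(y-\epsilon n, y)$ with the Fourier coefficient at frequency $-n$ of $\xi \mapsto a_0(y,\xi;\epsilon)$ on $\T^d$, up to the factor $(2\pi)^{-d}$.

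The key computation is Parseval's identity on $\T^d$, which for fixed $y$ yields
\begin{align*}
  \|k_\epsilon(\cdot,y)\|^2 = \sum_{n \in \Z^d} \left| \frac{1}{(2\pi)^d} \int_{\T^d} e^{in\xi} a_0(y,\xi;\epsilon)\, d\xi \right|^2 = \frac{1}{(2\pi)^d} \torn{a_0(y,\cdot;\epsilon)}^2.
\end{align*}
Plugging this into Proposition \ref{prop:TraClCri} and relabeling $y \to x$ yields the trace norm estimate \eqref{eq:TraClEstPDO}. Finiteness of the resulting sum follows from the symbol estimate $|a_0(x,\xi;\epsilon)| \leq C m(x,\xi) = C\jap{x}^{-d-\delta}$, which gives $\torn{a_0(x,\cdot;\epsilon)} \leq C'\jap{x}^{-d-\delta}$; and this decay is summable on $\epsilon\Z^d$ (the sum is comparable to $\epsilon^{-d} \int_{\R^d} \jap{x}^{-d-\delta}\, dx < \infty$), giving both the trace class property and the finiteness claim.

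For the trace formula \eqref{eq:TracePDO} I would \emph{not} pass to $a_0$ but instead work directly with the $t$-quantisation kernel of $\Op^\T_{\epsilon,t} a$. On the diagonal $x=y$ the exponential phase is trivial and the amplitude reduces to $a(tx+(1-t)x,\xi;\epsilon) = a(x,\xi;\epsilon)$ independently of $t$, so $k_\epsilon(x,x) = (2\pi)^{-d} \int_{\T^d} a(x,\xi;\epsilon)\, d\xi$. The second identity in \eqref{eq:TraClEstimateGeneral} then produces \eqref{eq:TracePDO} directly in terms of the original symbol $a$. The main technical content is the Parseval identification; the main potential obstacle is only bookkeeping: ensuring that the quantisation and sign conventions from the appendix are handled consistently when reading off the kernel and when invoking Proposition \ref{prop:SwitchQuant}.
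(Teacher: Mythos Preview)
Your proposal is correct and follows essentially the same approach as the paper: pass to $0$-quantisation via Proposition~\ref{prop:SwitchQuant}, identify the kernel columns with Fourier coefficients and use Parseval (equivalently, the isometry of the discrete Fourier transform $\Fd_\epsilon$) to compute $\norm{k_\epsilon(\cdot,y)}$, then invoke Proposition~\ref{prop:TraClCri}; for the trace formula work with the $t$-quantisation kernel directly on the diagonal. The only cosmetic difference is that the paper phrases the Parseval step via the unitary $\Fd_\epsilon:\L^2(\T^d)\to\l^2(\epsilon\Z^d)$ rather than as a Fourier-coefficient identity.
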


\begin{proof}
  Let $t \in [0,1]$. Due to Proposition \ref{prop:SwitchQuant}, for
  any $s \in [0,1]$ there is a symbol $a_s \in
  \Sy^0(m,\epsilon_0)(\R^d \times \T^d)$ such that
  $\Op_{\epsilon,t}^\T a = \Op_{\epsilon,s}^\T a_s$. So, using the
  pointwise definition \eqref{eq:OpTa_u_Def}, we may write
  \begin{align}
    \label{eq:ChangeQuantKernel}
    \left( \Op_{\epsilon,t}^\T a \right) u(x) = \sum_{y \in \epsilon\Z^d}
    k_{\epsilon,s}(x,y) u(y) \qquad (u \in \l^2(\epsilon\Z^d),\, x \in
    \epsilon\Z^d,\, s \in [0,1])
  \end{align}
  with kernel
  \begin{align}
    k_{\epsilon,s}(x,y) &= \frac{1}{(2\pi)^d} \int_{\T^d}
    e^{i(y-x)\xi/\epsilon} a_s(sx + (1-s)y,\xi;\epsilon) d\xi.
  \end{align}
 
  For verifying the trace class condition \eqref{eq:TraClCond}, we
  choose $s = 0$, which seems most easy.
  
  Clearly, $k_{\epsilon,0} \in
  \l^2(\epsilon\Z^d) \otimes \l^2(\epsilon\Z^d)$: Square summability
  of $k_{\epsilon,0}$ for fixed $\epsilon > 0$ on $x \neq y$ follows
  by integration by parts. For the diagonal $x = y$, we use the decay
  of $a_0$ according to condition \eqref{eq:TraClCondPDO}.

  Using the discrete Fourier transform $\Fd_\epsilon : \L^2(\T^d)
  \rightarrow \l^2(\epsilon\Z^d)$, defined 
  in \eqref{def:disFouTr}
  we have
  \begin{align}
    k_{\epsilon,0}(x,y) &=
    \frac{1}{(2\pi)^{d/2}} \Fd_{\epsilon}
    (a_0(y,\cdot;\epsilon))(x-y).
  \end{align}
  $\Fd_{\epsilon}$ is isometric, so
  \begin{align}
    \label{eq:FdIsom}
    \norm{k_{\epsilon,0}(\cdot,y)} = \frac{1}{(2\pi)^{d/2}}
    \torn{a_0(y,\cdot;\epsilon)} \qquad (y \in \l^2(\epsilon\Z^d)).
  \end{align}
  Combining \eqref{eq:FdIsom} with \eqref{eq:TraClCondPDO}, we see
  that condition \eqref{eq:TraClCond} is fulfilled. Applying
  Proposition \ref{prop:TraClCri} proves that $\Op_{\epsilon,t}^\T a$
  is of trace class with the trace norm estimate in
  \eqref{eq:TraClEstPDO}. The trace formula in \eqref{eq:TracePDO}
  follows from the trace formula in \eqref{eq:TraClEstimateGeneral}
  using the representation \eqref{eq:ChangeQuantKernel} with $s = t$.
\end{proof}

\subsection{Trace asymptotics}

 In this subsection we apply the trace class criterion and the asymptotic expansion of the trace in Proposition \ref{prop:TraClCriPDO}  
 to the operator  $f(\P_\epsilon)$ considered in this paper. The crucial result is Theorem \ref{theo:TraceAsympExp} below. 
 For this we need some  preparation.

\begin{prop}
  \label{prop:fPTraceClass}
  Assume Hypothesis \ref{hyp:WRef} (1) and (2) and let $\P_\epsilon$
  be as in Proposition \ref{prop:PSelfAdj}. Then, for any $\epsilon >
  0$ sufficiently small, the operator $f(\P_\epsilon)$ is of trace
  class for any $f \in \Cont_0^\infty(J)$.
\end{prop}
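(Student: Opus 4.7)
The plan is to exploit the representation formula \eqref{eq:FuncCalcAdjust} from Lemma \ref{lem:FuncCalcAdjust}, which expresses $f(\P_\epsilon)$ as an integral of the operator-valued function
\begin{align*}
z \mapsto (z-\P_\epsilon)^{-1}(\P_\epsilon - \adjOP{\P}_\epsilon)(z-\adjOP{\P}_\epsilon)^{-1},
\end{align*}
weighted by $\dz\aac{f}(z)$. The strategy is to show that the middle factor $\P_\epsilon - \adjOP{\P}_\epsilon$ is already of trace class; then, since the trace class operators form a two-sided ideal in the bounded operators, the entire integrand is trace class with a trace-norm bound that is integrable against $|\dz \aac f(z)|$.

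First, I would observe that $\P_\epsilon - \adjOP{\P}_\epsilon = \Op_{\epsilon,1/2}^\T(a - \adjust{a})$, where by construction of a $(\sup J)$-adjustment, $a - \adjust{a}$ vanishes outside some ball $|x| < \const$ not depending on $\epsilon$. Consequently $a - \adjust{a}$ is a symbol whose $x$-support is compact and in particular satisfies the decay hypothesis \eqref{eq:TraClCondPDO} of Proposition \ref{prop:TraClCriPDO} with any $\delta > 0$, and with a trace-norm bound uniform in $\epsilon$ on $(0,\epsilon_1]$ (after shrinking $\epsilon_1$ if necessary). Applying Proposition \ref{prop:TraClCriPDO} therefore yields
\begin{align*}
\trn{\P_\epsilon - \adjOP{\P}_\epsilon} \leq C < \infty.
\end{align*}

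Next, combining the ideal property \eqref{eq:TraceIdealEstim} with the self-adjoint resolvent estimate $\norm{(z-\P_\epsilon)^{-1}}, \norm{(z-\adjOP{\P}_\epsilon)^{-1}} \leq |\Im z|^{-1}$, I would bound the trace norm of the integrand by
\begin{align*}
\trn{(z-\P_\epsilon)^{-1}(\P_\epsilon - \adjOP{\P}_\epsilon)(z-\adjOP{\P}_\epsilon)^{-1}} \leq C |\Im z|^{-2}.
\end{align*}
The almost-analytic extension $\aac f$ is compactly supported, so $\dz \aac f(z)$ is supported in a fixed compact set, and by the defining estimate \eqref{eq:aacf} satisfies $|\dz \aac f(z)| \leq C_2 |\Im z|^2$. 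Hence the integrand in \eqref{eq:FuncCalcAdjust} is bounded in trace norm by a compactly supported continuous function, and the integral converges absolutely in the Banach space of trace class operators (cf.\ Remark \ref{rem:IntegralDef}). It follows that $f(\P_\epsilon)$ is of trace class.

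The main conceptual point, and arguably the only nontrivial step, is the reduction of the trace class question for $f(\P_\epsilon)$ to that of the difference $\P_\epsilon - \adjOP{\P}_\epsilon$ via Lemma \ref{lem:FuncCalcAdjust}; once this reduction is in place, everything else is a routine application of the ideal property together with the Helffer--Sjöstrand contour estimates. The effect of the adjustment $\adjust{a}$ is precisely to replace an operator without compact $x$-support (for which the trace class criterion of Proposition \ref{prop:TraClCriPDO} would fail on the full symbol) by a compactly supported perturbation of a reference operator whose spectrum avoids $\overline{J}$.
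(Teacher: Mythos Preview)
Your proof is correct and follows essentially the same approach as the paper: both use the integral representation \eqref{eq:FuncCalcAdjust} from Lemma \ref{lem:FuncCalcAdjust}, observe that the middle factor $\Op_{\epsilon,1/2}^\T(a-\adjust{a})$ is trace class by Proposition \ref{prop:TraClCriPDO} (compact $x$-support), and then combine the ideal property \eqref{eq:TraceIdealEstim}, the resolvent bound $|\Im z|^{-1}$, and the almost-analytic estimate \eqref{eq:aacf} to conclude that the integrand is trace-norm integrable over the compact support of $\aac f$.
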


\begin{proof}
  Choose $\adjust{a}$ and $\adjOP{\P}_\epsilon$ as in Lemma
  \ref{lem:FuncCalcAdjust}.  Then, for any $\epsilon > 0$ sufficiently
  small
  \begin{align}
    \label{eq:TraceClassFuncCalc}
    f(\P_\epsilon) = -\frac{1}{\pi} \int_\C \dz \aac{f}(z)
    (z-\P_\epsilon)^{-1} \left( \Op_{\epsilon,1/2}^\T (a-\adjust{a})
    \right) (z- \adjOP{\P}_\epsilon)^{-1} L(dz)
  \end{align}
  where $\aac{f}$ is an almost analytic extension of $f$. Since $a -
  \adjust{a}$ has compact support with respect to $x$, we know by
  Proposition \ref{prop:TraClCriPDO} that $\Op_{\epsilon,1/2}^\T (a -
  \adjust{a})$ is of trace class. A priori, the integrand in
  \eqref{eq:TraceClassFuncCalc} is only defined for $\Im z \neq
  0$. But, using the general trace estimate
  \eqref{eq:TraceIdealEstim}, the resolvent estimates
  \begin{align}
    \label{eq:resolvEstimP}
    \norm{((z-\P_\epsilon)^{-1})} &\leq |\Im z|^{-1}, &
    \norm{(z-\adjust{\P}_\epsilon)^{-1}} &\leq |\Im z|^{-1}
  \end{align}
  and the estimate \eqref{eq:aacf} for $\aac{f}$, one verifies that
  the integrand in \eqref{eq:TraceClassFuncCalc} can be extended to a
  continuous compactly supported function on $\C$ with values in the
  space of trace class operators. Since the space of trace class
  operators is complete with respect to the trace norm, the integral
  \eqref{eq:TraceClassFuncCalc} is also of trace class (see Remark
  \ref{rem:IntegralDef}).
\end{proof}

Next we also need some more technical preparations. In principle,
pseudo-differential operators are nonlocal; in particular, the composition of operators with symbols of disjoint support is non-zero. 
But in the semiclassical
limit such non-locality corrections are small. This is well known from the standard semiclassical symbolic calculus. Similarly, if only 
one of the symbols involved in a  composition of operators is compactly supported, the composition of operators has fast decay 
with respect to any polynomial weight on phase space.   We show in Lemma
\ref{lem:DecrOutsComp} below that in the discrete setting also these corrections
are of order $\O(\epsilon^\infty)$ and the decay mentioned above holds.

We recall that we use the notation $\compw := \compw_{\frac{1}{2}}$ as introduced in Proposition \ref{prop:SharpProd} in the appendix.

\begin{Lem}
  \label{lem:DecrOutsComp}
  Let $n \in \N$. For $j \in \left\{1, \dots, n\right\}$ and
  $a_j \in \Sy^0(m_j,\epsilon_0)(\R^d \times \T^d)$. Suppose
  \begin{equation}\label{ass1:DecrOutsComp} 
  \bigcap_{j=1}^n \bigcup_{\epsilon \in (0,\epsilon_0]} \supp
    a_j(\cdot,\cdot;\epsilon) = \emptyset 
    \end{equation} 
    and $\bigcup_{\epsilon \in
      (0,\epsilon_0]}\supp a_i(\cdot, \cdot; \epsilon) \subset K
      \times \T^d$ for some $i \in \left\{ 1, \dots, n \right\}$ and
      some compact $K \subset \R^d$. 
      
      Then for any $N \in \N$
      \begin{align}
        \label{eq:TwistProdSmall}
    a_1 \compw \cdots \compw a_n \in
    \Sy^N(m^{-N},\epsilon_0)(\R^d \times \T^d)
  \end{align}
  for $m(x,\xi) := \jap{x}$ and 
  \begin{align}
    \label{eq:ProdOperTraceSmall}
    \trn{ \left( \Op_{\epsilon,1/2}^\T a_1 \right) \circ \cdots \circ
      \left( \Op_{\epsilon,1/2}^\T a_n \right) } = \O(\epsilon^N)
    \qquad (\epsilon \downarrow 0).
  \end{align}
\end{Lem}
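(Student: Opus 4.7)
The plan is to first establish \eqref{eq:TwistProdSmall} via the iterated asymptotic expansion of the sharp product and then derive \eqref{eq:ProdOperTraceSmall} from the discrete trace class estimate of Proposition \ref{prop:TraClCriPDO}.

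The preliminary observation I would make is that, since $a_i$ and all its derivatives vanish outside the compact set $K$ in $x$, we have $a_i \in \Sy^0(\jap{x}^{-M},\epsilon_0)(\R^d \times \T^d)$ for every $M \in \N$: indeed $\jap{x}^{-M}$ is bounded below on $K$, so each seminorm of $a_i$ viewed in this smaller class is controlled by the corresponding seminorm in $\Sy^0(m_i,\epsilon_0)$. This arbitrary polynomial decay in $x$ coming from $a_i$ will ultimately dominate the (polynomial) growth of the remaining order functions $m_j$, $j \neq i$.

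The core step is to iterate Proposition \ref{prop:SharpProd} to obtain the asymptotic expansion
\begin{align*}
 a_1 \compw \cdots \compw a_n \sim \sum_{k\geq 0} \epsilon^k b_k,
\end{align*}
in which each $b_k$ is a finite linear combination of pointwise products of the form $c_\alpha \prod_{j=1}^n \partial^{\alpha_j} a_j$. In particular $\supp b_k \subset \bigcap_j \bigcup_{\epsilon \in (0,\epsilon_0]} \supp a_j(\cdot,\cdot;\epsilon)$, which is empty by \eqref{ass1:DecrOutsComp}, so $b_k \equiv 0$ for every $k$ (uniformly in $\epsilon$). Then the remainder estimate from Proposition \ref{prop:SharpProd}, applied with $a_i$ regarded as an element of $\Sy^0(\jap{x}^{-M},\epsilon_0)$ and $M$ chosen larger than $N$ plus the polynomial-growth order of $\prod_{j \neq i} m_j$ on $\R^d \times \T^d$, shows that $a_1 \compw \cdots \compw a_n$ lies in $\epsilon^N \Sy^0(\jap{x}^{-N},\epsilon_0) = \Sy^N(\jap{x}^{-N},\epsilon_0)$ for every $N$, which is \eqref{eq:TwistProdSmall}.

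For \eqref{eq:ProdOperTraceSmall} I would combine the composition identity $\Op_{\epsilon,1/2}^\T a_1 \circ \cdots \circ \Op_{\epsilon,1/2}^\T a_n = \Op_{\epsilon,1/2}^\T(a_1 \compw \cdots \compw a_n)$ from Proposition \ref{prop:SharpProd} with the trace norm bound \eqref{eq:TraClEstPDO}. Applying \eqref{eq:TwistProdSmall} with $N$ replaced by $N+d$, the symbol $a_1 \compw \cdots \compw a_n$ is $O(\epsilon^{N+d} \jap{x}^{-2d-1})$, and the Riemann-sum estimate $\sum_{x\in\epsilon\Z^d}\jap{x}^{-2d-1} = O(\epsilon^{-d})$ yields a trace norm of order $\epsilon^{N+d} \cdot \epsilon^{-d} = \epsilon^N$.

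The main obstacle is the careful bookkeeping of the order functions at Step 2: one must verify that the rapid decay $\jap{x}^{-M}$ extracted from $a_i$ genuinely propagates through all factors and remainder terms in the iterated sharp product and can be used to absorb the polynomial growth of $\prod_{j\neq i} m_j$, while simultaneously checking that the empty-intersection hypothesis \eqref{ass1:DecrOutsComp} (which is the union over $\epsilon$) really forces each $b_k$ to vanish identically rather than merely making it small in $\epsilon$.
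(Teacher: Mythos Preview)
Your proposal is correct and follows essentially the same route as the paper: exploit the compact $x$-support of $a_i$ to put it in $\Sy^0(\jap{x}^{-M},\epsilon_0)$ for arbitrary $M$, iterate Proposition~\ref{prop:SharpProd} so that every term $b_k$ in the asymptotic expansion is a finite sum of pointwise products $\prod_j \partial^{\alpha_j}a_j$ and hence vanishes by \eqref{ass1:DecrOutsComp}, and then read off \eqref{eq:TwistProdSmall} from the remainder estimate. The only point the paper makes explicit that you leave implicit is the change of quantisation: the bound \eqref{eq:TraClEstPDO} is stated in terms of the $(t=0)$-symbol, so one first passes from the Weyl symbol $q=a_1\compw\cdots\compw a_n$ to its $0$-quantisation $q_0$ via Proposition~\ref{prop:SwitchQuant}, which preserves the class $\Sy^{N+d}(\jap{x}^{-N},\epsilon_0)$, before summing over the lattice.
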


\begin{proof}
  Let $N \in \N$. We check \eqref{eq:TwistProdSmall} first. Since
  $a_i$ has compact support in $x$ (uniformly in $\xi, \epsilon$), we
  have $a_i \in \Sy^0\left( m^{-N'}, \epsilon_0 \right)(\R^d \times
  \T^d)$ for any $N' \in \N$. Then, by Proposition
  \ref{prop:SharpProd}, $q := a_1 \compw \cdots \compw a_n \in
  \Sy^0(m^{-N},\epsilon_0)(\R^d \times \T^d)$. Using the asymptotic
  expansion \eqref{eq:SharpProdAsExp} for $t=\frac{1}{2}$, we find $q \sim \sum_{k =
    0}^\infty \epsilon^k q_k(x,\xi;\epsilon)$ with symbols $q_k$ of
  the form
  \begin{align}
    \label{eq:nonlocalAsympExp}
    q_k (x,\xi;\epsilon) = \sum_{\substack{\alpha_1, \dots, \alpha_n
        \in \N^{2d} \\ \sum_j |\alpha_j| =
        2k}} C_{k\alpha_1\dots\alpha_n} (\partial^{\alpha_1}a_1
    \cdots \partial^{\alpha_n}a_n) (x,\xi;\epsilon).
  \end{align}
  Indeed, for each $k \in \N$ the symbol $q_k$  vanishes under the assumption \eqref{ass1:DecrOutsComp}. So, using the
  remainder estimates of Proposition \ref{prop:SharpProd}, we get $q
  \in \Sy^{N+d}(m^{-N},\epsilon_0)(\R^d \times \T^d)$.

  The relation \eqref{eq:ProdOperTraceSmall} follows as a consequence
  of \eqref{eq:TwistProdSmall}: Firstly, due to Proposition
  \ref{prop:SharpProd}
  \begin{align}
    \label{eq:nonlocalChangeQuant2}
    \trn{ \left( \Op_{\epsilon,1/2}^\T a_1 \right) \circ \cdots \circ
      \left( \Op_{\epsilon,1/2}^\T a_n \right) } = \trn{
      \Op_{\epsilon,1/2}^\T q }.
  \end{align}
  Secondly, due to Proposition \ref{prop:SwitchQuant} there is a
  unique symbol $q_0 \in \Sy^{N+d}(m^{-N},\epsilon_0)(\R^d \times
  \T^d)$ with $\Op_{\epsilon,1/2}^\T q = \Op_{\epsilon,0}^\T
  q_0$. Since for our choice $m(x,\xi) = \jap{x}$ the order function
  $m^{-N}$ satisfies \eqref{eq:TraClCondPDO} for $N$ sufficiently
  large, we have by Proposition \ref{prop:TraClCriPDO} that the rhs of
  \eqref{eq:nonlocalChangeQuant2} can be bounded by
  \begin{align}
    \label{eq:nonlocalChangeQuant}
    \trn{ \Op_{\epsilon,1/2}^\T q } \leq \frac{1}{(2\pi)^{d/2}}\sum_{x
      \in \epsilon\Z^d} \torn{q_0(x,\cdot;\epsilon)} =
    \O(\epsilon^{N}).
  \end{align}
\end{proof}

The next Lemma provides a similar estimate on non-local corrections
for the operator $f(\P_\epsilon)$ where $f \in \Cont_0^\infty(J)$ and
$J$ fulfils Hypothesis \ref{hyp:WRef} (\ref{hyp:WRef_EllItem2}). One
expects that the symbol of $f(\P_\epsilon)$ is mainly supported in the
set $K \times \T^d$ characterised by \eqref{eq:chiAK}. Thus the
product of $f(\P_\epsilon)$ and $\Op_{\epsilon,1/2}^\T (1-\chi)$,
where $1 - \chi$ is supported outside of $K \times \T^d$, is small in
trace norm. The proof of Lemma \ref{lem:fPChiTrace} uses the integral
representation \eqref{eq:FuncCalcAdjust} based on the
resolvent. Therefore a version of Lemma \ref{lem:DecrOutsComp}
depending on the resolvent parameter $z$ is crucial. To obtain this
more refined version, we shall reconsider the idea of the proof of
Lemma \ref{lem:DecrOutsComp}.

\begin{Lem}
  \label{lem:fPChiTrace}
  Assume the symbol $a$ to satisfy Hypothesis \ref{hyp:WRef} (1) and
  (2). Then there is a $(\sup J)$-adjustment $\adjust{a}$ of $a$ such
  that for some compact subset $K \subset \R^d$
  \begin{align}
    \label{eq:chiAK}
     \mbox{some neighbourhood of } \; G := \overline{\bigcup_{\epsilon
         \in (0,\epsilon_0]}\supp \left(
     (a-\adjust{a})(\cdot,\cdot;\epsilon) \right)} \; \mbox{ is
         contained in } \; K \times \T^d .
  \end{align}
  Let $\P_\epsilon$ be as in Proposition \ref{prop:PSelfAdj}. Then for any
  $\adjust{a}$ and $K$ fulfilling \eqref{eq:chiAK}, any bounded $\chi
  \in \Cont^\infty (\R^d \times \T^d)$ with $\left.\chi \right|_{K
    \times \T^d}\equiv 1$ and any $f \in \Cont_0^\infty(J)$, setting
  $\mathbf{X}_\epsilon := \Op_{\epsilon,1/2}^\T (1-\chi)$ we have
  \begin{align}
    \label{eq:fxxftrn}
    \trn{f(\P_\epsilon) \mathbf{X}_\epsilon} +
    \trn{\mathbf{X}_\epsilon f(\P_\epsilon)} = \O(\epsilon^\infty).
  \end{align}
\end{Lem}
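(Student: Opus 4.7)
The plan is to use the resolvent representation of $f(\P_\epsilon)$ furnished by Lemma \ref{lem:FuncCalcAdjust} and exploit the disjointness of $\supp(a-\adjust{a})$ and $\supp(1-\chi)$ through the twisted-product calculus of Lemma \ref{lem:DecrOutsComp}, with all $z$-dependence absorbed by the vanishing of $\dz\aac{f}$ near the real axis.

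First I would construct $\adjust{a}$ and $K$ explicitly. Since $a\ess\sup J$, there is $R>0$ with $a(x,\xi;\epsilon)>\sup J$ for $|x|>R$. Pick $\chi_0\in\Cont_c^\infty(\R^d)$ with $\chi_0\equiv 1$ on $\{|x|\leq R\}$ and set $\adjust{a}:=a+M\chi_0$ with $M$ large enough that $\adjust{a}>\sup J$ everywhere; then $K:=\supp\chi_0$ verifies \eqref{eq:chiAK}. Given any cutoff $\chi$ as in the statement (which we may assume real-valued, so that $\mathbf{X}_\epsilon$ is self-adjoint), multiply \eqref{eq:FuncCalcAdjust} on the right by $\mathbf{X}_\epsilon$ to obtain
\begin{align*}
f(\P_\epsilon)\mathbf{X}_\epsilon = -\frac{1}{\pi}\int_\C \dz\aac{f}(z)\,(z-\P_\epsilon)^{-1}\,\Op_{\epsilon,1/2}^\T(a-\adjust{a})\,(z-\adjOP{\P}_\epsilon)^{-1}\mathbf{X}_\epsilon\,L(dz).
\end{align*}

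The core step is to rewrite and estimate the middle three factors. Using \eqref{eq:adjPlambda}, $(z-\adjOP{\P}_\epsilon)^{-1}=\Op_{\epsilon,1/2}^\T b_{z-\adjust{a}}$ for $z$ in a small complex neighbourhood of $\overline{J}$ (where $\aac{f}$ is supported), and the seminorms of $b_{z-\adjust{a}}$ blow up at most polynomially in $|\Im z|^{-1}$ (by the discrete analogue of \eqref{eq:bzEstim}, obtained from Lemma \ref{Lem:InvSymbPeriodic} applied to the non-discrete estimate). Proposition \ref{prop:SharpProd} then gives
\begin{align*}
\Op_{\epsilon,1/2}^\T(a-\adjust{a})\circ\Op_{\epsilon,1/2}^\T b_{z-\adjust{a}}\circ\Op_{\epsilon,1/2}^\T(1-\chi)=\Op_{\epsilon,1/2}^\T q_z
\end{align*}
with $q_z:=(a-\adjust{a})\compw b_{z-\adjust{a}}\compw(1-\chi)$. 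Since $\supp(a-\adjust{a})\subset K\times\T^d$ and $(1-\chi)\equiv 0$ on a neighbourhood of $K\times\T^d$, the triple intersection of supports is empty, and $(a-\adjust{a})$ is compactly supported in $x$. Running the argument of Lemma \ref{lem:DecrOutsComp} while keeping the $z$-dependence visible, the asymptotic expansion terms \eqref{eq:nonlocalAsympExp} all vanish, so for any $N\in\N$, $q_z\in\Sy^{N+d}(\jap{x}^{-N},\epsilon_0)(\R^d\times\T^d)$ with seminorms bounded by $C_{N,\alpha}|\Im z|^{-K_{N,\alpha}}$ for some $K_{N,\alpha}\in\N$. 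Proposition \ref{prop:TraClCriPDO} (applied to $m(x)=\jap{x}^{-d-1}$ for $N$ large) then delivers
\begin{align*}
\trn{\Op_{\epsilon,1/2}^\T q_z}\leq C_N\,\epsilon^N\,|\Im z|^{-M_N}
\end{align*}
for some $M_N$.

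Combining with the resolvent bound $\|(z-\P_\epsilon)^{-1}\|\leq|\Im z|^{-1}$ and the bound \eqref{eq:aacf} chosen with $L>M_N+1$, the integrand is dominated in trace norm by $C_N\epsilon^N|\Im z|^{L-M_N-1}$, which is integrable on the compact support of $\aac{f}$. Hence $\trn{f(\P_\epsilon)\mathbf{X}_\epsilon}=\O(\epsilon^N)$ for every $N$, i.e. $\O(\epsilon^\infty)$. The companion estimate $\trn{\mathbf{X}_\epsilon f(\P_\epsilon)}=\O(\epsilon^\infty)$ follows either by repeating the argument after applying $\mathbf{X}_\epsilon$ on the left of \eqref{eq:FuncCalcAdjust}, or by taking adjoints (all factors being self-adjoint for real $\chi$ and real $f$). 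The only delicate bookkeeping is to verify that the polynomial blow-up of $b_{z-\adjust{a}}$ seminorms in $|\Im z|^{-1}$ is uniformly controlled through the twisted products and then dominated by the arbitrary vanishing order of $\dz\aac{f}$; this is the main obstacle, but it reduces to tracking constants already implicit in Propositions \ref{prop:SharpProd} and \ref{prop:TraClCriPDO}.
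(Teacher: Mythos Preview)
Your proposal is correct and follows essentially the same route as the paper: both use the resolvent representation \eqref{eq:FuncCalcAdjust}, write the three rightmost factors as the Weyl quantisation of $q_z=(a-\adjust{a})\compw b_{z-\adjust{a}}\compw(1-\chi)$, invoke the disjoint-support vanishing of the expansion terms from Lemma \ref{lem:DecrOutsComp} tracked with polynomial $|\Im z|^{-1}$-dependence via \eqref{eq:bzEstim}, and absorb that blow-up with the arbitrary vanishing order of $\dz\aac{f}$. Your explicit choice $\adjust{a}=a+M\chi_0$ and the adjoint shortcut for the second term are minor variations on the paper's presentation, not substantive differences.
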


\begin{proof}
  By Hypothesis \ref{hyp:WRef} (2) and the discussion preceding Lemma
  \ref{lem:FuncCalcAdjust}, in particular using the uniformity in
  $\epsilon$, there are $\adjust{a}$ and $K$ fulfilling
  \eqref{eq:chiAK}.

  Let $\chi \in \Cont^\infty (\R^d \times \T^d)$ be bounded with
  $\left.\chi \right|_{K \times \T^d}\equiv 1$ and $f \in
  \Cont_0^\infty(J)$. Let further $\adjOP{\P}_\epsilon$ and $\aac{f}$
  be as in Lemma \ref{lem:FuncCalcAdjust}. Then, for any $\epsilon >
  0$ sufficiently small, multiplying \eqref{eq:TraceClassFuncCalc} by
  $\mathbf{X}_\epsilon$ from the right, we have
  \begin{align}
    \label{eq:fXTrace}
    f(\P_\epsilon) \mathbf{X}_\epsilon = \int_\C
    \mathbf{W}_{\epsilon}(z)  L(dz)
  \end{align}
  where for $\Im z \neq 0$
  \begin{align}
    \label{eq:fXTraceW}
    \mathbf{W}_{\epsilon}(z) &:= \mathbf{W}_{1,\epsilon}(z)
    \mathbf{W}_{2,\epsilon}(z), \\ \label{eq:fXTraceW1}
    \mathbf{W}_{1,\epsilon}(z) &:= -\frac{1}{\pi} \dz\aac{f}(z) (z -
    \P_\epsilon)^{-1}, \\ \label{eq:fXTraceW2}
    \mathbf{W}_{2,\epsilon}(z) &:= \left(\Op_{\epsilon,1/2}^\T (a -
    \adjust{a}) \right) (z - \adjOP{\P}_\epsilon)^{-1}
    \mathbf{X}_\epsilon.
  \end{align}
  By the arguments given in the proof of Proposition \ref{prop:fPTraceClass}, $\mathbf{W}_{\epsilon}(z)$ can be continuously
  extended by $0$ to $\Im z = 0$.
  
  In \eqref{eq:fXTrace}, for any $\epsilon > 0$ sufficiently small,
  $f(\P_\epsilon)\mathbf{X}_\epsilon$ is of trace class since
  $f(\P_\epsilon)$ is of trace class (Proposition
  \ref{prop:fPTraceClass}) and $\mathbf{X}_\epsilon$ is bounded
  (Proposition \ref{prop:discrCaldVail}). For each $z \in \C$ with
  $\Im z \neq 0$, the operator $\mathbf{W}_{1,\epsilon}(z)$ is bounded
  and, by Proposition \ref{prop:TraClCriPDO} and the ideal property of
  the trace class operators, the operator $\mathbf{W}_{2,\epsilon}(z)$
  is of trace class. Thus $\mathbf{W}_{\epsilon}(z)$ is of trace class
  and, by \eqref{eq:TraceIdealEstim}, we have
  \begin{align}
    \label{eq:fxxfTrnW}
    \trn{\mathbf{W}_\epsilon(z)} \leq
    \norm{\mathbf{W}_{1,\epsilon}(z)} \trn{\mathbf{W}_{2,\epsilon}(z)}
    \qquad (z \in \C,\; \Im z \neq 0).
  \end{align}
  We now prove that $\mathbf{W}_\epsilon(z)$ is of order
  $\O(\epsilon^\infty)$ in trace norm, uniformly for $z \in \C$.

  The parameter $z$ appears in both the function $\aac{f}$ and the
  symbols of the resolvents in \eqref{eq:fXTraceW1} and
  \eqref{eq:fXTraceW2}. Since $\aac{f}$ has compact support, it is
  sufficient to verify uniformity on some compact subset of $\C$.
  Using the resolvent estimate \eqref{eq:resolvEstimP} and the
  estimate \eqref{eq:aacf} for the almost analytic extension, we have
  for any $M \in \N$
  \begin{align}
    \label{eq:fxxfTrnW1}
    \norm{\mathbf{W}_{1,\epsilon}(z)} \leq \frac{1}{\pi} \left|
    \dz\aac{f}(z) \right| |\Im z|^{-1} \leq C_M |\Im z|^{M} \qquad
    \mbox{for } z \in \C \mbox{ with } \Im z \neq 0
  \end{align}
  where $C_M$ is a constant not depending on $z$.  We now verify that
  for any $N \in \N$ there is some $M \in \N$ such that
  \begin{align}
    \label{eq:fxxfTrnW2}
    \trn{\mathbf{W}_{2,\epsilon}(z)} = |\Im z|^{-M} \O(\epsilon^N)
    \qquad \mbox{for } \epsilon \downarrow 0, \mbox{ uniformly in } z
    \mbox{ with } \Im z \neq 0,
  \end{align}
  by justifying and applying a parameter dependent version of Lemma
  \ref{lem:DecrOutsComp}. Granted \eqref{eq:fxxfTrnW2}, we may combine
  \eqref{eq:fxxfTrnW}, \eqref{eq:fxxfTrnW1} and \eqref{eq:fxxfTrnW2}
  to see that the integrand $\mathbf{W}_\epsilon(z)$ is of order
  $\O(\epsilon^\infty)$ in trace norm, uniformly for $z$. Thus, using
  the compact support of $\mathbf{W}_\epsilon(z)$, the integral
  \eqref{eq:fXTrace} is of order $\O(\epsilon^\infty)$ in trace norm
  and therefore $\trn{f(\P_\epsilon)\mathbf{X}_\epsilon} =
  \O(\epsilon^\infty)$. An analogue argument proves $\trn{
    \mathbf{X}_\epsilon f(\P_\epsilon)} = \O(\epsilon^\infty)$.

  It remains to prove \eqref{eq:fxxfTrnW2}. Let $N \in \N$. Due to
  Theorem \ref{theo:FuncCalcNonDiscr}, for some $\epsilon_1 >0$
  sufficiently small, the symbol $b_{z-\adjust{a}}$ of $(z -
  \adjOP{\P}_\epsilon)^{-1}$ is an element of
  $\Sy^{0}(m^{-1},\epsilon_1)(\R^d \times \T^d)$. We shall write
  $\adjust{b}_z := b_{z-\adjust{a}}$. According to Proposition
  \ref{prop:SharpProd}, we have $\mathbf{W}_{2,\epsilon}(z) =
  \Op_{\epsilon,1/2}^\T q(z)$ with
  \begin{align}
    q(z) := (a-\adjust{a}) \compw \adjust{b}_z \compw (1-\chi) \qquad
    \mbox{for } z \in \C \mbox{ with } \Im z \neq 0.
  \end{align}
  Since $a-\adjust{a}$ and $1-\chi$ have disjoint support and $a -
  \adjust{a}$ has compact support, we may apply Lemma
  \ref{lem:DecrOutsComp} to get $q(z) \in
  \Sy^0(\jap{x}^{-N},\epsilon_1)(\R^d \times \T^d)$. We may now trace
  the dependence on $z$ in the proof of Lemma \ref{lem:DecrOutsComp}
  with $q = q(z)$. Due to Proposition \ref{prop:SharpProd}, $q(z)$ has
  an asymptotic expansion $q(z) \sim \sum_{j=0}^\infty \epsilon^j
  q_j(z)$ with the expansion terms $q_j(z)$ given by
  \eqref{eq:nonlocalAsympExp}. In detail we have
  \begin{align}
    q_j(z) (x,\xi;\epsilon) = \sum_{\substack{\alpha_1, \alpha_2,
        \alpha_3 \in \N^{2d} \\ |\alpha_1 + \alpha_2 + \alpha_3| =
        2j}} C_{j \alpha_1 \alpha_2 \alpha_3} \left(
    \partial^{\alpha_1} (a - \adjust{a}) \partial^{\alpha_2}
    \adjust{b}_z \partial^{\alpha_3} (1-\chi) \right)(x,\xi;\epsilon)
  \end{align}
  with suitable constants $C_{j \alpha_1 \alpha_2 \alpha_3}$ not
  depending on $z$. In fact $q_j(z) = 0$ for each $j$ since
  $a-\adjust{a}$ and $1-\chi$ have disjoint support. Thus for the
  remainder term $R_{N+d}(z) := q(z) - \sum_{j=0}^{N+d-1} \epsilon^j
  q_j(z)$, we have
  \begin{align}
    \label{eq:remainderEqualSymb}
    R_{N+d}(z) = q(z) \in \Sy^{N+d}(\jap{x}^{-N},\epsilon_1)(\R^d \times
    \T^d).
  \end{align}
  By Proposition \ref{prop:SharpProd}, the mapping $(a-\adjust{a},
  \adjust{b}_z, 1-\chi) \mapsto R_{N+d}(z)$ is continuous in the
  Fréchet topology. By Proposition \ref{prop:SwitchQuant} (with $s =
  1/2,\, t=0$), the change of quantisation $q(z) \mapsto q_0(z)$,
  where $q_0(z)$ is in the same space as $q(z)$, is also
  continuous. Thus, using the identity in
  \eqref{eq:remainderEqualSymb}, the mapping $(a-\adjust{a},
  \adjust{b}_z, 1-\chi) \mapsto q_0(z)$ is continuous in the Fréchet
  topology. This, expressed in terms of the Fréchet seminorms
  $\norm{\cdot}_\alpha$ introduced in \eqref{eq:FrechetSNSym}, means
   \begin{align}
    \label{eq:q0zCont}
    \epsilon^{-(N+d)}\jap{x}^N |q_0(z)(x,\xi;\epsilon)| \leq
    \norm{q_0(z)}_0 \leq \sum_{\alpha_1, \alpha_2, \alpha_3 \in
      \N^{2d}} D_{\alpha_1 \alpha_2 \alpha_3} \norm{a -
      \adjust{a}}_{\alpha_1} \norm{\adjust{b}_{z}}_{\alpha_2}
    \norm{1-\chi}_{\alpha_3}
  \end{align}
  with suitable constants $D_{\alpha_1 \alpha_2 \alpha_3}$ vanishing
  for sufficiently large multi-indices $\alpha_1, \alpha_2, \alpha_3$
  and not depending on $z$. Since $\adjust{a}$ satisfies Hypothesis \ref{hyp:WRef} (1), the estimate \eqref{eq:bzEstim} holds for
  $\adjust{b}_z$. It thus follows from \eqref{eq:q0zCont} that for some $M \in \N$
  \begin{align}
    \label{eq:q0Estimxzeps}
    |q_0(z)(x,\xi;\epsilon)| = \jap{x}^{-N} |\Im z|^{-M}
    \O(\epsilon^{N+d}) \; \, \mbox{uniformly for } x \in \R^d,\, \xi
    \in \T^d \mbox{ and } z \in \C \mbox{ with } \Im z \neq 0.
  \end{align}
  Combining \eqref{eq:q0Estimxzeps} with the $z$-dependent version of
  \eqref{eq:nonlocalChangeQuant} for $N$ sufficiently large proves
  \eqref{eq:fxxfTrnW2}. This completes the proof of Lemma
  \ref{lem:fPChiTrace}.
\end{proof}

We are now ready to prove the main result of this section.

\begin{theo}
  \label{theo:TraceAsympExp}
  Assume Hypothesis \ref{hyp:WRef} and let $\P_\epsilon$ be as in
  Proposition \ref{prop:PSelfAdj}. Then for any $\epsilon > 0$
  sufficiently small and for any $f \in \Cont_0^\infty(J)$, the
  operators $f(\P_\epsilon)$ and $\Op_{\epsilon,1/2}^\T c_j$ for $j
  \in \N$ are of trace class. Here the functions $c_j \in
  \Cont^\infty(\R^d \times \T^d)$ - given by
  \eqref{eq:FuncCalcAsympcj} - form the asymptotic expansion of the
  Weyl symbol $c$ of $f(\P_\epsilon)$. Moreover,
    \begin{align}
      \label{eq:TraceAsympExpNorm}
    \trn{f(\P_\epsilon) - \sum_{j=0}^{N-1}\epsilon^j
      \Op_{\epsilon,1/2}^\T c_j} = \O(\epsilon^{N-d}) \qquad (\epsilon
    \downarrow 0) \qquad \mbox{for } N \in \N^*
  \end{align}
  and
  \begin{align}
    \label{eq:TraceAsympExp}
    \tr (f(\P_\epsilon)) \sim \frac{1}{(2\pi\epsilon)^d} \sum_{j=0}^\infty
    \epsilon^j \int_{\R^d} \int_{\T^d}
    c_j(x,\xi) d\xi dx.
  \end{align}
  In particular,
  \begin{align}
    \label{eq:TraceAsympExpPart}
    \tr (f(\P_\epsilon)) = \frac{1}{(2\pi\epsilon)^d} \left(
    \int_{\R^d} \int_{\T^d} f(a_0(x,\xi)) d\xi dx + R_1(\epsilon)
    \right),
  \end{align}
  where $|R_1(\epsilon)| \leq C \epsilon$ with some constant $C$ only
  depending linearly on finitely many derivatives of $f$ and $a$.
\end{theo}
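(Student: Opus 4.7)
The plan is to represent $f(\P_\epsilon)$ as a discrete pseudodifferential operator via Corollary \ref{cor:cperiodic}, localise its symbol to a compact set in $x$ (up to $\O(\epsilon^\infty)$ trace norm errors) using Lemma \ref{lem:fPChiTrace}, apply the trace formula of Proposition \ref{prop:TraClCriPDO}, and finally pass from the resulting Riemann sum to its continuum integral via Poisson summation (Appendix B).

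I would begin by using Corollary \ref{cor:cperiodic} to write $f(\P_\epsilon) = \Op_{\epsilon,1/2}^\T c$ with an asymptotic expansion $c \sim \sum_{j \ge 0}\epsilon^j c_j$, each $c_j$ periodic in $\xi$. Formula \eqref{eq:FuncCalcAsympcj}, combined with $f \in \Cont_0^\infty(J)$ and $a_0 \ess \sup J$ from Hypothesis \ref{hyp:WRef}(\ref{hyp:WRef_EllItem2}), forces every $c_j$ to vanish for $|x|$ larger than some common $R>0$; in particular each $c_j$ has compact $x$-support. Proposition \ref{prop:TraClCriPDO} (applied with $m(x,\xi) = \jap{x}^{-d-\delta}$ for any $\delta>0$) then makes $\Op_{\epsilon,1/2}^\T c_j$ trace class, while $f(\P_\epsilon)$ is trace class by Proposition \ref{prop:fPTraceClass}.

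For the trace norm estimate \eqref{eq:TraceAsympExpNorm} I would choose a bounded $\chi \in \Cont^\infty(\R^d \times \T^d)$ equal to $1$ on a neighbourhood of both the common $x$-support of the $c_j$ and the set $G$ from \eqref{eq:chiAK}, and set $\mathbf{X}_\epsilon := \Op_{\epsilon,1/2}^\T(1-\chi)$. Lemma \ref{lem:fPChiTrace} gives $\trn{f(\P_\epsilon)\mathbf{X}_\epsilon} + \trn{\mathbf{X}_\epsilon f(\P_\epsilon)} = \O(\epsilon^\infty)$, and Lemma \ref{lem:DecrOutsComp} (applied to the disjoint supports of $c_j$ and $1-\chi$, with $c_j$ compactly supported in $x$) gives $\trn{\Op_{\epsilon,1/2}^\T c_j \cdot \mathbf{X}_\epsilon} + \trn{\mathbf{X}_\epsilon \cdot \Op_{\epsilon,1/2}^\T c_j} = \O(\epsilon^\infty)$. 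Inserting $\Id = \Op_{\epsilon,1/2}^\T \chi + \mathbf{X}_\epsilon$ on both sides and using the trace ideal bound \eqref{eq:TraceIdealEstim} then reduces the estimate to bounding $\Op_{\epsilon,1/2}^\T(\chi \compw R_N \compw \chi)$, where $R_N := c - \sum_{j=0}^{N-1}\epsilon^j c_j \in \Sy^N(m^{-k},\epsilon_1)$ for every $k$. Since $\chi$ has compact $x$-support and thus belongs to $\Sy^0(\jap{x}^{-n},\epsilon_0)$ for every $n$, Proposition \ref{prop:SharpProd} yields $\chi \compw R_N \compw \chi \in \Sy^N(\jap{x}^{-(2d+1)},\epsilon_1)$, and Proposition \ref{prop:TraClCriPDO} gives
\begin{align*}
\trn{\Op_{\epsilon,1/2}^\T(\chi \compw R_N \compw \chi)} \le C \epsilon^N \sum_{x \in \epsilon\Z^d} \jap{x}^{-(2d+1)} = \O(\epsilon^{N-d}),
\end{align*}
since the lattice sum is $\O(\epsilon^{-d})$.

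For \eqref{eq:TraceAsympExp} and \eqref{eq:TraceAsympExpPart} I would apply the trace formula in Proposition \ref{prop:TraClCriPDO} to each $\Op_{\epsilon,1/2}^\T c_j$: since $x \mapsto \int_{\T^d} c_j(x,\xi)\,d\xi$ is smooth and compactly supported, Poisson summation (Appendix B) gives
\begin{align*}
\tr(\Op_{\epsilon,1/2}^\T c_j) = \frac{1}{(2\pi\epsilon)^d}\int_{\R^d}\int_{\T^d} c_j(x,\xi)\,d\xi\,dx + \O(\epsilon^\infty).
\end{align*}
Combining this with $|\tr A| \le \trn{A}$ and \eqref{eq:TraceAsympExpNorm} yields \eqref{eq:TraceAsympExp}; specialising to $N = 1$ and invoking $c_0 = f \circ a_0$ from Theorem \ref{theo:FuncCalcNonDiscr}(\ref{item:FuncCalcNonDiscr3}) produces \eqref{eq:TraceAsympExpPart}, and the announced linear dependence of the constant $C$ on finitely many derivatives of $f$ and on $a$ is inherited from the Fréchet seminorm bounds in Theorem \ref{theo:FuncCalcNonDiscr}(\ref{item:FuncCalcNonDiscr3}) and Proposition \ref{prop:SharpProd}. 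The main obstacle is precisely the mismatch between the symbol $c$, which fails to be compactly supported in $x$ (the resolvent symbol $b_{z-a}$ in \eqref{eq:FuncCalcSymb} does not decay in $x$), and its compactly supported expansion terms $c_j$; this gap is bridged by Lemma \ref{lem:fPChiTrace}, and once that localisation in trace norm is available, the remaining arguments are routine semiclassical bookkeeping.
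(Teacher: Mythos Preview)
Your proposal is correct and follows essentially the same approach as the paper: show the $c_j$ are compactly supported via \eqref{eq:FuncCalcAsympcj}, localise with Lemmas \ref{lem:DecrOutsComp} and \ref{lem:fPChiTrace}, estimate the localised remainder in trace norm, and pass from the lattice sum to the integral by Poisson summation. The only difference is cosmetic: the paper inserts the cutoff on one side only and uses the factorisation $\trn{(\Op^\T_{\epsilon,1/2}\chi)\,\mathbf{R}_N}\le\trn{\Op^\T_{\epsilon,1/2}\chi}\,\norm{\Op^\T_{\epsilon,1/2}R_N}=\O(\epsilon^{-d})\cdot\O(\epsilon^N)$ with Calder\'on--Vaillancourt supplying the operator-norm bound, rather than your two-sided sandwich $\chi\compw R_N\compw\chi$ followed by a direct appeal to Proposition \ref{prop:TraClCriPDO}.
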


\begin{proof}
  $f(\P_\epsilon)$ is of trace class for $\epsilon$ sufficiently small
  due to Proposition \ref{prop:fPTraceClass}.
  
  We claim that the functions $c_j$ have compact support. As a
  consequence, the operators $\Op_{\epsilon,1/2}^\T c_j$ are of trace
  class according to Proposition \ref{prop:TraClCriPDO}. To prove the
  claim, we choose some $(\sup J)$-adjustment $\adjust{a} \in
  \Sy^0(m,\epsilon_0)(\R^d \times \T^d)$ of $a$ and let $G$ and some
  compact $K$ be as in \eqref{eq:chiAK}. In particular, we shall show
  \begin{align}
    \label{eq:suppcjG}
    \supp c_j \subset G.
  \end{align}
  Since $f$ is supported in $J$, we have by formula
  \eqref{eq:FuncCalcAsympcj} that $\supp c_j \subset a_0^{-1} (J)$. It
  is therefore sufficient to show that $a_0^{-1}(J) \subset G$. For
  this let $(x,\xi) \in a_0^{-1} (J)$. Since $\adjust{a} > \sup J$,
  there is $\delta > 0$ such that
    \begin{align}
      \label{eq:TraceAsympExpProof1}
      \adjust{a} (x,\xi; \epsilon) - a_0(x,\xi) > \delta \qquad
      \mbox{for any } \epsilon \in (0,\epsilon_0].
    \end{align}
    According to the definition of the asymptotic expansion we have
    \begin{align}
      \label{eq:TraceAsympExpProof2}  
      a_0(x,\xi) - a(x,\xi;\epsilon) > - \frac{\delta}{2}
    \end{align}
    for any $\epsilon > 0$ sufficiently small.
    \eqref{eq:TraceAsympExpProof1} and \eqref{eq:TraceAsympExpProof2}
    then yield
    \begin{align}
      (\adjust{a} - a)(x,\xi;\epsilon) = \adjust{a} (x,\xi; \epsilon)
      - a_0(x,\xi) + a_0(x,\xi) - a(x,\xi;\epsilon) > \frac{\delta}{2}
      > 0,
    \end{align}
    so $(x,\xi) \in \supp (a - \adjust{a})$ for small
    $\epsilon$. Therefore $(x,\xi) \in G$. This proves \eqref{eq:suppcjG}. Thus the
    functions $c_j$ have compact support.

  Now define
  \begin{align}
    \mathbf{R}_N := f(\P_\epsilon) - \sum_{j=0}^{N-1} \epsilon^j
    \Op_{\epsilon,1/2}^\T c_j
  \end{align}
  and let $\chi \in \Cont_0^\infty(\R^d \times \T^d)$ satisfy
  $\left. \chi \right|_{K \times \T^d} \equiv 1$. Then
  \begin{align}
    \label{eq:TraceAsympExpProof5}
    \trn{\mathbf{R}_N} \leq \trn{\left(\Op_{\epsilon,1/2}^\T\chi \right)
      \mathbf{R}_N} + \trn{\left(\Op_{\epsilon,1/2}^\T(1-\chi) \right)
      \mathbf{R}_N}.
  \end{align}
  It follows from \eqref{eq:suppcjG} and the definition of $\chi$ that
  the supports of $c_j$ and $1-\chi$ are disjoint. Therefore, we
  conclude from Lemma \ref{lem:DecrOutsComp} and Lemma
  \ref{lem:fPChiTrace} that
  \begin{align}
    \label{eq:TraceAsympExpProof3}
      \trn{\left(\Op_{\epsilon,1/2}^\T (1- \chi)
      \right) \mathbf{R}_N} = \O(\epsilon^\infty) \qquad (\epsilon
    \downarrow 0).
  \end{align}
  Furthermore, due to the general trace norm estimate
  \eqref{eq:TraceIdealEstim}, the trace norm estimate
  \eqref{eq:TraClEstPDO} for discrete-type pseudo-differential
  operators and the discrete version of the Theorem of
  Calder{\'o}n-Vailloncourt (Proposition \ref{prop:discrCaldVail}), we get
  \begin{align}
    \label{eq:TraceAsympExpProof4}
    \trn{\left( \Op_{\epsilon,1/2}^\T \chi \right) \mathbf{R}_N} \leq
    \trn{\Op_{\epsilon,1/2}^\T \chi} \norm{\Op_{\epsilon,1/2}^\T
      \left( c - \sum_{j=0}^{N-1} \epsilon^j c_j \right)} =
    \O(\epsilon^{N-d}).
  \end{align}
  Combining \eqref{eq:TraceAsympExpProof5},
  \eqref{eq:TraceAsympExpProof3} and \eqref{eq:TraceAsympExpProof4},
  we get $\trn{\mathbf{R}_N} = \O(\epsilon^{N-d})$.

  Using the trace formula \eqref{eq:TracePDO}, we have
  \begin{align}
    \label{eq:TraceOpcj}
    \tr \left( \Op_{\epsilon,1/2}^\T c_j \right) = \frac{1}{(2\pi)^d}
    \sum_{x \in \epsilon\Z^d} \int_{\T^d} c_j(x,\xi) d\xi.
  \end{align}
  Since the trace is bounded by the trace norm,
  \eqref{eq:TraceAsympExp} is a consequence of
  \eqref{eq:TraceAsympExpNorm} and \eqref{eq:TraceOpcj}, using that
  due to the compact support of $c_j$ we may choose $\varphi \equiv 0$
  in Proposition \ref{prop:PoisApp} in order to approximate the sum
  $\sum_{x \in \epsilon\Z^d}$ by an integral
  $\epsilon^{-d}\int_{\R^d}dx$ with remainder of order
  $\O(\epsilon^\infty)$ for $\epsilon \downarrow 0$.

  The statement \eqref{eq:TraceAsympExpPart} is a consequence of
  \eqref{eq:TraceAsympExp}, using $c_0 = f \circ a_0$ and the
  statement on the remainder terms in Theorem
  \ref{theo:FuncCalcNonDiscr} (\ref{item:FuncCalcNonDiscr3}).
\end{proof}

\subsection{Rough Weyl asymptotics}

As a direct consequence of Theorem \ref{theo:TraceAsympExp}, we get
the following rough Weyl asymptotics for the number of eigenvalues.
Sharpening
the remainder estimate in the next section will be the main result of this paper. 
\begin{cor}
  \label{cor:WeylRough}
  Assume the symbol $a$ and the interval $J$ to satisfy Hypothesis
  \ref{hyp:WRef}, in particular, $a \sim \sum_{j=0}^\infty \epsilon^j
  a_j$. Let $\P_\epsilon$ be the self-adjoint operator as in
  Proposition \ref{prop:PSelfAdj}. Let $\alpha,\beta \in \R$ with
  $\alpha < \beta$ and $[\alpha, \beta] \subset J$ and denote by
  $\Neig([\alpha,\beta];\epsilon)$ the number of eigenvalues of
  $\P_\epsilon$ in $[\alpha,\beta]$. Defining upper and lower phase
  space volume with respect to $a_0$ and $[\alpha, \beta]$ by
  \begin{align}
    \label{eq:upperLowerVolDef}
    \overline{V}([\alpha,\beta]) &= \lim_{\delta \downarrow 0} \vol
    \left( a_0^{-1}([\alpha - \delta, \beta + \delta]) \right), \quad
    \underline{V}([\alpha,\beta]) = \lim_{\delta \downarrow 0} \vol
    \left( a_0^{-1}([\alpha + \delta, \beta - \delta])\right)
  \end{align}
  with $\vol$ given by \eqref{eq:phaseSpaceVol}, one has
  \begin{align}
    \label{eq:NeigAsympRough}
    \frac{1}{(2\pi \epsilon)^d} \left( \underline{V}([\alpha,\beta]) +
      o(1) \right) \leq \Neig([\alpha,\beta];\epsilon) \leq
    \frac{1}{(2\pi \epsilon)^d} \left( \overline{V}([\alpha,\beta]) +
      o(1) \right) \qquad (\epsilon \downarrow 0).
  \end{align}
  Furthermore, if $\alpha$ and $\beta$ are both non-critical values of
  $a_0$, then the lower and upper phase space volume in
  \eqref{eq:NeigAsympRough} coincide and we have
  \begin{align}
    \label{eq:NeigAsympRoughAdd}
    \Neig([\alpha,\beta];\epsilon) = \frac{1}{(2\pi \epsilon)^d}
    \left( \vol \left( a_0^{-1}([\alpha,\beta]) \right) +
    O(\epsilon^\nu) \right) \qquad (\epsilon \downarrow 0) \qquad
    \mbox{for some } \nu > 0.
  \end{align}
\end{cor}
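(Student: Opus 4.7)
The plan is to derive the bounds in \eqref{eq:NeigAsympRough} by sandwiching the spectral projector $\I_{[\alpha,\beta]}(\P_\epsilon)$ between smooth functional calculus expressions to which Theorem \ref{theo:TraceAsympExp} applies. Concretely, for each $\delta > 0$ small enough that $[\alpha-\delta,\beta+\delta] \subset J$, I would fix a pair $f_-^{\delta}, f_+^{\delta} \in \Cont_0^\infty(J)$ with $0 \leq f_-^{\delta} \leq \I_{[\alpha,\beta]} \leq f_+^{\delta} \leq 1$, where $f_+^{\delta} \equiv 1$ on $[\alpha,\beta]$ and $\supp f_+^{\delta} \subset (\alpha-\delta, \beta+\delta)$, while $f_-^{\delta} \equiv 1$ on $[\alpha+\delta, \beta-\delta]$ and $\supp f_-^{\delta} \subset (\alpha,\beta)$. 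Since $\P_\epsilon$ has only discrete spectrum in $J$ (as follows from Lemma \ref{lem:FuncCalcAdjust} via the absence of essential spectrum in $\overline{J}$ for the adjusted operator), the operator $\I_{[\alpha,\beta]}(\P_\epsilon)$ is a finite-rank projector of rank $\Neig([\alpha,\beta];\epsilon)$, and the spectral theorem gives
\begin{align*}
\tr\bigl(f_-^{\delta}(\P_\epsilon)\bigr) \leq \Neig([\alpha,\beta];\epsilon) \leq \tr\bigl(f_+^{\delta}(\P_\epsilon)\bigr).
\end{align*}

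Next I would apply \eqref{eq:TraceAsympExpPart} from Theorem \ref{theo:TraceAsympExp} to each of these smooth traces, obtaining
\begin{align*}
\tr\bigl(f_\pm^{\delta}(\P_\epsilon)\bigr) = \frac{1}{(2\pi\epsilon)^d}\Bigl(\int_{\R^d}\int_{\T^d} f_\pm^{\delta}(a_0(x,\xi))\,d\xi dx + R_1^\pm(\epsilon,\delta)\Bigr),
\end{align*}
with $|R_1^\pm(\epsilon,\delta)| \leq C_\delta\,\epsilon$, where $C_\delta$ depends linearly on finitely many derivatives of $f_\pm^{\delta}$. The bracketing
\begin{align*}
\vol\bigl(a_0^{-1}([\alpha+\delta,\beta-\delta])\bigr) \leq \int f_-^{\delta}(a_0)\,d\xi dx \leq \int f_+^{\delta}(a_0)\,d\xi dx \leq \vol\bigl(a_0^{-1}([\alpha-\delta,\beta+\delta])\bigr)
\end{align*}
then yields the inequalities in \eqref{eq:NeigAsympRough} after sending first $\epsilon \downarrow 0$ (for fixed $\delta$) and subsequently $\delta \downarrow 0$, using the definitions \eqref{eq:upperLowerVolDef}.

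For the refined statement \eqref{eq:NeigAsympRoughAdd} under the non-criticality assumption on $\alpha$ and $\beta$, the key additional input is that $\vol(a_0^{-1}([\alpha-\delta,\alpha+\delta])) = O(\delta)$, and similarly at $\beta$. This follows from the implicit function theorem applied at each point of the compact level hypersurface $a_0^{-1}(\alpha) \cap (K \times \T^d)$ (where $K$ is as in Lemma \ref{lem:fPChiTrace}; outside of this compact set $a_0$ exceeds $\sup J$ by Hypothesis \ref{hyp:WRef}(2)): transverse to the level set one integrates a thickness linear in $\delta$, and the coarea formula converts this into a linear volume bound. Now I would choose $f_\pm^{\delta}$ so that, for a fixed $k$ depending only on the order of derivatives entering the remainder estimate \eqref{eq:TraceAsympExpPart}, one has $C_\delta \leq C\delta^{-k}$. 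Combining this with the upper and lower bracketing then gives
\begin{align*}
\Bigl|\Neig([\alpha,\beta];\epsilon) - \frac{1}{(2\pi\epsilon)^d}\vol\bigl(a_0^{-1}([\alpha,\beta])\bigr)\Bigr| \leq \frac{C}{(2\pi\epsilon)^d}\bigl(\delta + \epsilon\delta^{-k}\bigr),
\end{align*}
and optimizing via $\delta = \epsilon^{1/(k+1)}$ yields a remainder $O(\epsilon^\nu)$ with $\nu = 1/(k+1) > 0$.

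The main obstacle here, modest as it is at this preparatory stage of the paper, is the simultaneous tracking of two competing effects: how the constant $C_\delta$ in \eqref{eq:TraceAsympExpPart} blows up as $\delta \downarrow 0$ through the growing derivatives of the cutoffs $f_\pm^{\delta}$, and how much volume can be shaved off near the non-critical boundary levels. The sharper remainder $O(\epsilon)$ claimed in Theorem \ref{theo:RefinedWeyl} cannot be reached by this soft bracketing argument alone, precisely because any admissible cutoff family $f_\pm^{\delta}$ must have derivatives of order at least $\delta^{-1}$; overcoming this loss requires the finer semiclassical time parametrix constructed in Section 4.
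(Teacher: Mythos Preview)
Your proposal is correct and follows essentially the same approach as the paper: sandwich $\I_{[\alpha,\beta]}$ between smooth cutoffs $f_\pm^\delta$ with derivative bounds $O(\delta^{-k})$, apply \eqref{eq:TraceAsympExpPart}, and optimize $\delta=\epsilon^{1/(k+1)}$ in the non-critical case. The only cosmetic differences are that the paper expresses the $O(\delta)$ volume estimate near the level sets via the Liouville form decomposition $dvol = da_0 \wedge L$ rather than the coarea/implicit-function argument you invoke (these are equivalent), and it makes the choice $\delta=\delta(\epsilon)$ explicit from the outset rather than phrasing the first part as an iterated limit.
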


\begin{proof}[Sketch of the proof]
   Since $J$ is open, we can choose for any $\delta > 0$ sufficiently
   small, functions $\underline{f}_\delta,
   \overline{f}_\delta \in \Cont_0^\infty(J, [0,1])$ such that
   \begin{align}
     \label{eq:fupperlower}
    \I_{[\alpha+\delta,\beta-\delta]} \leq \underline{f}_\delta \leq
    \I_{[\alpha,\beta]} \leq \overline{f}_\delta \leq
    \I_{[\alpha-\delta,\beta+\delta]}
   \end{align}
   and
   \begin{align}
     \label{eq:fupperlower2}
     \sup_{\lambda \in \R} \left(
     |\underline{f}_\delta^{(k)}(\lambda)| +
     |\overline{f}_\delta^{(k)}(\lambda)| \right) = \O(\delta^{-k})
     \qquad (\delta \downarrow 0) \qquad \mbox{for any } k \in \N.
   \end{align}
  We recall that by Theorem \ref{theo:TraceAsympExp} the operators
  $\underline{f}_\delta(\P_\epsilon)$ and
  $\overline{f}_\delta(\P_\epsilon)$ are of trace class for any
  $\epsilon, \delta$ sufficiently small. Thus, using
  \eqref{eq:fupperlower} and the spectral theorem,
  \begin{align}
    \label{eq:WeylRoughEigenvalueEstim}
    \tr \underline{f}_\delta (\P_\epsilon) \leq
    \Neig([\alpha,\beta];\epsilon) \leq \tr \overline{f}_\delta
    (\P_\epsilon) \qquad \mbox{for } \epsilon, \delta \mbox{
      sufficiently small}.
  \end{align}
  Firstly, we obviously have by the definition of the limit
  \begin{align}
    \label{eq:underVo1}
    \underline{V}([\alpha,\beta]) + o(1) = \int_{\R^d} \int_{\T^d} (\I_{[\alpha +
        \delta,\beta - \delta]} \circ a_0)(x,\xi) dxd\xi \qquad (\delta \downarrow 0). 
  \end{align}
  Secondly, using \eqref{eq:fupperlower} and \eqref{eq:TraceAsympExpPart} with
  $f = \underline{f}_\delta$ and $c_0 = \underline{f}_\delta \circ
  a_0$, we get
  \begin{align}
    \frac{1}{(2\pi \epsilon)^d} \int_{\R^d} \int_{\T^d} (\I_{[\alpha +
        \delta,\beta - \delta]} \circ a_0)(x,\xi) dxd\xi &\leq
    \frac{1}{(2\pi \epsilon)^d} \int_{\R^d} \int_{\T^d}
    \underline{f}_\delta (a_0(x,\xi)) dx d\xi
    \nonumber\\[1mm] \label{eq:WeylRoughProof} &
    = \tr \underline{f}_\delta
    (\P_\epsilon) + \epsilon^{-d}R(\delta,\epsilon)
  \end{align}
  where, due to \eqref{eq:fupperlower2} and the statement 
  on $R_1(\epsilon)$
  in \eqref{eq:TraceAsympExpPart}, there is some $k > 0$ such that
  \begin{align}
    \label{eq:RdeltaepsRoughWeyl}
   R(\delta,\epsilon) = \delta^{-k}O(\epsilon) \qquad \mbox{for }
   \epsilon \downarrow 0, \mbox{ uniformly for } \delta.
  \end{align}
  We choose $\delta(\epsilon)$ to fulfil $\delta(\epsilon) = o(1)$
  and $1/\delta(\epsilon) = o(\epsilon^{-1/k})$. Then
  $R(\delta(\epsilon),\epsilon) = o(1)$ and therefore, combining
  \eqref{eq:underVo1}, \eqref{eq:WeylRoughProof} and
  \eqref{eq:WeylRoughEigenvalueEstim},
  \begin{align}
    \label{eq:NeigBoundBelowRough}
    \frac{1}{(2\pi \epsilon)^d} \left( \underline{V}([\alpha,\beta]) +
    o(1) \right) \leq \tr
    \underline{f}_{\delta(\epsilon)}(\P_\epsilon) \leq
    \Neig([\alpha,\beta];\epsilon) \qquad (\epsilon \downarrow 0).
  \end{align}
  This proves the first inequality in \eqref{eq:NeigAsympRough}. The
  second inequality can be derived analogously. We note that the rough
  estimate in \eqref{eq:NeigAsympRough} is due to the rough estimate
  in \eqref{eq:underVo1}.

  We shall now prove \eqref{eq:NeigAsympRoughAdd} under the additional
  assumption that $\alpha$ and $\beta$ are non-critical values of
  $a_0$. In this case, the rough estimate in \eqref{eq:underVo1} can
  be improved. For some neighbourhood $U$ of the regular values
  $\alpha$ and $\beta$, we may construct in $a_0^{-1}(U)$ the
  Liouville form $L$ introduced in \eqref{eq:LiouvilleFormDef} to
  represent the symplectic volume form as $d vol = da_0 \wedge L$. We
  then write
  \begin{align}
    \label{eq:PhaseSpaceVolDiff1}
    \vol a_0^{-1}([\alpha - \delta, \beta + \delta]) - \vol
    a_0^{-1}([\alpha,\beta]) &= \int_{a_0 \in [\alpha-\delta, \alpha]
      \cup [\beta,\beta+\delta]} da_0 \wedge L, \\ \label{eq:PhaseSpaceVolDiff2} \vol
    a_0^{-1}([\alpha,\beta]) - \vol a_0^{-1}([\alpha + \delta, \beta -
      \delta]) &= \int_{a_0 \in [\alpha, \alpha+\delta] \cup
      [\beta-\delta,\beta]} da_0 \wedge L.
  \end{align}
  Using that 
  \begin{align}
    \int_{[\alpha-\delta, \alpha] \cup [\beta, \beta+\delta]} da_0 =
    \O(\delta) \qquad \mbox{and} \qquad \int_{[\alpha, \alpha+\delta]
      \cup [\beta-\delta, \beta]} da_0 = \O(\delta) \qquad \mbox{for }
    \delta \downarrow 0,
  \end{align}
  we obtain from \eqref{eq:PhaseSpaceVolDiff1} and
  \eqref{eq:PhaseSpaceVolDiff2} for the upper and lower phase space
  volume defined in \eqref{eq:upperLowerVolDef}
  \begin{align}
    \label{eq:allVolSame}
    \overline{V}([\alpha,\beta]) = \underline{V}([\alpha,\beta]) =
    \vol a_0^{-1}([\alpha,\beta]) = \vol a_0^{-1}([\alpha \mp \delta,
      \beta \pm \delta]) + \O(\delta) \qquad (\delta \downarrow 0).
  \end{align}
  Therefore, for the case of non-critical values $\alpha$ and $\beta$,
  the remainder of order $o(1)$ in \eqref{eq:underVo1} is actually of
  order $\O(\delta)$. Choosing $\delta(\epsilon) = \epsilon^{k+1}$ for
  $k$ given in \eqref{eq:RdeltaepsRoughWeyl}, we have
  $R(\delta(\epsilon),\epsilon) = \O(\epsilon^{1/(k+1)})$ for
  $\epsilon \downarrow 0$. Thus, reconsidering in this special case
  the arguments around \eqref{eq:NeigBoundBelowRough}, we get the
  improvement in the remainder estimate stated in
  \eqref{eq:NeigAsympRoughAdd}.
\end{proof}

We remark that, similarly to the setting in Corollary
\ref{cor:WeylRough}, a Hamiltonian given by a discrete Laplacian plus
$\Cont^\infty$-potential without the additional assumption on
regularity in $\alpha$ and $\beta$ has been treated in
\cite{Ka23}. This is a special case of a symbol which is analytic in a
group of variables. In this case, one obtains the first equalities in
\eqref{eq:allVolSame} but not the last. We shall not investigate
improved error estimates for such kinds of symbols.

It is the content of Theorem \ref{theo:RefinedWeyl} that $\nu$ in
\eqref{eq:NeigAsympRoughAdd} for the setting of non-critical values
$\alpha$ and $\beta$ can actually be chosen as $\nu = 1$. The proof of
this statement is the main focus of our work and shall be given in the
next section. It requires additional techniques such as the
semiclassical approximation of the time evolution of $f(\P_\epsilon)$
given in Theorem \ref{theo:PseudoFourIntApprox} below.

\section{Proof of Theorem \ref{theo:RefinedWeyl}}
\label{sec:ProofOfTheorem}

In order to prove Theorem \ref{theo:RefinedWeyl}, we shall follow the strategy
of \cite[Chapter 10]{DiSj} for the non-discrete setting. First, we see
in Subsection \ref{subsec:MainProof1} that the proof of Theorem
\ref{theo:RefinedWeyl} can be reduced to the proof of Proposition
\ref{prop:f1f3asymp}, where the neighbourhoods
of the interval boundaries $\alpha$ and $\beta$ are analysed. The interior of the
interval can already be treated by means of the trace asymptotics in Theorem
\ref{theo:TraceAsympExp}. 

The main tool for proving
Proposition \ref{prop:f1f3asymp} is a semiclassical approximation of
the time evolution of $\P_\epsilon$ with respect to the trace
norm. This construction is given in Subsection \ref{subsec:MainProof2}
and follows standard ideas, however an additional analysis addressing
the periodicity with respect to the momentum variable is needed. 

We
complete the proof of Proposition \ref{prop:f1f3asymp} in Subsection
\ref{subsec:MainProof3}, where we relate the Fourier transform of the
time evolution to the density of eigenvalues near the non-critical
points $\alpha$ and $\beta$ up to an error of order
$\O(\epsilon)$. The approximation of the
time evolution as a Fourier integral operator of discrete type
induced by a certain kernel and a phase function allows to apply trace
estimates (Section \ref{sec:TraceEst}), Poisson summation techniques
(Appendix \ref{sec:appPoisson}) and the method of stationary phase,
which are the essential techniques here. 

Throughout this section we
shall assume Hypothesis \ref{hyp:WRef} to ensure that all occurring
phase space volumes are finite.

\subsection{Reducing the proof to Proposition \ref{prop:f1f3asymp}}
\label{subsec:MainProof1}

Let the interval $J$ and the symbol $a$
with leading order symbol $a_0$ satisfy Hypothesis \ref{hyp:WRef}. Let $\P_\epsilon$ be the 
self-adjoint operator associated to $a$ and
let $[\alpha, \beta] \subset J$ where $\alpha$ and $\beta$ are
non-critical values of $a_0$. 

We follow \cite[chapter 10]{DiSj} and
choose $f_1, f_2, f_3 \in \Cont_0^\infty(\R)$ with supports in $J$
such that
\begin{align}
 \label{eq:sumf1f2f3}
 f_1 + f_2 + f_3 &= 1 \quad \mbox{on } [\alpha,\beta],
\end{align}
$\supp f_2 \subset (\alpha,\beta)$ and that $f_1$ and $f_3$ have
supports in neighbourhoods of $\alpha$ and $\beta$, respectively, only
consisting of non-critical values of $a_0$.

Supposing that these
neighbourhoods are chosen sufficiently small and denoting by $\Neig([\alpha,\beta];\epsilon)$ the
number of eigenvalues of $\P_\epsilon$ in $[\alpha,\beta]$ as in Theorem \ref{theo:RefinedWeyl}, we have the
decomposition
\begin{align}
  \notag
  \Neig([\alpha,\beta];\epsilon) &= \sum_{\alpha \leq \lambda_j \leq
    \beta} 1  = \sum_{\alpha \leq \lambda_j \leq \beta} (f_1 + f_2
  + f_3)(\lambda_j)\\
  \label{eq:UnitDecomp}
  &= \sum_{\lambda_j \geq \alpha} f_1(\lambda_j) + \sum_{\lambda_j}
  f_2(\lambda_j) + \sum_{\lambda_j \leq \beta} f_3(\lambda_j),
\end{align}
where we sum over eigenvalues $\lambda_j$ of $\P_\epsilon$ counted
with multiplicity. The sum in the middle is the trace of
$f_2(\P_\epsilon)$, which can be expanded asymptotically according to
Theorem \ref{theo:TraceAsympExp}. Neglecting higher order terms we
thus get
\begin{align}
  \label{eq:f2asymp}
  \sum_{\lambda_j} f_2(\lambda_j) = \frac{1}{(2\pi \epsilon)^d} \left(
  \int_{\substack{x \in \R^d,\, \xi \in \T^d}}
  f_2(a_0(x,\xi))  d\xi dx + \O(\epsilon) \right) \qquad (\epsilon
  \downarrow 0)
\end{align}
The remaining sums in \eqref{eq:UnitDecomp} require substantially
different arguments (the cut-offs $\lambda_j \geq \alpha$ and
$\lambda_j \leq \beta$ are not smooth). As in the well known
pseudodifferential setting, we shall use a semiclassical parametrix
for the unitary group induced by $\P_\epsilon$ to obtain

\begin{prop}
  \label{prop:f1f3asymp}
  Assume the symbol $a$ and the interval $J$ to satisfy Hypothesis
  \ref{hyp:WRef}, in particular, $a \sim \sum_{j=0}^\infty \epsilon^j
  a_j$. Let $\P_\epsilon$ be the self-adjoint operator as in
  Proposition \ref{prop:PSelfAdj}. Furthermore, let $[\alpha, \beta] \subset J$ where
  $\alpha$ and $\beta$ are non-critical values of $a_0$. Then for
  $f_1$ and $f_3$ from \eqref{eq:sumf1f2f3}, we have
  \begin{align}
    \label{eq:f1asymp}
    \sum_{\lambda_j \geq \alpha} f_1(\lambda_j) &= \frac{1}{(2\pi
      \epsilon)^d} \left( \int_{\substack{x \in \R^d,\, \xi \in
        \T^d \\ \alpha \leq a_0(x,\xi)}} f_1(a_0(x,\xi)) d\xi
    dx + \O(\epsilon) \right), \\
  \label{eq:f3asymp}
  \sum_{\lambda_j \leq \beta} f_3(\lambda_j) &= \frac{1}{(2\pi
    \epsilon)^d} \left( \int_{\substack{x \in \R^d,\, \xi \in
      \T^d \\ a_0(x,\xi) \leq \beta}} f_3(a_0(x,\xi)) d\xi dx
  + \O(\epsilon) \right)
  \end{align}
  for $\epsilon \downarrow 0$, where we sum over eigenvalues
  $\lambda_j$ of $\P_\epsilon$.
\end{prop}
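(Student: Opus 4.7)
My plan is to prove \eqref{eq:f1asymp}; \eqref{eq:f3asymp} is entirely symmetric. Start from
\[ \sum_{\lambda_j \geq \alpha} f_1(\lambda_j) = \tr\bigl( \I_{[\alpha,\infty)}(\P_\epsilon) f_1(\P_\epsilon) \bigr), \]
which makes sense since $f_1(\P_\epsilon)$ is trace class (Proposition \ref{prop:fPTraceClass}). Following \cite[Chapter 10]{DiSj}, I would recover the sharp cutoff at $\alpha$ from the semiclassical Fourier transform of the unitary group $U_\epsilon(t) := e^{-it\P_\epsilon/\epsilon}$. Fix $\rho \in \S(\R)$ with $\hat\rho \in \Cont_0^\infty(\R)$, $\hat\rho \equiv 1$ near $0$, and $\supp \hat\rho$ contained in the time interval on which Theorem \ref{theo:PseudoFourIntApprox} provides the Fourier integral parametrix. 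Setting $\rho_\epsilon(\lambda) := \epsilon^{-1}\rho(\lambda/\epsilon)$, decompose
\[ \I_{[\alpha,\infty)} = \I_{[\alpha,\infty)} * \rho_\epsilon + \bigl(\I_{[\alpha,\infty)} - \I_{[\alpha,\infty)} * \rho_\epsilon\bigr); \]
the first piece is accessible via the parametrix, the second is a Tauberian remainder concentrated in a window of size $\epsilon$ around $\alpha$.

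For the smoothed part, the Fourier representation of the Heaviside distribution at semiclassical scale gives, up to a constant contribution absorbed into the smooth Weyl asymptotics of $\tr f_1(\P_\epsilon)$,
\[ \tr\bigl((\I_{[\alpha,\infty)} * \rho_\epsilon)(\P_\epsilon) f_1(\P_\epsilon)\bigr) = \frac{1}{2\pi i} \int_\R \frac{\hat\rho(t)}{t - i0}\, e^{-i\alpha t/\epsilon}\, \tr\bigl( U_\epsilon(-t) f_1(\P_\epsilon) \bigr)\, dt + \text{lower order}. \]
Inserting Theorem \ref{theo:PseudoFourIntApprox}, the operator $U_\epsilon(-t) f_1(\P_\epsilon)$ is, modulo $\O(\epsilon^\infty)$ in trace norm, a discrete Fourier integral operator with phase function $\Phi(t,x,\xi)$ generating the Hamiltonian flow of $a_0$ on $\R^d \times \T^d$ and amplitude $b(t,x,\xi;\epsilon) \sim \sum_j \epsilon^j b_j(t,x,\xi)$, both periodic in $\xi$, with $b_0(0,x,\xi) = f_1(a_0(x,\xi))$. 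Computing the trace via Proposition \ref{prop:TraClCriPDO} and replacing $\sum_{x \in \epsilon\Z^d}$ by $\epsilon^{-d}\int_{\R^d} dx$ with $\O(\epsilon^\infty)$ error using the Poisson summation estimates of Appendix B (where $\xi$-periodicity is essential), one obtains an oscillatory integral in $(t,x,\xi) \in \R \times \R^d \times \T^d$ whose leading phase is $t\bigl(a_0(x,\xi) - \alpha\bigr)/\epsilon$. Since $\alpha$ is a non-critical value of $a_0$, stationary phase in $t$ applies uniformly on a neighbourhood of $\{a_0 = \alpha\}$ and produces the volume term $(2\pi\epsilon)^{-d} \int_{a_0 \geq \alpha} f_1(a_0)\,dx\,d\xi$ with a remainder $\O(\epsilon^{1-d})$.

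The Tauberian remainder $\sum_j \bigl[\I_{[\alpha,\infty)}(\lambda_j) - (\I_{[\alpha,\infty)} * \rho_\epsilon)(\lambda_j)\bigr] f_1(\lambda_j)$ is controlled by the rapid decay of $\I_{[\alpha,\infty)} - \I_{[\alpha,\infty)} * \rho_\epsilon$ away from $\alpha$ at scale $\epsilon$ combined with the density bound implicit in Corollary \ref{cor:WeylRough}: non-criticality of $\alpha$ ensures that the number of eigenvalues of $\P_\epsilon$ in an $\epsilon$-neighbourhood of $\alpha$ is $\O(\epsilon^{1-d})$, which is the desired magnitude. Combining both contributions and accounting for the $(2\pi\epsilon)^{-d}$ prefactor in \eqref{eq:f1asymp} yields the claim with error $\O(\epsilon)$. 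The main obstacle is the construction in Subsection \ref{subsec:MainProof2} of a parametrix for $U_\epsilon(t) f_1(\P_\epsilon)$ whose phase and amplitude are \emph{periodic in $\xi$}, so that the oscillatory integrals live on $\R^d \times \T^d$ rather than on $\R^{2d}$; without this periodicity, stationary phase would produce spurious sums over lattice translates in $\xi$ and fail to reproduce the correct symplectic volume. A secondary technical difficulty is tracking the dependence on $\hat\rho$ through the stationary phase expansion so that the $\O(\epsilon^{1-d})$ remainder is not amplified by derivatives of $\hat\rho$ blowing up as the support of $\hat\rho$ is restricted.
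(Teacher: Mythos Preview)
Your strategy matches the paper's almost exactly: Tauberian splitting via a time cutoff, insertion of the parametrix from Theorem~\ref{theo:PseudoFourIntApprox}, Poisson summation to pass from the lattice to $\R^d$, and stationary phase to extract the Liouville volume. The paper organizes this by introducing the smoothed spectral density $I_1(\lambda;f,\psi,\epsilon)$ and integrating it over $[\alpha,\infty)$, while you work directly with the Fourier representation of the Heaviside via $1/(t-i0)$; these are equivalent.

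There is one genuine gap. You justify the Tauberian remainder by invoking ``the density bound implicit in Corollary~\ref{cor:WeylRough}'', claiming it yields $\O(\epsilon^{1-d})$ eigenvalues in an $\epsilon$-window around $\alpha$. Corollary~\ref{cor:WeylRough} does \emph{not} give this: it provides $\Neig([\alpha,\beta];\epsilon)=(2\pi\epsilon)^{-d}(\vol+\O(\epsilon^\nu))$ with some $\nu\in(0,1)$ for a \emph{fixed} interval, the implicit constant depending on $\alpha,\beta$ through derivatives of the cutoff functions $\underline f_\delta,\overline f_\delta$. Applying it to a shrinking interval of length $\epsilon$ forces $\delta\lesssim\epsilon$, and the $\delta^{-k}$ factors in \eqref{eq:RdeltaepsRoughWeyl} blow up, destroying the bound. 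In the paper this is Lemma~\ref{lem:epsSubinterval}, and its proof \emph{requires} the parametrix: one takes $\psi$ with $\F_1\psi\ge 0$, integrates the stationary-phase asymptotic for $I_1(\lambda)$ over the $\epsilon$-interval, and uses positivity to bound the eigenvalue count from below by a multiple of the left side. So the Tauberian step is not a soft consequence of the rough Weyl law; it is a second application of the same parametrix machinery you already used for the smoothed term, and you should flag it as such.
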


With the help of \eqref{eq:f2asymp}, \eqref{eq:f1asymp} and
\eqref{eq:f3asymp} we may then replace the sums in
\eqref{eq:UnitDecomp} by their asymptotics and use the property
\eqref{eq:sumf1f2f3} to get
\begin{align*}
  \Neig([\alpha,\beta];\epsilon) &= \frac{1}{(2\pi \epsilon)^d} \left(
  \int_{\substack{x \in \R^d,\, \xi \in \T^d \\ \alpha \leq
      a_0(x,\xi) \leq \beta}} (f_1 + f_2 + f_3)(a_0(x,\xi)) d\xi dx +
  \O(\epsilon) \right) \\ &=
  \frac{1}{(2\pi \epsilon)^d} \left( \int_{\substack{x \in \R^d,\, \xi
      \in \T^d \\ \alpha \leq a_0(x,\xi) \leq \beta}} d\xi dx
  + \O(\epsilon) \right) \\
  &= \frac{1}{(2\pi \epsilon)^d}
    \left( \vol \left( a_0^{-1}([\alpha,\beta]) \right) +
      \O(\epsilon)\right)
\end{align*}
for $\epsilon \downarrow 0$. Granted Proposition \ref{prop:f1f3asymp}
this proves Theorem \ref{theo:RefinedWeyl}.

\subsection{Semiclassical approximation of the time evolution}
\label{subsec:MainProof2}

In this subsection we shall, for given $f \in \Cont_0^\infty(J)$,
construct Fourier integral operators $\U^{(f)}_\epsilon(t)$ of discrete
type, which approximate the time evolution $e^{it\P_\epsilon/\epsilon}
f(\P_\epsilon)$ to any order $\O(\epsilon^N)$ with respect to the
trace norm in a small neighbourhood of $t = 0$.
The use of $f(\P_\epsilon)$ introduces a localisation in energy. 
Our construction is summarised in 
Theorem \ref{theo:PseudoFourIntApprox}.

First we introduce a suitable class of Fourier integral operators
$\U_\epsilon(t)$ of discrete type (mapping $\l^2 (\epsilon \Z^d)$ to $\l^2 (\epsilon \Z^d)$), to which
our approximate time evolution $\U_\epsilon^{(f)}(t)$ belongs.  
In contrast to the
non-discrete setting, we later need both the kernel function to be periodic with
respect to the momentum variable and the phase function to fulfil an appropriate periodicity condition 
as specified in \eqref{equ:HamJacPeriod}.

At least formally, for $\epsilon \in (0,\epsilon_0]$ and
  $t \in \R$, the operator $\U_\epsilon(t)$ belonging to our class  
is induced by a kernel function $\mu$ and a Hamiltonian
  $H \in \Cont^\infty(\R^d \times \T^d)$ via the formula
  \begin{align}
    \label{eq:FIODef}
      \U_\epsilon(t) u(x) =
      \frac{1}{(2\pi)^d} \sum_{y \in \epsilon\Z^d} \int_{[-\pi,\pi]^d}
      e^{i(y\xi - \phi(t,x,\xi))/\epsilon} \mu(t,x,y,\xi;\epsilon) u(y)
      d\xi
\end{align}
for $u \in \l^2(\epsilon\Z^d),\, x \in \epsilon\Z^d$, where for some
numbers $T, L > 0$
\begin{enumerate}
\item $\mu$ is a symbol in $\Sy^0(1,\epsilon_0)(\R
  \times \R^d \times \R^d \times \T^d)$ and has support in $(-T,T)
  \times (-L,L)^d \times (-L,L)^d \times \T^d \times (0,\epsilon_0]$
    and
\item $\phi : \R \times \R^d \times \R^d
  \rightarrow \R$ is smooth in $(-T,T) \times (-L,L)^d \times
  \R^d$ and solves the Hamilton-Jacobi equation
  \begin{align}
    \label{eq:HamJacEq}
    \partial_t \phi(t,x,\xi) + H(x,\nabla_x \phi(t,x,\xi)) = 0, \quad
    \phi(0,x,\xi) = x\xi
  \end{align}
  in $(-T,T) \times (-L,L)^d \times \R^d$.
\end{enumerate}

We first observe that the relevant Hamilton-Jacobi equation \eqref{eq:HamJacEq} actually 
possesses smooth solutions of a certain periodicity type.

\begin{Lem}
  \label{lem:HamJacPeriod}
  For any smooth Hamiltonian which is $(2\pi\Z^d)$-periodic with
  respect to $\xi$, i.e. $H \in \Cont^\infty(\R^d \times \T^d)$, and
  for any compact set $K \subset \R^d$ there is a time $T > 0$ such
  that the associated Hamilton-Jacobi equation \eqref{eq:HamJacEq}
  with the specified initial condition has a unique smooth solution
  $\phi$ in the domain $(-T,T) \times K \times \R^d$. Furthermore,
  $\phi$ can be represented as
  \begin{align}\label{equ:HamJacPeriod}
    \phi(t,x,\xi) = x\xi + \phi_\T(t,x,\xi) \qquad (t \in (-T,T),\,
    x \in K,\, \xi \in \R^d)
  \end{align}
  where the function $\phi_\T$ is $(2\pi\Z^d)$-periodic with respect to $\xi$.
\end{Lem}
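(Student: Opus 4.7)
My plan is to solve the Hamilton-Jacobi equation \eqref{eq:HamJacEq} by the classical method of characteristics and then track the effect of the shift $\xi\mapsto\xi+\gamma$ for $\gamma\in 2\pi\Z^d$ on the flow in order to deduce the periodicity structure \eqref{equ:HamJacPeriod}.

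For fixed momentum parameter $\eta\in\R^d$, let $(X(t;y,\eta),\Xi(t;y,\eta))$ denote the solution of the Hamilton system
\[
\dot X=\partial_\xi H(X,\Xi),\qquad \dot\Xi=-\partial_x H(X,\Xi),
\]
with initial data $X(0)=y$, $\Xi(0)=\eta$. Since $H\in\Cont^\infty(\R^d\times\T^d)$, the functions $\partial_x H$ and $\partial_\xi H$ are smooth and \emph{uniformly} bounded in $\xi$ (together with all derivatives on $K\times\R^d$, by periodicity). Thus, standard ODE theory gives a time $T>0$, depending only on $K$ and finitely many derivatives of $H$, such that $(t,y,\eta)\mapsto(X,\Xi)$ is smooth on $(-T,T)\times K_1\times\R^d$ for a slightly larger compact $K_1\supset K$, and such that for $|t|<T$ the Jacobian $\partial X/\partial y$ remains close to the identity uniformly in $\eta$. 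Hence $y\mapsto X(t;y,\eta)$ is a diffeomorphism of a neighbourhood of $K$ onto its image (containing $K$) for every $\eta$ and $|t|<T$; denote its inverse by $y=Y(t,x,\eta)$. Define the candidate solution by the generating-function formula
\begin{equation}\label{eq:planphidef}
\phi(t,x,\xi):=Y(t,x,\xi)\cdot\xi+\int_0^t\bigl(\dot X(s)\cdot\Xi(s)-H(X(s),\Xi(s))\bigr)\,ds,
\end{equation}
where the characteristics are evaluated at initial data $(Y(t,x,\xi),\xi)$. A direct differentiation along characteristics shows $\partial_t\phi+H(x,\nabla_x\phi)=0$ and $\phi(0,x,\xi)=x\xi$; uniqueness follows because any smooth solution propagates its gradient along the Hamiltonian flow.

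For the periodicity assertion, fix $\gamma\in 2\pi\Z^d$ and consider the characteristic starting at $(y,\eta+\gamma)$. Since $H$ is $\gamma$-periodic in $\xi$, so are $\partial_x H$ and $\partial_\xi H$, and the pair $(X(t;y,\eta),\Xi(t;y,\eta)+\gamma)$ satisfies the Hamilton equations with initial data $(y,\eta+\gamma)$. By uniqueness,
\[
X(t;y,\eta+\gamma)=X(t;y,\eta),\qquad \Xi(t;y,\eta+\gamma)=\Xi(t;y,\eta)+\gamma.
\]
In particular $Y(t,x,\eta+\gamma)=Y(t,x,\eta)=:y$. Substituting into \eqref{eq:planphidef} and using $H$-periodicity in the integrand,
\[
\phi(t,x,\xi+\gamma)=y\cdot\xi+y\cdot\gamma+\int_0^t\bigl(\dot X\cdot\Xi-H(X,\Xi)\bigr)ds+\gamma\cdot\int_0^t\dot X(s)\,ds,
\]
and the last integral equals $\gamma\cdot(x-y)$. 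Therefore $\phi(t,x,\xi+\gamma)=\phi(t,x,\xi)+\gamma\cdot x$, so the function $\phi_\T(t,x,\xi):=\phi(t,x,\xi)-x\xi$ is $2\pi\Z^d$-periodic in $\xi$, yielding \eqref{equ:HamJacPeriod}.

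The main point that requires care — and where I expect the only genuine obstacle — is ensuring that $T$ can be chosen \emph{uniformly in $\xi\in\R^d$}, so that $\phi$ really is defined and smooth on the unbounded slab $(-T,T)\times K\times\R^d$ rather than only on compact pieces of it. This is where periodicity of $H$ is used a second time: all bounds on the Hamiltonian flow and on the Jacobian $\partial X/\partial y$ that enter the inverse function theorem depend only on $\sup$-norms of finitely many derivatives of $H$ on $K_1\times\T^d$, hence are automatically uniform in $\xi$. Everything else is routine and the formula for $\phi$ above makes the periodicity almost tautological.
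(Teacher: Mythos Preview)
Your argument is correct and takes a genuinely different route from the paper's. The paper proceeds at the PDE level: it first invokes black-box existence and uniqueness of the Hamilton--Jacobi solution on a bounded domain $(-T,T)\times K\times V$ with $V\supset[-\pi,\pi]^d$, then defines the shifted candidate $\tilde\phi(t,x,\xi)=x\xi+\phi_\T(t,x,\xi+\gamma)$, checks that $\tilde\phi$ again solves \eqref{eq:HamJacEq} (using periodicity of $H$), and concludes $\phi=\tilde\phi$ on the overlap $V\cap(V-\gamma)$ by uniqueness; the periodic extension of $\phi_\T$ then defines $\phi$ globally in $\xi$. You instead work at the ODE level, building $\phi$ explicitly from the Hamiltonian flow via the action integral and deducing $\phi(t,x,\xi+\gamma)=\phi(t,x,\xi)+\gamma\cdot x$ from the flow symmetry $(X,\Xi)\mapsto(X,\Xi+\gamma)$. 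The paper's approach is shorter because existence and uniqueness are simply cited, but your construction makes the uniform-in-$\xi$ choice of $T$ completely transparent (all relevant bounds live on $K_1\times\T^d$) and yields the periodicity identity by a one-line computation rather than an overlap argument. Both proofs ultimately rest on the same mechanism---the $\xi$-periodicity of $H$---but yours exposes it through the characteristics while the paper exposes it through PDE uniqueness.
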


\begin{proof}
  Let $V$ be an open bounded subset of $\R^d$ containing $[-\pi,\pi]^d$. It is
  known (see \cite{DiSj,Robert1987AutourDL,hoermander2009analysis})
  that there is $T>0$ such that the Hamilton-Jacobi equation
  \eqref{eq:HamJacEq} has a unique smooth solution $\phi$ in $(-T,T)
  \times K \times V$. Let $\phi_\T \in \Cont^\infty((-T,T) \times K \times V)$ with
  \begin{align}
    \phi(t,x,\xi) = x\xi + \phi_\T(t,x,\xi) \qquad (t \in (-T,T),\,
    x \in K,\, \xi \in V).
  \end{align}
  Denote by $(\gamma_i)$ all elements of $2\pi\Z^d$ with norm $2\pi$.
  We fix $i \in \left\{ 1, \dots, 2d \right\}$ and set $\gamma :=
  \gamma_i$.  We have by the initial assumption on $V$ that $V \cap (V
  - \gamma)$ is an open neighbourhood of the $i$-th face of
  $[-\pi,\pi]^d$. Define then the function
  \begin{align}
    \tilde{\phi}(t,x,\xi) := x\xi + \phi_\T(t,x,\xi + \gamma) \qquad
    (t \in (-T,T),\, x \in K,\, \xi \in V - \gamma).
  \end{align}
  By checking that $\tilde{\phi}$ fulfils \eqref{eq:HamJacEq} in
  $(-T,T) \times K \times (V - \gamma)$, we conclude by the uniqueness
  of the solution of the Hamilton-Jacobi equation that $\phi (t,x,\xi)
  = \tilde{\phi} (t,x,\xi)$ for $t \in (-T,T)$, $x \in K$, $\xi \in V
  \cap (V - \gamma)$, so $\phi_\T (t,x,\xi + \gamma) = \phi_\T
  (t,x,\xi)$. Since $\gamma$ was chosen as any $\gamma_i$, we will get
  \begin{align}
    \label{eq:phiTPeriod}
    \phi_\T (t,x,\xi + \gamma_i) = \phi_\T (t,x,\xi) \qquad (t
    \in (-T,T),\, x \in K)
  \end{align}
  for any $\gamma_i$ and $\xi \in V \cap (V - \gamma_i)$.

  We first check the initial condition. Let $x \in K$. Since
  $\phi(0,x,\xi) = x\xi$, we have $\phi_\T(0,x,\xi) = 0$ for $\xi \in
  V$. So $\tilde{\phi}(0,x,\xi) = x\xi$ for $\xi \in V - \gamma$.

  We check now that $\tilde{\phi}$ fulfils the Hamilton-Jacobi
  differential equation. Using periodicity of $H$ and that $\phi$
  solves the Hamilton-Jacobi equation, we have for $t \in (-T,T)$, $x
  \in K$, $\xi \in V-\gamma$
  \begin{align}
    \notag
    \partial_t \tilde{\phi}(t,x,\xi) + H(x,\nabla_x \tilde{\phi}(t,x,\xi)) &= \partial_t \phi_\T(t,x,\xi + \gamma) + 
    H(x, \xi + \nabla_x \phi_\T(t,x,\xi + \gamma)) \\
    \notag
    &= \partial_t \phi_\T(t,x,\xi + \gamma) + H(x, \xi + \gamma + \nabla_x \phi_\T(t,x,\xi + \gamma)) \\
    \notag
    &= \partial_t \phi(t,x,\xi + \gamma) + H(x, \nabla_x \phi(t,x,\xi + \gamma)) \\
    \label{eq:HamiltonJac_phiTilde} &= 0.
  \end{align}
  Therefore the periodicity statement \eqref{eq:phiTPeriod} is
  valid. Since $(\gamma_i)$ generates $2\pi\Z^d$, we can now extend
  $\phi_\T$ uniquely to a periodic function on the domain $(-T,T)
  \times K \times \R^d$. This extension in turn is used to extend
  $\phi$ onto the same domain by defining
  \begin{align}
    \phi(t,x,\xi) := x\xi + \phi_\T(t,x,\xi) \qquad (t \in (-T,T),\, x
    \in K,\, \xi \in \R^d).
  \end{align}
  Arguing as in \eqref{eq:HamiltonJac_phiTilde} we see that $\phi$
  fulfils the Hamilton-Jacobi equation \eqref{eq:HamJacEq} on $(-T,T)
  \times K \times \R^d$.
\end{proof}

Observe also that due to Proposition
\ref{prop:TraClCri}, since $\mu$ is
compactly supported with respect to $x$ and $y$, we have

\begin{Lem}
  \label{lem:UepsTrCl}
  $\U_\epsilon(t)$ is of trace class, in particular it is bounded from $\l^2 (\epsilon \Z^d)$ to
  $\l^2 (\epsilon \Z^d)$. 
\end{Lem}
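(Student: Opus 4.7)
The plan is to realise $\U_\epsilon(t)$ as an integral operator on $\l^2(\epsilon\Z^d)$ in the sense of \eqref{eq:IntOpAeps} and then apply the trace class criterion of Proposition \ref{prop:TraClCri} directly. Reading off \eqref{eq:FIODef}, the kernel is
\begin{align*}
  k_\epsilon(x,y) := \frac{1}{(2\pi)^d} \int_{[-\pi,\pi]^d} e^{i(y\xi - \phi(t,x,\xi))/\epsilon} \mu(t,x,y,\xi;\epsilon) d\xi \qquad (x,y \in \epsilon\Z^d).
\end{align*}
Since, by assumption, $\mu(t,\cdot,\cdot,\cdot;\epsilon)$ is supported in $(-L,L)^d \times (-L,L)^d \times \T^d$, the kernel $k_\epsilon(x,y)$ vanishes unless both $x$ and $y$ lie in the compact set $(-L,L)^d$.

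Next I would obtain a crude pointwise bound on $k_\epsilon$. Because $\mu \in \Sy^0(1,\epsilon_0)$ and the $\xi$-integration is over the compact torus $[-\pi,\pi]^d$, the integrand is bounded in absolute value by $\norm{\mu}_0$ uniformly in $x,y$ and in $\epsilon \in (0,\epsilon_0]$, so one obtains an estimate $|k_\epsilon(x,y)| \leq C$ for some constant $C$ independent of $x,y,\epsilon$ (for $t$ in any fixed compact subset of $(-T,T)$).

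Combining these two observations: for each fixed $y \in \epsilon\Z^d \cap (-L,L)^d$ the number of lattice points $x$ contributing to $\norm{k_\epsilon(\cdot,y)}^2 = \sum_x |k_\epsilon(x,y)|^2$ is at most of order $\epsilon^{-d}$, so $\norm{k_\epsilon(\cdot,y)} \leq C' \epsilon^{-d/2}$; and the number of $y$ for which $k_\epsilon(\cdot,y) \neq 0$ is again of order $\epsilon^{-d}$. Consequently
\begin{align*}
  \sum_{y \in \epsilon\Z^d} \norm{k_\epsilon(\cdot,y)} < \infty
\end{align*}
for each fixed $\epsilon \in (0,\epsilon_0]$, which by Proposition \ref{prop:TraClCri} gives that $\U_\epsilon(t)$ is trace class with the crude bound $\trn{\U_\epsilon(t)} \leq C'' \epsilon^{-3d/2}$. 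Boundedness on $\l^2(\epsilon\Z^d)$ is then automatic from the trace class property.

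There is really no hard step here: the only point that requires any care is making sure that one is estimating the discrete $\l^2$-norm $\norm{k_\epsilon(\cdot,y)}$ and not, for instance, the continuum norm, and for this the compact support of $\mu$ in both $x$ and $y$ does all the work. Sharper quantitative bounds on $\trn{\U_\epsilon(t)}$ are not claimed in the lemma and are not needed here, since this lemma only serves as an a priori trace class statement used as input for the more refined approximation arguments of Theorem \ref{theo:PseudoFourIntApprox}.
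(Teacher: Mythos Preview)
Your proof is correct and follows essentially the same route as the paper: both invoke Proposition \ref{prop:TraClCri} using the compact support of $\mu$ in $x$ and $y$. The paper additionally remarks that the compact support in both variables means only finitely many lattice points $x,y$ contribute for each fixed $\epsilon$, so $\U_\epsilon(t)$ is actually of \emph{finite rank} and hence trivially trace class; your more explicit estimate via $\sum_y \norm{k_\epsilon(\cdot,y)}$ works just as well but is slightly more than is needed.
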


In fact, $\U_\epsilon (t)$ is even a finite rank operator due to the compact support of $\mu$ and as such clearly trace class.

As a consequence of Lemma \ref{lem:HamJacPeriod} and \ref{lem:UepsTrCl}, our class of operators formally 
given in \eqref{eq:FIODef} is non-empty as
a class of trace class operators. 

We are now ready to construct $\U^{(f)}_\epsilon(t)$.

\begin{theo}
  \label{theo:PseudoFourIntApprox}
  Assume the symbol $a$ with leading order $a_0$ and the interval $J$ to satisfy Hypothesis
  \ref{hyp:WRef} and let $\P_\epsilon$ be the associated self-adjoint operator as in
  Proposition \ref{prop:PSelfAdj}. Let $f \in \Cont_0^\infty(\R)$
  with $\supp f \subset J$ and let $\chi \in \Cont_0^\infty(\R^d)$
  with $\chi \equiv 1$ near some compact set $K \subset \R^d$ where
  $a_0^{-1}(J) \subset K \times \T^d$. Then there is a family
  $(\U^{(f)}_\epsilon(t))_{\epsilon,t}$ of operators of the form
  \eqref{eq:FIODef} induced by some kernel function $\mu$ and the
  Hamiltonian $H = a_0$ such that
  \begin{align}
    \label{eq:PseudoFIOApprox}
    \sup_{|t| < T} \trn{ \U^{(f)}_\epsilon(t) -
      e^{it\P_\epsilon/\epsilon}f(\P_\epsilon)} = \O(\epsilon^\infty)
    \qquad (\epsilon \downarrow 0)
  \end{align}
  for some number $T > 0$ (possibly shrinking the number $T$ from
  \eqref{eq:FIODef}) and
  \begin{align}
    \label{eq:FIOKernelInit}
    \mu(0,x,y,\xi;\epsilon) = \chi(x)\chi(y) c((x+y)/2,\xi;\epsilon)
  \end{align}
  for $x,y \in \R^d, \xi\in \T^d, \epsilon \in (0,\epsilon_0]$ where $c \in
    \Sy^0(1,\epsilon_0)(\R^d \times \T^d)$ is the Weyl symbol of
    $f(\P_\epsilon)$ (characterised by \eqref{eq:FuncCalcSymb} and Corollary \ref{cor:cperiodic}).
\end{theo}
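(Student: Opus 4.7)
The strategy is the standard WKB/parametrix construction adapted from \cite[Chapter 10]{DiSj}, but with the crucial additional requirement that every object built is either compactly supported in $x, y$ or $(2\pi \Z^d)$-periodic in $\xi$, so that it fits into the discrete quantisation framework. I would begin by making an ansatz of the form \eqref{eq:FIODef} with Hamiltonian $H = a_0$, and look for a kernel in the form of a formal Borel sum $\mu(t,x,y,\xi;\epsilon) \sim \sum_{j \geq 0} \epsilon^j \mu_j(t,x,y,\xi)$. The phase function $\phi$ is taken to be the solution of the Hamilton--Jacobi equation \eqref{eq:HamJacEq} supplied by Lemma \ref{lem:HamJacPeriod}, which is defined and smooth on $(-T,T) \times K' \times \R^d$ for some $T > 0$ and some compact $K' \supset \supp \chi$, with $\phi - x\xi$ periodic in $\xi$. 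Shrinking $T$ if necessary ensures that the map $\xi \mapsto \nabla_x \phi(t,x,\xi)$ remains a small perturbation of the identity, hence a diffeomorphism.

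Next I would plug this ansatz into $(i\epsilon \partial_t - \P_\epsilon)\U^{(f)}_\epsilon(t)$ and expand asymptotically in $\epsilon$. Applying $\Op^\T_{\epsilon,1/2} a$ to an oscillatory integral with phase $e^{-i\phi(t,x,\xi)/\epsilon}$ and amplitude $\mu$, a standard stationary-phase expansion around the critical point determined by $\nabla_x \phi$ produces an asymptotic series whose leading term contains the factor $\partial_t \phi + a_0(x, \nabla_x\phi)$, which vanishes identically by \eqref{eq:HamJacEq}. Collecting coefficients of $\epsilon^j$ then yields a sequence of transport equations
\begin{align*}
  (\partial_t + X_{a_0} + b_0(t,x,\xi))\mu_0 &= 0, \\
  (\partial_t + X_{a_0} + b_0(t,x,\xi))\mu_j &= F_j[\mu_0, \dots, \mu_{j-1}] \qquad (j \geq 1),
\end{align*}
along the flow of a vector field $X_{a_0}$ associated with the bicharacteristics of $a_0$, with $F_j$ a differential polynomial in previously determined terms. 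These ODEs along characteristics can be solved globally on $(-T,T) \times K' \times \R^d$ with initial data at $t=0$. The initial data is fixed by the requirement $\U^{(f)}_\epsilon(0) \equiv f(\P_\epsilon)$ modulo $\O(\epsilon^\infty)$ in trace norm: I would choose $\mu_j(0,x,y,\xi) = \chi(x)\chi(y) c_j((x+y)/2,\xi)$ for $j \geq 1$ (and $\mu_0(0,\cdot)$ as in \eqref{eq:FIOKernelInit}), where the $c_j$ are the symbols from Corollary \ref{cor:cperiodic}, already known to be periodic in $\xi$ and supported in $K \times \T^d$.

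The key discrete-setting point is to verify that each $\mu_j$ is $(2\pi\Z^d)$-periodic in $\xi$. This follows by induction: the initial data is periodic in $\xi$ by construction, the vector field $X_{a_0}$ and the coefficient $b_0$ only involve $a_0$ and derivatives of $\phi$, hence are periodic in $\xi$ by Lemma \ref{lem:HamJacPeriod}, and the source terms $F_j$ are built from previously constructed periodic objects; so periodicity is preserved under the flow. An analogous inductive argument preserves compact support in $x, y$, using that $\chi$ in \eqref{eq:FIOKernelInit} cuts off both variables and that the classical flow of $a_0$ during $|t| < T$ stays in an enlarged compact set. A Borel summation (as in \cite{hoermander1968LecNotes}) then produces a genuine symbol $\mu \in \Sy^0(1,\epsilon_0)$ with support in $(-T,T) \times (-L,L)^d \times (-L,L)^d \times \T^d \times (0,\epsilon_0]$ realising the asymptotic series.

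Finally, to upgrade the formal identity $(i\epsilon\partial_t - \P_\epsilon)\U^{(f)}_\epsilon(t) = \O(\epsilon^\infty)$ to the trace-norm estimate \eqref{eq:PseudoFIOApprox}, I would use Duhamel's formula
\[
\U^{(f)}_\epsilon(t) - e^{it\P_\epsilon/\epsilon} f(\P_\epsilon) = e^{it\P_\epsilon/\epsilon}\bigl(\U^{(f)}_\epsilon(0) - f(\P_\epsilon)\bigr) - \frac{i}{\epsilon}\int_0^t e^{i(t-s)\P_\epsilon/\epsilon}\, R_\epsilon(s)\, ds,
\]
where $R_\epsilon(s) = (i\epsilon\partial_s - \P_\epsilon)\U^{(f)}_\epsilon(s)$. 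Both $\U^{(f)}_\epsilon(0) - f(\P_\epsilon)$ and $R_\epsilon(s)$ are $\Op^\T_{\epsilon,1/2}$-quantisations of symbols in $\Sy^{N}(\jap{x}^{-N},\epsilon_0)$ for every $N$, so Proposition \ref{prop:TraClCriPDO} together with $\|e^{it\P_\epsilon/\epsilon}\| = 1$ and the ideal property \eqref{eq:TraceIdealEstim} yield trace-norm bounds of order $\O(\epsilon^N)$ uniformly for $|t| < T$. The main obstacle, in my view, is not the formal construction (which is classical) but establishing at each step the $\xi$-periodicity of the amplitudes $\mu_j$ together with a uniform support control in $(x,y)$ strong enough that the final remainder can be estimated in trace norm (and not merely in operator norm) using the criteria of Section 3; this is what forces the explicit use of the cut-off $\chi$ with $\chi \equiv 1$ on $a_0^{-1}(J)$, so that the discrepancy between the transported initial data and $f(\P_\epsilon)$ remains negligible.
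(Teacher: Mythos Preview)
Your overall strategy matches the paper's closely: the same Hamilton--Jacobi phase from Lemma \ref{lem:HamJacPeriod}, the same hierarchy of transport equations for the $\mu_j$ with inductive verification of $\xi$-periodicity and compact $(x,y)$-support, Borel summation, and a Duhamel argument. Two points in your final step, however, do not work as stated.

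First, the remainder $R_\epsilon(s)$ is \emph{not} a discrete pseudodifferential operator: its kernel carries the phase $e^{i(y\xi - \phi(s,x,\xi))/\epsilon}$, which for $s \neq 0$ is not of the form $e^{i(y-x)\xi/\epsilon}$, so Proposition \ref{prop:TraClCriPDO} is inapplicable. The paper instead shows (Claim \ref{claim:AppropKernelFIO}) that the amplitude $\tilde\eta$ of this Fourier integral operator lies in $\Sy^{n_1}(\jap{(x,y)}^{-n_2},\epsilon_0)$ for all $n_1,n_2$ and bounds the trace norm directly via the general kernel criterion of Proposition \ref{prop:TraClCri}; see \eqref{eq:applyTildeEta2}. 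Obtaining these amplitude estimates with the correct periodicity in $\xi$ is exactly where Proposition \ref{prop:OpUnitConj} enters, a step you glossed as ``stationary-phase expansion''. (Also note the sign: with the group $e^{+it\P_\epsilon/\epsilon}$ the relevant operator is $i\epsilon\partial_t + \P_\epsilon$, not $i\epsilon\partial_t - \P_\epsilon$.)

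Second, $\U^{(f)}_\epsilon(0) - f(\P_\epsilon) = (\Op_{\epsilon,1/2}^\T\chi)\, f(\P_\epsilon)\, (\Op_{\epsilon,1/2}^\T\chi) - f(\P_\epsilon)$ is not obviously the quantisation of a symbol in $\Sy^N(\jap{x}^{-N},\epsilon_0)$: the Weyl symbol $c$ of $f(\P_\epsilon)$ lies only in $\Sy^0(m^{-k},\epsilon_1)$ for the order function $m$ of $a$, which carries no a priori spatial decay. The paper handles this term by factoring out $f(\P_\epsilon)\,\Op_{\epsilon,1/2}^\T(1-\chi)$ and its adjoint and invoking Lemma \ref{lem:fPChiTrace}, whose proof rests on the resolvent representation \eqref{eq:FuncCalcAdjust} and the compactly supported difference $a - \adjust{a}$; this is not a pure symbolic-calculus argument and cannot be replaced by a direct appeal to Proposition \ref{prop:TraClCriPDO}.
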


\begin{proof} Fixing $f\in \Cont_0^\infty(\R)$ with $\supp f \subset J$ we write for simplicity 
$\U_\epsilon:= \U_\epsilon^{(f)}$ and we shall explicitly show that $\U_\epsilon$ can be constructed in the class of 
operators satisfying \eqref{eq:FIODef}.
  We follow the strategy of proof in \cite[chapter 10]{DiSj} for
  analogous statements in the non-discrete setting. We define  the operator
   \begin{align}
    \mathbf{W}_\epsilon(t) := \U_\epsilon(t) -
    e^{it\P_\epsilon/\epsilon} f(\P_\epsilon)\, ,
  \end{align}
  which is of trace class for $\epsilon$ sufficiently small, since $f(\P_\epsilon)$ is of
  trace class for small $\epsilon$ (Proposition
  \ref{prop:fPTraceClass}) and $\U_\epsilon(t)$ is of trace class
  (Lemma \ref{lem:UepsTrCl}).
  By the fundamental
  theorem of calculus $\mathbf{W}_\epsilon (t)$ can be represented as
  \begin{align}
    \label{eq:WtDiffRep}
    i \epsilon e^{-it\P_\epsilon/\epsilon} \mathbf{W}_\epsilon(t) =
    \int_0^t i\epsilon \partial_\tau \left( e^{-i\tau\P_\epsilon/\epsilon}
      \mathbf{W}_\epsilon(\tau) \right) d\tau + i \epsilon
    \mathbf{W}_\epsilon(0).
  \end{align}
  Using that $\trn{A_b A_{tr}} \leq \norm{A_b} \trn{A_{tr}}$ for a
  bounded operator $A_b$ and a trace class operator $A_{tr}$,
  we can take the trace norm in \eqref{eq:WtDiffRep} to find that for
  any number $T > 0$ the lhs of \eqref{eq:PseudoFIOApprox} can be bounded
  by
  \begin{align}
    \label{eq:supTrW}
    \sup_{|t| < T} \trn{ \mathbf{W}_\epsilon(t) } \leq
    \frac{T}{\epsilon} \sup_{|t| < T} \trn{ i\epsilon \partial_t
    \left( e^{-it\P_\epsilon/\epsilon}
    \mathbf{W}_\epsilon(t)\right) } + \trn{\mathbf{W}_\epsilon(0)}.
  \end{align}
  We shall see that $\U_\epsilon(t)$ can be constructed such that all
  the appearing trace norms are finite. The time derivative that
  appears in \eqref{eq:supTrW} can be written as
  \begin{align}
    \label{eq:partW}
    i\epsilon\partial_t \left( e^{-it\P_\epsilon/\epsilon}
    \mathbf{W}_\epsilon(t) \right) =  i\epsilon\partial_t \left( e^{-it\P_\epsilon/\epsilon}
    \U_\epsilon(t) \right) =
    e^{-it\P_\epsilon/\epsilon} (i\epsilon\partial_t + \P_\epsilon)
    \U_\epsilon(t).
  \end{align}
  By \eqref{eq:supTrW} and \eqref{eq:partW}, the estimate
  \eqref{eq:PseudoFIOApprox} holds if $\U_\epsilon(t)$ is chosen such
  that
  \begin{align}
    \label{eq:CondForFourierAppr}
    \trn{\mathbf{W}_\epsilon(0)} + \trn{(i\epsilon
  \partial_t + \P_\epsilon)\U_\epsilon(t)} = \O(\epsilon^\infty),
  \end{align}
  uniformly in $t \in (-T,T)$. We will construct operators
  $\U_\epsilon(t)$ which satisfy \eqref{eq:CondForFourierAppr} for
  some number $T > 0$.

  Let $L > 0$ with $\supp \chi \subset (-L,L)^d$. According to Lemma
  \ref{lem:HamJacPeriod} (applied with $K = [-L,L]^d$) we find $T' >
  0$ such that the Hamilton-Jacobi equation \eqref{eq:HamJacEq} with
  $H = a_0$ can be solved by some $\phi \in \Cont^\infty(\R \times
  \R^d \times \R^d)$ in the domain $(-T',T') \times (-L,L)^d \times
  \R^d$. Moreover we may assume $\phi$ to fulfil the condition
  \eqref{eq:gradPhiAss} of Proposition \ref{prop:OpUnitConj}, which will be used later. In the
  following we take this $\phi$ as phase function in $\U_\epsilon(t)$.
  
  We seek to control each summand in \eqref{eq:CondForFourierAppr}. We
  handle $\trn{\mathbf{W}_\epsilon(0)}$ first by defining
  $\U_\epsilon(t)$ for $t=0$. We have $\phi(0,x,\xi) = x\xi$ due to
  \eqref{eq:HamJacEq} and we choose the kernel $\mu$ of
  $\U_\epsilon(0)$ to fullfill \eqref{eq:FIOKernelInit}.
  $\mu(0,\cdot,\cdot,\cdot;\epsilon)$ then clearly has support in
  $(-L,L)^d \times (-L,L)^d \times \T^d$. 
  $\U_\epsilon(0)$ now takes the form
  \begin{align}
    \notag
     \U_\epsilon(0) u(x) &= \frac{1}{(2\pi)^d} \sum_{y \in
       \epsilon\Z^d} \int_{[-\pi,\pi]^d} e^{i(y - x)\xi/\epsilon}
     \chi(x)\chi(y) c((x+y)/2,\xi;\epsilon) u(y) d\xi \\
     \label{eq:UchifPchi} &= \left(
     \Op_{\epsilon,1/2}^\T \chi \right) f(\P_\epsilon) \left(
     \Op_{\epsilon,1/2}^\T \chi \right) u(x)
  \end{align}
  for $u \in \l^2(\epsilon\Z^d),\, x \in \epsilon\Z^d$. Thus, using Lemma
  \ref{lem:fPChiTrace}, we have
  \begin{align}
    \trn{\mathbf{W}_\epsilon(0)} = \trn{\U_\epsilon(0) -
      f(\P_\epsilon)} = \O(\epsilon^\infty).
  \end{align}
    Note that Lemma \ref{lem:fPChiTrace} is applicable because $\chi
    \equiv 1$ near some compact set $K$ where $a_0^{-1}(J) \subset K
    \times \T^d$. This means that for any open set $O \supset K$ we
    have $a(\cdot,\cdot;\epsilon)^{-1}(J) \subset O \times \T^d$ for
    $\epsilon$ small enough and that we can even choose $O$ such that
    there are a $(\sup J)$-adjustment $\adjust{a}$ of $a$ and a
    compact set $K' \supset O$ with $\left. \chi \right|_{K'} \equiv
    1$ and $({K'}^c \times \T^d) \cap G = \emptyset$ for $G =
    \bigcup_{\epsilon \in (0,\epsilon_0]}
      \supp((a-\adjust{a})(\cdot,\cdot;\epsilon))$.

  We shall now handle the second summand $\trn{(i\epsilon \partial_t +
    \P_\epsilon)\U_\epsilon(t)}$ in \eqref{eq:CondForFourierAppr} by
  constructing the operators $\U_\epsilon(t)$ for $t\neq 0$. This
  construction shall be reduced to well known results of the
  non-discrete setting, where defining the amplitude $\mu$ will be
  based on solutions of transport equations. Here, in contrast to the
  standard setting, it is absolutely crucial to check additional
  periodicity properties of $\mu$ with respect to $\xi$.

  To cover the discrete case,
  we use the restriction mapping $r_\epsilon$ combined with the
  restriction formula \eqref{eq:RestrForm}. Observe that, setting
  \begin{align}
    \eta_{t,y,\xi;\epsilon}(x) := e^{-i\phi(t,x,\xi)/\epsilon}
    \mu(t,x,y,\xi;\epsilon) \qquad (x,y \in \R^d,\, \xi \in \T^d,\, t \in
    (-T',T'),\, \epsilon \in (0,\epsilon_0]),
  \end{align}
  with yet undetermined $\mu(t,x,y,\xi;\epsilon)$,
  the operator $(i\epsilon \partial_t + \P_\epsilon)\U_\epsilon(t)$
  can be formally represented as
  \begin{align}
    \notag
    (i\epsilon \partial_t + \P_\epsilon)\U_\epsilon(t) u(x) &=
    \frac{1}{(2\pi)^d} \sum_{y\in \epsilon\Z^d} \int_{[-\pi,\pi]^d}
    ((i\epsilon\partial_t + \P_\epsilon) \circ r_\epsilon)
    \eta_{t,y,\xi;\epsilon} (x) e^{iy\xi/\epsilon} u(y) d\xi \\ \label{eq:iedPCont} &=
    \frac{1}{(2\pi)^d} \sum_{y\in \epsilon\Z^d} \int_{[-\pi,\pi]^d}
    \left( r_\epsilon \circ \left( i\epsilon\partial_t +
    \Op_{\epsilon,1/2} a \right) \right) \eta_{t,y,\xi;\epsilon} (x)
    e^{iy\xi/\epsilon} u(y) d\xi
  \end{align}
  for $u \in \l^2(\epsilon\Z^d),\, x \in \epsilon\Z^d$. We remark that
  we shall construct $\mu$ such that $\mu(t,\cdot,\cdot,\xi;\epsilon)$
  has compact support. Thus $\Op_{\epsilon, 1/2} a$ can actually be
  applied to $ \eta_{t,y,\xi;\epsilon}$. Let
  \begin{align}
    \notag \tilde{\eta}(t) &: (x,y,\xi,\epsilon) \mapsto
    e^{i\phi(t,x,\xi)/\epsilon}\left( i\epsilon\partial_t +
    \Op_{\epsilon,1/2} a \right) \eta_{t,y,\xi;\epsilon}
    (x)\\ \label{eq:tildeEtaT} &\phantom{: (x,y,\xi,\epsilon) \mapsto}
    = \left[ e^{i\phi(t,\cdot,\xi)/\epsilon} \left(
      i\epsilon\partial_t + \Op_{\epsilon,1/2} a \right)
      e^{-i\phi(t,\cdot,\xi)/\epsilon} \mu(t, \cdot, y,\xi;\epsilon)
      \right](x)
  \end{align}
  for $t \in (-T',T')$. We identify $\tilde{\eta}(\cdot)$ with a
  function on $(-T',T')$.

  \begin{claim}
    \label{claim:AppropKernelFIO}
    A function $\mu(t,x,y,\xi;\epsilon)$, compactly supported with respect to $x$ and $y$ and periodic in $\xi$, 
    can be constructed such that
    for some $\tilde{T} \in (0,T')$ the function $\tilde{\eta}$
    satisfies
    \begin{align}
    \label{eq:tildeEtaC0SN}
    \tilde{\eta} \in \Cont^\infty \left( (-\tilde{T},\tilde{T}),
    \Sy^{n_1}(m_0^{-n_2},\epsilon_0)(\R^d \times \R^d \times \T^d)
    \right)
    \end{align}
    for $m_0(x,y,\xi) = \jap{(x,y)}$ and any $n_1, n_2 \in \N$.
  \end{claim}
  
  Assuming Claim \ref{claim:AppropKernelFIO}, which is proven below,
  the number $T$ in \eqref{eq:PseudoFIOApprox} is then chosen to be
  some $T \in (0,\tilde{T})$. We finally choose the kernel function
  for $\U_\epsilon(t)$ in Theorem \ref{theo:PseudoFourIntApprox} to be
  $\mu$ multiplied by a cut-off function being equal to $1$ on
  $(-T,T)$.  Using \eqref{eq:iedPCont} and choosing $n_2 > 2d + 1$ in
  \eqref{eq:tildeEtaC0SN}, we simultaneously see that the operator $(i\epsilon
  \partial_t + \P_\epsilon)\U_\epsilon(t)$ is of the form
  \eqref{eq:IntOpAeps} with kernel
  \begin{align}
    k_{t,\epsilon}(x,y) = \frac{1}{(2\pi)^d} \int_{\T^d} e^{i(y\xi - \phi(t,x,\xi))/\epsilon}
    \tilde{\eta}(t)(x,y,\xi,\epsilon) d\xi
  \end{align}
  and fulfils the condition \eqref{eq:TraClCondSpec} and therefore
  the trace class condition \eqref{eq:TraClCond} of Proposition
  \ref{prop:TraClCri}.  The trace norm estimate
  \eqref{eq:TraClEstimateGeneral} then yields
  \begin{align}
    \notag \trn{(i\epsilon \partial_t + \P_\epsilon)\U_\epsilon(t)}
    &\leq \sum_{y \in \epsilon\Z^d} \left( \sum_{x \in \epsilon\Z^d}
    \left| k_{t,\epsilon}(x,y) \right|^2 \right)^{1/2} \\ \notag &\leq
    \frac{1}{(2\pi)^d}\sum_{y \in \epsilon\Z^d} \left( \sum_{x \in
      \epsilon\Z^d} \left( \int_{\T^d} \left|
    \tilde{\eta}(t)(x,y,\xi,\epsilon) \right| d\xi \right)^2
    \right)^{1/2} \\ \notag &= \O(\epsilon^{n_1}) \sum_{y \in
      \epsilon\Z^d} \left( \sum_{x \in \epsilon\Z^d}
    \jap{(x,y)}^{-2n_2} \right)^{1/2} \\ \notag &= \O(\epsilon^{n_1})
    \sum_{y \in \Z^d} \left( \sum_{x \in \Z^d} \jap{(\epsilon
      x,\epsilon y)}^{-2n_2} \right)^{1/2}\\ \label{eq:applyTildeEta2}
    &= \O(\epsilon^{n_1 - n_2}) \sum_{y \in \Z^d} \left( \sum_{x \in
      \Z^d} \jap{(x,y)}^{-2n_2} \right)^{1/2}.
  \end{align}
  These estimates are uniform with respect to $t \in [-T,T]$ since
  $\tilde{\eta}$ is continuous on $(-\tilde{T},\tilde{T})$ and
  therefore bounded on $[-T,T]$. For any $N \in \N$ the lhs of
  \eqref{eq:applyTildeEta2} is of order $\O(\epsilon^N)$ since we may
  choose $n_1 = N + n_2$ in \eqref{eq:applyTildeEta2}.  By
  \eqref{eq:CondForFourierAppr} this completes the proof of Theorem
  \ref{theo:PseudoFourIntApprox}, modulo Claim
  \ref{claim:AppropKernelFIO}.
\end{proof}

In the proof of Claim \ref{claim:AppropKernelFIO} we shall derive
conditions, ultimately seen sufficient, on the kernel function $\mu$,
which will turn out to be transport equations for the coefficients of
an asymptotic expansion of $\mu$. Transport equations are well
studied, see for example \cite{foll95pde, DiSj, evans1998partial}. We
need to prepare a special case where the initial condition and the
inhomogeneity have compact support.

\begin{Lem}
  \label{lem:transpEq}
  Let $F: \R \times \R^{d} \rightarrow \R^d$ and $g: \R \times \R^d
  \rightarrow \C$ be smooth and $L > 0$. Then there is some
  $T > 0$ such that for any smooth functions $u_0 : \R^d \rightarrow
  \C$ and $I : \R \times \R^d \rightarrow \C$ where $u_0$ and
  $I(t,\cdot)$ have compact support in $(-L,L)^d$, uniformly for $t
  \in (-T,T)$, that is
  \begin{align}
    \label{eq:suppPropTransp}
     \overline{\bigcup_{t \in (-T,T)} \supp I(t,\cdot)} \subset
     (-L,L)^d,
  \end{align}
  the initial value problem
  \begin{align}
    \label{eq:transpLem}
    (\partial_t + F(t,x) \cdot \nabla_x + g(t,x))u(t,x) = I(t,x),
    \qquad u(0,\cdot) = u_0
  \end{align}
  has a solution $u \in \Cont^\infty((-T,T) \times \R^d)$ where
  $u(t,\cdot)$ has compact support in $(-L,L)^d$, uniformly for $t \in
  (-T,T)$.
\end{Lem}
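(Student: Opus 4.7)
The plan is to use the classical method of characteristics. Let $\Phi_{s,t}(y)$ denote the associated flow, i.e.\ the solution at time $t$ of $\dot{x}(\sigma) = F(\sigma, x(\sigma))$ with $x(s) = y$. Since $F$ is smooth, the flow $(s,t,y) \mapsto \Phi_{s,t}(y)$ is smooth on its domain of existence; moreover, on a fixed compact neighbourhood of $\overline{(-L,L)^d}$ the vector field $F$ is bounded by some $M$, so that $|\Phi_{s,t}(y)-y| \leq M|t-s|$ whenever the flow remains in this neighbourhood. Hence for $T > 0$ sufficiently small the flow is defined smoothly on $(-T,T) \times (-T,T) \times \overline{(-L,L)^d}$.

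Along a characteristic, the function $s \mapsto v(s) := u(s, \Phi_{t,s}(x))$ satisfies the linear ODE
\begin{align}
v'(s) + g(s,\Phi_{t,s}(x))\, v(s) = I(s,\Phi_{t,s}(x)), \qquad v(0) = u_0(\Phi_{t,0}(x)),
\end{align}
whose unique solution, evaluated at $s=t$, gives the explicit formula
\begin{align}
u(t,x) = e^{-\int_0^t g(\sigma,\Phi_{t,\sigma}(x)) d\sigma}\, u_0(\Phi_{t,0}(x)) + \int_0^t e^{-\int_s^t g(\sigma,\Phi_{t,\sigma}(x))d\sigma}\, I(s,\Phi_{t,s}(x))\, ds.
\end{align}
I would take this formula as the definition of $u$ on $(-T,T)\times \R^d$ (extending $u_0$ and $I$ to be zero outside their supports when the flow leaves $(-L,L)^d$, which is harmless by the support property below). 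Smoothness of $u$ then follows directly from smoothness of $\Phi$, $g$, $u_0$, and $I$, and a direct differentiation shows that $u$ solves \eqref{eq:transpLem}.

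For the support statement, set $K := \overline{\supp u_0} \cup \overline{\bigcup_{s \in (-T,T)} \supp I(s,\cdot)}$; by \eqref{eq:suppPropTransp} this is a compact subset of $(-L,L)^d$, so there is $\delta > 0$ with the $\delta$-neighbourhood of $K$ still contained in $(-L,L)^d$. From the explicit formula, $u(t,x) \neq 0$ forces $\Phi_{t,0}(x) \in \supp u_0 \subset K$ or $\Phi_{t,s}(x) \in \supp I(s,\cdot) \subset K$ for some $s \in [0,t]$; equivalently $x = \Phi_{s,t}(y)$ for some $y \in K$ and some $|s| < T$. Shrinking $T$ further so that $MT < \delta$, the flow estimate $|\Phi_{s,t}(y) - y| \leq M|t-s| < \delta$ gives $x \in (-L,L)^d$, which yields the uniform support property.

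The main obstacle, as usual for such transport problems, is not the explicit solution formula but the simultaneous control of the support: one must choose $T$ small enough that the characteristic flow cannot carry mass out of $(-L,L)^d$ before time $T$, and the size of $T$ depends on both $L$ and the size of $F$ near $\overline{(-L,L)^d}$. The compactness of $K$ inside $(-L,L)^d$, together with local boundedness of $F$, is exactly what makes the choice of $T$ possible and independent of the particular data $u_0, I$ (given the uniform support hypothesis).
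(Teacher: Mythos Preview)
Your proof is correct and follows essentially the same route as the paper: method of characteristics along the flow of the transport vector field, then variation of constants (the paper phrases this via an auxiliary homogeneous solution $u_{hom}$ with initial data $1$, whereas you write the integrating-factor formula directly, but these are the same thing). Your support argument via the flow estimate $|\Phi_{s,t}(y)-y|\le M|t-s|$ and a $\delta$-neighbourhood of $K$ is in fact more explicit than the paper's, which simply invokes ``compactness and smoothness of the local flow''.
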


\begin{proof}[Proof (sketch of the standard arguments)]
  We first consider the homogeneous problem
  \begin{align}
    \label{eq:transpLemLin}
    (\partial_t + F(t,x) \cdot \nabla_x + g(t,x))u_{hom}(t,x) = 0,
    \qquad u_{hom}(0,\cdot) = 1.
  \end{align}
  The initial condition in \eqref{eq:transpLemLin} is given on the
  hypersurface $S = \left\{0\right\} \times \R^d$ which is
  non-characteristic for \eqref{eq:transpLemLin}. Since $S$ is a
  $\Cont^\infty$-hypersurface and all coefficients in
  \eqref{eq:transpLemLin} are smooth, \eqref{eq:transpLemLin} has a
  smooth solution $u_{hom}$ on some sufficiently small neighbourhood
  $\Omega$ of $S$. A similar statement for real-valued
  $\Cont^1$-coefficients is proven, for example, in
  \cite{foll95pde}. The solution $u_{hom}$ is constructed by solving
  the differential equation \eqref{eq:transpLemLin} along all integral
  curves $\tau \mapsto \gamma_{x_0}(\tau)$ of the transport vector
  field $(1,F(t,x))$ with $\gamma_{x_0}(0) = (0,x_0)$, each passing
  through precisely one point $(0,x_0)$ of the hypersurface $S$. By
  variation of constants, a solution $u$ of \eqref{eq:transpLem} on
  $\Omega$ can then be constructed
  from
  \begin{align}
    u(\gamma_{x_0}(\tau)) = \left( u_0(x_0) + \int_0^\tau
    \frac{I(\gamma_{x_0}(\tilde{\tau}))}{u_{hom}(\gamma_{x_0}(\tilde{\tau}))}
    d\tilde{\tau} \right) u_{hom}(\gamma_{x_0}(\tau)).
  \end{align}
  By compactness and smoothness of the local flow, this defines a
  solution $u \in \Cont^\infty((-T,T) \times \R^d)$ for some $T > 0$
  with the claimed properties.
\end{proof}

\begin{proof}[Sketch of the proof of Claim \ref{claim:AppropKernelFIO}]
  Note that in the following we shall make the Ansatz that $\mu$ has an
  asymptotic expansion, resulting in conditions on its
  coefficients. We will however leave it to the reader to check that a
  function $\mu$ constructed in accordance with these conditions will
  actually fulfil the statement in Claim \ref{claim:AppropKernelFIO}.

  We will keep the notation of the proof of Theorem
  \ref{theo:PseudoFourIntApprox} where the Claim
  \ref{claim:AppropKernelFIO} was stated.
  
  By construction and due to Lemma \ref{lem:HamJacPeriod} the solution
  $\phi$ of the Hamilton-Jacobi equation \eqref{eq:HamJacEq} satisfies
  the assumptions on the phase function $\phi$ in Proposition
  \ref{prop:OpUnitConj} (see the text after
  \eqref{eq:CondForFourierAppr} where $\phi$ was introduced as
  solution on $(-T',T') \times (-L,L)^d \times \R^d$). Thus
  Proposition \ref{prop:OpUnitConj} may be applied with $q = a$ to
  define a family $(\tilde{a}_{t,\xi})_{t \in (-T',T'), \xi \in \R^d}$
  of symbols $\tilde{a}_{t,\xi} \in \Sy^0(m,\epsilon_0)(\R^d \times
  \T^d)$, which are $2\pi\Z^d$-periodic with respect to the parameter
  $\xi$ and satisfy 
     \begin{align}
      e^{i\phi(t,\cdot,\xi)/\epsilon} \Op_{\epsilon,1/2} a
      e^{-i\phi(t,\cdot,\xi)/\epsilon} = \Op_{\epsilon,1/2}
      \tilde{a}_{t,\xi} \qquad (t \in (-T,T),\, \xi \in \R^d).
    \end{align} 
  Inserting this into the last line of \eqref{eq:tildeEtaT} and
  applying the chain and product rule, we
  formally obtain
  \begin{align}
        \label{eq:etaTilde}
        \tilde{\eta}(t)(x,y,\xi,\epsilon) = \left( (\partial_t
        \phi)(t,x,\xi) + i\epsilon\partial_t + \Op_{\epsilon,1/2}
        \tilde{a}_{t,\xi} \right) \mu(t,x,y,\xi;\epsilon).
  \end{align}
  Due to Proposition \ref{prop:OpUnitConj} we know that
  $\tilde{a}_{t,\xi}$ can be asymptotically expanded, uniformly in $t$
  and $\xi$, i.e. we can write
  \begin{align}
    \label{eq:tildea_asymp}
    \tilde{a}_{t,\xi}(x,\eta;\epsilon) \sim \sum_{j=0}^\infty \epsilon^j \tilde{a}_{t,\xi,j}(x,\eta) 
  \end{align}
  with $\epsilon$-independent symbols $\tilde{a}_{t,\xi,k} \in
  \Sy^0(m,\epsilon_0)(\R^d \times \T^d)$ which fulfil the symbol class
  property uniformly in $t$ and $\xi$ as specified in
  \eqref{eq:unifSymb} and \eqref{eq:unifAsympExp}. In addition, by
  \eqref{eq:LeadOrderTerm} the leading order term is given by
  \begin{align}
    \label{eq:tildea_lead}
    \tilde{a}_{t,\xi,0} (x,\eta) = a_0(x,\eta + \nabla_x
    \phi(t,x,\xi)).
  \end{align}
  Each coefficient in \eqref{eq:tildea_asymp} in turn can be formally
  represented by its Taylor expansion with respect to $\eta$,
  \begin{align}
     \label{eq:tildea_Taylor}
     \tilde{a}_{t,\xi,k}(x,\eta) = \sum_{\alpha \in \N^d}
     \frac{1}{\alpha!}  (\partial_\eta^\alpha
     \tilde{a}_{t,\xi,k})(x,0) \eta^\alpha.
  \end{align}
  Combining the expansions \eqref{eq:tildea_asymp} and
  \eqref{eq:tildea_Taylor}, we then formally get
  \begin{align}
        \label{eq:OpExpandedTaylor}
        \Op_{\epsilon,1/2} \tilde{a}_{t,\xi} = \sum_{k=0}^\infty
        \sum_{\alpha \in \N^d} \frac{\epsilon^k}{\alpha!}
        \Op_{\epsilon,1/2} ((\partial_\eta^\alpha
        \tilde{a}_{t,\xi,k})(x,0) \eta^\alpha).
  \end{align}
  Using \eqref{eq:OpExpandedTaylor} and making the Ansatz
  \begin{align}
    \label{eq:muAsymp}
        \mu(t,x,y,\xi;\epsilon) \sim \sum_{j=0}^\infty \epsilon^j
        \mu_j(t,x,y,\xi),
      \end{align}
      we identify the coefficients of $\epsilon^k$ in
      \eqref{eq:etaTilde} and, in order to satisfy
      \eqref{eq:tildeEtaC0SN}, set them equal to zero for any $k \in \N$. The equation
      associated with $\epsilon^0$ is
      \begin{align}
        \label{eq:asymptConstr1}
        0 &= ((\partial_t\phi)(t,x,\xi) + \tilde{a}_{t,\xi,0}(x,0))
        \mu_0(t,x,y,\xi)
      \end{align}
      and the equations associated with $\epsilon^k$ for $k \in \N^*$
      can be identified and rearranged as
      \begin{align}
        \notag 0 &= (\partial_t \phi)(t,x,\xi)
        \epsilon^k\mu_k(t,x,y,\xi) + i\epsilon^k
        (\partial_t\mu_{k-1})(t,x,y,\xi) \\ \notag &\phantom{=} +
        \sum_{l=0}^k \sum_{j=0}^{k-l} \epsilon^j \sum_{|\alpha|=k-l-j}
        \frac{1}{\alpha!} \Op_{\epsilon,1/2} (\partial_\eta^\alpha
        \tilde{a}_{t,\xi,j}(x,0) \eta^\alpha)
        \epsilon^l\mu_l(t,x,y,\xi) \\ \notag &= \epsilon^k \left(
        (\partial_t \phi)(t,x,\xi) + \tilde{a}_{t,\xi,0}(x,0) \right)
        \mu_k(t,x,y,\xi) + i\epsilon^k
        (\partial_t\mu_{k-1})(t,x,y,\xi) \\ \notag &\phantom{=} +
        \tilde{a}_{t,\xi,1}(x,0) \epsilon^{k}\mu_{k-1}(t,x,y,\xi) +
        \sum_{|\alpha|=1} \Op_{\epsilon,1/2} (\partial_\eta^\alpha
        \tilde{a}_{t,\xi,0}(x,0) \eta^\alpha)
        \epsilon^{k-1}\mu_{k-1}(t,x,y,\xi) \\ \label{eq:asymptConstr2}
        &\phantom{=} + \sum_{l=0}^{k-2} \sum_{j=0}^{k-l} \epsilon^j
        \sum_{|\alpha|=k-l-j} \frac{1}{\alpha!} \Op_{\epsilon,1/2}
        (\partial_\eta^\alpha \tilde{a}_{t,\xi,j}(x,0) \eta^\alpha)
        \epsilon^l\mu_l(t,x,y,\xi), \tag{$\theequation_k$}
      \end{align}
      where the last sum is understood to equal $0$ for $k = 1$.

      We recall that for a symbol $p \in \Sy^0(m,\epsilon_0)(\R^d
     \times \R^d)$ of the form $p(x,\eta) = g(x)\eta_j$ we have
     $\Op_{\epsilon,1/2}p = i\epsilon \left(g \partial_j +
     \frac{(\partial_jg)}{2} \right)$. This means that the sum in
     \eqref{eq:asymptConstr2} running through $|\alpha | = 1$ can be
     written as
     \begin{align}
        \notag \sum_{|\alpha|=1} \Op_{\epsilon,1/2}
        (\partial_\eta^\alpha \tilde{a}_{t,\xi,0}(x,0) \eta^\alpha) &=
        \sum_{|\alpha|=1} i\epsilon \left( \partial_\eta^\alpha
        \tilde{a}_{t,\xi,0}(x,0) \partial_x^\alpha + \frac{1}{2}
        \partial_x^\alpha \left( \partial_\eta^\alpha
        \tilde{a}_{t,\xi,0}(x,0) \right) \right) \\ &= i\epsilon
        \left( F(t,x,\xi) \cdot \nabla_x + \frac{1}{2} \div_x
        F(t,x,\xi) \right)
     \end{align}
     where
     \begin{align}
       F(t,x,\xi) := (\nabla_\eta \tilde{a}_{t,\xi,0})(x,0) \qquad (t
       \in (-T',T'),\, x,\xi \in \R^d).
     \end{align}
     Writing $\tilde{a}_{t,\xi,0}$ in terms of $a_0$ (see
     \eqref{eq:tildea_lead}), we see in particular that $F$ is the
     Hamiltonian vector field of $a_0$ projected onto position space,
     i.e.
     \begin{align}
       F(t,x,\xi) &= (\nabla_\xi a_0)(x,\nabla_x
       \phi(t,x,\xi)),
     \end{align}
     where $\nabla_\xi a_0$ denotes the derivative of $a_0$ with
     respect to its momentum variable. Thus, combined with
     \eqref{eq:tildea_lead}, the system \eqref{eq:asymptConstr1} and
     \eqref{eq:asymptConstr2} is equivalent to
     \begin{align}
       \label{eq:transp_eq1}
       0 &= ((\partial_t\phi)(t,x,\xi) + a_0(x,\nabla_x
       \phi(t,x,\xi))) \mu_0(t,x,y,\xi), \\ \notag 0 &= \left(
       (\partial_t \phi)(t,x,\xi) + a_0(x,\nabla_x \phi(t,x,\xi))
       \right) \mu_k(t,x,y,\xi) \\ \notag &\phantom{=} + \left(
       i\partial_t + iF (t,x,\xi) \cdot \nabla_x + \frac{i}{2} \div_x
       F(t,x,\xi) + \tilde{a}_{t,\xi,1}(x,0) \right)
       \mu_{k-1}(t,x,y,\xi)
       \\ \tag{$\theequation_k$}\label{eq:transp_eq2} &\phantom{=} +
       I_k(\mu_0, \dots, \mu_{k-2})(t,x,y,\xi)
     \end{align}
     with functions $I_k(\mu_0, \dots, \mu_{k-2})$ not depending on
     $\mu_{k-1}$ and $I_1 = 0$.

     We shall define the functions $\mu_k$ inductively and then see
     that \eqref{eq:transp_eq1} and all \eqref{eq:transp_eq2} are
     satisfied in a neighbourhood of $t = 0$ and any $x,y,\xi \in
     \R^d$. Note that \eqref{eq:FIOKernelInit} combined with the
     asymptotic expansion for the symbol $c$ given in Theorem
     \ref{theo:FuncCalcNonDiscr} (\ref{item:FuncCalcNonDiscr3})
     provides initial conditions given by
     \begin{align}
       \label{eq:TranspAppMuIni}
       \mu_k(0,x,y,\xi) &= \chi(x)\chi(y) c_k((x+y)/2,\xi).
     \end{align}
     In particular, the supports of $\mu_k(0,\cdot,y,\xi)$ are
     contained in $(-L,L)^d$, uniformly in $y,\xi$.

     Note that, since $\phi$ fulfils the Hamilton-Jacobi equation in
     the domain $(-T',T') \times (-L,L)^d \times \R^d \times \R^d$,
     after dividing by $i$ the equations \eqref{eq:transp_eq2} will
     take the form
     \begin{align}
       \notag 0 &= \left( \partial_t + F (t,x,\xi) \cdot \nabla_x +
       \frac{1}{2} \div_x F(t,x,\xi) - i \tilde{a}_{t,\xi,1}(x,0)
       \right)
       \mu_{k-1}(t,x,y,\xi)\\ \stepcounter{equation}\tag{$\theequation_{k}$} \label{eq:transp_eqV3}
       &\phantom{=} - i I_{k}(\mu_0, \dots, \mu_{k-2})(t,x,y,\xi)
     \end{align}
     in this domain. These equations are transport equations (where
     $y$ and $\xi$ act as parameters) which are treated in Lemma
     \ref{lem:transpEq} (with $g(t,x) = \frac{1}{2} \div_x F(t,x,\xi)
     - i \tilde{a}_{t,\xi,1}(x,0)$ in \eqref{eq:transpLem}) and which
     can be solved inductively. Note that $F$ and $g$ do not depend on
     the equation number $k$, so due to Lemma \ref{lem:transpEq} we
     find some $\tilde{T} \in (0,T')$ and solutions $\mu_k \in
     \Cont^\infty((-\tilde{T},\tilde{T}) \times \R^d)$ where each
     $\mu_k(t,\cdot,y,\xi)$ has compact support in $(-L,L)^d$. 
     
     The
     applicability of Lemma \ref{lem:transpEq} can be checked
     inductively: For $k=1$ we have that $\mu_0(0,\cdot,y,\xi)$ has
     compact support in $(-L,L)^d$ and $I_1 = 0$. Then
     $\mu_0(t,\cdot,y,\xi)$ will have compact support in $(-L,L)^d$,
     uniformly for $t \in (-\tilde{T},\tilde{T})$. Assumed that $k \in
     \N$ is chosen such that $\mu_0(t,\cdot,y,\xi)$,
     $\mu_1(t,\cdot,y,\xi)$, \dots, $\mu_{k-1}(t,\cdot,y,\xi)$ have
     compact support in $(-L,L)^d$, uniformly for $t \in
     (-\tilde{T},\tilde{T})$, then the inhomogeneity
     $I_{k+1}(t,\cdot,y,\xi)$ of the $(k+1)$-th equation will also
     have compact support in $(-L,L)^d$. So, applying Lemma
     \ref{lem:transpEq} to the $(k+1)$-th equation, the compact
     support of $\mu_{k}(t,\cdot,y,\xi)$ will be contained in
     $(-L,L)^d$ as well, uniformly for $t \in (-\tilde{T},\tilde{T})$.

     Note that the solutions $\mu_k$ of \eqref{eq:TranspAppMuIni} and
     \eqref{eq:transp_eqV3} satisfy the system \eqref{eq:transp_eq1},
     \eqref{eq:transp_eq2} in the domain $(-\tilde{T},\tilde{T})
     \times (-L,L)^d \times \R^d \times \R^d$ since $\phi$ fulfils
     the Hamilton-Jacobi equation there. In the domain
     $(-\tilde{T},\tilde{T}) \times (\R^d \setminus (-L,L)^d) \times
     \R^d \times \R^d$, the functions $\mu_k$ trivially fulfil the
     system \eqref{eq:transp_eq1}, \eqref{eq:transp_eq2} since all
     contributions vanish due to the support property. Furthermore,
     all $\mu_k$ are periodic in $\xi$ by construction.

     Given the solutions $\mu_k$ of \eqref{eq:TranspAppMuIni},
     \eqref{eq:transp_eqV3}, we define $\mu$ via Borel summation in
     \eqref{eq:muAsymp}. We now leave it to the reader to verify that
     the function $\tilde{\eta}$ defined by \eqref{eq:tildeEtaT}
     actually fulfils \eqref{eq:tildeEtaC0SN} (essentially by
     reversing the arguments given above). Using that $\mu$ has
     compact support with respect to $x$, one first verifies
     $\tilde{\eta}(t) \in \Sy^{n_1}(m_0^{-n_2},\epsilon_0)(\R^d \times
     \R^d \times \R^d)$ for any $t \in (-\tilde{T},\tilde{T})$ and any
     $n_1, n_2 \in \N$. Furthermore, it can be seen from the
     representation \eqref{eq:etaTilde} that $\tilde{\eta}(t)$ is
     periodic with respect to $\xi$, since $\partial_t \phi$ is due to
     Lemma \ref{lem:HamJacPeriod}, the family of symbols
     $\tilde{a}_{t,\xi}$ is due to Proposition \ref{prop:OpUnitConj}
     and $\mu$ is by construction. It then follows from our uniformity
     statements with respect to $t,y,\xi$ that $t \mapsto
     \tilde{\eta}(t)$ is even smooth.
\end{proof}

\subsection{Proof of Proposition \ref{prop:f1f3asymp}}
\label{subsec:MainProof3}

We shall prove Proposition \ref{prop:f1f3asymp} by following the
strategy given in \cite[chapter 10]{DiSj}. We assume Hypothesis
\ref{hyp:WRef} and consider the functions $f_1$ and $f_3$ from
\eqref{eq:sumf1f2f3}, which have support in a neighbourhood of
$\alpha$ and $\beta$, respectively, only consisting of non-critical
values of $a_0$. Proposition \ref{prop:f1f3asymp} then follows from 
identifying the leading order terms of both the lhs and the rhs of
\eqref{eq:f1asymp} and \eqref{eq:f3asymp} with
\begin{align}
  \label{eq:StepToWeyl}
  \int_\alpha^\infty I_1(\lambda;f_1, \psi, \epsilon) d\lambda \quad
  \mbox{ and } \quad \int_{-\infty}^\beta I_1(\lambda;f_3, \psi,
  \epsilon) d\lambda, \quad \mbox{ respectively.}
\end{align}

Here, for $\lambda \in \R$ and a function $f \in \Cont_0^\infty(J)$
compactly supported in the set of non-critical values of $a_0$,
\begin{align}
  \label{eq:I1Def}
  I_1(\lambda;f, \psi, \epsilon) &:= \frac{1}{2\pi \epsilon} \tr
  \left( \int_\R e^{it(\P_\epsilon - \lambda)/\epsilon} \psi(t)
  f(\P_\epsilon) dt \right)
\end{align}
 with a cut-off function $\psi \in \Cont_0^\infty(\R)$ with $\psi(0) =
 1$ and supported in a small neighbourhood of $0$. Note that
 $f(\P_\epsilon)$ is of trace class for small $\epsilon$
 (Proposition \ref{prop:fPTraceClass}). Therefore the operator in
 \eqref{eq:I1Def} to which the trace is applied is of trace class
 since it is the limit of trace class operators with respect to the
 trace norm. So $I_1$ is well-defined.

In order to identify the rhs of \eqref{eq:f1asymp} and
\eqref{eq:f3asymp} in leading order with the expressions in
\eqref{eq:StepToWeyl}, we need two preparatory steps. Using Theorem
\ref{theo:PseudoFourIntApprox} we first show in Lemma
\ref{lem:I1I2eps} that $I_1(\lambda;f, \psi,\epsilon)$ can be
approximated sufficiently precisely by
\begin{align}
  \label{eq:I2Def}
  I_2(\lambda;f, \psi, \epsilon) &:= \int_\R \int_{\T^d}
  \int_{\R^d} g_{f,\lambda}(t,x,\xi;\epsilon) dxd\xi dt.
\end{align}
Here we set
\begin{align}
   \label{eq:gLambda}
  g_{f,\lambda}(t,x,\xi;\epsilon) &:= \frac{1}{(2\pi\epsilon)^{d+1}}
  e^{i\varphi_\lambda(t,x,\xi)/\epsilon} \psi(t)
  \mu_f(t,x,x,\xi;\epsilon),\\
   \label{eq:PhiLambda}
  \varphi_\lambda(t,x,\xi) &:= x\xi - \phi(t,x,\xi) - \lambda t = - \phi_{\T}(t,x,\xi) - \lambda t, 
\end{align}
where $\phi$ denotes the solution of the Hamilton-Jacobi equation
\eqref{eq:HamJacEq} with $H = a_0$, satisfying the periodicity property in
Lemma \ref{lem:HamJacPeriod} and $\mu_f$ denotes the kernel function
corresponding to the operators $(\U_\epsilon^{(f)}(t))_{t \in (-T,T)}$
for some $T>0$ as constructed in Theorem
\ref{theo:PseudoFourIntApprox}.

Note that $g_{f,\lambda}$ depends on
$f$ since $\mu_f$ does and is $2\pi$-periodic with respect
to $\xi$ since $\phi_\T$ and $\mu_f$ are (by construction and assumption). So the integral in \eqref{eq:I2Def} is
well-defined. Applying the method of stationary phase to
$I_2(\lambda;f, \psi,\epsilon)$, we shall then identify the integral of the resulting leading order term as the 
principal term on the rhs
of \eqref{eq:f1asymp} for $f = f_1$ and \eqref{eq:f3asymp} for $f =
f_3$, respectively.

\begin{Lem}
  \label{lem:I1I2eps}
  Assume Hypothesis \ref{hyp:WRef}. For $f \in \Cont_0^\infty(J)$
  compactly supported in the set of non-critical values of $a_0$ and
  any $\psi \in \Cont_0^\infty (\R)$ supported in a sufficiently small
  neighbourhood of $t=0$, we have
  \begin{align}
    I_1(\lambda;f, \psi, \epsilon) = I_2(\lambda; f, \psi, \epsilon) +
    \O(\epsilon^\infty) \qquad (\epsilon \downarrow 0),
  \end{align}
  uniformly in $\lambda \in \R$.
\end{Lem}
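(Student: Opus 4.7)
The plan is to reduce $I_1$ to $I_2$ in two steps: first, replace $e^{it\P_\epsilon/\epsilon} f(\P_\epsilon)$ by the parametrix $\U_\epsilon^{(f)}(t)$ via Theorem \ref{theo:PseudoFourIntApprox}; then, convert the resulting discrete sum over $\epsilon\Z^d$ into an integral by Poisson summation (Proposition \ref{prop:PoisApp}). For the first step, choose $\psi$ supported in $(-T,T)$ with $T$ as in Theorem \ref{theo:PseudoFourIntApprox}, set
\begin{align*}
\tilde{I}_1 := \frac{1}{2\pi\epsilon} \tr \int_\R e^{-i\lambda t/\epsilon} \psi(t) \U_\epsilon^{(f)}(t) dt,
\end{align*}
and use $|\tr(A)| \leq \trn{A}$ and $|e^{-i\lambda t/\epsilon}| = 1$ together with \eqref{eq:PseudoFIOApprox} to conclude $|I_1 - \tilde{I}_1| = O(\epsilon^\infty)$ uniformly in $\lambda$.

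Next, I would compute $\tilde{I}_1$ explicitly. Because $\mu_f$ has compact support in $x$ and $y$, each $\U_\epsilon^{(f)}(t)$ is finite rank, so Proposition \ref{prop:TraClCri} applies and one may interchange the trace with the $t$-integral (justified by continuity of $\tr$ in trace norm and the compact support of $\psi$). With the kernel $k_{t,\epsilon}(x,y) = \frac{1}{(2\pi)^d}\int_{\T^d} e^{i(y\xi - \phi(t,x,\xi))/\epsilon} \mu_f(t,x,y,\xi;\epsilon) d\xi$, summation of $k_{t,\epsilon}(x,x)$ over the lattice yields
\begin{align*}
\tilde{I}_1 = \epsilon^d \sum_{x \in \epsilon\Z^d} \int_\R\int_{\T^d} g_{f,\lambda}(t,x,\xi;\epsilon) d\xi dt,
\end{align*}
after rearrangement of the prefactors and use of $\varphi_\lambda = x\xi - \phi(t,x,\xi) - \lambda t$.

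Finally, I would interchange the $x$-summation with the $t,\xi$-integrations (legitimate by compact support of $\mu_f$ in $x$), pull the $x$-independent factor $e^{-i\lambda t/\epsilon}$ out of the $x$-sum, and apply Proposition \ref{prop:PoisApp} to the inner sum $\epsilon^d \sum_x e^{-i\phi_\T(t,x,\xi)/\epsilon} \mu_f(t,x,x,\xi;\epsilon)$ with phase $\varphi(x) = -\phi_\T(t,x,\xi)$. This replaces the sum by $\int_{\R^d}(\cdot) dx$ with an $O(\epsilon^\infty)$ remainder that is \emph{independent of $\lambda$}. Re-integrating over $t,\xi$ preserves the $O(\epsilon^\infty)$ bound and produces $I_2$. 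The main obstacle is uniformity in $\lambda$: the clean resolution is to isolate the $\lambda$-dependence as the unimodular factor $e^{-i\lambda t/\epsilon}$, which is $x$-free and can be pulled outside the $x$-summation before Poisson summation is invoked. A secondary point is that the remainder in Proposition \ref{prop:PoisApp} must be uniform in the parameters $(t,\xi) \in \supp\psi \times \T^d$, which follows from the smooth and periodic dependence of $\phi_\T$ on $(t,x,\xi)$ established in Lemma \ref{lem:HamJacPeriod}, together with the uniform symbol bounds on $\mu_f$ from Theorem \ref{theo:PseudoFourIntApprox}.
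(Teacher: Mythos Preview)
Your proposal is correct and follows essentially the same two-step approach as the paper: introduce the intermediate quantity $\tilde I_1$ (the paper calls it $I_3$), bound $|I_1-\tilde I_1|$ via the trace-norm parametrix estimate \eqref{eq:PseudoFIOApprox}, then convert the lattice sum to an integral by Proposition~\ref{prop:PoisApp} with phase $-\phi_\T(t,x,\xi)$. The only point you leave implicit that the paper spells out is the verification of hypothesis \eqref{eq:PoisAppCond}: since $\phi_\T(0,\cdot,\cdot)=0$, one must shrink the support of $\psi$ to some $(-\tilde T,\tilde T)$ so that $\sup_{t,\xi,x,j}|\partial_{x_j}\phi_\T(t,x,\xi)|<2\pi$, after which the explicit remainder bound \eqref{eq:PoisAppBound} gives the uniformity in $(t,\xi)$ you claim.
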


\begin{proof}
  Fixing $f$, we will shorten the
  notation by writing
  \begin{align}
    I_1(\lambda;\psi,\epsilon) &:= I_1(\lambda;f,\psi,\epsilon), &
    I_2(\lambda;\psi,\epsilon) &:= I_2(\lambda;f,\psi,\epsilon), &
    \U_\epsilon &:= \U_\epsilon^{(f)}, & \mu := \mu_f.
  \end{align}

  We define
  \begin{align}
    I_3(\lambda;\psi, \epsilon) &:= \frac{1}{2\pi \epsilon} \tr \left(
    \int_\R e^{-it\lambda/\epsilon} \psi(t) \U_\epsilon(t) dt \right).
  \end{align}
  From \eqref{eq:PseudoFIOApprox} we then conclude for $\psi \in
  \Cont_0^\infty((-T,T))$
  \begin{align}
    \notag \sup_\lambda |I_1(\lambda;\psi,\epsilon) -
    I_3(\lambda;\psi,\epsilon)| &\leq \frac{1}{2\pi\epsilon}
    \sup_\lambda \trn{\int_\R e^{-it\lambda/\epsilon} \psi(t) \left(
      e^{it\P_\epsilon/\epsilon} f(\P_\epsilon) - \U_\epsilon(t)
      \right) dt} \\ \notag &\leq \frac{1}{2\pi\epsilon}
    \left(\sup_{|t| < T} \trn{e^{it\P_\epsilon/\epsilon}
      f(\P_\epsilon) - \U_\epsilon(t)}\right) \int_\R |\psi(t)| dt
    \\ &= \O(\epsilon^\infty),
  \end{align}
  uniformly in $\lambda \in \R$.

  Applying the trace formula \eqref{eq:TraClEstimateGeneral} to $I_3$,
  we have
  \begin{align}
    \label{eq:I3trace}
    I_3(\lambda;\psi, \epsilon) =  \frac{1}{2\pi\epsilon} \frac{1}{(2\pi)^d} \int_\R
    \int_{\T^d} \sum_{x \in \epsilon\Z^d}
    e^{i\varphi_\lambda(t,x,\xi)/\epsilon} \psi(t)
    \mu(t,x,x,\xi;\epsilon) d\xi dt.
  \end{align}
  We apply Proposition \ref{prop:PoisApp} to transform the sum in
  \eqref{eq:I3trace} for appropriate $\psi$ into an integral to get
  \begin{align}
    I_3(\lambda;\psi,\epsilon) = I_2(\lambda;\psi,\epsilon) + \O(\epsilon^\infty),
  \end{align}
  uniformly in $\lambda \in \R$, which concludes the proof.  We
  briefly specify how Proposition \ref{prop:PoisApp} is applied. For
  this purpose, we write
  \begin{align}
    I_3(\lambda;\psi,\epsilon) =
    \frac{1}{2\pi\epsilon}\frac{1}{(2\pi)^d} \int_\R
    \int_{\T^d} e^{-i\lambda t/\epsilon} \sum_{x \in \epsilon\Z^d}
    e^{i\tilde{\varphi}_{t,\xi}(x)/\epsilon}
    \tilde{a}_{t,\xi}(x;\epsilon) d\xi dt
  \end{align}
  where 
  \begin{align}
    \tilde{\varphi}_{t,\xi}(x) &:= x\xi - \phi(t,x,\xi) = - \phi_\T(t,x,\xi),
    \\ \tilde{a}_{t,\xi}(x;\epsilon) &:= \psi(t)
    \mu(t,x,x,\xi;\epsilon).
  \end{align}
  By construction both, the symbol $\tilde{a}_{t,\xi}$ has support in some
  compact $\tilde{K} \subset \R^d$, uniformly for $t \in (-T,T)$ and
  $\xi \in [-\pi,\pi]^d$. Since $\phi(0,x,\xi) = x\xi$, we can find
  some $\tilde{T} \in (0,T)$ such that
  \begin{align}
    \label{eq:IntSumTrafoAppCond}
    \sup_{\substack{t \in (-\tilde{T},\tilde{T}) \\ \xi \in
        [-\pi,\pi]^d}}\sup_{\substack{j \in \{1,\dots,d\} \\ x \in
        \tilde{K}}} |\partial_{x_j} \tilde{\varphi}_{t,\xi}(x)| <
    2\pi.
  \end{align}
  Therefore for any $t \in (-\tilde{T},\tilde{T})$ and $\xi \in
  [-\pi,\pi]^d$ the condition \eqref{eq:PoisAppCond} is fulfilled for
  $K$ and $\phi$ in \eqref{eq:PoisAppCond} chosen as $K = \tilde{K}$
  and $\phi = \tilde{\varphi}_{t,\xi}$. We now choose
  $\psi$ to have compact support in $(-\tilde{T},\tilde{T})$. By
  Proposition \ref{prop:PoisApp} with $a$ in \eqref{eq:PoisApp} and
  \eqref{eq:PoisAppBound} chosen as $\tilde{a}_{t,\xi}$, we then have
  the error estimate
  \begin{align}
    \label{eq:IntSumTrafoApp}
    \abs{\epsilon^d \sum_{x \in \epsilon\Z^d}
      e^{i\tilde{\varphi}_{t,\xi}(x)/\epsilon}
      \tilde{a}_{t,\xi}(x;\epsilon) -
      \int_{\R^d}e^{i\tilde{\varphi}_{t,\xi}(x)/\epsilon}
      \tilde{a}_{t,\xi}(x;\epsilon) dx } \leq \epsilon^{2k}
    \sum_{\tilde{\xi} \in 2\pi \Z^d \setminus \left\{0\right\}}
    \int_{\tilde{K}} \abs{ (\mathbf{W}_\epsilon^k
      \tilde{a}_{t,\xi})(x,\tilde{\xi})} dx
  \end{align}
  for any $k \geq d$, $t \in \R$ and $\xi \in [-\pi,\pi]$ with the
  operator $\mathbf{W}_\epsilon$, here depending on the parameters $t$
  and $\xi$, defined by \eqref{eq:PoisAppDefW}. It follows now from
  the estimate \eqref{eq:IntSumTrafoAppCond} and the explicit formula
  \eqref{eq:PoisAppDefW} that the rhs. of \eqref{eq:IntSumTrafoApp} is
  of order $\O(\epsilon^{2k})$, uniformly for $t \in
  (-\tilde{T},\tilde{T}), \xi \in [-\pi,\pi]^d$. Therefore the
  difference $I_3(\lambda;\psi,\epsilon) - I_2(\lambda;\psi,\epsilon)$
  is of order $\O(\epsilon^\infty)$, uniformly for $\lambda
  \in \R$.
\end{proof}

In the following, we seek to apply the method of stationary phase to the integral $I_2$
defined in \eqref{eq:I2Def}. For this purpose we first study the critical
points of $\varphi_\lambda$ in Lemma \ref{lem:CritManif}. 

\begin{Lem}
  \label{lem:CritManif}
  Let $\phi$ be the solution of the Hamilton-Jacobi equation
  \eqref{eq:HamJacEq} for some Hamiltonian $H \in \Cont^\infty(\R^d
  \times \T^d)$ in the domain $(-T,T) \times (-L,L)^d \times
  \R^d$. For $\lambda \in \R$ denote by $\mathcal{M}_\lambda \subset
  \R_t \times \R_x^d \times \T_\xi^d$ the set of critical points of
  the function $\varphi_\lambda$ defined in \eqref{eq:PhiLambda}. Then
  for any $U \subset \R$ consisting of non-critical values
  of $H$ and with $H^{-1}(U)$ being compact in $\R^d \times \T^d$,
  there is some $t_0 > 0$ such that for any $\lambda \in U$
  \begin{align}
    \label{eq:critSet_t0}
  \mathcal{M}_\lambda \cap ((-t_0, t_0) \times \R^d \times \T^d) =
  \left\{ (0,x,\xi)\; |\; H(x,\xi) = \lambda \right\}.
  \end{align}
\end{Lem}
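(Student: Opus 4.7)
The plan is to reduce the critical-point condition on $\varphi_\lambda$ to a fixed-point condition for the Hamiltonian flow $\Phi_t$ of $H$, and then to rule out non-trivial fixed points uniformly on $H^{-1}(U)$ using that $U$ consists of non-critical values. First I differentiate \eqref{eq:PhiLambda} and apply the Hamilton-Jacobi equation \eqref{eq:HamJacEq} to obtain
\begin{align*}
\partial_t \varphi_\lambda(t,x,\xi) &= H(x, \nabla_x \phi(t,x,\xi)) - \lambda, \\
\nabla_x \varphi_\lambda(t,x,\xi) &= \xi - \nabla_x \phi(t,x,\xi), \\
\nabla_\xi \varphi_\lambda(t,x,\xi) &= x - \nabla_\xi \phi(t,x,\xi).
\end{align*}
Since $\phi(0,x,\xi) = x\xi$ yields $\nabla_x\phi(0,\cdot,\cdot) = \xi$ and $\nabla_\xi\phi(0,\cdot,\cdot) = x$, the inclusion $\supset$ in \eqref{eq:critSet_t0} is immediate; periodicity of $\phi_\T$ from Lemma \ref{lem:HamJacPeriod} further ensures that the three expressions above descend to $\R_t \times \R_x^d \times \T_\xi^d$, so that $\mathcal{M}_\lambda$ is intrinsically defined on that product.

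For the reverse inclusion I would interpret $\phi$ as the type-2 generating function of the Hamiltonian flow $\Phi_t = (X_t, \Xi_t)$ of $H$ in the mixed coordinates (new position, old momentum). By the method of characteristics, for $|t|$ sufficiently small and $(x,\xi)$ in the relevant domain there is a unique $x_0 = x_0(t,x,\xi)$ with $X_t(x_0,\xi) = x$, and one verifies the standard identities $\nabla_x\phi(t,x,\xi) = \Xi_t(x_0,\xi)$ and $\nabla_\xi\phi(t,x,\xi) = x_0$ (both sides agree at $t=0$ and satisfy the same evolution in $t$ along the characteristics). Consequently the simultaneous vanishing $\nabla_x \varphi_\lambda(t,x,\xi) = 0 = \nabla_\xi \varphi_\lambda(t,x,\xi)$ is equivalent to $\Phi_t(x,\xi) = (x,\xi)$, and in that case the $\partial_t$-condition reduces to $H(x,\xi) = \lambda$.

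It then suffices to exclude non-trivial fixed points of $\Phi_t$ on $H^{-1}(\lambda)$ uniformly for $\lambda \in U$. Since $U$ consists of non-critical values of $H$ and $H^{-1}(U)$ is compact in $\R^d \times \T^d$, one has $|\nabla H| \geq c_0 > 0$ there, so the Hamiltonian vector field $X_H = (\nabla_\xi H, -\nabla_x H)$ satisfies $|X_H| \geq c_0$ on $H^{-1}(U)$. A uniform first-order Taylor expansion $\Phi_t(x,\xi) - (x,\xi) = t\, X_H(x,\xi) + O(t^2)$ on this compact set then produces $t_0 > 0$ with $\Phi_t(x,\xi) \neq (x,\xi)$ for every $(x,\xi) \in H^{-1}(U)$ and $0 < |t| \leq t_0$, which combined with the previous paragraph yields \eqref{eq:critSet_t0}.

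The main obstacle is the clean verification of the generating-function identities for $\phi$ together with their compatibility with the periodic representation in Lemma \ref{lem:HamJacPeriod}; everything else is a standard compactness argument. In particular one has to check that the $O(t^2)$ remainder in the expansion of $\Phi_t$ and the lower bound $|X_H| \geq c_0$ can both be chosen uniformly in $\lambda \in U$, which is where the compactness of $H^{-1}(U)$ inside $\R^d \times \T^d$ (rather than in $\R^{2d}$) is repeatedly used.
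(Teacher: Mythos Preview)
Your proposal is correct but takes a more geometric route than the paper. The paper avoids the generating-function identities altogether: after writing down the same three critical-point equations, it argues by contradiction, taking a sequence $(t_n,x_n,\xi_n)\in\mathcal{M}_{\lambda_n}$ with $0\neq t_n\to 0$, using compactness of $H^{-1}(U)$ to pass to a convergent subsequence $(x_n,\xi_n)\to(x^*,\xi^*)$, and Taylor-expanding $\nabla_{(x,\xi)}\phi(t_n,x_n,\xi_n)$ directly at $t=0$. Since both $(0,x_n,\xi_n)$ and $(t_n,x_n,\xi_n)$ satisfy \eqref{eq:critPoint2}--\eqref{eq:critPoint3}, the zeroth-order term cancels, and the first-order coefficient $\partial_t\nabla_{(x,\xi)}\phi(0,x_n,\xi_n)=-\nabla_{(x,\xi)}H(x_n,\xi_n)$ (from the Hamilton-Jacobi equation) is forced to be $O(t_n)$, yielding $\nabla H(x^*,\xi^*)=0$ in the limit --- a contradiction. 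This is essentially your Taylor argument for $\Phi_t$ rewritten in terms of $\phi$ itself, so the two proofs share the same core mechanism; the paper's version is more elementary in that it never invokes the flow or the identities $\nabla_x\phi=\Xi_t(x_0,\xi)$, $\nabla_\xi\phi=x_0$, which you correctly flag as the step requiring the most care. Your approach has the advantage of being conceptually cleaner and of delivering an explicit uniform $t_0$ rather than arguing by contradiction; the paper itself remarks (after Proposition~\ref{lem:I2Asymp}) that its contradiction argument can be replaced by a more direct one.
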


\begin{proof}
We have $(t,x,\xi) \in \mathcal{M}_\lambda$ if and only if
\begin{align}
  \label{eq:critPoint1}
  0 &= \partial_t \varphi_\lambda(t,x,\xi) = - \partial_t\phi(t,x,\xi)
  - \lambda = H(x,\xi) - \lambda\\
  \label{eq:critPoint2}
  0 &= \nabla_x \varphi_\lambda(t,x,\xi) = \xi - \nabla_x \phi(t,x,\xi) \\
  \label{eq:critPoint3}
  0 &= \nabla_\xi \varphi_\lambda(t,x,\xi) = x - \nabla_\xi \phi(t,x,\xi),
\end{align}
where in \eqref{eq:critPoint1} we used the fact that $\phi$ fulfils
the Hamilton-Jacobi equation \eqref{eq:HamJacEq} combined with
\eqref{eq:critPoint2}. Due to the initial condition in
\eqref{eq:HamJacEq} these equations are satisfied if $t=0$ and
$H(x,\xi) = \lambda$. This proves ``$\supset$'' in
\eqref{eq:critSet_t0}.

We shall prove ``$\subset$'' in \eqref{eq:critSet_t0} by
contradiction. Assume that there is a sequence $\lambda_n$ in $U$ and
a sequence of points $(t_n,x_n,\xi_n) \in \mathcal{M}_{\lambda_n}$
where $0 \neq t_n \rightarrow 0$ for $n \rightarrow \infty$. Since due
to \eqref{eq:critPoint1} the points $(x_n, \xi_n)$ are in the set
$H^{-1}(U)$, which by assumption is compact, we may suppose the
sequence $(x_n, \xi_n)$ to converge to some $(x^*,\xi^*) \in
H^{-1}(U)$.  Using Taylor approximation for $\nabla_{(x,\xi)}\phi$
with respect to the time variable at $t = 0$, we have
\begin{align}
  \label{eq:TaylGradPhi}
  (\nabla_{(x,\xi)}\phi)(t_n,x_n,\xi_n) =
  (\nabla_{(x,\xi)}\phi)(0,x_n,\xi_n) + t_n (\partial_t
  \nabla_{(x,\xi)} \phi)(0,x_n,\xi_n) + \mathcal{O}(t_n^2),
\end{align}
where the remainder $\mathcal{O}(t_n^2)$ is uniform in $x_n$ and
$\xi_n$.  Here, since $(0,x_n,\xi_n)$ and $(t_n,x_n,\xi_n)$ are
critical points of $\varphi_{\lambda_n}$, we have due to
\eqref{eq:critPoint2} and \eqref{eq:critPoint3}
\begin{align}
  \label{eq:TaylGradPhiLead}
  (\nabla_{(x,\xi)}\phi)(t_n,x_n,\xi_n) =
  (\nabla_{(x,\xi)}\phi)(0,x_n,\xi_n).
\end{align}
Since $\phi$ fulfils the Hamilton-Jacobi equation
\eqref{eq:HamJacEq}, the first order coefficient in the expansion
\eqref{eq:TaylGradPhi} is of the form
\begin{align}
  \label{eq:dtCritPoint}
  (\partial_t \nabla_{(x,\xi)} \phi)(0,x_n,\xi_n) = - \nabla_{(x,\xi)}
  \left( H(x_n, \nabla_x\phi(0,x_n,\xi_n) )\right) = -
  \nabla_{(x,\xi)} H (x_n,\xi_n).
\end{align}
Combining the equations \eqref{eq:TaylGradPhi},
\eqref{eq:TaylGradPhiLead} and \eqref{eq:dtCritPoint}, we get
\begin{align}
  \nabla_{(x,\xi)} H (x_n,\xi_n) = \O(t_n).
\end{align}
Taking the limit $n \rightarrow \infty$ here gives $\nabla_{(x,\xi)} H
(x^*,\xi^*) = 0$. But this contradicts $H(x^*,\xi^*) \in U$ since by
assumption $U$ contains only non-critical values of $H$.
\end{proof}

By Lemma \ref{lem:CritManif} the critical points of $\varphi_\lambda$
near $t=0$ form the $(2d-1)$-dimensional manifold $\{ 0 \} \times
H^{-1}(\lambda)$ if $\lambda$ is a non-critical value of the
underlying Hamiltonian. In particular, the critical points are
isolated with respect to $t$ but not with respect to $x$ and
$\xi$. Thus the stationary phase argument, which typically presupposes
a critical point of the phase function to be isolated, is not directly
applicable. We shall therefore follow the strategy in \cite[Chapter 10]{DiSj} and use Fubini's theorem 
to write $I_2$ from
\eqref{eq:I2Def} as an iterated integral where, using the Liouville
form as introduced in \eqref{eq:LiouvilleFormDef}, one integral is
taken over the level set $H^{-1}(\lambda)$ with $H = a_0$ and the
other one with respect to $dt$ and $dH$. On the domain of the
$dtdH$-integral, which is a $2$-dimensional manifold, the phase
function $\varphi_\lambda$ has an isolated critical point for $t$ near
$0$. So, for the $dtdH$-integral, the method of stationary phase
applies. Here $\lambda$ and the coordinates of $H^{-1}(\lambda)$ act
as parameters. We need to obtain remainder estimates that are uniform
with respect to these parameters. We shall therefore prepare a
parameter dependent version of the method of stationary phase in
Corollary \ref{cor:statPhase}. It is based on the local version given
in Theorem \ref{theo:statPhaseHoer}, which is taken from 
\cite[Theorem 7.7.5., Theorem 7.7.6.]{hoermander2015analysis}.

\begin{theo}
  \label{theo:statPhaseHoer}
  Let $(x,y) \mapsto \varphi(x,y)$ be a real-valued smooth function in
  a neighbourhood of $(x_0,y_0)$ in $\R^{n} \times \R^m$. Assume that
  $D_x\varphi (x_0,y_0) = 0$ and that $D^2_{x}\varphi(x_0,y_0)$ is
  non-singular with signature $\sigma$. Denote by $x(y)$ the solution
  of the equation $D_x\varphi(x,y) = 0$ with $x(y_0)=x_0$ given by the
  implicit function theorem near $y = y_0$. Then there exist
  differential operators $L_{j}$ of order $2j$ acting on
  $\Cont_0^\infty(\R^n \times \R^m)$ such that for some constant $C >
  0$, some compact neighbourhood $K$ of $(x_0,y_0)$ and any $u \in
  \Cont_0^\infty(K)$ (i.e. $u \in \Cont_0^\infty(\R^n \times \R^m)$
  with $\supp u \subset K$), $\omega > 0$ and $k \in \N^*$
  \begin{align}
    \label{eq:statPhaseExp}
    \left| \int_{\R^n} u(x,y) e^{i\omega\varphi(x,y)} dx -
    A_{y} \omega^{-n/2} e^{i\omega\varphi(x(y),y)} \sum_{j =
      0}^{k-1} \omega^{-j} L_{j} u(x(y),y) \right| \leq C
    \omega^{-k} \sum_{|\alpha| \leq 2k} \sup_x \left|
    \partial_x^\alpha u(x,y) \right|
  \end{align}
  where $A_{y} = \frac{(2\pi)^{n/2} e^{i\pi \sigma/4}}{|\det D^2_{x}
    \varphi(x(y),y)|^{1/2}}$. Here, $L_{j}u$ evaluated at $(x(y),y)$
  is given by
  \begin{align}
    \label{eq:statPhaseLj}
     \left. L_{j} u \right|_{(x(y),y)} = \sum_{\nu - \mu = j}
     \sum_{2\nu \geq 3\mu \geq 0} (2^\nu i^j \mu! \nu!)^{-1}
     \left. \skp{\left. D^2_x \varphi \right|_{(x(y),y)}^{-1}
       \nabla_x}{\nabla_x}^\nu (g^\mu u) \right|_{(x(y),y)}
  \end{align}
  where
  \begin{align}
    g(x,y) = \varphi(x,y) - \varphi(x(y),y) - \frac{1}{2} \skp{
      \left. D^2_x \varphi \right|_{(x(y),y)} (x - x(y))}{x - x(y)}.
  \end{align}
  In particular, $L_{0} u (x(y),y) = u(x(y),y)$ and the coefficients
  of $L_{1}$ at $(x(y),y)$ are rational functions in
  $\left. \partial^\alpha_x \varphi \right|_{(x(y),y)}$ for $\alpha \in \N^n$
  with $2 \leq |\alpha| \leq 4$, homogeneous of degree $-1$ and with
  denominator $\left(\det\left(\left. D^2_x\varphi
  \right|_{(x(y),y)}\right)\right)^3$.
\end{theo}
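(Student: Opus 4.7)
The plan is to reduce the oscillatory integral to a Gaussian integral with parameters and then evaluate it by Fourier analysis, keeping careful track of all constants so that the remainder estimate is uniform in $y$.

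First, I would use the implicit function theorem to obtain the smooth critical point map $y \mapsto x(y)$ on a neighborhood of $y_0$. Since $D^2_x\varphi(x_0,y_0)$ is nonsingular, this map is well-defined, and by shrinking the neighborhood $K$ I may assume the quadratic form $Q(y) := D^2_x\varphi(x(y),y)$ remains nonsingular with constant signature $\sigma$ on $K$. Next I would apply the Morse lemma with parameters: there is a smooth family of diffeomorphisms $x = \Psi(z,y)$ with $\Psi(0,y) = x(y)$ and $D_z\Psi(0,y) = \Id$ such that
\begin{equation*}
  \varphi(\Psi(z,y),y) = \varphi(x(y),y) + \tfrac{1}{2}\skp{Q(y)z}{z}.
\end{equation*}
The standard proof is a Moser-type homotopy argument that is fully smooth in $y$ because the construction only involves the implicit function theorem and linear algebra applied fiberwise.

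After the change of variables, the integral becomes
\begin{equation*}
  e^{i\omega\varphi(x(y),y)} \int_{\R^n} v(z,y)\,
  e^{i\omega \frac{1}{2}\skp{Q(y)z}{z}}\,dz,
  \qquad v(z,y) := u(\Psi(z,y),y)\,\abs{\det D_z\Psi(z,y)}.
\end{equation*}
I would Taylor-expand $v(z,y)$ at $z = 0$ to order $2k-1$ with integral remainder, then evaluate each resulting monomial via the elementary Gaussian formula
\begin{equation*}
  \int_{\R^n} z^\alpha e^{i\omega\frac{1}{2}\skp{Qz}{z}}\,dz
  = \left(\tfrac{2\pi}{\omega}\right)^{n/2}\! \abs{\det Q}^{-1/2} e^{i\pi\sigma/4}
  \left.\left(\tfrac{1}{i\omega}\right)^{|\alpha|/2}\!
  \partial^\alpha_\zeta e^{-\frac{1}{2i\omega}\skp{Q^{-1}\zeta}{\zeta}}\right|_{\zeta=0},
\end{equation*}
derived by completing the square and differentiating under the integral sign. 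Collecting like powers of $\omega^{-1}$ produces an asymptotic expansion of the desired form; the leading term is $A_y\omega^{-n/2}e^{i\omega\varphi(x(y),y)}u(x(y),y)$, yielding $L_0 u = u$ at the critical point.

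The main obstacle is identifying the intrinsic form \eqref{eq:statPhaseLj} of $L_j$ without reference to the auxiliary Morse coordinates $z$. To handle this I would work directly in the original $x$ variables by writing $\varphi(x,y) = \varphi(x(y),y) + \tfrac{1}{2}\skp{D^2_x\varphi(x(y),y)(x-x(y))}{x-x(y)} + g(x,y)$, factoring $e^{i\omega\varphi}$ as the pure Gaussian times $e^{i\omega g}$, then formally expanding $e^{i\omega g} = \sum \frac{(i\omega g)^\mu}{\mu!}$ and applying the Gaussian moment formula above. The symbolic calculus identity
\begin{equation*}
  \int_{\R^n} h(x) e^{i\omega\frac{1}{2}\skp{Bx}{x}}\,dx
  \sim A_B\,\omega^{-n/2}\sum_{\nu\geq 0} \frac{1}{\nu!}\Bigl(\frac{1}{2i\omega}\Bigr)^{\!\nu}\skp{B^{-1}\nabla}{\nabla}^\nu h(0)
\end{equation*}
applied to $h = g^\mu u$ produces exactly \eqref{eq:statPhaseLj}, with the restriction $2\nu \geq 3\mu$ coming from the fact that $g$ vanishes to order three at the critical point, so lower-order terms do not contribute to $L_j$ for $j = \nu - \mu$. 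The specific structure of $L_1$ and its denominator $(\det D^2_x\varphi)^3$ follows by inspecting the $\mu = 0, \nu = 1$ and $\mu = 2, \nu = 3$ contributions. The uniform remainder estimate is routine: the Taylor remainder integral contributes a bound in terms of $\sup_{|\alpha|\leq 2k}\sup_x|\partial^\alpha_x u|$, and all prefactors depend continuously on $y$ over the compact set $K$, so the constant $C$ can be chosen independent of $y$.
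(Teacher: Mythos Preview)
The paper does not prove this theorem; it is quoted from H\"ormander \cite[Theorems 7.7.5 and 7.7.6]{hoermander2015analysis}, with only the supplementary remark that the structure of $L_1$ (rational coefficients, denominator $(\det D^2_x\varphi)^3$) follows from \eqref{eq:statPhaseLj} by Cramer's rule together with $\partial^\alpha_x g|_{(x(y),y)}=0$ for $|\alpha|\le 2$. Your sketch is essentially a reconstruction of H\"ormander's argument: the decomposition $\varphi = \varphi(x(y),y) + \tfrac12\skp{Q(y)(x-x(y))}{x-x(y)} + g$, the formal expansion of $e^{i\omega g}$, and the Gaussian moment formula are exactly the ingredients of Theorem~7.7.6 there, so your approach matches the source the paper relies on. The preliminary detour through the parametric Morse lemma is an alternative route to the expansion but is not needed once you work directly with $g$; H\"ormander does not use it.

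One small slip: in your enumeration of the terms contributing to $L_1$ you list only $(\mu,\nu)=(0,1)$ and $(2,3)$, but the constraint $\nu-\mu=1$, $2\nu\ge 3\mu$ also admits $(\mu,\nu)=(1,2)$. All three terms are needed; the maximal $\nu=3$ is what produces the denominator $(\det D^2_x\varphi)^3$ via Cramer's rule, consistent with the paper's remark. You might also note that the expansion of $e^{i\omega g}$ is only asymptotic, not convergent: each term $(i\omega g)^\mu/\mu!$ individually grows with $\omega$, and the rigorous remainder bound requires cutting the series at a finite $\mu$ and controlling the tail via the vanishing order of $g$---this is where the constraint $2\nu\ge 3\mu$ enters analytically, not just combinatorially.
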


We note that the statement on the coefficients of $L_1$ in Theorem
\ref{theo:statPhaseHoer} can be verified by a straightforward
calculation using \eqref{eq:statPhaseLj}, Cramer's rule and the
observation that $\left. \partial^\alpha_x g \right|_{(x(y),y)} = 0$
for $|\alpha| \leq 2$.

The estimate \eqref{eq:statPhaseExp} is fulfilled by functions $u$
with support in some small compact neighbourhood of the point
$(x_0,y_0)$ with $x_0$ being a critical non-degenerate point of the
phase function $\varphi(\cdot,y_0)$. As explained before Theorem
\ref{theo:statPhaseHoer} we need to globalise this estimate with
respect to the parameter $y$. Using a standard compactness argument,
which combines nicely with local estimates, one can extend
\eqref{eq:statPhaseExp} to functions $u$ with support in some small
neighbourhood of a compact set of points $(x(y),y)$ where $y \mapsto
x(y)$ parametrises critical non-degenerate points of the family
$(\varphi(\cdot,y))_y$ of phase functions. In Corollary
\ref{cor:statPhase}, this is deduced from Theorem
\ref{theo:statPhaseHoer} for the special setting ($n = 2$, $k=2$)
needed for the proof of Proposition \ref{lem:I2Asymp}.

\begin{cor}
  \label{cor:statPhase}
  Let $y \mapsto x(y) \in \R^2$ be continuous on some open set $V
  \subset \R^m$ and let $U$ be a neighbourhood of $x(V)$. Let $\varphi
  : U \times V \rightarrow \R$ be a real-valued smooth function such
  that, for any $y \in V$, $\left. D_x\varphi \right|_{(x(y),y)} = 0$
  and $\left. D^2_{x}\varphi \right|_{(x(y),y)}$ is non-singular with
  constant signature $\sigma$. Let $\Omega \subset V$ be compact. Then
  there are some constant $C > 0$ and some compact neighbourhood $K$
  of $\{ (x(y),y) \; | \; y \in \Omega \}$ such that for any $u \in
  \Cont_0^\infty(K)$, $y \in \Omega$ and $\epsilon > 0$
  \begin{align}
    \label{eq:statPhaseSpecEst}
    \left| \int_{\R^2} u(x,y) e^{i\varphi(x,y)/\epsilon} dx - \epsilon
    A_{y} e^{i\varphi(x(y),y)/\epsilon} u(x(y),y) \right| \leq C
    \epsilon^{2} \sum_{|\alpha| \leq 4} \sup_x \left|
    \partial_x^\alpha u(x,y) \right|
  \end{align}
  where $A_{y} = \frac{2\pi e^{i\pi \sigma/4}}{|\det D^2_{x}
    \varphi(x(y),y)|^{1/2}}$.
\end{cor}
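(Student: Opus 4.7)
The plan is to deduce the uniform estimate from Theorem \ref{theo:statPhaseHoer} (applied pointwise in $y \in \Omega$ with $n=2$, $\omega = 1/\epsilon$, $k=2$), combined with a standard compactness and partition of unity argument in the parameter $y$.

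First, for each $y_0 \in \Omega$, Theorem \ref{theo:statPhaseHoer} yields a compact neighborhood $K_{y_0}$ of $(x(y_0), y_0)$ and a constant $C_{y_0} > 0$ such that the local stationary phase estimate holds with leading term $A_y \epsilon\, e^{i\varphi(x(y),y)/\epsilon}(L_0 u + \epsilon L_1 u)(x(y),y)$ and remainder $C_{y_0}\epsilon^2 \sum_{|\alpha|\leq 4} \sup_x|\partial_x^\alpha u(x,y)|$. Since $L_0 u = u$ and $L_1$ is a differential operator of order at most $2$, the discarded $\epsilon L_1 u$ contribution produces an additional error bounded by $C'_{y_0}\epsilon^2 \sum_{|\alpha|\leq 2}\sup_x|\partial_x^\alpha u(x,y)|$, which is absorbed into the desired $\epsilon^2 \sum_{|\alpha|\leq 4}$ remainder. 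I would shrink $K_{y_0}$ if necessary so that the critical point produced by the implicit function theorem inside Theorem \ref{theo:statPhaseHoer} coincides with the given section $x(y)$ throughout $K_{y_0}$: this is possible because non-singularity of $\left. D^2_x\varphi \right|_{(x(y_0),y_0)}$ gives local uniqueness of critical points, and by continuity of $y \mapsto x(y)$ the given section agrees with the one furnished by the implicit function theorem on a neighborhood of $y_0$.

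Next, the set $\Gamma := \{(x(y), y) : y \in \Omega\}$ is compact in $U \times V$ since $\Omega$ is compact and $x$ is continuous. I would extract a finite subcover $\inte K_{y_1}, \ldots, \inte K_{y_N}$ of $\Gamma$, choose a compact neighborhood $K$ of $\Gamma$ contained in $\bigcup_i \inte K_{y_i}$, and fix a $\Cont^\infty$ partition of unity $(\chi_i)_{i=1}^N$ on a neighborhood of $K$ with $\supp \chi_i \subset \inte K_{y_i}$. For $u \in \Cont_0^\infty(K)$ and $y \in \Omega$, I would write $u = \sum_i \chi_i u$ with each $\chi_i u \in \Cont_0^\infty(K_{y_i})$ and apply the local estimate to each summand. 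Since $\sum_i \chi_i \equiv 1$ on a neighborhood of $\Gamma$ and $(x(y), y) \in \Gamma$ for $y \in \Omega$, the leading contributions add up to $\epsilon A_y e^{i\varphi(x(y),y)/\epsilon} u(x(y), y)$.

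Finally, the remainder is controlled via Leibniz's rule: for each $i$ one has $\sup_x|\partial_x^\alpha(\chi_i u)(x,y)| \leq D_i \sum_{|\beta|\leq |\alpha|} \sup_x|\partial_x^\beta u(x,y)|$ with $D_i$ depending only on $\chi_i$. Setting $C := N \cdot \max_i (C_{y_i} D_i)$ and collecting the bounds yields \eqref{eq:statPhaseSpecEst} with constants uniform in $y \in \Omega$. The only non-routine step is aligning the critical point given by the implicit function theorem with the globally prescribed $x(y)$ in each local chart; once this matching is arranged by shrinking $K_{y_0}$, the rest of the argument is a straightforward finite patching, and uniformity of $A_y$ and all implicit constants over $\Omega$ follows from continuity combined with compactness of $\Omega$.
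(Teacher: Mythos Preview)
Your proposal is correct and follows essentially the same approach as the paper: apply Theorem~\ref{theo:statPhaseHoer} locally at each $y_0\in\Omega$ with $n=2$, $k=2$, $\omega=1/\epsilon$, absorb the $\epsilon^2 A_y L_1 u$ term into the remainder, match the implicit-function-theorem critical point with the given $x(y)$ via local uniqueness, then globalise by covering the compact graph $\{(x(y),y):y\in\Omega\}$ with finitely many local patches and summing against a partition of unity. The paper's proof is organised identically; your explicit mention of the Leibniz rule for controlling $\partial_x^\alpha(\chi_i u)$ is a detail the paper leaves implicit in the phrase ``Summing over $i$ then yields \eqref{eq:statPhaseSpecEst}.''
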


\begin{proof}
  Using Theorem \ref{theo:statPhaseHoer}, we shall first derive
  \eqref{eq:statPhaseSpecEst} for functions $u$ with support in some
  small compact neighbourhood $K(y_0)$ of $(x_0,y_0)$ with fixed $y_0
  \in V$ and $x_0 = x(y_0)$. By compactness we shall then cover the
  compact set $\{ (x(y),y) \; | \; y \in \Omega \}$ by finitely many
  sets of the family $(K(y))_{y \in V}$.
  
  Let $y_0 \in V$ and $x_0 = x(y_0)$. Since by assumption $(x_0, y_0)$
  is a critical non-degenerate point of $\varphi(\cdot,y_0)$ and
  $x(\cdot)$ is continuous, $x(\cdot)$ coincides with the solution
  $x(\cdot)$ in Theorem \ref{theo:statPhaseHoer} near $y = y_0$ by the
  uniqueness part of the implicit function theorem. So by Theorem
  \ref{theo:statPhaseHoer} applied with $n=2$, $k=2$ and $\omega =
  \epsilon^{-1}$, there are some constant $C'(y_0) > 0$ and some
  compact neighbourhood $K(y_0)$ of $(x(y_0),y_0)$ such that for any
  $u \in \Cont_0^\infty(K(y_0))$, $y \in V$ and $\epsilon > 0$
  \begin{align}
        \notag &\left| \int_{\R^2} u(x,y) e^{i\varphi(x,y)/\epsilon}
        dx - \epsilon A_{y} e^{i\varphi(x(y),y)/\epsilon} u(x(y),y)
        \right| \\ &\qquad \leq C'(y_0) \epsilon^2 \sum_{|\alpha| \leq 4}
        \sup_x \left| \partial_x^\alpha u(x,y) \right| + \epsilon^2
        \left| A_{y} e^{i\varphi(x(y),y)/\epsilon} L_{1} u(x(y),y)
        \right|.
  \end{align}
  Here, due to the smoothness of $\varphi$ and due to the form of
  $L_{1}$ given in Theorem \ref{theo:statPhaseHoer}, the last term can
  be bounded by the derivatives of $u$ of order of at most $4$,
  uniformly for $(x,y) \in K(y_0)$. Therefore for some constant
  $C(y_0) > C'(y_0)$ and any $u \in \Cont_0^\infty(K(y_0))$, $y \in V$
  and $\epsilon > 0$, we have
  \begin{align}
    \label{eq:statPhaseSpecEst2}
    \left| \int_{\R^2} u(x,y) e^{i\varphi(x,y)/\epsilon} dx - \epsilon
    A_{y} e^{i\varphi(x(y),y)/\epsilon} u(x(y),y) \right| \leq C(y_0)
    \epsilon^{2} \sum_{|\alpha| \leq 4} \sup_x \left|
    \partial_x^\alpha u(x,y) \right|.
  \end{align}

  By a compactness argument we shall now use the local estimates
  \eqref{eq:statPhaseSpecEst2} to gain the global estimate
  \eqref{eq:statPhaseSpecEst}. For any $y \in V$ let $U(y)$ be an open
  neighbourhood of $(x(y),y)$ whose closure is contained in the
  interior of $K(y)$. Since by assumption $\Omega$ is compact and
  $x(\cdot)$ is continuous, the set $\{ (x(y),y) \; | \; y \in \Omega
  \}$ is compact. So we can choose $y_1, \dots, y_n \in V$ such that
  $\{ (x(y),y) \; | \; y \in \Omega \} \subset \bigcup_i U(y_i)$. Let
  $\chi_i \in \Cont_0^\infty(K(y_i))$ with $\sum_i \chi_i = 1$ on $K
  := \overline{\bigcup_i U(y_i)}$ and let $u \in
  \Cont_0^\infty(K)$. Then $\chi_i u \in \Cont_0^\infty(K(y_i))$ and
  \eqref{eq:statPhaseSpecEst2} is fulfilled for $u$ chosen as $\chi_i
  u$ and with $y_0 = y_i$. Summing over $i$ then yields
  \eqref{eq:statPhaseSpecEst}.
\end{proof}

Having prepared this parameter dependent version of the method of
stationary phase, we come back to the analysis of the integral $I_2$
from \eqref{eq:I2Def}. We denote by $dvol$ the symplectic volume form
on $\R^d \times \T^d$. Given a differential form $\tau$, we will
denote by $|\tau|$ the associated density. The integral $I_2$ can then
be written as
\begin{align}
  \label{eq:I2volForm}
  I_2(\lambda;f,\psi,\epsilon) = \int_{\R \times \R^d \times \T^d}
  g_{f,\lambda} |dt \land dvol| \; ,
\end{align}
where the integral is well-defined without introducing an orientation
for the manifold $\R \times \R^d \times \T^d$. On the rhs of
\eqref{eq:I2volForm} we suppressed the dependence on $\epsilon$.

We further denote by $L$ the Liouville form with respect to the level
sets of $a_0$ and the symplectic volume form $dvol$. $L$ is
invariantly defined at any non-critical point of $a_0$ as the
contraction of $dvol$ with the vector field $\partial/\partial a_0$,
i.e.
\begin{align}
  \label{eq:LiouvilleFormDef}
  L = i_{\partial/\partial a_0} dvol = \frac{\partial}{\partial a_0} \contr dvol, 
\end{align}
where $i$ denotes the interior derivative and $\contr$ is the standard
symbol for the contraction. The Liouville form can be used to represent
the symplectic volume form as
\begin{align}
  \label{eq:SymplecLiou}
  dvol = da_0 \land L.
\end{align}
We shall now prove that, applying Corollary \ref{cor:statPhase} to the
$dtda_0$-integral with $\lambda$ and the coordinates of
$a_0^{-1}(\lambda)$ acting as parameters, the integral $I_2$ has the
expansion \eqref{eq:I2Asymp}.
\begin{prop}
  \label{lem:I2Asymp}
  Assume Hypothesis \ref{hyp:WRef}. Let $f \in \Cont_0^\infty(J')$
  where $J' \subset J$ is a compact interval only consisting of
  non-critical values of $a_0$. Then for any $\psi \in
  \Cont_0^\infty(\R)$ with $\psi(0) = 1$ and supported in a
  sufficiently small neighbourhood of $0$, the integral
  $I_2(\lambda;f,\psi,\epsilon)$ given in \eqref{eq:I2volForm} can be
  expanded as
  \begin{align}
    \label{eq:I2Asymp}
    I_2(\lambda; f, \psi,\epsilon) &= \frac{1}{(2\pi\epsilon)^d}
    \left( f(\lambda) \int_{a_0 = \lambda} |L| + \O(\epsilon) \right),
    \qquad \mbox{uniformly for } \lambda \in J'.
  \end{align}
\end{prop}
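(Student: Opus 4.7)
The plan is to follow the strategy of \cite[Chapter 10]{DiSj}: use Fubini and the Liouville decomposition $dvol = da_0 \land L$ (see \eqref{eq:SymplecLiou}) to rewrite $I_2$ as an iterated integral with the outer integral over level sets $\{a_0 = h\}$ and the inner integral in the two variables $(t, h)$. On the reduced two-dimensional domain the phase $\varphi_\lambda$ has an isolated non-degenerate critical point at $(0, \lambda)$, so Corollary \ref{cor:statPhase} applies with the level-set coordinates and $\lambda$ as compound parameters.

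First I would localise. Choose open intervals $V \Subset U$ with $J' \subset V$ and $\overline{U}$ consisting only of non-critical values of $a_0$, and pick $\rho \in \Cont_0^\infty(U)$ with $\rho \equiv 1$ on $V$. Since $\partial_t \varphi_\lambda(0,x,\xi) = a_0(x,\xi) - \lambda$ by the Hamilton-Jacobi equation and $\nabla_x\phi(0,x,\xi) = \xi$, on $\supp\bigl((1-\rho)\circ a_0\bigr)$ one has $|\partial_t\varphi_\lambda(0,x,\xi)| \geq \mathrm{dist}(\R\setminus V, J') =: \eta > 0$ uniformly in $\lambda \in J'$. For $\psi$ supported in a sufficiently small neighbourhood of $0$, continuity gives $|\partial_t\varphi_\lambda| \geq \eta/2$ on $\supp\psi$. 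Iterated integration by parts in $t$ then yields an $\O(\epsilon^\infty)$ contribution uniformly in $\lambda$.

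For the remaining $\rho(a_0)$-part, the integrand is supported in a compact subset of $([-L,L]^d \times \T^d) \cap a_0^{-1}(\overline{U})$. I cover this set by finitely many coordinate patches in which $h := a_0$ is one coordinate and $(y_1,\dots,y_{2d-1})$ parametrise the level sets, with a subordinate partition of unity $(\chi_j)$. In each patch, $|dx\,d\xi| = |J(y,h)||dy\,dh|$ and $\int_{a_0=h}|L| = \int|J(y,h)|\,dy$. The problem reduces to a finite sum of integrals
\begin{align*}
\frac{1}{(2\pi\epsilon)^{d+1}}\int dy\int\!\!\int e^{iF(t,h,y)/\epsilon}\, u_j(t,h,y;\epsilon)\, dt\,dh,
\end{align*}
where $F(t,h,y) := -\phi_\T(t,x(y,h),\xi(y,h)) - \lambda t$ and $u_j$ absorbs $\psi(t)\rho(h)\chi_j\mu_f(t,x,x,\xi;\epsilon)|J(y,h)|$. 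I then apply Corollary \ref{cor:statPhase} with compound parameter $(y,\lambda)$ ranging in a compact set. The key computation: since $\phi_\T(0,\cdot,\cdot)\equiv 0$, $F(0,h,y) = 0$ identically, so $\partial_h F = \partial_h^2 F = 0$ at $t=0$. Using $\partial_t\phi(0,\cdot) = -a_0$, one gets $\partial_t F(0,h,y) = h-\lambda$; differentiating the identity $a_0(x(y,h),\xi(y,h)) = h$ in $h$ gives $\partial_t\partial_h F(0,h,y) = 1$. Thus $(0,\lambda)$ is the unique critical point with $F(0,\lambda,y) = 0$ and Hessian $\bigl(\begin{smallmatrix}-\partial_t^2\phi_\T(0,\cdot) & 1 \\ 1 & 0\end{smallmatrix}\bigr)$ of $|\det|=1$ and signature $0$, non-degenerate independently of $(y,\lambda)$. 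Corollary \ref{cor:statPhase} then yields $\int\!\!\int e^{iF/\epsilon}u_j\,dt\,dh = 2\pi\epsilon\, u_j(0,\lambda,y;\epsilon) + \O(\epsilon^2)$ uniformly.

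To identify the leading term I use $\psi(0) = \rho(\lambda) = 1$; the initial condition \eqref{eq:FIOKernelInit} together with $c = f\circ a_0 + \O(\epsilon)$ from Theorem \ref{theo:FuncCalcNonDiscr}(\ref{item:FuncCalcNonDiscr3}); and the fact that $\chi \equiv 1$ on $K \supset a_0^{-1}(\lambda)$. This gives $u_j(0,\lambda,y;\epsilon) = \chi_j(y,\lambda) f(\lambda)|J(y,\lambda)| + \O(\epsilon)$. Summing over $j$ and integrating in $y$ produces the leading contribution $2\pi\epsilon \cdot f(\lambda)\int_{a_0=\lambda}|L|$, and dividing by $(2\pi\epsilon)^{d+1}$ yields \eqref{eq:I2Asymp}. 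The hard part is the non-degeneracy of the Hessian of $F$ at $(0,\lambda)$ uniformly in the parameters: the crucial off-diagonal entry $\partial_t\partial_h F = 1$ emerges precisely from using $h = a_0$ as a coordinate, and the uniformity of the stationary phase expansion in Corollary \ref{cor:statPhase} over the compact parameter set requires the smooth dependence of the critical point and the Hessian on $(y,\lambda)$, which is automatic once the coordinate charts are fixed.
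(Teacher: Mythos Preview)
Your proposal is correct and follows essentially the same approach as the paper's proof: localise away from the critical set by a non-stationary phase argument, pass to coordinates $(h,y)$ adapted to the level sets of $a_0$, apply the two-variable stationary phase (Corollary~\ref{cor:statPhase}) in $(t,h)$ with $(y,\lambda)$ as compact parameters, and read off the Hessian $\bigl(\begin{smallmatrix} * & 1 \\ 1 & 0\end{smallmatrix}\bigr)$ with $|\det|=1$ and signature $0$. The only cosmetic difference is that you localise first via an energy cutoff $\rho(a_0)$ and then introduce the partition of unity, whereas the paper builds the partition of unity $(\chi_i)$ on a tubular neighbourhood of $a_0^{-1}(J')$ directly and treats the complement $1-\sum_i\chi_i$ as the non-stationary piece; both routes land on the same $dt\,da_0$ integral and the same leading term identification via \eqref{eq:FIOKernelInit} and $c_0 = f\circ a_0$.
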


Note that, as already remarked above, the integral $\int_{a_0 =
  \lambda} |L|$ in \eqref{eq:I2Asymp} is well-defined as a positive
number without introducing an orientation for the level set
$a_0^{-1}(\lambda)$.

\begin{proof}
  Fixing $f$, we will shorten the
  notation by writing
  \begin{align}
    I_2(\lambda;\psi,\epsilon) &:= I_2(\lambda;f,\psi,\epsilon), &
    g_{\lambda} &:= g_{f,\lambda} , & \mu := \mu_f.
  \end{align}
  with $g_{f,\lambda}$, $\mu_f$ and $T > 0$ introduced below
  \eqref{eq:I2Def}.

  Let $J''$ be a neighbourhood of $J'$ such that the closure of $J''$
  is contained in $J$ and consists only of non-critical values of
  $a_0$. By the regular value theorem and Hypothesis \ref{hyp:WRef},
  for any $\lambda \in J''$, the level set $a_0^{-1}(\lambda)$ is a
  smooth compact submanifold of $\R^d \times \T^d$ of dimension
  $2d-1$. We consider $\psi$ with support in $(-t_0,t_0)$ where $t_0
  \in (0,T)$ is determined according to \eqref{eq:critSet_t0} with $U
  = \overline{J''}$ and the Hamiltonian chosen as $H = a_0$.

  We choose families of $\epsilon$-independent charts $(U_i,\sigma_i)$
  and $(U_i,\tilde{\sigma}_i)$ of $\R^d \times \T^d$ with $U_i \subset
  a_0^{-1}(J'')$ such that $(U_i)_i$ is an open covering of a tubular
  neighbourhood of $a_0^{-1}(J')$. Due to Hypothesis \ref{hyp:WRef}
  and the assumption on $J'$ we may assume this family to be
  finite. We shall indicate in the course of the proof how small the
  domains $U_i$ have to be chosen. We further assume that $\sigma_i =
  (x,\xi)$ represents natural coordinates and $\tilde{\sigma}_i= (a_0,
  \omega_1, \dots, \omega_{2d-1}) = (a_0, \omega)$ is a submanifold
  chart locally flattening the level sets of $a_0$, i.e.
  \begin{align}
    \label{eq:chartProperty}
    \tilde{\sigma}_i (a_0^{-1}(\lambda) \cap U_i) \subset \{ \lambda
    \} \times \R^{2d-1} \qquad (\lambda \in J'').
  \end{align}
  When using coordinates $x,\xi, a_0, \omega$, we have as usual
  suppressed the index $i$ labelling the correspondence to the local
  charts $\sigma_i$ and $\tilde\sigma_i$. These coordinates induce
  differential forms $dx, d\xi, da_0$ and $d\omega_j$ on
  $U_i$. Writing $d\omega = d\omega_1 \land \dots \land
  d\omega_{2d-1}$, the symplectic volume form $dvol$ on $U_i$ can then
  be represented as
  \begin{align}
    \label{eq:VolTransf} dvol = dx \land d\xi &= \det (D(\sigma_i \circ
    \tilde{\sigma}_i^{-1})) (a_0,\omega) (da_0
    \land d\omega).
  \end{align}
  Choose $\chi_i \in \Cont_0^\infty(U_i)$ such that
  \begin{align}
    \label{eq:PartUniLevelSet}
   \chi := \sum_i \chi_i = 1 \quad \mbox{ on some tubular
     neighbourhood of } a_0^{-1}(J').
  \end{align}
  
  For $\lambda \in J''$, we can now decompose
  \begin{align}
    I_2(\lambda;\psi,\epsilon) = I_{stat}(\lambda;\psi,\epsilon) +
    I_{nonst}(\lambda;\psi,\epsilon)
  \end{align}
  where
  \begin{align}
    I_{nonst}(\lambda;\psi,\epsilon) &:= \int_{\R \times \R^d \times
      \T^d} g_{\lambda} (1 - \chi) \, |dt \land dvol|
  \end{align}
  and
  \begin{align}
    \label{eq:IStatDef}
    I_{stat}(\lambda;\psi,\epsilon) &:= \sum_{i} A_{i}(\lambda) \qquad
    \mbox{ with } \qquad A_{i}(\lambda) := \int_{\R \times \R^d \times
      \T^d} g_{\lambda} \chi_i \, |dt \land dvol|.
  \end{align}
  Using \eqref{eq:VolTransf} and the definition of the integral of
  forms on manifolds, we see that
  \begin{align}
    \label{eq:IstatAi}
    A_{i}(\lambda) = \int_{\R^{2d-1}} B_{i}(\lambda,\omega) d\omega
    \qquad \mbox{ with } \qquad B_{i}(\lambda,\omega) &:= \int_{\R}
    \int_{\R} C_{\lambda,\omega}^i(t,a_0) dtda_0
  \end{align}
  and
  \begin{align}
    \label{eq:IstatTransfIntegrand}C_{\lambda,\omega}^i(t,a_0) &:= g_{\lambda}(t,
    \tilde{\sigma}_i^{-1}(a_0,\omega);\epsilon) (\chi_i \circ
    \tilde{\sigma}_i^{-1})(a_0,\omega) | \det D(\sigma_i \circ
    \tilde{\sigma}_i^{-1})|(a_0,\omega).
  \end{align}
  In the definitions for $A_i$, $B_i$ and $C_{\lambda,\omega}^i$, we
  suppressed the dependence on $\epsilon$. In \eqref{eq:IstatAi} and
  \eqref{eq:IstatTransfIntegrand}, with the usual abuse of notation,
  we consider $a_0$ and $\omega$ as elements of $\R$ and $\R^{2d-1}$,
  respectively, $dtda_0$ as the Lebesgue measure on $\R^{2}$ and
  $d\omega$ as the Lebesgue measure on $\R^{2d-1}$. Furthermore, in
  \eqref{eq:IstatTransfIntegrand} we have interpreted the function
  $\chi_i \circ \tilde{\sigma}_i^{-1}$ as an element of
  $\Cont_0^\infty(\R^{2d})$.

  We shall apply the parameter dependent version of the method of
  stationary phase, given in Corollary \ref{cor:statPhase}, to each
  integral $B_{i}(\lambda,\omega)$. For this purpose, we identify the
  integration variable $x$ and the parameter $y$ from Corollary
  \ref{cor:statPhase} as
  \begin{align}
    \label{eq:AppStatPhaseNot}
     x &= (t,a_0), \qquad y= (\lambda,\omega).
  \end{align}
  We emphasise that $x$ in \eqref{eq:AppStatPhaseNot} has a meaning
  different from $x$ introduced around \eqref{eq:chartProperty}. By
  the definition \eqref{eq:gLambda} of $g_{\lambda}$ and using the
  notation \eqref{eq:AppStatPhaseNot}, we see that the mapping $(x,y)
  \mapsto C_{y}^i(x)$ given in \eqref{eq:IstatTransfIntegrand} is
  smooth and that the phase function of the oscillating integral
  $B_i(\lambda,\omega)$ is given by
  \begin{align}
    \label{eq:varphiTransf}
    \varphi(x,y) := \varphi_{\lambda}(t,(\sigma_i \circ
    \tilde{\sigma}_i^{-1})(a_0,\omega)) \qquad (t \in (-t_0,t_0),
    (a_0,\omega) \in \im \tilde{\sigma}_i, \lambda \in J''),
  \end{align}
  which is real-valued. On the lhs of \eqref{eq:varphiTransf} we
  suppressed the label $i$. Let further
  \begin{align}
    \label{eq:xycritPoints}
    y \mapsto x(y) := (0,\lambda) \qquad \mbox{for } y =
    (\lambda,\omega) \in \im \tilde\sigma_i, \, \lambda \in J''.
  \end{align}
  Then, due to Lemma \ref{lem:CritManif} and the property
  \eqref{eq:chartProperty}, $x(\cdot)$ parametrises the critical
  points of $\varphi(\cdot,y)$, i.e. one has $\left. D_x \varphi
  \right|_{(x(y),y)} = 0$. Using the definition \eqref{eq:PhiLambda}
  for $\varphi_{\lambda}$ and the Hamilton-Jacobi equation
  \eqref{eq:HamJacEq} for $\phi$ with Hamiltonian $H = a_0$, we find
  \begin{align}
    \label{eq:varphiCritP} \varphi_{\lambda}(0,x,\xi) &= 0,\\ \label{eq:varphiCritP2} \partial_t
    \varphi_{\lambda}(0,x,\xi) &= -\partial_t \phi(0,x,\xi) - \lambda
    = a_0(x,\xi) - \lambda,
  \end{align}
  where $x$ now means the coordinates introduced around
  \eqref{eq:chartProperty}. This implies that the Hessian of
  $\varphi(\cdot,(\lambda,\omega))$ at the critical point
  $(0,\lambda)$ for given parameters $\lambda, \omega$ is of the form
  \begin{align}
    \label{eq:HessianForm}
    K(\lambda,\omega) := \left. D^2
    \varphi(\cdot,(\lambda,\omega)) \right|_{(0,\lambda)} =
    \begin{pmatrix}
      * & 1 \\ 1 & 0
    \end{pmatrix}.
  \end{align}
  So the critical points $(0,\lambda)$ are non-degenerate (since $\det
  K(\lambda,\omega) = -1$).

  We recall that $C_{\lambda,\omega}^i$ depends on $\epsilon$. But,
  defining the compact set $\Omega := \supp (\chi_i \circ
  \tilde\sigma_i^{-1})$, without loss of generality we may assume the
  mapping $(t,a_0,\lambda,\omega) \mapsto C_{\lambda,\omega}^i(t,a_0)$
  to have support in a sufficiently small $\epsilon$-independent
  neighbourhood of
  \begin{align}
    \{ (x(y),y) \; | \; y \in \Omega \} = \{
    (0,\lambda,\lambda,\omega) \; | \; (\lambda,\omega) \in \Omega \}
  \end{align}
  by choosing the domain $U_i$ (and thus the
  support of $\chi_i$ in \eqref{eq:PartUniLevelSet}) and the support
  of $\psi$ (which appears as a factor in the definition
  \eqref{eq:gLambda} of $g_{\lambda}$) sufficiently small. Therefore
  the conditions for applying Corollary \ref{cor:statPhase} are
  satisfied. Thus
  \begin{align}
    \label{eq:statPhaseAppl1}
    \int_{\R}\int_{\R} C^i_{\lambda,\omega}(t,a_0) dtda_0 = \epsilon A
    C^i_{\lambda,\omega} (0,\lambda) + \O(\epsilon^2), \qquad
    \mbox{uniformly for } \lambda \in J'', \omega \in \R^{2d-1},
  \end{align}
  where the constant $A$ is given by
  \begin{align}
    \label{eq:K0StatPhase}
    A = \frac{2\pi e^{i\pi/4 \sign K(\lambda,\omega)}}{|\det
      K(\lambda,\omega)|^{1/2}} = 2\pi.
  \end{align}
  Here we used that $\det K(\lambda,\omega) = -1$ and thus $\sign
  K(\lambda,\omega) = 0$. By the definition of $C_{\lambda,
    \omega}^i$ in \eqref{eq:IstatTransfIntegrand} and $g_{\lambda}$ in
  \eqref{eq:gLambda}, the amplitude in \eqref{eq:statPhaseAppl1}
  evaluated at the critical point $(0,\lambda)$ is given by
  \begin{align}
    \label{eq:StatPhaseApplIntegrand}
    C^i_{\lambda,\omega}(0,\lambda) &= \frac{1}{(2\pi\epsilon)^{d+1}}
    \left( e^{i\varphi_{\lambda}(0,\sigma_i)/\epsilon} \psi(0)
    \mu(0,x,x,\xi;\epsilon) \chi_i \middle)
    \right|_{\tilde\sigma_i^{-1}(\lambda,\omega)} | \det D(\sigma_i
    \circ \tilde{\sigma}_i^{-1})|(\lambda,\omega).
  \end{align}

  To finish the proof, we shall use \eqref{eq:statPhaseAppl1} to find
  an expansion for $I_2$. From \eqref{eq:FIOKernelInit} we get
  \begin{align}
    \mu(0,x,x,\xi;\epsilon) = c(x,\xi;\epsilon) \quad \mbox{ on }
    a_0^{-1}(J'').
  \end{align}
  Due to \eqref{eq:FuncCalcAsympcj}, we have $c \sim \sum_j \epsilon^j
  c_j$ where the leading order term $c_0$ is given by
  \begin{align}
    c_0 = f \circ a_0.
  \end{align}
  Thus, using the remainder estimates in Theorem
  \ref{theo:FuncCalcNonDiscr} (\ref{item:FuncCalcNonDiscr3}) for the
  asymptotic expansion of $c$,
  \begin{align}
    \label{eq:MuAsymp}
    \mu(0,x,x,\xi;\epsilon) = f(\lambda) + \O(\epsilon), \quad \mbox{
      uniformly on } a_0^{-1}(\lambda) \mbox{ and for } \lambda \in
    J''.
  \end{align}
  Inserting \eqref{eq:varphiCritP}, \eqref{eq:MuAsymp} and the
  assumption $\psi(0) = 1$ into \eqref{eq:StatPhaseApplIntegrand}, we
  find
  \begin{align}
    \notag C^i_{\lambda,\omega}(0,\lambda) =
    \frac{1}{(2\pi\epsilon)^{d+1}} (f(\lambda) + \O(\epsilon)) (\chi_i
    \circ \tilde{\sigma}_i^{-1})(\lambda,\omega) | \det D(\sigma_i
    \circ \tilde{\sigma}_i^{-1})|(\lambda,\omega),
    &\\\label{eq:IntegrandCOmega} \mbox{ uniformly for } \lambda \in
    J'', \omega \in \R^{2d-1}.&
  \end{align}
  Combining the definition of $B_i(\lambda,\omega)$ in
  \eqref{eq:IstatAi} with \eqref{eq:statPhaseAppl1} and
  \eqref{eq:IntegrandCOmega}, we get
  \begin{align}
    \notag B_i(\lambda,\omega) = \frac{1}{(2\pi\epsilon)^{d}}
    (f(\lambda) + \O(\epsilon)) (\chi_i \circ
    \tilde{\sigma}_i^{-1})(\lambda,\omega) | \det D(\sigma_i \circ
    \tilde{\sigma}_i^{-1})|(\lambda,\omega),
    &\\ \label{eq:BiEvaluated} \mbox{ uniformly for } \lambda \in J'',
    \omega \in \R^{2d-1}.
  \end{align}
    From \eqref{eq:SymplecLiou} and \eqref{eq:VolTransf} we see that
  \begin{align}
    \label{eq:LiouvOmega} L &= \left(\det (D(\sigma_i \circ
    \tilde{\sigma}_i^{-1})) \circ \tilde{\sigma}_i\right) d\omega.
  \end{align}
  To evaluate the integral of $B_i$ in \eqref{eq:IstatAi}, observe
  that, using \eqref{eq:LiouvOmega} and the definition of the integral
  of forms,
  \begin{align}
    \label{eq:LiouvOmegaInt}
    \int_{\R^{2d-1}}(\chi_i \circ
    \tilde{\sigma}_i^{-1})(\lambda,\omega) | \det D(\sigma_i \circ
    \tilde{\sigma}_i^{-1})|(\lambda,\omega) d\omega= \int_{a_0 =
      \lambda} \chi_i |L|,
  \end{align}
  where $d\omega$ denotes the Lebesgue measure on $\R^{2d-1}$. Thus,
  combining \eqref{eq:IstatAi}, \eqref{eq:BiEvaluated} and
  \eqref{eq:LiouvOmegaInt},
  \begin{align}
    A_i(\lambda) = \frac{1}{(2\pi\epsilon)^d} (f(\lambda) + \O(\epsilon))
    \int_{a_0 = \lambda} \chi_i \, |L|, \qquad \mbox{uniformly for }
    \lambda \in J'.
  \end{align}
  Using \eqref{eq:PartUniLevelSet} to sum over $i$, the integral
  $I_{stat}$ from \eqref{eq:IStatDef} can therefore be expanded as
  \begin{align}
    I_{stat}(\lambda; \psi,\epsilon) &= \frac{1}{(2\pi \epsilon)^d}
    \left( f(\lambda) \int_{a_0 = \lambda} |L| + \O(\epsilon) \right),
    \qquad \mbox{uniformly for } \lambda \in J'.
  \end{align}
  The asymptotics \eqref{eq:I2Asymp} now follows since
  $I_{nonst}(\lambda; \psi,\epsilon)$ is of order
  $\O(\epsilon^\infty)$ by standard arguments of non-stationary phase.
\end{proof}

For the sake of the reader we remark that the argument below
\eqref{eq:xycritPoints} verifying that the critical points
$(0,\lambda)$ are non-degenerate provides a direct way to prove the
inclusion ``$\subset$'' in \eqref{eq:critSet_t0}, avoiding the less
explicit contradiction argument given in the proof of Lemma
\ref{lem:CritManif}. It essentially takes advantage of the coordinates
introduced around \eqref{eq:chartProperty} to compute the Hessian at
the critical points.

In the next Lemma, we shall identify the rhs of \eqref{eq:I2Asymp} as
a suitable approximation of $I_1$ not only for $\lambda$ in a bounded
interval but for any $\lambda \in \R$. In addition, since we want this
approximation to be integrable with respect to $\lambda$, we need the
error term to have sufficient decay in $\lambda$.

\begin{Lem}
  \label{Lem:I1Liouv}
  Assume Hypothesis \ref{hyp:WRef}.  For $f \in \Cont_0^\infty(J)$
  compactly supported in the set of non-critical values of $a_0$ and
  any $\psi \in \Cont_0^\infty(\R)$ with $\psi(0) = 1$ and supported
  in a sufficiently small neighbourhood of $0$, the expression
  $I_1(\lambda;f,\psi,\epsilon)$ defined in \eqref{eq:I1Def} can be
  expanded as
  \begin{align}
    \label{eq:I1Asymp}
    I_1(\lambda; f, \psi,\epsilon) &= \frac{1}{(2\pi\epsilon)^d}
    \left( f(\lambda) \int_{a_0 = \lambda} |L| + \langle \lambda
    \rangle^{-N} \O\left(\epsilon \right) \right),
  \end{align}
  for any $N \in \N$ and uniformly for $\lambda \in \R$.
\end{Lem}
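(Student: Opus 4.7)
The plan is to split the proof of \eqref{eq:I1Asymp} into a bulk regime, in which $\lambda$ lies in a bounded neighbourhood of $\supp f$, and a tail regime, in which $\lambda$ is bounded away from $\supp f$. Fix a compact interval $J' \subset J$ that contains $\supp f$ in its interior and consists entirely of non-critical values of $a_0$. For $\lambda \in J'$, combining Lemma \ref{lem:I1I2eps} (which replaces $I_1$ by $I_2$ modulo $\O(\epsilon^\infty)$ uniformly in $\lambda$) with Proposition \ref{lem:I2Asymp} yields
\[
  I_1(\lambda;f,\psi,\epsilon) = \frac{1}{(2\pi\epsilon)^d}\Bigl( f(\lambda)\int_{a_0=\lambda}|L| + \O(\epsilon)\Bigr)
\]
uniformly for $\lambda \in J'$. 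Since $\langle\lambda\rangle^{-N}$ is bounded below by a positive constant on the compact set $J'$, the $\O(\epsilon)$ remainder here may trivially be rewritten as $\langle\lambda\rangle^{-N}\O(\epsilon)$, finishing the bulk case.

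For $\lambda \in \R\setminus J'$ the Weyl term vanishes because $f(\lambda)=0$, and it remains to show $|I_1(\lambda;f,\psi,\epsilon)| = \O(\epsilon^{1-d}\langle\lambda\rangle^{-N})$ for each $N\in\N$. For this I pass to the spectral side. Since the spectrum of $\P_\epsilon$ in $J$ is discrete, let $(\lambda_j)_j$ denote the eigenvalues repeated according to multiplicity. Setting $F(\omega):=\int_\R e^{it\omega}\psi(t)\,dt$, which is Schwartz on $\R$ because $\psi\in\Cont_0^\infty(\R)$, the spectral theorem together with Fubini gives
\[
  I_1(\lambda;f,\psi,\epsilon) = \frac{1}{2\pi\epsilon}\sum_j f(\lambda_j)\, F\bigl((\lambda_j-\lambda)/\epsilon\bigr).
\]
The tail estimate then rests on two ingredients. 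First, for $\lambda_j\in\supp f$ and $\lambda\in\R\setminus J'$, separating $\{|\lambda|\leq R\}$ from $\{|\lambda|>R\}$ with $R$ large yields a uniform lower bound $|\lambda_j-\lambda|\geq c\,\langle\lambda\rangle$ with some $c>0$, so Schwartz decay of $F$ produces $|F((\lambda_j-\lambda)/\epsilon)|\leq C_M\,\epsilon^M\langle\lambda\rangle^{-M}$ for every $M\in\N$. Second, by Theorem \ref{theo:TraceAsympExp} (applied with $N=1$) together with the trace norm estimate \eqref{eq:TraClEstPDO} for the leading Weyl operator $\Op_{\epsilon,1/2}^\T c_0$, one has $\sum_j|f(\lambda_j)| \leq \trn{f(\P_\epsilon)} = \O(\epsilon^{-d})$. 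Combining these two estimates gives
\[
  |I_1(\lambda;f,\psi,\epsilon)| \leq C_M\,\epsilon^{M-1-d}\langle\lambda\rangle^{-M},
\]
and choosing $M\geq\max(N,2)$ delivers the desired bound.

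The main technical point is not the leading-order asymptotics, which are already provided by Proposition \ref{lem:I2Asymp}, but rather the uniform polynomial decay in $\lambda$ with the correct power of $\epsilon$. The oscillatory-integral side supplies the Weyl term only on bounded sets of parameters; the tail decay must be extracted from the spectral side of $I_1$, exploiting the interplay between Schwartz decay of the Fourier transform of $\psi$ and the global trace-class control on $f(\P_\epsilon)$ furnished by the trace asymptotics of Section \ref{sec:TraceEst}.
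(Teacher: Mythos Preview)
Your proof is correct and follows essentially the same strategy as the paper: split into the bulk regime $\lambda\in J'$ (handled by Lemma~\ref{lem:I1I2eps} and Proposition~\ref{lem:I2Asymp}) and the tail regime $\lambda\notin J'$ (handled via the spectral representation of $I_1$ combined with Schwartz decay of the Fourier transform of $\psi$). The only minor difference is that in the tail you bound $\sum_j|f(\lambda_j)|$ by $\trn{f(\P_\epsilon)}=\O(\epsilon^{-d})$ via Theorem~\ref{theo:TraceAsympExp}, whereas the paper invokes the rough Weyl estimate of Corollary~\ref{cor:WeylRough}; both yield the same $\O(\epsilon^{-d})$ control and the argument is otherwise identical.
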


Note that in \eqref{eq:I1Asymp}, for $\lambda \notin \supp f$, we
interpret the leading order term as $0$, i.e.
\begin{align}
  f(\lambda) \int_{a_0 = \lambda} |L| = 0 \quad \mbox{ for } \lambda
  \notin \supp f.
\end{align}

\begin{proof}
  Combining Lemma \ref{lem:I1I2eps} and Proposition \ref{lem:I2Asymp},
  we have
  \begin{align}
    \label{eq:I1inside}
    I_1(\lambda;f,\psi,\epsilon) = \frac{1}{(2\pi\epsilon)^d} \left(
    f(\lambda) \int_{a_0 = \lambda} |L| + \O (\epsilon ) \right), \quad \mbox{ uniformly
  for } \lambda \in J',
  \end{align}
  where $J' \subset J$ is a compact interval only consisting of
  non-critical values of $a_0$ and containing a neighbourhood of
  $\supp f$.

  To get an asymptotic expansion for $\lambda \notin J'$, we 
 give another representation of $I_1$. Since
the integral in \eqref{eq:I1Def} converges with respect to the trace
norm, trace and integration in \eqref{eq:I1Def} can be
interchanged. By functional calculus, we therefore get
\begin{align}
    \notag I_1(\lambda;f,\psi,\epsilon) &= \frac{1}{2\pi \epsilon}
    \sum_{\lambda_j \in \supp f} \int_\R e^{it(\lambda_j -
      \lambda)/\epsilon} \psi(t) f(\lambda_j) dt
    \\ \label{eq:I1RepreFourier} &= \frac{1}{\epsilon \sqrt{2\pi}}\sum_{\lambda_j \in \supp f}
    f(\lambda_j) (\F_\epsilon \psi)(\lambda - \lambda_j),
\end{align}
where the sum is taken over the $\epsilon$-dependent eigenvalues
$\lambda_j$ of $\P_\epsilon$ and $\F_\epsilon \psi$ is the
$\epsilon$-scaled Fourier transform of $\psi$ defined by
\begin{align}
  \label{eq:EpsFourPsi}
  (\F_\epsilon \psi)(\lambda) := \frac{1}{\sqrt{2\pi}} \int_\R
  e^{-it\lambda/\epsilon} \psi(t) dt \qquad (\lambda \in \R).
\end{align}
  By integration by
  parts in the definition of $\F_\epsilon \psi$ given in
  \eqref{eq:EpsFourPsi}, we see that for any $N \in \N$ the
  $\epsilon$-scaled Fourier transform satisfies
  \begin{align}
    \label{eq:FourOrder}
    (\F_\epsilon \psi)(\lambda) = \langle \lambda \rangle^{-N}\O \left(
    \epsilon^N  \right), \quad
    \mbox{uniformly for } |\lambda| \geq C > 0.
  \end{align}
  Therefore, using compactness of $J'$, we obtain uniformly for
  $\lambda_j \in \supp f \subset\subset J'$ and $\lambda \in \R
  \setminus J'$,
  \begin{align}
    \label{eq:FourierI1}
    (\F_\epsilon \psi)(\lambda - \lambda_j) = \langle \lambda
    - \lambda_j \rangle^{-N} \O \left( \epsilon^N \right) = \langle
    \lambda \rangle ^{-N} \O \left( \epsilon^N \right).
  \end{align}
  Inserting \eqref{eq:FourierI1} into \eqref{eq:I1RepreFourier} and
  using the rough Weyl estimate \eqref{eq:NeigAsympRoughAdd}   
  from Corollary \ref{cor:WeylRough}, we
  get
  \begin{align}
    \label{eq:I1outside}
    I_1(\lambda;f,\psi,\epsilon) = \langle \lambda \rangle ^{-N} \O
    \left( \epsilon^{N-d-1} \right), \quad \mbox{ uniformly for }
    \lambda \in \R \setminus J'.
  \end{align}
   The statement \eqref{eq:I1Asymp} now follows by combining
   \eqref{eq:I1inside} for $\lambda \in J'$ and \eqref{eq:I1outside}
   for $\lambda \notin J'$ and using again compactness of $J'$.
\end{proof}

Integrating \eqref{eq:I1Asymp} with $f = f_1$ and $N \geq 2$ and using
\eqref{eq:SymplecLiou}, we have
\begin{align}
  \notag \int_\alpha^\infty I_1(\lambda; f_1, \psi,\epsilon) d\lambda
  &= \frac{1}{(2\pi\epsilon)^d} \left( \int_\alpha^\infty \left(
  f_1(\lambda) \int_{a_0 = \lambda} |L| \right) d\lambda +
  \O(\epsilon) \right) \\ \label{eq:IntI1PhaseVol} &= \frac{1}{(2\pi
    \epsilon)^d} \left( \int_{\substack{x \in \R^d,\, \xi \in \T^d
      \\ \alpha \leq a_0(x,\xi)}} f_1(a_0(x,\xi)) d\xi dx +
  \O(\epsilon) \right).
\end{align}
Thus we identified the rhs of \eqref{eq:f1asymp} with the first
expression in \eqref{eq:StepToWeyl}. Analogously, we can identify the
rhs of \eqref{eq:f3asymp} with the second expression in
\eqref{eq:StepToWeyl}.

It remains to identify the leading order terms on the lhs of
\eqref{eq:f1asymp} and \eqref{eq:f3asymp} with the expressions given
in \eqref{eq:StepToWeyl}. As a first step, we need the following
Lemma, which bounds the number of eigenvalues of $\P_\epsilon$ in an
interval of length $\epsilon$. It is a statement on the absence of
clustering of eigenvalues, uniformly in $\epsilon$. The proof needs
the construction of a semi-classical approximation of the time
evolution.

\begin{Lem}
  \label{lem:epsSubinterval}
  Assume Hypothesis \ref{hyp:WRef}.  For $\epsilon > 0$ sufficiently
  small let $J_\epsilon$ be a subinterval of $J$ such that the length
  $|J_\epsilon|$ of $J_\epsilon$ is of order $\O(\epsilon)$. In
  addition, we assume that there is a set covering all $J_\epsilon$
  which is compactly contained in $J$ and in the set of non-critical
  values of $a_0$. Then the number of eigenvalues of $\P_\epsilon$ in
  $J_\epsilon$ is of order $\O(\epsilon^{1-d})$.
\end{Lem}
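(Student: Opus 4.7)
The plan is to apply the trace representation \eqref{eq:I1RepreFourier} of $I_1(\lambda;f,\psi,\epsilon)$ together with the asymptotic upper bound \eqref{eq:I1Asymp} from Lemma \ref{Lem:I1Liouv} to suitably tailored test functions $f$ and $\psi$. Using the hypothesis, I would first fix a compact set $J^{\ast} \subset J$ of non-critical values of $a_0$ that contains every $J_\epsilon$, and choose $f \in \Cont_0^\infty(J,[0,1])$ with $f \equiv 1$ on $J^{\ast}$ and $\supp f$ contained in the set of non-critical values of $a_0$. Then $f(\lambda_j) = 1$ for every eigenvalue $\lambda_j \in J_\epsilon$, and by Lemma \ref{Lem:I1Liouv} the upper bound $I_1(\lambda;f,\psi,\epsilon) = \O(\epsilon^{-d})$ holds uniformly for $\lambda \in J^{\ast}$, since $\int_{a_0 = \lambda}|L|$ is continuous in $\lambda$ on the compact set $J^{\ast}$ of regular values of $a_0$ and hence bounded there.

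The key additional ingredient is a choice of $\psi$ that turns the sum in \eqref{eq:I1RepreFourier} into a sum of non-negative terms from which $\Neig(J_\epsilon;\epsilon)/\epsilon$ can be extracted. I would set $\psi = \chi \ast \chi$ for a real-valued, even, non-negative $\chi \in \Cont_0^\infty(\R)$ with $\int \chi > 0$ and support in a small neighbourhood of $0$, normalised so that $\psi(0) = 1$. This produces a real-valued, even $\psi \in \Cont_0^\infty(\R)$ with $\F\psi = \sqrt{2\pi}(\F\chi)^2 \geq 0$. Since $\F\chi$ is real-analytic with $\F\chi(0) = (2\pi)^{-1/2}\int\chi > 0$, there exist $M, c_0 > 0$ such that $\F\chi \geq c_0$ on $[-M,M]$. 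Applying the rescaling $\chi \mapsto \chi(a\,\cdot\,)$ for $a$ sufficiently large shrinks $\supp\psi$ while stretching the positivity interval of $\F\chi$ by the factor $a$. Fixing $a$ so large that both $\supp\psi$ lies in the neighbourhood of $0$ required by Lemma \ref{Lem:I1Liouv} and $aM \geq C$, where $C$ is the constant in $|J_\epsilon| \leq C\epsilon$, one obtains $(\F_\epsilon\psi)(\mu) = (\F\psi)(\mu/\epsilon) \geq c > 0$ for all $|\mu| \leq C\epsilon$, with $c$ a fixed positive constant.

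Combining these ingredients, for any $\lambda \in J_\epsilon$ the representation \eqref{eq:I1RepreFourier} and the non-negativity of $f$ and $\F_\epsilon\psi$ give
\[
\frac{c}{\epsilon\sqrt{2\pi}}\,\Neig(J_\epsilon;\epsilon) \leq \frac{1}{\epsilon\sqrt{2\pi}}\sum_{\lambda_j} f(\lambda_j)(\F_\epsilon\psi)(\lambda - \lambda_j) = I_1(\lambda;f,\psi,\epsilon) = \O(\epsilon^{-d}),
\]
where in the first inequality we retained only the (non-negative) summands with $\lambda_j \in J_\epsilon$ and used $|\lambda - \lambda_j| \leq |J_\epsilon| \leq C\epsilon$ together with $f(\lambda_j) = 1$ to invoke the lower bound on $\F_\epsilon\psi$. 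Solving for $\Neig(J_\epsilon;\epsilon)$ yields the claimed bound $\O(\epsilon^{1-d})$. The main technical point will be the simultaneous arrangement of the two constraints on $\psi$, namely the small compact support required by Lemma \ref{Lem:I1Liouv} and a quantitative positivity bound for $\F_\epsilon\psi$ on the interval $[-C\epsilon, C\epsilon]$ whose length depends on the hypothesis on $|J_\epsilon|$; the scaling argument above reconciles them, with the constant $c$ depending on $a$ entering the final estimate only as a harmless multiplicative factor.
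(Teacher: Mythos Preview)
Your proof is correct and follows the same core idea as the paper: choose $f$ equal to $1$ on a compact set of non-critical values containing all $J_\epsilon$, choose $\psi$ so that $\F_\epsilon\psi \geq 0$, bound the Fourier-trace representation \eqref{eq:I1RepreFourier} of $I_1$ from below by the terms with $\lambda_j \in J_\epsilon$, and bound it from above by the $\O(\epsilon^{-d})$ coming from the $I_1$ asymptotics.

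The one noteworthy difference is how the lower bound is extracted. The paper \emph{integrates} \eqref{eq:I1RepreFourier} over $J_\epsilon$ and then shows that $\epsilon^{-1}\int_{J_\epsilon}(\F_\epsilon\psi)(\lambda-\lambda_j)\,d\lambda$ is bounded below by a positive constant; this requires first reducing to the case $|J_\epsilon|/\epsilon \geq C > 0$ and then a small argument with the rescaled intervals $J_{\epsilon,j}=(J_\epsilon-\lambda_j)/\epsilon$. You instead \emph{evaluate at a single point} $\lambda \in J_\epsilon$ and arrange via the scaling $\chi \mapsto \chi(a\,\cdot\,)$ that $(\F_\epsilon\psi)(\mu) \geq c$ for all $|\mu| \leq C\epsilon$, so that each retained summand is directly bounded below. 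Your route is slightly more streamlined: it avoids the WLOG reduction on $|J_\epsilon|$ and replaces the argument around $\delta\cdot J_{\epsilon,j}$ by a straightforward dilation. One small point to tidy: the rescaling $\chi \mapsto \chi(a\,\cdot\,)$ does not preserve $\psi(0)=1$, so you should either renormalise afterwards or use $\chi \mapsto \sqrt{a}\,\chi(a\,\cdot\,)$; this only changes the constant $c$ and does not affect the argument.
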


\begin{proof}
  Without loss of generality, we shall assume that
  \begin{align}
    \label{eq:JepsLow}
    \frac{|J_\epsilon|}{\epsilon} \geq C > 0.
  \end{align}
  In fact, for any smaller interval the claimed estimate holds a
  fortiori.
  
  Let $f \in \Cont_0^\infty(J)$ be non-negative with $f = 1$ near
  $\bigcup_\epsilon J_\epsilon$ and with support compactly contained
  in the set of non-critical values of $a_0$. Applying Lemma
  \ref{lem:I1I2eps} and Proposition \ref{lem:I2Asymp} to $f$ and using
  \eqref{eq:I1RepreFourier}, we get
    \begin{align}
      \label{eq:NumEigSubinterv}
      \frac{1}{\epsilon \sqrt{2\pi}} \sum_{\lambda_j} f(\lambda_j)
      (\F_\epsilon \psi)(\lambda - \lambda_j) =
      \frac{1}{(2\pi\epsilon)^d} \left( f(\lambda) \int_{a_0 =
        \lambda} |L| + \O(\epsilon) \right),\quad \mbox{uniformly for
      } \lambda \in \R.
    \end{align}
    Integrating \eqref{eq:NumEigSubinterv} over $J_\epsilon$ yields
    \begin{align}
      \label{eq:NumEigSubinterv2}
      \frac{1}{\epsilon} \sum_{\lambda_j} f(\lambda_j)
      \int_{J_\epsilon} (\F_\epsilon \psi)(\lambda - \lambda_j)
      d\lambda = \O(\epsilon^{1-d}).
    \end{align}
    We claim that $\psi$ may be chosen such that $\F_\epsilon \psi$ is
    non-negative and $(\F_1 \psi)(0) > 0$. In fact we may choose $\psi
    = \frac{1}{\sqrt{2\pi}} g \ast \tilde{g}$ for some real-valued $g
    \in \Cont_0(\R)$ where $\tilde{g}(t) := g(-t)$. Then $\F_1 \psi =
    |\F_1 g|^2 \geq 0$ and thus $(\F_1\psi)(0) > 0$ by choosing $g$ to
    be non-negative anywhere and positive somewhere. Since $\psi(0) =
    \frac{1}{\sqrt{2\pi}} \int (\F_1\psi) (\lambda) d\lambda$, we may
    arrange that $\psi(0) = 1$. Choosing $g$ with small support
    guarantees a small support of $\psi$. It is straightforward to
    check that $\F_\epsilon \psi$ has the stated properties by using
    the scaling property
    \begin{align}
      \label{eq:epsFourScaling}
      (\F_\epsilon \psi)(\lambda) = (\F_1 \psi) \left(
      \frac{\lambda}{\epsilon} \right) \qquad (\lambda \in \R).
    \end{align}
    With this choice of $\psi$ and using $f = 1$ on each $J_\epsilon$,
    we obtain
    \begin{align}
      \label{eq:NumEigSubinterv3}
      \mbox{lhs} \eqref{eq:NumEigSubinterv2} \geq \frac{1}{\epsilon}
      \sum_{\lambda_j \in J_\epsilon} 1 \cdot \left( \int_{J_\epsilon}
      (\F_\epsilon \psi)(\lambda - \lambda_j) d\lambda \right).
    \end{align}
    Using the scaling property \eqref{eq:epsFourScaling}, we get for
    $\lambda_{j} \in J_\epsilon$
    \begin{align}
      \label{eq:NumEigSubinterv4}
      \frac{1}{\epsilon} \int_{J_\epsilon} (\F_\epsilon \psi)(\lambda
      - \lambda_j) d\lambda = \int_{J_{\epsilon,j}} (\F_1
      \psi)(\lambda) d\lambda \geq C' > 0, \quad \mbox{where }
      J_{\epsilon,j} := (J_\epsilon - \lambda_j)/\epsilon.
    \end{align}
    We shall show that $C'$ can be chosen independently of $\epsilon$
    and $j$: Assumption \eqref{eq:JepsLow} gives $|J_{\epsilon,j}|
    \geq C$. Furthermore, due to $|J_\epsilon| = \O(\epsilon)$, there
    is a compact set $K \subset \R$ with $J_{\epsilon,j} \subset K$
    for all $\epsilon > 0$ and $\lambda_j \in J_\epsilon$. Since $0
    \in J_{\epsilon,j}$ for any $\lambda_j \in J_\epsilon$, we have
    $\delta \cdot J_{\epsilon,j} \subset J_{\epsilon,j}$ for any
    $\delta \in (0,1)$. For $\delta$ sufficiently small, the minimum
    $M$ of $\F_1 \psi$ on $\delta \cdot K$ is positive, since
    $(\F_1 \psi)(0) > 0$. Thus, using non-negativity of
    $\F_1 \psi$,
    \begin{align}
      \label{eq:NumEigSubinterv5}
      \int_{J_{\epsilon,j}} (\F_1 \psi)(\lambda) d\lambda
      \geq \int_{\delta \cdot J_{\epsilon,j}} (\F_1
      \psi)(\lambda) d\lambda \geq \delta C M.
    \end{align}
    The estimate in \eqref{eq:NumEigSubinterv4} follows from
    \eqref{eq:NumEigSubinterv5} with $C' = \delta CM$ .

    Combining \eqref{eq:NumEigSubinterv2}, \eqref{eq:NumEigSubinterv3}
    and \eqref{eq:NumEigSubinterv4} implies the statement in Lemma
    \ref{lem:epsSubinterval}.
\end{proof}

The following lemma finally identifies in leading order the lhs of
\eqref{eq:f1asymp} and \eqref{eq:f3asymp} with an integral over $I_1$
(depending on $f_1$ and $f_3$, respectively). Combined with the
asymptotic relation in \eqref{eq:IntI1PhaseVol}, this proves the two
estimates \eqref{eq:f1asymp} and \eqref{eq:f3asymp} in Proposition
\ref{prop:f1f3asymp}.

\begin{Lem}
  Assume Hypothesis \ref{hyp:WRef}. For any $\psi \in
  \Cont_0^\infty(\R)$ with $\psi(0) = 1$, the expressions
  $I_1(\lambda;f_1,\psi,\epsilon)$ and
  $I_1(\lambda;f_3,\psi,\epsilon)$ defined by \eqref{eq:I1Def} satisfy
  \begin{align}
    \label{eq:intI1}
      \sum_{\lambda_j \geq \alpha} f_1(\lambda_j) &=
      \int_\alpha^\infty I_1(\lambda;f_1,\psi,\epsilon) d\lambda +
      \O(\epsilon^{1-d}),\\ \label{eq:intI2} \sum_{\lambda_j \leq
        \beta} f_3(\lambda_j) &= \int_{-\infty}^\beta
      I_1(\lambda;f_3,\psi,\epsilon) d\lambda + \O(\epsilon^{1-d}),
  \end{align}
  where we sum over eigenvalues $\lambda_j$ of $\P_\epsilon$.
\end{Lem}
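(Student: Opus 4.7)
The plan is to start from the Fourier representation of $I_1$ given in \eqref{eq:I1RepreFourier}. Integrating it over $[\alpha,\infty)$, swapping sum and integral (justified since only finitely many eigenvalues $\lambda_j$ lie in $\supp f_1$ for each fixed $\epsilon$ by Corollary \ref{cor:WeylRough}), and using the scaling \eqref{eq:epsFourScaling} together with the substitution $\nu = (\lambda - \lambda_j)/\epsilon$, I would obtain
\begin{align*}
\int_\alpha^\infty I_1(\lambda;f_1,\psi,\epsilon)\, d\lambda = \sum_{\lambda_j} f_1(\lambda_j)\,\Psi\!\left(\frac{\alpha - \lambda_j}{\epsilon}\right),\qquad \Psi(\mu) := \frac{1}{\sqrt{2\pi}}\int_\mu^\infty (\F_1\psi)(\nu)\, d\nu.
\end{align*}
Since $\F_1\psi \in \S(\R)$ and Fourier inversion at $t=0$ gives $\int_\R \F_1\psi\, d\nu = \sqrt{2\pi}\,\psi(0) = \sqrt{2\pi}$, the function $\Psi$ interpolates smoothly between $\Psi(-\infty)=1$ and $\Psi(+\infty)=0$ with rapid decay $|\Psi(\mu) - \I_{\mu\leq 0}| = \O(\jap{\mu}^{-N})$ for every $N\in\N$. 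Writing $\I_{\lambda_j \geq \alpha} = \I_{(\alpha-\lambda_j)/\epsilon \leq 0}$, the proof reduces to showing
\begin{align*}
E := \sum_{\lambda_j} f_1(\lambda_j)\Bigl[\Psi\!\bigl(\tfrac{\alpha-\lambda_j}{\epsilon}\bigr) - \I_{\lambda_j\geq\alpha}\Bigr] = \O(\epsilon^{1-d}).
\end{align*}

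To control $E$, I would decompose $\R$ into the $\epsilon$-scale intervals $I_k := [\alpha + k\epsilon, \alpha + (k+1)\epsilon)$, $k\in\Z$. Since $\supp f_1$ is compactly contained in a subset of $J$ consisting only of non-critical values of $a_0$, only $|k| = \O(\epsilon^{-1})$ intervals contribute to the sum, and each such $I_k$ satisfies the hypotheses of Lemma \ref{lem:epsSubinterval}, which uniformly bounds the number of eigenvalues inside $I_k$ by $\O(\epsilon^{1-d})$. For $\lambda_j \in I_k$ with $|k|\geq 2$, the rapid decay of $\Psi$ to its limits yields $|\Psi((\alpha-\lambda_j)/\epsilon) - \I_{\lambda_j\geq\alpha}| = \O(\jap{k}^{-N})$, so after summing,
\begin{align*}
|E| \leq \|f_1\|_\infty\cdot \O(\epsilon^{1-d})\cdot\Bigl(\sum_{|k|\geq 2}\jap{k}^{-N} + \const\Bigr) = \O(\epsilon^{1-d}),
\end{align*}
where the two transition intervals $I_0$ and $I_{-1}$ are absorbed into the constant term by the uniform boundedness of $|\Psi|$ and $|\I_{\mu\leq 0}|$.

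The proof of \eqref{eq:intI2} is entirely analogous, with $\alpha$ replaced by $\beta$ and $\Psi$ replaced by its reflection $\tilde\Psi(\mu) := \frac{1}{\sqrt{2\pi}}\int_{-\infty}^\mu \F_1\psi\, d\nu$, which approximates $\I_{[0,\infty)}$. The main point of the argument, rather than a genuine obstacle, is that the $\epsilon$-scale partition is matched exactly to the clustering scale controlled by Lemma \ref{lem:epsSubinterval}: without that uniform clustering bound, the sum over $I_k$ would yield only a weaker estimate, and it is precisely the interplay between the Schwartz decay of $\F_1\psi$ (which suppresses contributions away from the boundary) and the uniform eigenvalue clustering rate on $\epsilon$-intervals that produces the stated order $\O(\epsilon^{1-d})$.
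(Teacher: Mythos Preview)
Your proof is correct and follows essentially the same approach as the paper: both start from the Fourier representation \eqref{eq:I1RepreFourier}, reduce via the scaling \eqref{eq:epsFourScaling} to estimating $\sum_{\lambda_j} f_1(\lambda_j)\bigl[\Psi((\alpha-\lambda_j)/\epsilon)-\I_{\lambda_j\geq\alpha}\bigr]$ using the Schwartz decay of $\F_1\psi$, and then control this sum by partitioning the eigenvalues into the $\epsilon$-intervals $[\alpha+k\epsilon,\alpha+(k+1)\epsilon)$ and invoking the clustering bound of Lemma \ref{lem:epsSubinterval} together with the summable decay $\jap{k}^{-N}$. The paper's $R_\epsilon^j$ is exactly your bracketed term, and its sets $E_\epsilon^m$ are your $I_k$'s restricted to $\supp f_1$.
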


\begin{proof}
  We shall prove only \eqref{eq:intI1}. The statement \eqref{eq:intI2}
  follows by analogous arguments.

  Using the representation \eqref{eq:I1RepreFourier} for $I_1$, we may
  write the lhs of \eqref{eq:intI1} as
  \begin{align}
    \label{eq:intI1step}
    \int_\alpha^\infty I_1(\lambda;f_1,\psi,\epsilon) d\lambda =
    \frac{1}{\epsilon\sqrt{2\pi}} \sum_{\lambda_j \in \supp f_1}
    f_1(\lambda_j) \int_\alpha^\infty (\F_\epsilon \psi)(\lambda -
    \lambda_j) d\lambda.
  \end{align}
  Using the scaling property \eqref{eq:epsFourScaling} for the
  $\epsilon$-scaled Fourier transform, the integral on the rhs of
  \eqref{eq:intI1step} takes the form
  \begin{align}  \label{eq:FourierPsiStep}
     \int_\alpha^\infty (\F_\epsilon \psi)(\lambda - \lambda_j)
    d\lambda &= \int_\alpha^\infty (\F_1 \psi) \left( \frac{\lambda -
      \lambda_j}{\epsilon} \right) d\lambda
   =
    \epsilon\, \int_{\frac{\alpha-\lambda_j}{\epsilon}}^\infty  (\F_1
    \psi) (\lambda) d\lambda.
  \end{align}
  Due to the Fourier inversion theorem, we have
  \begin{align}
    \frac{1}{\sqrt{2\pi}} \int_{-\infty}^\infty (\F_1 \psi)(\lambda)
    d\lambda = \psi(0) = 1.
  \end{align}
  Therefore 
  \begin{align}\label{eq:defRepsilonj}
    R_\epsilon^j :=  \I_{[\alpha,\infty)}(\lambda_j) - \frac{1}{\sqrt{2\pi}} \int_{\frac{\alpha-\lambda_j}{\epsilon}}^\infty (\F_1
    \psi) (\lambda) d\lambda  = 
    \begin{cases}
      - \frac{1}{\sqrt{2\pi}} \int_{\frac{\alpha-\lambda_j}{\epsilon}}^\infty (\F_1
      \psi) (\lambda) d\lambda & \mbox{if } \alpha > \lambda_j
      \\  \frac{1}{\sqrt{2\pi}} \int_{-\infty}^{\frac{\alpha-\lambda_j}{\epsilon}}
      (\F_1 \psi)(\lambda) d\lambda & \mbox{otherwise.}
    \end{cases}
  \end{align}
    Using \eqref{eq:FourierPsiStep} and \eqref{eq:defRepsilonj} we may write 
  \begin{equation} \label{eq:sumf1umformung}
  \sum_{\lambda_j \geq \alpha} f_1(\lambda_j) = \sum_{\lambda_j \in \supp f_1} f_1(\lambda_j) \I_{[\alpha,\infty)}(\lambda_j) =
   \sum_{\lambda_j \in \supp f_1} f_1(\lambda_j) \left(R_\epsilon^j  + 
 \frac{1}{\epsilon\sqrt{2\pi}} \int_\alpha^\infty (\F_\epsilon \psi)(\lambda -
    \lambda_j) d\lambda \right).
    \end{equation}

 We now claim 
  \begin{align}
    \label{eq:countSumRemain}
    \sum_{\lambda_j \in \supp f_1} f_1(\lambda_j) R_\epsilon^j = \O
    (\epsilon^{1-d}).
  \end{align}
Then, inserting \eqref{eq:countSumRemain} and \eqref{eq:intI1step} into \eqref{eq:sumf1umformung},
gives \eqref{eq:intI1}.
   
   It remains to prove \eqref{eq:countSumRemain}. 
   
  Since $\F_1\psi$ is a Schwartz function, \eqref{eq:defRepsilonj} gives
  \begin{align}
    \label{eq:FouPsiCasesOrder}
    R_\epsilon^j = \O\left( \left\langle \frac{\alpha -
      \lambda_j}{\epsilon} \right\rangle^{-N} \right) \quad \mbox{for
      any } N \in \N, \mbox{ uniformly for } \lambda_j \in \supp f_1.
  \end{align}
   Now let $E_\epsilon^m$ be the
  subset of all eigenvalues $\lambda_j \in \supp f_1$ that are
  contained in the interval $[\alpha + m\epsilon, \alpha +
    (m+1)\epsilon)$, for $m \in \Z$. The function $\lambda \mapsto
    \left\langle \frac{\alpha - \lambda}{\epsilon} \right\rangle^{-2}$
    defined on this interval and arising as a bound in
    \eqref{eq:FouPsiCasesOrder} for $N = 2$ takes its supremum at the
    boundary, i.e.
    \begin{align}
      \sup_{\lambda \in [\alpha + m\epsilon, \alpha + (m+1)\epsilon)}
        \left\langle \frac{\alpha - \lambda}{\epsilon}
        \right\rangle^{-2} = \max \left\{ \left\langle m
        \right\rangle^{-2}, \left\langle m+1 \right\rangle^{-2}
        \right\}.
    \end{align}
    As a consequence,
    \begin{align}
      \label{eq:boundMax}
      \max_{\lambda_j \in E_\epsilon^m} |R_\epsilon^j| = \langle m
      \rangle^{-2} \O (1), \qquad \mbox{uniformly for } m \in \Z.
    \end{align}
    Due to Lemma \ref{lem:epsSubinterval} the number of eigenvalues
    $\lambda_j$ in $E_\epsilon^m$ is of order $\O(\epsilon^{1-d})$,
    uniformly for $m \in \Z$. Therefore, using \eqref{eq:boundMax},
    boundedness of $f_1$ and the fact that $(E_\epsilon^m)_m$ is a
    decomposition of the set of eigenvalues $\lambda_j \in \supp f_1$,
  \begin{align}
    \notag \sum_{\lambda_j \in \supp f_1} |f_1(\lambda_j)
    R_\epsilon^{j}| &= \sum_{m \in \Z} \left( \sum_{\lambda_j \in
      E_\epsilon^m} |f_1(\lambda_j) R_\epsilon^{j}| \right)
    \\ \notag &= \O(\epsilon^{1-d}) \sum_{m \in \Z} \langle m
    \rangle^{-2} \\ &= \O(\epsilon^{1-d}),
  \end{align}
  which proves \eqref{eq:countSumRemain}.
\end{proof}

\section*{Acknowledgement}
We thank Bernard Helffer for helpful discussions on the subject of this paper and earlier relevant work.

\begin{appendix}

\section{Pseudo-differential operators in the discrete setting}
\label{sec:AppA}

Pseudo-differential operators in a discrete setting have already
been introduced in previous works like \cite{KR09} and \cite{KR18} as
a tool to study difference operators on a lattice. As explained there,
difference operators are induced by symbols periodic with respect to
the momentum variable. In this section, we shall recall basic definition and properties. 

In particular, 
we shall discuss the intertwining property between the standard $t$-quantisation  $\Op_{\epsilon,t} a $  and the discrete 
$t$-quantisation
$\Op_{\epsilon,t}^{\T} a $ given by restriction to the lattice (see
Proposition \ref{prop:RestrForm}), a change of quantisation formula from $s$- to $t$-quantisation, the symbolic calculus for 
our operators and a discrete  version of the Calderon-Vaillancourt theorem. 

We conclude this section with a result on the 
effect of conjugation of a discrete pseudodifferential operator by a rapidly oscillating multiplication operator and give its 
principal symbol. Conceptually, this is a result of Egorov type, but we do not prove the full Egorov theorem for the class of 
operators considered here. This very special type  of result is sufficient for our application to the time parametrix. 
We shall prove periodicity and uniform control on all parameters in a more elementary way, using a result on the quantisation 
of symbols $a(x,y,\xi)$ depending on 2 space variables and 1 momentum variable. This is all we need here.

We remark that throughout this work, by slight abuse of notation, we
shall identify any mapping from the $d$-dimensional torus $\T^d = \R^d
/ 2\pi\Z^d$ also as a $(2\pi\Z^d)$-periodic mapping from $\R^d$. Thus,
whenever referring to standard literature on pseudo-differential
operators like \cite{DiSj} or \cite{Mart}, we might consider the
 spaces of symbols introduced in our work as subsets of the 
spaces of symbols treated there.

Let $N, d \in \N^*$ and $\epsilon_0 \in (0,1]$.  A function $m : \R^N
  \times \T^d \rightarrow (0,\infty)$ is called an order function if
  there are constants $C > 0$, $M \in \N$ such that
\begin{align}
  \label{eq:orderFuncDef}
  m(x,\xi) \leq C \jap{ x - y }^M m(y,\mu) \qquad (x,y \in \R^N,\,
  \xi,\mu \in \T^d)
\end{align}
where $\jap{ x } := \sqrt{ 1 + \abs{x}^2 }$.  For $k \in \R$ we then
define the symbol class $\Sy^k(m,\epsilon_0)(\R^{N} \times \T^d)$ as
the space of functions $a : \R^N \times \T^d \times (0,\epsilon_0]
  \rightarrow \C$ with $a( \cdot, \cdot; \epsilon) \in
  \Cont^\infty(\R^N \times \T^d)$ for $\epsilon \in (0, \epsilon_0]$
    that for some constants $C_{\alpha} > 0$ ($\alpha \in \N^{N\times d}$)
    satisfy
\begin{align}
    \abs{ \partial_{x,\xi}^\alpha a(x,\xi;\epsilon) } \leq C_{\alpha}
    \epsilon^k m(x,\xi) \qquad (x \in \R^N, \xi \in \T^d,\, \epsilon \in
    (0,\epsilon_0]).
\end{align}
The space $\Sy^k(m,\epsilon_0)(\R^{N} \times \T^d)$ can be equipped with the
Fréchet seminorms
  \begin{align}
    \label{eq:FrechetSNSym}
    \norm{ a }_{\alpha} := \sup_{\substack{x \in \R^N, \xi \in \T^d \\ \epsilon \in
        (0,\epsilon_0]}} \frac{ \abs{\partial_{x,\xi}^\alpha a(x,\xi;\epsilon)}
      }{\epsilon^k m(x,\xi)} \qquad (\alpha \in \N^{N\times d}).
  \end{align}
For $\epsilon \in (0,\epsilon_0]$, we adapt the Schwartz space
  $\S(\R^N)$ to a discrete version $\s(\epsilon\Z^N)$ by defining it
  as the space of functions $u : \epsilon \Z^N \rightarrow \C$ with
\begin{align}
  \label{eq:s_seminorm}
  \norm{ u }_{\epsilon,\alpha} := \sup_{x\in \epsilon\Z^N}
  \abs{x^\alpha u(x)} < \infty \qquad \mbox{for all $\alpha \in
    \N^N$.}
\end{align}
It is known (see \cite{DiSj}) that for $t \in [0,1]$ and a symbol $a
\in \Sy^k(m,\epsilon_0)(\R^d \times \T^d)$ the standard
pseudo-differential operator $\Op_{\epsilon,t} a : u \mapsto
\left(\Op_{\epsilon,t} a\right) u$ where
\begin{align}
  \label{eq:OpaNonDiscrDef}
    \left( \Op_{\epsilon,t} a \right) u(x) :=
    \frac{1}{(2\pi\epsilon)^d} \int_{\R^{2d}} e^{i(y-x)\xi/\epsilon}
    a(tx + (1-t)y, \xi; \epsilon) u(y) dyd\xi \qquad (x \in \R^d)
\end{align}
is well-defined and continuous as a mapping $\S(\R^d) \rightarrow
\S(\R^d)$. Here, recall that we consider $a(x,\xi)$ as a function on $\R^{2d}$ periodic
with respect to $\xi \in \R^d$. 

For $u \in \s(\epsilon\Z^d)$, we now
define the function $ \left(\Op_{\epsilon,t}^\T a\right)u :
\epsilon\Z^d \rightarrow \C$ by
\begin{align}
  \label{eq:OpTa_u_Def}
  \left(\Op_{\epsilon,t}^\T a\right)u(x) := \frac{1}{(2\pi)^d} \sum_{y
    \in \epsilon \Z^d} \int_{\T^d} e^{i(y-x)\xi/\epsilon} a(tx
  + (1-t)y, \xi; \epsilon) u(y) d\xi \qquad (x \in \epsilon\Z^d).
\end{align}
It is clear that $\left(\Op_{\epsilon,t}^\T a\right)u(x)$ is
well-defined for fixed $x$: The sum on the rhs of
\eqref{eq:OpTa_u_Def} converges absolutely since the symbol $a$ is
bounded by some polynomial and $u \in \s(\epsilon\Z^d)$. In fact,
$\left(\Op_{\epsilon,t}^\T a\right)u$ is even a function in
$\s(\epsilon\Z^d)$. We check this by relating the standard
non-discrete pseudo-differential operators in
\eqref{eq:OpaNonDiscrDef} to their discrete version in
\eqref{eq:OpTa_u_Def}. The following proposition states that the
action \eqref{eq:OpTa_u_Def} is essentially the restriction of $
\left( \Op_{\epsilon,t} a \right) u$ to the lattice
$\epsilon\Z^d$. Defining the restriction map
  \begin{align}
    \label{eq:RestrMapDef}
  r_\epsilon &: \S(\R^d) \rightarrow \s(\epsilon\Z^d), \quad
  (r_\epsilon u)(x) = u(x) \qquad (u \in \S(\R^d),\, x
  \in \epsilon\Z^d)
\end{align}
we have
\begin{prop}
  \label{prop:RestrForm}
  Let $a \in \Sy^k(m,\epsilon_0)(\R^d \times \T^d)$. Then for
  $\epsilon \in (0,\epsilon_0]$ and $t \in [0,1]$
  \begin{align}
    \label{eq:RestrForm}
    \left( r_\epsilon \circ \Op_{\epsilon,t}a \right)u(x) =
    \left( \Op_{\epsilon,t}^\T a \right) (r_\epsilon u)(x)
    \qquad (u \in \S(\R^d),\, x \in \epsilon\Z^d).
  \end{align}
\end{prop}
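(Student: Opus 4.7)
The plan is to exploit periodicity of $a(w,\xi;\epsilon)$ in $\xi$ together with Poisson summation in order to reduce the $(y,\xi)$-integration defining $\Op_{\epsilon,t} a$ to a sum over the lattice $\epsilon\Z^d$. The hypothesis $x \in \epsilon\Z^d$ will enter decisively.

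First I would fix $x \in \epsilon\Z^d$ and $u \in \S(\R^d)$ and analyse the inner kernel
\begin{align*}
K_\epsilon(x,y) := \frac{1}{(2\pi\epsilon)^d} \int_{\R^d} e^{i(y-x)\xi/\epsilon}\, a(tx + (1-t)y,\xi;\epsilon)\, d\xi
\end{align*}
as a tempered distribution in $y$. Splitting $\R^d = \bigsqcup_{k \in \Z^d}\bigl([-\pi,\pi]^d + 2\pi k\bigr)$ modulo a null set and using periodicity of $a$ in $\xi$, the $\xi$-integral becomes
\begin{align*}
\int_{[-\pi,\pi]^d} e^{i(y-x)\xi/\epsilon}\, a(tx+(1-t)y,\xi;\epsilon)\, d\xi \;\cdot\; \sum_{k\in\Z^d} e^{i2\pi k\cdot(y-x)/\epsilon}.
\end{align*}
Since $x \in \epsilon\Z^d$ we have $e^{-i2\pi k\cdot x/\epsilon}=1$ for every $k$, so the sum reduces to $\sum_{k\in\Z^d} e^{i2\pi k\cdot y/\epsilon}$, which by the classical Poisson summation formula equals $\epsilon^d \sum_{y_0 \in \epsilon\Z^d}\delta(y - y_0)$ in $\S'(\R^d)$.

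Pairing $K_\epsilon(x,\cdot)$ with $u$ then collapses the $y$-integral to a sum over $\epsilon\Z^d$, yielding exactly
\begin{align*}
(\Op_{\epsilon,t} a) u(x) = \frac{1}{(2\pi)^d} \sum_{y\in\epsilon\Z^d} u(y) \int_{[-\pi,\pi]^d} e^{i(y-x)\xi/\epsilon}\, a(tx + (1-t)y,\xi;\epsilon)\, d\xi,
\end{align*}
which coincides with $(\Op_{\epsilon,t}^\T a)(r_\epsilon u)(x)$ after identifying $\int_{[-\pi,\pi]^d}$ with $\int_{\T^d}$ by periodicity.

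The main obstacle is justifying the distributional manipulations rigorously, since the series $\sum_k e^{i2\pi k\cdot y/\epsilon}$ only converges in $\S'(\R^d)$. The cleanest fix is to work from the outset with the Fourier series $a(w,\xi;\epsilon) = \sum_{n\in\Z^d} \hat a_n(w;\epsilon)\, e^{-in\xi}$ on $\T^d$: smoothness and periodicity of $a$ make the coefficients $\hat a_n$ decay faster than any polynomial in $n$, while their growth in $w$ is controlled by the order function $m$. Substituting this expansion into $(\Op_{\epsilon,t} a)u(x)$ and integrating termwise against $u \in \S(\R^d)$ converges absolutely; each term produces a Dirac mass $(2\pi)^d \delta\bigl((y-x)/\epsilon - n\bigr)$ supported at the lattice point $y = x + \epsilon n \in \epsilon\Z^d$, and reassembling via Fourier inversion $\hat a_n(w;\epsilon) = (2\pi)^{-d}\int_{\T^d} a(w,\xi;\epsilon)\, e^{in\xi}\, d\xi$ yields the identity \eqref{eq:RestrForm}.
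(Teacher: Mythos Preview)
Your proposal is correct and takes essentially the same approach as the paper: the paper's proof also hinges on the observation that the Fourier transform (in $\xi$) of a $2\pi\Z^d$-periodic smooth function is a Dirac comb on $\epsilon\Z^d$ weighted by torus Fourier coefficients, which collapses the $y$-integral to a sum over $y \in x + \epsilon\Z^d = \epsilon\Z^d$. Your rigorous Fourier-series version is precisely this argument, and your heuristic Poisson-summation computation is just a dual way of expressing the same Dirac-comb identity.
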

We remark that a version of Proposition \ref{prop:RestrForm} has been
proven in \cite[Proposition A.2]{KR18}. There, the more general case
of operators $\widetilde{\Op}_\epsilon {a}$ induced by a
symbol ${a}(x,y,\xi;\epsilon) \in \Sy_\delta^k(m)(\R^{2d} \times \T^d)$ is
treated. These operators act by
\begin{align}
  \label{eq:OpaNonDiscrDefgen}
    \left( \widetilde{\Op}_{\epsilon} a \right) u(x) :=
    \frac{1}{(2\pi\epsilon)^d} \int_{\R^{2d}} e^{i(y-x)\xi/\epsilon}
    a(x,y, \xi; \epsilon) u(y) dyd\xi \qquad (x \in \R^d).
\end{align}
Setting ${a}_t(x,y,\xi;\epsilon) := a((1-t)x +
ty,\xi;\epsilon)$ for $a \in \Sy^k(m)(\R^d \times \T^d)$, one has
${a}_t \in \Sy_0^k(m)(\R^{2d} \times \T^d)$ and 
$\widetilde{\Op}_{\epsilon}
{a}_t = \Op_{\epsilon,t} a$. 

Vice versa, for a general symbol $a(x,y,\xi)$ in $\Sy_0^k(m)(\R^{2d} \times \T^d)$  there is a symbol 
$a_t \in \Sy^k(m)(\R^d \times \T^d)$
such that
\begin{align}
    \label{eq:genquant}
  \widetilde{\Op}_{\epsilon}
{a} = \Op_{\epsilon,t} a_t,  
  \end{align}
where the principal symbol of $a_t(x,\xi)$ is given by $a(x,x,\xi)$,
see \cite[Proposition A.5]{KR18}.

Thus, the $t$-quantisation \eqref{eq:OpaNonDiscrDef} may be
considered as a special case of the general quantisation \eqref{eq:OpaNonDiscrDefgen} (compare
\cite[Remark A.3]{KR18} but keep in mind that compared to \cite{KR18}
we used a different convention in the definitions
\eqref{eq:OpaNonDiscrDef} and \eqref{eq:OpTa_u_Def}, with $t$ replaced
by $1-t$). 

We also note that \cite[Proposition A.2]{KR18} is
restricted to functions $u$ with compact support but easily extends to
$u \in \S(\R^d)$ using continuity (compare \cite[Remark
  A.4]{KR18}). Lastly, we remark that Proposition \ref{prop:RestrForm}
has also been proven in \cite[Appendix A]{KR09} for the
$(t=1)$-quantisation.

For completeness sake, we remark that for this more general quantisation there is also a discrete version
$\widetilde{\Op}_\epsilon^{\T}\tilde{a}$  such that the analog of the restriction formula
\eqref{eq:RestrForm} and the formula \eqref{eq:genquant} on the $t$-quantisation hold in this case. 
However, we shall not formally need this result.

 For the sake of the reader we recall the proof
of Proposition \ref{prop:RestrForm}.

\begin{proof}
   Using the $\epsilon$-scaled Fourier transform
  \begin{align}
    \label{eq:FourTransfEpsScaled}
    \F_\epsilon u(x) = \sqrt{2\pi}^{-d} \int_{\R^d} e^{-ix\xi/\epsilon}
    u(\xi) d\xi,
  \end{align}
  we can write for $u \in \S(\R^d)$
  \begin{align}
    (\Op_{\epsilon,t} a) u(x) = (\epsilon \sqrt{2\pi})^{-d}
    \int_{\R^d} (\F_\epsilon a(tx + (1-t)y, \cdot;\epsilon))(x-y) u(y)
    dy.
  \end{align}
  Since for any $2\pi\Z^d$-periodic function $g \in
  \Cont^\infty(\R^d)$ the Fourier transform is given by
  \begin{align}
    \F_\epsilon g = \left( \frac{\epsilon}{\sqrt{2\pi}} \right)^d
    \sum_{z \in \epsilon\Z^d} \delta_z c_z, \quad \mbox{ where } \quad
    c_z := \int_{\T^d} e^{-iz\mu / \epsilon} g(\mu) d\mu,
  \end{align}
  we formally get for any $x \in \R^d$
  \begin{align}
    \notag (\Op_{\epsilon,t} a) u(x) &= \frac{1}{(2\pi)^d} \sum_{z \in
      \epsilon\Z^d} \int_{\T^d} \int_{\R^d} e^{-iz\mu/\epsilon} a(tx +
    (1-t)y, \mu; \epsilon) \delta_z(x-y) u(y) dy d\mu \\
    \label{eq:OpWithKernel}
    &= \sum_{y \in G_x} K(x,y) u(y)
  \end{align}
  with $G_{x} = x + \epsilon\Z^d$ and the pointwise defined  kernel
  \begin{align}
  \label{kernel}
    K(x,y) = \frac{1}{(2\pi)^d} \int_{\T^d} e^{i(y-x)\mu/\epsilon}
    a(tx + (1-t)y, \mu; \epsilon) d\mu.
  \end{align}
  Restricting to $x \in \epsilon\Z^d$ we have $G_{x} =
  \epsilon\Z^d$. So, applying the restriction map $r_\epsilon$
  to \eqref{eq:OpWithKernel}, we conclude
  \begin{align}
    \notag \left( r_\epsilon \circ \Op_{\epsilon,t}a \right)u(x) &=
    \sum_{y \in  \epsilon\Z^d} K(x,y) u(y) = \left( \Op_{\epsilon,t}^\T a
    \right) ( r_\epsilon u)(x).
  \end{align}
\end{proof}
Since $\Op_{\epsilon,t}a$ maps $\S(\R^d)$ into $\S(\R^d)$
continuously, it is a direct consequence of Proposition
\ref{prop:RestrForm} that $\Op_{\epsilon,t}^\T a$ 
maps $\s(\epsilon\Z^d)$ into
$\s(\epsilon\Z^d)$ continuously, where $\s(\epsilon \Z^d)$ is equipped
with the Fréchet topology induced by the seminorms $\norm{ \cdot
}_{\epsilon,\alpha}$ of \eqref{eq:s_seminorm}.

In order to extend $\Op_{\epsilon,t}^\T a$ to a
continuous operator on $\s'(\epsilon\Z^d)$, we define the bilinear form
\begin{align}
  \label{eq:dualps}
  \dualp{u}{v} := \sum_{x \in \epsilon\Z^d} u(x) v(x) \qquad ( u, v
  \in \s(\epsilon\Z^d))
\end{align}
and, by abuse of notation, extend \eqref{eq:dualps} to a dual pairing
between $\s(\epsilon\Z^d)$ and $\s'(\epsilon\Z^d)$.  Identifying an
element $u \in \s(\epsilon\Z^d)$ with the distribution
$\dualp{u}{\cdot} \in \s'(\epsilon\Z^d)$, the space $\s(\epsilon\Z^d)$
may be continuously embedded into $\s'(\epsilon\Z^d)$, where
$\s'(\epsilon\Z^d)$ is endowed with the weak$^\ast$-topology. For $u,
v \in \s(\epsilon\Z^d)$ we have
\begin{align}
  \label{eq:OpExtSDualPrep}
  \notag \dualp{ \left( \Op_{\epsilon,t}^\T a \right) u }{
    v } &= \frac{1}{(2\pi)^d}\sum_{x \in \epsilon\Z^d}
  \sum_{y \in \epsilon\Z^d} \int_{\T^d} e^{i(y-x)\xi/\epsilon}
  a(tx + (1-t)y,\xi;\epsilon) u(y) d\xi v(x) \\ \notag &=
  \frac{1}{(2\pi)^d}\sum_{y \in \epsilon\Z^d} u(y)  \sum_{x
      \in \epsilon\Z^d} \int_{\T^d} e^{i(y-x)\xi/\epsilon}
  a((1-t)y + tx, \xi;\epsilon) v(x) d\xi \\ \notag &=
  \frac{1}{(2\pi)^d}\sum_{y \in \epsilon\Z^d} u(y)  \sum_{x
      \in \epsilon\Z^d} \int_{\T^d} e^{i(x-y)\xi/\epsilon}
  a((1-t)y + tx, -\xi;\epsilon) v(x) d\xi
  \\&= \dualp{ u
  }{  \left( \Op_{\epsilon,1-t}^\T a' \right) v 
  }
\end{align}
with $a'(x,\xi;\epsilon) := a(x,-\xi;\epsilon)$. Thus the restriction
of the adjoint operator to $\s(\epsilon\Z^d)$ is itself a continuous
mapping. We may therefore extend $\Op_{\epsilon,t}^\T a$ to a
continuous operator $\s'(\epsilon\Z^d) \rightarrow \s'(\epsilon\Z^d)$
by defining
\begin{align}
  \label{eq:OpSDual}
  \dualp{ \left( \Op_{\epsilon,t}^\T a \right) u' }{ v } := \dualp{ u'
  }{ \left( \Op_{\epsilon,1-t}^\T a' \right) v } \qquad (u' \in
  \s'(\epsilon\Z^d),\, v \in \s(\epsilon\Z^d)).
\end{align}

As usual, for symbols  $a_j \in \Sy^{k_j}(m,\epsilon_0)(\R^{d}\times \T^d)$, $a\in \Sy^{k_0}(m,\epsilon_0)(\R^{d}\times \T^d)$ where 
the sequence $\bigl(k_j\bigr)_{j \in \N}$ is increasing with $k_j \rightarrow \infty$, 
we write 
  \begin{align}
    a(x,\xi;\epsilon) \sim \sum_{j = 0}^\infty a_j(x,\xi;\epsilon) \;
    \mbox{ if and only if } \; \left( a - \sum_{j=0}^{M} a_j \right) \in
    \Sy^{k_{M+1}}(m, \epsilon_0)(\R^{d} \times \T^d) \mbox{ for any
    } M \in \N.
  \end{align}
The formal
sum  $\sum_{j = 0}^\infty a_j$ is called asymptotic expansion of 
the symbol $a$.

The following proposition states that a pseudodifferential operator
can be represented with any quantisation parameter $t$. Given the
symbol $a_s$ of an operator in $s$-quantisation, the symbol $a_t$ for the
same operator in $t$-quantisation can be computed by formula
\eqref{eq:SwitchForm}. In particular, considering the asymptotic
expansion \eqref{eq:SwitchAsympExp} of $a_t$, the leading order term
equals $a_s$.

\begin{prop}
  \label{prop:SwitchQuant}
  Let $s \in [0,1]$ and $a_s \in \Sy^k(m,\epsilon_0)(\R^d \times
  \T^d)$. Then for any $t \in [0,1]$ there is a unique $a_t \in
  \Sy^k(m,\epsilon_0)(\R^d \times \T^d)$ such that
  $\Op_{\epsilon,t}^\T a_t = \Op_{\epsilon,s}^\T a_s$ for $\epsilon
  \in (0,\epsilon_0]$. Moreover, the mapping $a_s \mapsto a_t$ is
    continuous. Formally, $a_t$ is given by
  \begin{align}
    \label{eq:SwitchForm}
    a_t(x,\xi;\epsilon) = \frac{1}{(2\pi)^d} \sum_{y \in \epsilon\Z^d}
    \int_{\T^d} e^{i(\xi-\mu)y/\epsilon} a_s(x+(s-t)y,\mu;\epsilon)
    d\mu \qquad (x,\xi \in \R^d,\, \epsilon \in (0,\epsilon_0]).
  \end{align}
  Furthermore,
  \begin{align}
    \label{eq:SwitchAsympExp}
    a_t(x,\xi;\epsilon) \sim \sum_{j=0}^\infty \epsilon^j
    a_{t,j}(x,\xi;\epsilon) \qquad \mbox{where} \quad a_{t,j}(x,\xi;\epsilon) :=
    \sum_{\substack{\alpha \in \N_0^d \\ |\alpha| = j}}
    \frac{i^j}{\alpha!} \left.\partial_\mu^\alpha \partial_y^\alpha a_s(x +
    (s-t)y,\mu;\epsilon) \right|_{\substack{y=0 \\ \mu = \xi}}.
  \end{align}
  Writing
  \begin{align}
    \label{eq:SwitchAsympExpRemain}
    R_N(a_s)(x,\xi;\epsilon) := a_t(x,\xi;\epsilon) - \sum_{j=0}^{N-1}
    \epsilon^j a_{t,j}(x,\xi;\epsilon),
  \end{align}
  we have $R_N(a_s) \in S^{k+N}(m,\epsilon_0)(\R^d \times \T^d)$ and
  the Fréchet seminorms of $R_N$ only depend linearly on finitely many
  $\norm{a_s}_{\alpha}$ with $\abs{\alpha} \geq N$. 
\end{prop}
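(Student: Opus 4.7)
\textbf{Plan for the proof of Proposition \ref{prop:SwitchQuant}.} The strategy is to \emph{define} $a_t$ by the formula \eqref{eq:SwitchForm}, show that it lies in $\Sy^k(m,\epsilon_0)(\R^d\times\T^d)$, derive the asymptotic expansion by Taylor expansion combined with integration by parts, and verify $\Op^\T_{\epsilon,t}a_t = \Op^\T_{\epsilon,s}a_s$ by a direct manipulation of the discrete oscillatory expression; uniqueness will follow from a Schwartz-kernel argument.

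First I would rewrite \eqref{eq:SwitchForm} as an absolutely convergent sum. Since $a_s(X,\cdot;\epsilon)$ is smooth on $\T^d$, iterated integration by parts in $\mu$ shows that its Fourier coefficients
\begin{align*}
\hat a_s(X,y;\epsilon):=\frac{1}{(2\pi)^d}\int_{\T^d}e^{-iy\mu/\epsilon}a_s(X,\mu;\epsilon)\,d\mu
\end{align*}
decay faster than any polynomial in $|y|/\epsilon$, so \eqref{eq:SwitchForm} equals $\sum_{y\in\epsilon\Z^d}e^{iy\xi/\epsilon}\hat a_s(x+(s-t)y,y;\epsilon)$, which is absolutely convergent. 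Differentiation under the sum, together with the order-function property \eqref{eq:orderFuncDef} applied to the shift $x\mapsto x+(s-t)y$ (so that the polynomial factors in $y$ coming from the shift are absorbed by the rapid decay of $\hat a_s$), then gives $a_t\in \Sy^k(m,\epsilon_0)(\R^d\times\T^d)$ and continuity of the map $a_s\mapsto a_t$ in the Fréchet seminorms \eqref{eq:FrechetSNSym}.

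Next I would derive the asymptotic expansion. Apply Taylor's theorem in the first variable to $a_s(x+(s-t)y,\mu;\epsilon)$ around $y=0$ up to order $N$; each polynomial term is transformed via the identity
\begin{align*}
y^\alpha e^{i(\xi-\mu)y/\epsilon}=(-i\epsilon)^{|\alpha|}\partial_\mu^\alpha e^{i(\xi-\mu)y/\epsilon}
\end{align*}
and integration by parts in $\mu\in\T^d$ (no boundary terms by periodicity), yielding a factor $(i\epsilon(s-t))^{|\alpha|}/\alpha!$ multiplying $\partial_X^\alpha\partial_\mu^\alpha a_s(x,\mu;\epsilon)$. The remaining sum/integral $(2\pi)^{-d}\sum_{y\in\epsilon\Z^d}\int_{\T^d}e^{i(\xi-\mu)y/\epsilon}(\cdot)\,d\mu$ collapses by Fourier inversion on the torus to evaluation at $\mu=\xi$, producing precisely the coefficient $a_{t,j}$ of \eqref{eq:SwitchAsympExp}. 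The Taylor remainder is handled in the same way, and the rapid-decay bound on its Fourier coefficients combined with the order function estimate gives $R_N(a_s)\in \Sy^{k+N}(m,\epsilon_0)$ with Fréchet seminorms depending linearly on finitely many $\|a_s\|_\alpha$ with $|\alpha|\geq N$.

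To verify $\Op^\T_{\epsilon,t}a_t=\Op^\T_{\epsilon,s}a_s$, I would substitute \eqref{eq:SwitchForm} into \eqref{eq:OpTa_u_Def}, use Fubini on the resulting absolutely convergent quadruple sum/integral, and perform the $\xi$-integration first: $(2\pi)^{-d}\int_{\T^d}e^{i(z-x+y')\xi/\epsilon}d\xi=\delta_{z-x+y',0}$ forces $y'=x-z$, after which $tx+(1-t)z+(s-t)(x-z)=sx+(1-s)z$ and $\Op^\T_{\epsilon,s}a_su(x)$ is recovered. Uniqueness follows because if $\Op^\T_{\epsilon,t}p=0$ for some $p\in\Sy^k(m,\epsilon_0)(\R^d\times\T^d)$, then taking the discrete Schwartz kernel and inverting the Fourier series in the shift variable $y-x\in\epsilon\Z^d$ (as in the proof of Lemma \ref{Lem:InvSymbPeriodic}) forces $p=0$. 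The main obstacle I expect is the bookkeeping to make all these formal manipulations --- Taylor expansion, summation/integration by parts, Fubini, and the torus Fourier inversion --- rigorous and simultaneously uniform in $(x,\xi,\epsilon)$, so that the symbol-class estimates and the explicit linear dependence of the Fréchet seminorms of $R_N$ on $\|a_s\|_\alpha$ for $|\alpha|\geq N$ are preserved throughout.
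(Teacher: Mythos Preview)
Your proposal is correct in spirit and gives a genuinely self-contained argument, whereas the paper's proof is a one-line reduction: it observes that the $t$-quantisation $\Op_{\epsilon,t}^\T a$ is the special case $a(x,y,\xi;\epsilon)=a(tx+(1-t)y,\xi;\epsilon)$ of the general quantisation $\widetilde{\Op}_\epsilon$ for symbols in $\Sy^k_0(m)(\R^{2d}\times\T^d)$ discussed around \eqref{eq:OpaNonDiscrDefgen}, and then simply cites \cite[Proposition~A.5]{KR18}, which already provides the symbol $a_t$, its asymptotic expansion, the remainder estimates, and the continuity of the map. Your route is the direct Fourier-analytic proof (rapid decay of torus Fourier coefficients, Taylor expansion in $y$, integration by parts in $\mu$, torus Fourier inversion) that such a cited result would ultimately rest on; it buys independence from \cite{KR18} at the cost of carrying out the bookkeeping explicitly, while the paper's approach keeps Appendix~\ref{sec:AppA} short by leaning on already-published machinery.

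One small slip: the identity should read $y^\alpha e^{i(\xi-\mu)y/\epsilon}=(i\epsilon)^{|\alpha|}\partial_\mu^\alpha e^{i(\xi-\mu)y/\epsilon}$, not $(-i\epsilon)^{|\alpha|}$; after integration by parts this feeds into the sign in the expansion coefficients. This is purely cosmetic and does not affect the validity of your strategy.
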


\begin{proof}
  Considering the $t$-quantisation as a special case of the general
  quantisation as described below Proposition \ref{prop:RestrForm},
  Proposition \ref{prop:SwitchQuant} may be seen as a special case of
  \cite[Proposition A.5]{KR18}.
\end{proof}

\eqref{eq:SwitchForm} can be understood in a distributional sense or
as an iterated integral. The analytically nontrivial estimate is the
estimate on the remainder \eqref{eq:SwitchAsympExpRemain}. We use it
for estimates uniform with respect to a parameter.

The next proposition determines the symbol of the composition of
pseudodifferential operators in the discrete setting. The symbol has
an asymptotic expansion that can be derived from the derivatives of
the symbols of the operators involved. The proposition follows
analogous results from the standard theory (see \cite[Proposition 7.7
  and Theorem 7.9]{DiSj}) but we make the statements on the remainder
estimate more precise. A special case in the discrete setting has
already been proved in \cite[Corollary A.5]{KR09}.

\begin{prop}
  \label{prop:SharpProd}
  Let $t \in [0,1]$
  and $a_j \in \Sy^0(m_j,\epsilon_0)(\R^d \times \T^d)$ for
  $j \in \left\{1, 2\right\}$. Define
  \begin{align}
    (a_1 \comp{t} a_2)(x,\xi;\epsilon) := \left. e^{i\epsilon \left(
          \nabla_\eta \cdot \nabla_v^T - \nabla_u \cdot \nabla_\xi^T
        \right) } a_1(tx + (1-t)u,\eta;\epsilon) a_2((1-t)x +
      tv,\xi;\epsilon) \right|_{\substack{u = v = x \\ \eta = \xi}} 
  \end{align}
  for $x \in \R^d, \xi \in \T^d$, $\epsilon \in (0,\epsilon_0]$. Then $a_1
  \comp{t} a_2 \in \Sy^0(m_1m_2, \epsilon_0)(\R^d \times \T^d)$ and
  \begin{multline}
    \label{eq:SharpProdAsExp}
    (a_1 \comp{t}  a_2)(x,\xi;\epsilon) \\
     \sim \sum_{k=0}^\infty
    \left.\frac{1}{k!} \left( i\epsilon \right)^k \left(
        \nabla_\eta \cdot \nabla_v^T - \nabla_u \cdot \nabla_\xi^T
      \right)^k a_1(tx + (1-t)u,\eta;\epsilon) a_2((1-t)x + tv,\xi;\epsilon)
    \right|_{\substack{u=v=x \\ \eta = \xi}}.
  \end{multline}
  The remainder 
  \begin{align}
    \label{eq:SharpProdAsExpRemain}
    & R_N(a_1,a_2)(x,\xi;\epsilon) := (a_1 \comp{t}
    a_2)(x,\xi;\epsilon) \\
    & \quad - \sum_{k=0}^{N-1} \left.\frac{1}{k!} \left( i\epsilon
      \right)^k \left( \nabla_\eta \cdot \nabla_v^T - \nabla_u \cdot
        \nabla_\xi^T \right)^k a_1(tx + (1-t)u,\eta;\epsilon)
      a_2((1-t)x + tv,\xi;\epsilon) \right|_{\substack{u=v=x \\ \eta =
        \xi}} \nonumber
  \end{align}
  is an element of $\Sy^N(m_1m_2, \epsilon_0)(\R^d \times \T^d)$ and
  its Fréchet seminorms only depend linearly on finitely many Fréchet
  seminorms of the symbols $a_1$ and $a_2$. Furthermore,
  \begin{align}
    \left( \Op_{\epsilon,t}^\T a_1 \right) \circ \left(
      \Op_{\epsilon,t}^\T a_2 \right) = \Op_{\epsilon,t}^\T (a_1
    \comp{t} a_2) \qquad (\epsilon \in (0,\epsilon_0]).
  \end{align}
  We define $\compw := \comp{1/2}$.
\end{prop}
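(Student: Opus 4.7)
The plan is to reduce the statement to the corresponding composition theorem in the non-discrete setting, then invoke the restriction formula from Proposition \ref{prop:RestrForm} to transfer the result to the lattice. More precisely, I would first apply the standard theorem on composition of $t$-quantisations from \cite[Theorem 7.9]{DiSj}: for any $a_1, a_2 \in \Sy^0(m_1,\epsilon_0)(\R^d \times \R^d)$ and $a_2 \in \Sy^0(m_2,\epsilon_0)(\R^d\times\R^d)$ (viewed on $\R^{2d}$), one has $\Op_{\epsilon,t} a_1 \circ \Op_{\epsilon,t} a_2 = \Op_{\epsilon,t}(a_1 \comp{t} a_2)$ with a uniquely determined symbol $a_1 \comp{t} a_2 \in \Sy^0(m_1 m_2, \epsilon_0)(\R^d \times \R^d)$, given by the oscillatory integral that formally corresponds to the exponential-of-differential-operator expression in the statement. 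The asymptotic expansion \eqref{eq:SharpProdAsExp} and the remainder estimate for $R_N(a_1,a_2)$ with the claimed linear dependence of Fréchet seminorms on those of $a_1, a_2$ are standard consequences of that theory.

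Next I would verify that $a_1 \comp{t} a_2$ is periodic with respect to $\xi$, so that it actually lies in $\Sy^0(m_1 m_2,\epsilon_0)(\R^d \times \T^d)$. The cleanest way is a uniqueness argument in the spirit of Lemma \ref{Lem:InvSymbPeriodic}: for $\gamma \in 2\pi\Z^d$, let $(a_1 \comp{t} a_2)^\gamma(x,\xi;\epsilon) := (a_1 \comp{t} a_2)(x,\xi+\gamma;\epsilon)$ and observe that, thanks to the periodicity of both $a_1$ and $a_2$ in their momentum arguments, one checks directly from the defining oscillatory integral (or equivalently term by term in the expansion \eqref{eq:SharpProdAsExp}, whose terms are manifestly periodic in $\xi$) that $\Op_{\epsilon,t}\bigl((a_1 \comp{t} a_2)^\gamma\bigr) = \Op_{\epsilon,t}(a_1 \comp{t} a_2)$; since the symbol in the non-discrete Weyl/$t$-calculus is uniquely determined by the operator on $\S(\R^d)$, we obtain $(a_1 \comp{t} a_2)^\gamma = a_1 \comp{t} a_2$.

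Finally, to identify the composition of discrete operators, let $u \in \s(\epsilon\Z^d)$ and pick any Schwartz extension $\tilde{u} \in \S(\R^d)$ with $r_\epsilon \tilde{u} = u$. By Proposition \ref{prop:RestrForm} applied twice, setting $v := (\Op_{\epsilon,t} a_2) \tilde{u} \in \S(\R^d)$, we have
\begin{align*}
\bigl(\Op_{\epsilon,t}^\T a_1\bigr)\bigl(\Op_{\epsilon,t}^\T a_2\bigr)u &= \bigl(\Op_{\epsilon,t}^\T a_1\bigr)(r_\epsilon v) = r_\epsilon\bigl((\Op_{\epsilon,t} a_1)\,v\bigr) \\
&= r_\epsilon\bigl((\Op_{\epsilon,t} a_1 \circ \Op_{\epsilon,t} a_2)\tilde{u}\bigr) = r_\epsilon\bigl(\Op_{\epsilon,t}(a_1 \comp{t} a_2)\tilde{u}\bigr) = \Op_{\epsilon,t}^\T(a_1 \comp{t} a_2)\,u,
\end{align*}
which establishes the composition formula on $\s(\epsilon\Z^d)$, hence by continuity on $\s'(\epsilon\Z^d)$ via the extension \eqref{eq:OpSDual}.

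The main technical obstacle is really only the periodicity verification in step two: one must be careful that the symbol produced by the non-discrete composition theorem is genuinely defined on $\R^d \times \T^d$ rather than merely on $\R^{2d}$. Once this is in place, the remainder estimates, symbol class membership, and continuity of the map $(a_1,a_2) \mapsto a_1 \comp{t} a_2$ in the Fréchet topology are immediate from the cited non-discrete results together with periodicity, and the composition identity for discrete operators follows by the simple restriction argument above.
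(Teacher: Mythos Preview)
Your proposal is correct and follows essentially the approach the paper has in mind: the paper's proof simply cites \cite[Corollary A.5]{KR09} for the case $t=1$ and remarks that the general case follows analogously or via change of quantisation, having already noted in the preamble that the result follows the standard non-discrete composition theorem \cite[Proposition 7.7, Theorem 7.9]{DiSj}. Your sketch fills in exactly those details---non-discrete composition, periodicity via the uniqueness argument of Lemma \ref{Lem:InvSymbPeriodic}, and transfer to the lattice via Proposition \ref{prop:RestrForm}---so there is no substantive difference.
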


\begin{proof}
  For the special case $t = 1$, this statement is proved in
  \cite[Corollary A.5]{KR09}. The general case can be proved
  analogously or by applying a change of quantisation to the special
  case.
\end{proof}

 In particular, we use the estimate on the remainder
 \eqref{eq:SharpProdAsExpRemain} for estimates uniform with respect to a parameter.
 We remark that in the usual non-discrete setting similar statements on the remainder hold and are 
 used in the proof of Proposition \ref{prop:discrPseudoInvShort}.

 The next proposition is a discrete version of the Theorem of
 Calder{\'o}n-Vaillancourt stating that pseudodifferential operators
 induced by a bounded symbol are bounded (see \cite[Theorem
   7.11]{DiSj}). Considering the $t$-quantisation as a special case of
 the general quantisation \eqref{eq:OpaNonDiscrDefgen}
 described below Proposition
 \ref{prop:RestrForm}, Proposition \ref{prop:discrCaldVail} is a
 special case of \cite[Corollary A.6]{KR18}.
 
\begin{prop}
  \label{prop:discrCaldVail}
  Let $t \in [0,1]$ and $a \in \Sy^0(1,\epsilon_0)(\R^d \times  \T^d)$. 
  Then for any $\epsilon \in (0,\epsilon_0]$ the operator
    $\Op_{\epsilon,t}^\T a$ can be extended to a bounded operator
    $\Op_{\epsilon,t}^\T a : \l^2(\epsilon\Z^d) \rightarrow
    \l^2(\epsilon\Z^d)$.

  Moreover, there exists a constant $M>0$ depending only on (upper bounds for) a finite number of
  Fr{\'e}chet seminorms of the symbol $a$ such that 
  \begin{align}
    \label{eq:discrCaldVailEstim}
    \norm{ \Op_{\epsilon,t}^\T a } \leq M
    \qquad (\epsilon \in (0,\epsilon_0],\, t \in [0,1]).
  \end{align}
\end{prop}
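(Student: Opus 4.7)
The plan is to apply Schur's test directly to the integral kernel representation of $\Op_{\epsilon,t}^\T a$ derived in \eqref{kernel}. From that formula, the operator acts on $u \in \s(\epsilon\Z^d)$ by summation against the kernel
\begin{align*}
K_{\epsilon,t}(x,y) = \frac{1}{(2\pi)^d} \int_{\T^d} e^{i(y-x)\mu/\epsilon} a(tx+(1-t)y,\mu;\epsilon) \, d\mu, \qquad x,y \in \epsilon\Z^d.
\end{align*}
The crucial observation is that for $x,y \in \epsilon\Z^d$ we have $(y-x)/\epsilon \in \Z^d$, so $K_{\epsilon,t}(x,y)$ is, up to the factor $(2\pi)^{-d}$, the $((x-y)/\epsilon)$-th Fourier coefficient on $\T^d$ of the smooth periodic function $\mu \mapsto a(tx+(1-t)y,\mu;\epsilon)$.

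The first step is to obtain the decay estimate
\begin{align*}
|K_{\epsilon,t}(x,y)| \leq C_N \Bigl\langle \frac{y-x}{\epsilon} \Bigr\rangle^{-N}, \qquad x,y\in\epsilon\Z^d,
\end{align*}
valid for any $N \in \N$ with a constant $C_N$ depending on finitely many Fréchet seminorms of $a \in \Sy^0(1,\epsilon_0)(\R^d\times\T^d)$, independently of $\epsilon \in (0,\epsilon_0]$, $t \in [0,1]$ and the spatial parameter $tx+(1-t)y$. This is a standard integration by parts in $\mu$, using that for $\xi = (y-x)/\epsilon$ with $\xi \neq 0$ one chooses a component $\xi_j$ with $|\xi_j| = \max_k|\xi_k|\geq |\xi|/\sqrt{d}$, and iterates $\partial_{\mu_j} e^{i\xi\mu} = i\xi_j e^{i\xi\mu}$ (the boundary terms vanish due to periodicity). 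Uniformity of $C_N$ in the spatial variable is immediate from the definition of $\Sy^0(1,\epsilon_0)(\R^d\times \T^d)$ via the Fréchet seminorms in \eqref{eq:FrechetSNSym}.

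The second step is a change of variables: since $y-x$ ranges over $\epsilon\Z^d$ when $x$ is fixed in $\epsilon\Z^d$ and $y$ varies over $\epsilon\Z^d$, we obtain for any $N > d$
\begin{align*}
\sup_{x\in\epsilon\Z^d} \sum_{y\in\epsilon\Z^d} |K_{\epsilon,t}(x,y)| \leq C_N \sum_{k \in \Z^d} \langle k\rangle^{-N} =: M_1 < \infty,
\end{align*}
and symmetrically $\sup_{y\in\epsilon\Z^d} \sum_{x\in\epsilon\Z^d} |K_{\epsilon,t}(x,y)| \leq M_1$, both bounds being uniform in $\epsilon \in (0,\epsilon_0]$ and $t\in[0,1]$ and depending only on finitely many Fréchet seminorms of $a$. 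Schur's test then gives $\|\Op_{\epsilon,t}^\T a\|_{\l^2(\epsilon\Z^d) \to \l^2(\epsilon\Z^d)} \leq M_1$ on the dense subspace $\s(\epsilon\Z^d)$, which extends by continuity to all of $\l^2(\epsilon\Z^d)$.

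The main subtlety is ensuring that the constant $C_N$ in the Fourier-coefficient decay estimate depends only on a finite number of seminorms of $a$ and on no other data, in particular neither on $\epsilon$, $t$, nor on the position $tx+(1-t)y$. This follows directly from the definition of $\Sy^0(1,\epsilon_0)(\R^d\times\T^d)$, so the argument is essentially mechanical; no further technical obstacle arises. The approach is in fact a special case of the more general quantisation treated in \cite[Corollary A.6]{KR18}, as cited by the authors.
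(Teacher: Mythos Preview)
Your proof is correct. The paper itself does not give a proof here; it simply remarks that Proposition~\ref{prop:discrCaldVail} is a special case of \cite[Corollary~A.6]{KR18} once the $t$-quantisation is viewed as an instance of the general quantisation~\eqref{eq:OpaNonDiscrDefgen}. Your argument is therefore more explicit: you exploit directly that for lattice points the kernel $K_{\epsilon,t}(x,y)$ is a Fourier coefficient on $\T^d$ of a smooth function whose $\mu$-derivatives are controlled uniformly by the symbol seminorms, then feed the resulting $\langle (y-x)/\epsilon\rangle^{-N}$ decay into Schur's test. This is entirely elementary and self-contained, and it makes transparent exactly which seminorms of $a$ enter the bound (namely $\norm{a}_\alpha$ for $|\alpha|\leq N$ with any fixed $N>d$). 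The cited result in \cite{KR18} covers the more general symbols $a(x,y,\xi)$ and thus has broader scope, but for the present statement your direct approach is both sufficient and arguably cleaner.
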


In the next proposition, we analyse the symbol of an operator
conjugated with an oscillating term $e^{i\phi/\epsilon}$ with focus on
its asymptotic expansion. The proposition and its proof resemble
\cite[Proposition A.7]{KR18}, where conjugation with an amplitude
$e^{\phi/\epsilon}$ is treated. We use Proposition
\ref{prop:OpUnitConj} as a tool to approximate the time evolution of
the parametrix in Section \ref{subsec:MainProof2}.

\begin{prop}
  \label{prop:OpUnitConj}
  Let $q \in \Sy^0(m,\epsilon_0)(\R^d \times \T^d)$ be a symbol with
  asymptotic expansion $q \sim \sum_{j=0}^\infty \epsilon^j q_j$. Let
  $T > 0$ and let $\phi \in \Cont^\infty((-T,T) \times \R^d \times
  \R^d, \R)$ be such that the map
  \begin{align}
      \label{eq:gradPhiAss}
      (t,x,\eta) \mapsto \nabla_x\phi (t,x,\eta) - \eta
    \end{align}
    is $2\pi\Z^d$-periodic with respect to $\eta$ with all derivatives
    being bounded. Fix $s \in [0,1]$. \\ Then there is a family
    $(\tilde{q}_{t,\eta})_{t \in (-T,T),\, \eta \in \R^d}$ of symbols
    $\tilde{q}_{t,\eta} \in \Sy^0(m,\epsilon_0)(\R^d \times \T^d)$,
    $2\pi\Z^d$-periodic in the parameter $\eta$, such that
    \begin{align}
      \label{eq:ConjOpSymbol}
      e^{i\phi(t,\cdot,\eta)/\epsilon} \Op_{\epsilon,s}(q)
      e^{-i\phi(t,\cdot,\eta)/\epsilon} = \Op_{\epsilon,s}
      (\tilde{q}_{t,\eta}) \qquad (t \in (-T,T),\, \eta \in \R^d)
    \end{align}
    and satisfying for any $\alpha \in \N^{1+3d}$
    \begin{align}
        \label{eq:unifSymb}
        \sup_{t,\eta,x,\xi,\epsilon}
        \frac{|\partial^\alpha_{t,\eta,x,\xi}
          \tilde{q}_{t,\eta}(x,\xi;\epsilon) |}{m(x,\xi)} < \infty
    \end{align}
    Moreover, for some sequence $(\tilde{q}_{t,\eta;j})_{j \in \N}$ of
    $\epsilon$-independent symbols in $\Sy^0(m,\epsilon_0)(\R^d \times
    \T^d)$
    \begin{align}
      \label{eq:unifAsympExp}
      \sup_{t,\eta,x,\xi,\epsilon} \frac{\left|\partial^\alpha_{t,\eta,x,\xi} \left( \tilde{q}_{t,\eta}
        - \sum_{j=0}^{N-1} \epsilon^j \tilde{q}_{t,\eta;j} \right)(x,\xi;\epsilon) \right|}{\epsilon^N
        m(x,\xi)} < \infty
    \end{align}
      holds for any $N \in \N^*$ with leading order term given by
    \begin{align}
      \label{eq:LeadOrderTerm}
      \tilde{q}_{t,\eta;0} (x,\xi) = q_0(x,\xi + \nabla_x \phi(t,x,\eta)).
    \end{align}
    An analog of \eqref{eq:unifSymb} holds for all $\tilde{q}_{t,\eta;j}$.
\end{prop}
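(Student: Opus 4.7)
The plan is to follow the standard conjugation trick for pseudodifferential operators, realising the conjugated operator first as a general quantisation $\widetilde{\Op}_\epsilon$ depending on two space variables, and then converting back to a $t$-quantisation via the formula \eqref{eq:genquant}. The whole argument must be done uniformly with respect to the parameters $t$ and $\eta$, and the periodicity in $\eta$ must be verified at the end. Throughout I would work on a fixed compact subinterval of $(-T,T)$; the statement for $\eta \in \R^d$ will then follow from $(t,x,\eta) \mapsto \nabla_x\phi(t,x,\eta) - \eta$ being $2\pi\Z^d$-periodic.

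First, for $u\in \S(\R^d)$ I would write the action of the left-hand side of \eqref{eq:ConjOpSymbol} as an iterated oscillating integral with phase $(y-x)\xi/\epsilon + (\phi(t,x,\eta)-\phi(t,y,\eta))/\epsilon$. Using the standard Taylor device
\[
\phi(t,x,\eta) - \phi(t,y,\eta) = -(y-x)\cdot\Phi(t,x,y,\eta), \qquad \Phi(t,x,y,\eta) := \int_0^1 \nabla_x\phi(t,x+\tau(y-x),\eta)\,d\tau,
\]
and substituting $\zeta = \xi - \Phi(t,x,y,\eta)$ (which keeps the $d\xi$-measure invariant) transforms the conjugated operator into
\[
\bigl[e^{i\phi/\epsilon}\Op_{\epsilon,s}(q)e^{-i\phi/\epsilon}\bigr] u(x) \;=\; \frac{1}{(2\pi\epsilon)^d}\int\!\!\int e^{i(y-x)\zeta/\epsilon}\, p_{t,\eta}(x,y,\zeta;\epsilon)\, u(y)\,dy\,d\zeta,
\]
with $p_{t,\eta}(x,y,\zeta;\epsilon) := q\bigl(sx+(1-s)y,\, \zeta + \Phi(t,x,y,\eta);\epsilon\bigr)$. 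By the assumption \eqref{eq:gradPhiAss}, the function $\Phi(t,x,y,\eta) - \eta$ is bounded with bounded derivatives uniformly in all arguments, which together with the $2\pi\Z^d$-periodicity of $q$ in its momentum variable gives $p_{t,\eta}\in \Sy^0_0(m,\epsilon_0)(\R^{2d}\times\T^d)$ with Fréchet seminorms bounded uniformly in $(t,\eta)$.

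Next, I would apply the general-to-$t$ quantisation result \eqref{eq:genquant} (from \cite[Prop.~A.5]{KR18}) to each $p_{t,\eta}$; this yields the unique symbol $\tilde{q}_{t,\eta}\in\Sy^0(m,\epsilon_0)(\R^d\times\T^d)$ such that \eqref{eq:ConjOpSymbol} holds. The asymptotic expansion of this reduction produces the $\tilde q_{t,\eta;j}$, where the leading order is obtained by evaluating $p_{t,\eta}$ on the diagonal $y=x$:
\[
\tilde q_{t,\eta;0}(x,\xi) = p_{t,\eta}(x,x,\xi;\epsilon)\bigl|_{q\,\to\,q_0} = q_0\bigl(x,\,\xi + \nabla_x\phi(t,x,\eta)\bigr),
\]
which gives \eqref{eq:LeadOrderTerm}. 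The uniform estimates \eqref{eq:unifSymb} and \eqref{eq:unifAsympExp} (including the analogous bounds on each $\tilde q_{t,\eta;j}$) follow from the continuity of the reduction map in the Fréchet topology and from the fact that all $(t,\eta)$-derivatives of $p_{t,\eta}$, and of the symbols produced by the asymptotic expansion of the reduction, inherit uniform bounds from the uniform boundedness of the derivatives of $\Phi - \eta$ and the asymptotic expansion $q\sim\sum\epsilon^j q_j$.

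Finally, periodicity in $\eta$ is the cleanest point: splitting $\Phi(t,x,y,\eta) = \eta + \tilde\Phi(t,x,y,\eta)$ with $\tilde\Phi$ periodic in $\eta$, the shift $\eta\mapsto \eta+2\pi k$ for $k\in\Z^d$ changes $p_{t,\eta}$ only by shifting its momentum argument by $2\pi k$, leaving $p_{t,\eta}$ invariant by periodicity of $q$; by the uniqueness statement in the change-of-quantisation formula, $\tilde q_{t,\eta}$ is also $2\pi\Z^d$-periodic in $\eta$, as is each $\tilde q_{t,\eta;j}$. The main technical obstacle will be keeping track of the uniformity with respect to $t$ and $\eta$ at every step, in particular when extracting the remainder estimates from Proposition \ref{prop:SwitchQuant} applied to the reduction; but since these remainders depend only linearly on finitely many Fréchet seminorms of the input symbol, and those seminorms are uniformly bounded in $(t,\eta)$ by the hypotheses on $\phi$, this goes through without genuine difficulty.
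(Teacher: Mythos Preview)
Your proposal is correct and follows essentially the same route as the paper: compute the amplitude of the conjugated operator, absorb the phase difference via the Taylor integral $\Phi$ and the substitution $\zeta=\xi-\Phi$, recognise the result as a general two-variable quantisation of $q(sx+(1-s)y,\zeta+\Phi;\epsilon)$, and reduce to $s$-quantisation via \cite[Prop.~A.5]{KR18}, reading off the leading term on the diagonal and obtaining uniformity and $\eta$-periodicity exactly as you describe. The only cosmetic difference is that the paper performs the substitution on the pointwise kernel over $\T^d$ (using that $e^{i(y-x)\xi/\epsilon}$ is $2\pi\Z^d$-periodic in $\xi$ for $y-x\in\epsilon\Z^d$), whereas you work with the full $\R^d$-integral; both are valid and lead to the same general symbol.
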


\begin{proof}
  The operator $ e^{i\phi(t,\cdot,\eta)/\epsilon} \Op_{\epsilon,s}(q)
  e^{-i\phi(t,\cdot,\eta)/\epsilon}$ is characterised by its
  distributional kernel using \eqref{eq:OpWithKernel}. This in turn is completely described by its 
  pointwise defined kernel $K(x,y)$ for $x \in \R^d$, $ y \in \epsilon\Z^d + x$ introduced in \eqref{kernel}, i.e.
  
  \begin{align}
    K(x,y) &:= (2\pi)^{-d} \int_{\T^d} e^{i((y-x)\xi +
      \phi(t,x,\eta) - \phi(t,y,\eta))/\epsilon} q(sx +
    (1-s)y,\xi;\epsilon) d\xi \notag \\ \label{eq:IntKernConjOp} &=
    (2\pi)^{-d} \int_{\T^d} e^{i(y-x)(\xi -
      \Phi(t,\eta,x,y))/\epsilon} q(sx + (1-s)y,\xi;\epsilon) d\xi
  \end{align}
  where
  \begin{align}
    \label{eq:PHI_conjProof}
    \Phi(t,\eta,x,y) := \int_0^1 (\nabla_x \phi)(t,(1-\tau)y + \tau x,
    \eta) d\tau.
  \end{align}
  Substituting $\tilde{\xi} := \xi - \Phi(t,\eta,x,y)$ and using
  that the integrand in \eqref{eq:IntKernConjOp} is $2\pi\Z^d$-periodic with respect to $\xi$
  (note that $e^{i(y-x)\xi/\epsilon}$ is $2\pi \Z^d$-periodic since $y-x \in \epsilon\Z^d$), we get
  \begin{align}
    \notag \mbox{rhs} \eqref{eq:IntKernConjOp} &= (2\pi)^{-d}
    \int_{\T^d - \Phi(t,\eta,x,y)} e^{i(y-x)\tilde{\xi}/\epsilon} q(sx
    + (1-s)y,\tilde{\xi} + \Phi(t,\eta,x,y);\epsilon) d\tilde{\xi}
    \\ \label{eq:IntKernConjOp2} &= (2\pi)^{-d} \int_{\T^d}
    e^{i(y-x)\tilde{\xi}/\epsilon} q(sx + (1-s)y,\tilde{\xi} +
    \Phi(t,\eta,x,y);\epsilon) d\tilde{\xi}.
  \end{align}
  
  Thus the operator on the lhs of \eqref{eq:ConjOpSymbol} might be seen as a special case of the 
  general quantisation of a symbol $a(x,y,\xi)$, which by \eqref{eq:genquant} can be expressed as 
  a pseudodifferential operator in s-quantisation. It remains to check that we actually work in 
  the appropriate symbol spaces. More precisely, we proceed as follows.
  
  Since by assumption all derivatives of \eqref{eq:gradPhiAss} are
  bounded, it follows that all derivatives of the non-oscillating factor
  of the integrand in rhs\eqref{eq:IntKernConjOp2}, namely the
  derivatives of the function
  \begin{align}
    \label{eq:ExtSymbolConjOp}
    (t,x,y,\xi,\eta) \mapsto q(sx + (1-s)y, \xi + \Phi(t,\eta,x,y);
    \epsilon),
  \end{align}
  are of order $\mathcal{O}(m(sx + (1-s)y, \xi))$,
  uniformly in $\epsilon$. 
  
  By \cite[Proposition A.5]{KR18} (see our equation \eqref{eq:genquant} above) we then find
  $\tilde{q}_{t,\eta} \in \Sy^0(m,\epsilon_0)(\R^d \times \T^d)$ which
  is the symbol of the operator associated to the kernel
  \eqref{eq:IntKernConjOp2} in $s$-quantisation, i.e.,
  $\tilde{q}_{t,\eta}$ satisfies \eqref{eq:ConjOpSymbol}.

  The uniformity assertions in \eqref{eq:unifSymb} and
  \eqref{eq:unifAsympExp} for $\tilde{q}_{t,\eta}$ and its expansion
  terms $\tilde{q}_{t,\eta;j}$ with respect to $t$ and $\eta$ follow
  from the $\Cont^\infty$-assumptions on $\phi$ combined with the
  continuity statement in \cite[Proposition A.5]{KR18} for the mapping
  $a \mapsto a_s$ in the relevant Fréchet topology of symbols. Note
  that periodicity in the parameter $\eta$ follows from the
  periodicity of \eqref{eq:gradPhiAss} and $q$ used in the
  representation formula \eqref{eq:IntKernConjOp2}.  
  Evaluating
  \eqref{eq:ExtSymbolConjOp}  at $x = y$ gives the leading order
  term \eqref{eq:LeadOrderTerm}. 
\end{proof}

 \section{Poisson summation and application}
\label{sec:appPoisson}

We recall the well-known Poisson's
summation formula (see e.g. \cite[Section 7.2]{hoermander2015analysis}.

\begin{prop}
  \label{prop:Pois}
  Let $u \in \S(\R^d)$ and $a > 0$. Then
  \begin{align}
    \sum_{x \in a\Z^d} u(x) = a^{-d} (2\pi)^{d/2} \sum_{\xi \in
      \frac{2\pi}{a} \Z^d} (\F u)(\xi),
  \end{align}
  where $(\F u)(\xi) = (2\pi)^{-d/2} \int_{\R^d} e^{-ix\xi} u(x) dx$.
\end{prop}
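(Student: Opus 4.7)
The plan is to follow the classical periodisation argument and derive the Poisson summation formula from the pointwise convergence of the Fourier series of a suitable periodic function. First I would introduce the $a\Z^d$-periodisation of $u$, namely
\[
P(x) := \sum_{n \in a\Z^d} u(x+n) \qquad (x \in \R^d).
\]
Since $u \in \S(\R^d)$, the Schwartz estimates $|u(y)| \leq C_N \jap{y}^{-N}$ for any $N$ yield absolute and uniform convergence of this sum on compact subsets, and the same holds for all derivatives after termwise differentiation. Hence $P \in \Cont^\infty(\R^d)$ and $P$ is $a\Z^d$-periodic by construction.

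Next I would expand $P$ in a Fourier series on the torus $\R^d/a\Z^d$. The relevant dual lattice is $\frac{2\pi}{a}\Z^d$, and smoothness of $P$ guarantees that its Fourier series converges pointwise (in fact uniformly and absolutely):
\[
P(x) = \sum_{\xi \in \frac{2\pi}{a}\Z^d} c_\xi\, e^{ix\xi}, \qquad c_\xi = \frac{1}{a^d} \int_{[0,a]^d} P(x)\, e^{-ix\xi}\, dx.
\]
The key computation is to identify $c_\xi$ with $(\F u)(\xi)$ up to the expected factor. Substituting the definition of $P$, interchanging sum and integral (justified by Schwartz decay), and using that $e^{-i n \xi} = 1$ for $n \in a\Z^d$ and $\xi \in \frac{2\pi}{a}\Z^d$, one unfolds the integral into an integral over all of $\R^d$:
\[
c_\xi = \frac{1}{a^d} \sum_{n \in a\Z^d} \int_{[0,a]^d} u(x+n)\, e^{-i(x+n)\xi}\, dx = \frac{1}{a^d} \int_{\R^d} u(y)\, e^{-iy\xi}\, dy = \frac{(2\pi)^{d/2}}{a^d} (\F u)(\xi).
\]

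Finally I would evaluate the Fourier series at $x = 0$, which directly gives
\[
\sum_{n \in a\Z^d} u(n) = P(0) = \sum_{\xi \in \frac{2\pi}{a}\Z^d} c_\xi = a^{-d}(2\pi)^{d/2} \sum_{\xi \in \frac{2\pi}{a}\Z^d} (\F u)(\xi),
\]
which is the claimed identity. No step is really an obstacle here: the only points requiring care are the interchange of sum and integral (immediate from Schwartz decay) and the pointwise convergence of the Fourier series of $P$ at $x=0$ (immediate from smoothness of $P$, since $\F u$ has Schwartz decay and hence the series on the right is absolutely convergent). The argument works verbatim in any dimension and is independent of the lattice spacing $a>0$.
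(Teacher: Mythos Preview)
Your proof is correct and is the standard periodisation argument. The paper does not actually prove this proposition: it simply recalls it as the well-known Poisson summation formula and refers to \cite[Section 7.2]{hoermander2015analysis}, so there is nothing to compare against.
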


This formula is the main tool to prove the following Proposition \ref{prop:PoisApp}, which gives a sufficient condition on a
phase function to approximate the associated oscillating sum by an
integral with small remainder. We note that a similar approximation
(for non-oscillating sums) has been given in \cite[Lemma 4.1]{KR18}. 
We use Proposition \ref{prop:PoisApp} in the proofs of
Theorem \ref{theo:TraceAsympExp} and Lemma \ref{lem:I1I2eps} to
transform sums into integrals interpretable as standard phase space
volumes.

\begin{prop}
  \label{prop:PoisApp}
  Let $\epsilon_0 \in (0,1]$ and $a \in \Sy^0(1,\epsilon_0)(\R^d)$
  with support in some compact $K \subset \R^d$, uniformly in
  $\epsilon \in (0,\epsilon_0]$. Let $\varphi \in \Cont^\infty(\R^d)$ be
  real-valued with 
  \begin{align}
    \label{eq:PoisAppCond}
    \sup_{\substack{j \in \left\{1, \dots, d\right\} \\ x \in K
      }}\abs{\partial_j \varphi(x)} < 2\pi
  \end{align}
  Then
  \begin{align}
    \label{eq:PoisApp}
    \abs{\epsilon^d \sum_{x \in \epsilon\Z^d} e^{i\varphi(x)/\epsilon}
    a(x;\epsilon) - \int_{\R^d}e^{i\varphi(x)/\epsilon} a(x;\epsilon)
    dx } = \O(\epsilon^\infty) \qquad (\epsilon \downarrow 0).
  \end{align}
  More precisely, for any $k \in \N$ with $k \geq d$ an error bound in
  \eqref{eq:PoisApp} is given by
  \begin{align}
    \label{eq:PoisAppBound}
    \epsilon^{2k} \sum_{\xi \in 2\pi \Z^d \setminus
      \left\{0\right\}} \int_{K} \abs{ (\mathbf{W}_\epsilon^k
      a)(x,\xi)} dx  \qquad (\epsilon \in (0,\epsilon_0]),
  \end{align}
  where the operator $\mathbf{W}_\epsilon$ acts via
  \begin{align}
    \label{eq:PoisAppDefW}
    \left( \mathbf{W}_\epsilon a \right)(x,\xi) = \nabla_{x}^2 \left(
      \frac{a(x;\epsilon)}{w_\epsilon(x,\xi)} \right) \qquad \mbox{
      with } \quad w_\epsilon(x,\xi) = \sum_{j=1}^d \left(
      i\epsilon \partial_j^2\varphi(x) - (\partial_j\varphi(x) -
      \xi_j)^2 \right).
  \end{align}
\end{prop}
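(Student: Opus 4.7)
The plan is to apply Poisson's summation formula from Proposition \ref{prop:Pois} to the function $u(x) = e^{i\varphi(x)/\epsilon} a(x;\epsilon)$ (which is Schwartz since $a(\cdot;\epsilon)$ has compact support in $K$). With lattice spacing $\epsilon$, this yields
\begin{align*}
\epsilon^d \sum_{x \in \epsilon\Z^d} e^{i\varphi(x)/\epsilon} a(x;\epsilon) = (2\pi)^{d/2} \sum_{n \in \Z^d} (\F u)\bigl(2\pi n/\epsilon\bigr),
\end{align*}
and the $n = 0$ term is precisely $(2\pi)^{d/2} (\F u)(0) = \int_{\R^d} e^{i\varphi(x)/\epsilon} a(x;\epsilon)\, dx$. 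So the lhs of \eqref{eq:PoisApp} equals $(2\pi)^{d/2}\bigl|\sum_{n \neq 0}(\F u)(2\pi n/\epsilon)\bigr|$, and it remains to estimate each $(\F u)(2\pi n/\epsilon)$ for $n \in \Z^d \setminus \{0\}$ and to sum.

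The key observation is the identity
\begin{align*}
\sum_{j=1}^d \partial_j^2 e^{i(\varphi(x) - 2\pi n \cdot x)/\epsilon} = \epsilon^{-2} w_\epsilon(x, 2\pi n)\, e^{i(\varphi(x) - 2\pi n \cdot x)/\epsilon},
\end{align*}
obtained by direct computation: each $\partial_j$ brings down $(i/\epsilon)(\partial_j\varphi(x) - 2\pi n_j)$, the second derivative then produces the two terms in the definition of $w_\epsilon$. Under the hypothesis \eqref{eq:PoisAppCond}, for $n \neq 0$ we have $|\partial_j\varphi(x)| < 2\pi \leq 2\pi|n_j|$ whenever $n_j \neq 0$, so $\Re w_\epsilon(x, 2\pi n) = -\sum_j (\partial_j\varphi(x) - 2\pi n_j)^2 < 0$, uniformly on $K$. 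Thus $w_\epsilon(\cdot, 2\pi n)$ is smooth and non-vanishing on $K$, and the operator $\mathbf{W}_\epsilon$ applied with $\xi = 2\pi n$ is well-defined.

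Writing $(\F u)(2\pi n/\epsilon) = (2\pi)^{-d/2} \int_K a(x;\epsilon) e^{i(\varphi(x)-2\pi n\cdot x)/\epsilon}\, dx$ and using the identity together with $k$-fold integration by parts (boundary terms vanish due to the compact support of $a$ in $K$, uniformly in $\epsilon$), we obtain
\begin{align*}
(\F u)(2\pi n/\epsilon) = (2\pi)^{-d/2} \epsilon^{2k} \int_K (\mathbf{W}_\epsilon^k a)(x, 2\pi n)\, e^{i(\varphi(x)-2\pi n\cdot x)/\epsilon}\, dx.
\end{align*}
Taking absolute values and summing over $n \in \Z^d \setminus \{0\}$ (equivalently over $\xi = 2\pi n \in 2\pi\Z^d \setminus \{0\}$) gives exactly the bound \eqref{eq:PoisAppBound}.

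Finally, we need that the bound \eqref{eq:PoisAppBound} is finite for $k \geq d$, so that the right-hand side is genuinely $\O(\epsilon^{2k}) = \O(\epsilon^\infty)$. The main technical point — and the only step requiring care beyond routine computation — is the decay of $\mathbf{W}_\epsilon^k a$ in $n$: since $|w_\epsilon(x, 2\pi n)| \gtrsim |n|^2$ uniformly on $K$ for large $|n|$, and each $\partial_j$ hitting $1/w_\epsilon$ does not worsen this rate (the relevant derivatives of $w_\epsilon$ are $O(|n|)$ and $O(1)$), one obtains $|\mathbf{W}_\epsilon^k a(x, 2\pi n)| = O(\langle n\rangle^{-2k})$ uniformly for $x \in K$ and $\epsilon \in (0,\epsilon_0]$. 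The sum $\sum_{n \neq 0}\langle n\rangle^{-2k}$ converges when $2k > d$, hence certainly for $k \geq d$, completing the proof. This uniform decay estimate, obtained via the Leibniz rule and the explicit form of $w_\epsilon$, is the main bookkeeping step.
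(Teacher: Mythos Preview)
Your proof is correct and follows essentially the same approach as the paper: apply Poisson summation to isolate the $n=0$ term as the integral, then use the Laplacian identity $\nabla_x^2 e^{i(\varphi - x\xi)/\epsilon} = \epsilon^{-2} w_\epsilon\, e^{i(\varphi - x\xi)/\epsilon}$ and $k$-fold integration by parts to produce the bound \eqref{eq:PoisAppBound}. The paper's proof is nearly identical, only more terse in the final step (it asserts finiteness for $k\geq d$ without spelling out the $\langle n\rangle^{-2k}$ decay that you derive).
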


\begin{proof}
  By Proposition \ref{prop:Pois}
  \begin{align}
    \epsilon^d \sum_{x \in \epsilon\Z^d} e^{i\varphi(x)/\epsilon}
    a(x;\epsilon) - \int_{\R^d} e^{i\varphi(x)/\epsilon} a(x;\epsilon)
    dx = \sum_{\xi \in 2\pi\Z^d \setminus \left\{0\right\}}
      \int_{\R^d}e^{i(\varphi(x) - x\xi)/\epsilon} a(x;\epsilon) dx.
  \end{align}
  Due to condition \eqref{eq:PoisAppCond}, $1/w_\epsilon$ is
  bounded on $K \times (2\pi\Z^d \setminus \left\{ 0 \right\})$,
  uniformly in $\epsilon \in (0,\epsilon_0]$. Using the identity
  \begin{align}
    \frac{\epsilon^2 \nabla_x^2}{w_\epsilon(x,\xi)} e^{i(\varphi(x) -
      x\xi)/ \epsilon} = e^{i(\varphi(x) - x\xi)/ \epsilon} \qquad (x
    \in K, \, \xi \in 2\pi\Z^d \setminus \left\{ 0 \right\} ),
  \end{align}
  integration by parts yields
  \begin{align}
    \abs{\sum_{\xi \in 2\pi\Z^d \setminus \left\{0\right\}}
      \int_{\R^d}e^{i(\varphi(x) - x\xi)/\epsilon} a(x;\epsilon) dx}
    &= \epsilon^{2k} \abs{\sum_{\xi \in 2\pi\Z^d \setminus
        \left\{0\right\}} \int_{K}e^{i(\varphi(x) - x\xi)/\epsilon}
      (\mathbf{W}_\epsilon^k a)(x,\xi) dx} \nonumber\\
    &\leq \epsilon^{2k} \sum_{\xi \in 2\pi \Z^d \setminus
      \left\{0\right\}} \int_{K} \abs{ (\mathbf{W}_\epsilon^k
      a)(x,\xi)} dx,
  \end{align}
  where the last expression is finite and of order $\O(\epsilon^{2k})$
  for large $k \geq d$.
\end{proof}

\end{appendix}
\nocite{*}


\begin{thebibliography}{H{\"o}r85b}


\bibitem[Ag79]{a} Shmuel  Agmon.
\newblock  {\em Some new results in spectral and scattering theory of differential operators on $\R^n$.}
\newblock { S{é}m. Goulaouic-Schwartz} 1978-1979, Exp. IUI, 1--11


\bibitem[AK92]{ak} Shmuel  Agmon,  Markus Klein.
\newblock  {\em Analyticity properties in scattering and spectral theory for Schrödinger operators with long-range radial potentials.}
\newblock Duke Mathematical Journal 68(2): 337--399,  1992.

\bibitem[AE08]{AmEsch09}
Herbert Amann and Joachim Escher.
\newblock {\em Analysis III}.
\newblock Birkh{\"a}user, Basel, Boston, Berlin, 2008.


\bibitem[BCR24]{bcr}
L.N.A.  Botchway,   M. Chatzakou  and M. Ruzhansky. \newblock {\em Semi-classical Pseudo-differential Operators on $h \Z^n$ and
Applications} 
\newblock  Journal of Fourier Analysis and Applications  30:41, (2024)


\bibitem[BKR20]{bkr}
L.N.A. Botchway, P.G. Kibiti and  M. Ruzhansky. 
\newblock {\em Difference operators and pseudo-differential operator on $\Z^n$ .} 
\newblock  J. Funct. Anal. 278(11), 108473 (2020)




\bibitem[BdH15]{bov}  
Anton Bovier, Frank den Hollander.
\newblock {\em Metastability. A Potential-Theoretic approach.}
\newblock  Grundlehren der mathematischen Wissenschaften 351, Springer, 2015


\bibitem[BEGK01]{begk1} 
Anton Bovier, Michael Eckhoff, Veronique Gayrard, Markus Klein. 
\newblock {\em Metastability in
stochastic dynamics of disordered mean-field models}.
\newblock Probability Theory and Related Fields 119, p. 99-161, (2001)

\bibitem[BEGK02]{begk2} 
Anton Bovier, Michael Eckhoff, Veronique Gayrard, Markus Klein. 
{\em Metastability and
low lying spectra in reversible Markov chains}.
\newblock Communications in Mathematical Physics 228, p. 219-255, (2002)


\bibitem[CdV73]{cdv1} 
Y. Colin de Verdi{è}re.
\newblock {\em Spectre du laplacien et longeur des g{é}od{é}siques p{é}riodiques.}
\newblock  Composition Mathematica, 27: 83--106, 1973



\bibitem[CdV85]{cdv2} Yves Colin de Verdière.
\newblock {\em Ergodicité et fonctions propres du Laplacien.}
\newblock   Séminaire Bony-Sjöstrand-Meyer 1984–85 no XIII. Commun. Math. Phys.,  102: 497–-502 , 1985



\bibitem[Cha74]{chaz74}
Jacques Chazarain.
\newblock {\em Formules de Poisson pour les vari{\'e}t{\'e}s riemanniens.}
\newblock  Inventiones mathematicae, 24: 65 -- 82, 1974


\bibitem[Cha80]{Chaz1980}
Jacques Chazarain.
\newblock {\em Spectre d'un hamiltonien quantique et m{\'e}canique classique.}
\newblock Communications in Partial Differential Equations,
  5(6):595--644, 1980.

\bibitem[DG75]{DuGu75Spectrum}
Johannes~Jisse Duistermaat and Victor~William Guillemin.
\newblock {\em The spectrum of positive elliptic operators and periodic
  bicharacteristics.}
\newblock  Inventiones mathematicae, 29:39--79, 1975.

\bibitem[DH72]{10.1007/BF02392165}
Johannes~Jisse Duistermaat and Lars H{\"o}rmander.
\newblock {\em Fourier integral operators. II}.
\newblock { Acta Mathematica}, 128:183 -- 269, 1972.

\bibitem[Die08]{dieu2008}
Jean Dieudonn{\'e}.
\newblock {\em Foundations of Modern Analysis}.
\newblock Pure and Applied Mathematics. Read Books, 2008.



\bibitem[DS99]{DiSj}
Mouez Dimassi and Johannes Sj{\"o}strand.
\newblock {\em Spectral Asymptotics in the Semi-Classical Limit}.
\newblock London Mathematical Society Lecture Note Series Number 268.
  Cambridge University Press, New York, 1999.

\bibitem[Dui96]{duistermaat1995fourier}
Johannes~Jisse Duistermaat.
\newblock {\em Fourier Integral Operators}.
\newblock Progress in Mathematics. Birkh{\"a}user Boston, 1996.

\bibitem[Eva98]{evans1998partial}
Lawrence~Craig Evans.
\newblock {\em Partial Differential Equations}.
\newblock Graduate studies in mathematics. American Mathematical Society, 1998.

\bibitem[Fol95]{foll95pde}
Gerald~Budge Folland.
\newblock {\em Introduction to Partial Differential Equations}.
\newblock Princeton University Press vol. 102, 2nd edition, 1995.

\bibitem[GdG13]{gdg1}
Giacomo di Ges{ù}.
\newblock {\em Semiclassical spectral analysis of discrete Witten Laplacians}.
\newblock  Thesis, 2013, \url{https://publishup.uni-potsdam.de/opus4-ubp/frontdoor/index/index/docId/6287}


\bibitem[GdG23]{gdg2}
Giacomo di Ges{ù}.
\newblock {\em Spectral Analysis of Discrete Metastable Diffusions} 
\newblock  {Communications in Mathematical Physics} 402 543--580, 2023. 




\bibitem[GK69]{GohK69}
Israel Gohberg and Mark~Grigor'evi{\v{c}} Krejn.
\newblock {\em Introduction to the Theory of Linear Non-Self-Adjoint
  Operators}.
\newblock AMS, Providence, RI, USA, 1969.

\bibitem[GS94]{Grigis_Sjoestrand_1994}
Alain Grigis and Johannes Sj{\"o}strand.
\newblock {\em Microlocal Analysis for Differential Operators: An
  Introduction}.
\newblock London Mathematical Society Lecture Note Series, 196. Cambridge
  University Press, 1994.

\bibitem[GV64]{GeVi64}
Israel~Moiseevich Gel'fand and Naum~Yakovlevich Vilenkin.
\newblock {\em Generalized Functions: Volume 4. Applications of Harmonic
  Analysis}.
\newblock Academic Press, Inc., New York, London, 1964.

\bibitem[H{\"o}r68a]{hoermander1968LecNotes}
Lars H{\"o}rmander.
\newblock Lecture notes at the nordic summer school of mathematics, 1968.

\bibitem[H{\"o}r68b]{Hoer1968Spectral}
Lars H{\"o}rmander.
\newblock {\em The spectral function of an elliptic operator.}
\newblock { Acta Mathematica}, 121:193--218, 1968.


\bibitem[H{\"o}r76]{hscat}
Lars H{\"o}rmander.
\newblock {\em The existence of wave operators in scattering theory.}
\newblock { Mathematische Zeitschrift}, 146:  69--91, 1976.

\bibitem[H{\"o}r83]{hormander1983analysis}
Lars H{\"o}rmander.
\newblock {\em The Analysis of Linear Partial Differential Operators, II:
  Differential Operators with Constant Coefficients}.
\newblock Grundlehren der mathematischen Wissenschaften, 257. Springer Berlin
  Heidelberg, 1983.

\bibitem[H{\"o}r85a]{hormander1985analysis}
Lars H{\"o}rmander.
\newblock {\em The Analysis of Linear Partial Differential Operators, III:
  Pseudo-differential Operators}.
\newblock Grundlehren der mathematischen Wissenschaften, 274. Springer Berlin
  Heidelberg, 1985.

\bibitem[H{\"o}r85b]{hoermander2009analysis}
Lars H{\"o}rmander.
\newblock {\em The Analysis of Linear Partial Differential Operators IV:
  {Fourier} Integral Operators}.
\newblock Grundlehren der mathematischen Wissenschaften, 275. Springer Berlin
  Heidelberg, 1985.

\bibitem[H{\"o}r90]{hoermander2015analysis}
Lars H{\"o}rmander.
\newblock {\em The Analysis of Linear Partial Differential Operators I:
  Distribution Theory and {Fourier} Analysis}.
\newblock Grundlehren der mathematischen Wissenschaften, 256. Springer Berlin
  Heidelberg, 1990.


\bibitem[HMR]{hmr} Bernard Helffer, Andr{\'e} Martinez, Didier Robert.
\newblock  {\em Ergodicité et limite semi-classique.}
\newblock {Communications in Mathematical Physics}, 109: 313--326,   1987.

\bibitem[HR81]{hero81}
Bernard Helffer and Didier Robert.
\newblock Comportement   semi-classiques du spectre des hamiltoniens quantiques elliptiques.
\newblock {\em Annales Institut Fourier, Grenoble}, 31(3):169--223, 1981.


\bibitem[HR83]{HELFFER1983246}
Bernard Helffer and Didier Robert.
\newblock Calcul fonctionnel par la transformation de {Mellin} et
  op{\'e}rateurs admissibles.
\newblock {\em Journal of Functional Analysis}, 53(3):246--268, 1983.


\bibitem[IK85]{ik}
Hiroshi Isozaki, Hitoshi Kitada.
\newblock  {\em Modified wave operators with time-independent modifiers.}
\newblock  { Journal Faculty of Science, University Tokyo, Sect. IA Math} 32.1: 77--104,  1985.



\bibitem[Ivr98]{ivrii1998}
Victor Ivrii.
\newblock {\em Microlocal Analysis and Precise Spectral Asymptotics}.
\newblock Springer, Berlin, Heidelberg, 1998.




\bibitem[Ivr19]{ivmonster}
Victor Ivrii. 
\newblock  {\em Microlocal Analysis, Sharp Spectral Asymptotics and Applications vol. I -V.} 
\newblock  Springer Monographs in Mathematics,  2019 

\bibitem[Kam23]{Ka23}
Kentaro Kameoka.
\newblock {\em Semiclassical analysis and the {Agmon-Finsler} metric for discrete
  {Schr{\"o}dinger} operators.}
\newblock {Communications on Pure and Applied Analysis}, 22(4):1180--1193,
  2023.


\bibitem[KN24]{nakam}
Kentaro Kameoka and Shu Nakamura.
\newblock {\em Continuum limit of resonances for discrete Schr\"{o}dinger operators.}
\newblock {Journal of Mathematical Physics} 66(7), 2025. 



\bibitem[KMW93]{kmw} 
Markus Klein, André Martinez and Xue Ping Wang.
\newblock {\em On the Born-Oppenheimer approximation of wave operators in molecular scattering theory}. 
\newblock {Communications in Mathematical Physics} 152: 73–-95,  1993.


\bibitem[KLR14]{KRL14}
Markus Klein, Christian L{\'e}onard, and Elke Rosenberger.
\newblock {\em Agmon-type estimates for a class of jump processes.}
\newblock { Mathematische Nachrichten}, 287(17-18):2021--2039, 2014.

\bibitem[KR08]{KR08}
Markus Klein and Elke Rosenberger.
\newblock {\em Agmon-type estimates for a class of difference operators.}
\newblock {Annales Henri Poincar{\'e}}, 9:1177--1215, 01 2008.

\bibitem[KR09]{KR09}
Markus Klein and Elke Rosenberger.
\newblock {\em Harmonic approximation of difference operators.}
\newblock {Journal of Functional Analysis}, 257(11):3409--3453, 2009.

\bibitem[KR11]{KR11}
Markus Klein and Elke Rosenberger.
\newblock {\em Asymptotic eigenfunctions for a class of difference operators.}
\newblock {Asymptotic Analysis}, 73(1-2):1--36, 2011.

\bibitem[KR12]{KR12}
Markus Klein and Elke Rosenberger.
\newblock {\em Tunneling for a class of difference operators.}
\newblock {Annales Henri Poincar{\'e}}, 13(5):1231--1269, 2012.

\bibitem[KR16]{KR16}
Markus Klein and Elke Rosenberger.
\newblock {\em {Agmon} estimates for the difference of exact and approximate
  {Dirichlet} eigenfunctions for difference operators.}
\newblock { Asymptotic Analysis}, 97(1-2):61--89, mar 2016.

\bibitem[KR18]{KR18}
Markus Klein and Elke Rosenberger.
\newblock {\em Tunneling for a class of difference operators: Complete asymptotics.}
\newblock {Annales Henri Poincar{\'e}}, 19:3511--3559, 11 2018.

\bibitem[Mar02]{Mart}
Andr{\'e} Martinez.
\newblock {\em An Introduction to Semiclassical and Microlocal Analysis}.
\newblock Universitext. Springer-Verlag, New York, 2002.

\bibitem[Mat71]{mather1971}
John~Norman Mather.
\newblock {\em On {Nirenberg}'s proof of {Malgrange}'s preparation theorem.}
\newblock { Proceedings of Liverpool
  Singularities --- Symposium I}, pages 116--120, Berlin, Heidelberg, 1971.
  Springer Berlin Heidelberg.

\bibitem[Nak14]{na}
Shu Nakamura.
\newblock {\em Modified wave operators for discrete {Schr{\"o}dinger} operators with
  long-range perturbations.}
\newblock {Journal of Mathematical Physics}, 55(11):112101, 11 2014.


\bibitem[Rob82]{ROBERT198274}
Didier Robert.
\newblock {\em Calcul fonctionnel sur les op{\'e}rateurs admissibles et application.}
\newblock {Journal of Functional Analysis}, 45(1):74--94, 1982.

\bibitem[Rob87]{Robert1987AutourDL}
Didier Robert.
\newblock {\em Autour de l'approximation semi-classique}.
\newblock Progress in mathematics. Birkh{\"a}user, 1987.

\bibitem[RT87]{rt}
Didier Robert and  Hideo Tamura .
\newblock  {\em Semi-classical estimates for resolvents and asymptotics
for total scattering cross-sections.}
\newblock {Annales Henri Poincar{\'e}, section A, tome} 46, no 4: 415--442, 1987.

\bibitem[RS75]{ReSi2}
Michael Reed and Barry Simon.
\newblock {\em Methods of Modern Mathematical Physics. Vol. 2: {Fourier}
  Analysis, Self-Adjointness}.
\newblock Academic Press, Inc., San Diego, California, 1975.

\bibitem[RS78]{ReSi4}
Michael Reed and Barry Simon.
\newblock {\em Methods of Modern Mathematical Physics. Vol. 4: Analysis of
  Operators}.
\newblock Academic Press, Inc., San Diego, California, 1978.

\bibitem[RS79]{ReSi3}
Michael Reed and Barry Simon.
\newblock {\em Methods of Modern Mathematical Physics. Vol. 3: Scattering
  Theory.}
\newblock Academic Press, Inc., San Diego, California, 1979.

\bibitem[RS80]{ReSi}
Michael Reed and Barry Simon.
\newblock {\em Methods of Modern Mathematical Physics. Vol. 1: Functional
  Analysis.}
\newblock Academic Press, Inc., San Diego, California, 1980.

\bibitem[R06]{thesis} 
Elke Rosenberger. 
\newblock {\em Asymptotic Spectral Analyis and Tunnelling for a class
of Difference Operators.} Thesis 2006, \url{http://nbn-resolving.de/urn:nbn:de:kobv:517-opus-7393}

\bibitem[RT09]{RuzhTuru2009}
Michael Ruzhansky and Ville Turunen.
\newblock {\em Pseudo-Differential Operators and Symmetries.}
\newblock Birkh{\"a}user, Basel, 2009.

\bibitem[RT10]{RuzhTuru2010}
Michael Ruzhansky and Ville Turunen.
\newblock {\em Quantization of Pseudo-differential Operators on the Torus.}
\newblock {Journal of Fourier Analysis and Applications} (2010) 16: 943–982

\bibitem[Shn74]{sh}
Alexander Shnirelman. 
\newblock {\em Ergodic properties of eigenfunctions. }
\newblock {Russian mathematical surveys},  29: 181–-182,  1974
 
 
\bibitem[Shu01]{shu} Mikhail A. Shubin.
\newblock  {\em Pseudodifferential Operators and Spectral Theory.}
\newblock  Springer, 2nd edition, 2001

\bibitem[TS73]{shut} V.N. Tulovskii and Mikhail A. Shubin.
\newblock  {\em On the asymptotic distribution of eigenvalues of pseudodifferential operators in $\R^n$.} 
\newblock  {Mathematics USSR Sbornik}, 21: 565-- 583, 1973.


\bibitem[Sim05]{SimonTI}
Barry Simon.
\newblock {\em Trace ideals and their applications.}
\newblock { Mathematical Surveys and Monographs, volume 120},
Providence, RI: American Mathematical Society, 2nd ed. edition,
  2005.

\bibitem[Sti58]{Stinespring1958}
William~Forrest Stinespring.
\newblock {\em A sufficient condition for an integral operator to have a trace.}
\newblock {Journal f{\"u}r die reine und angewandte Mathematik},
  200:200--207, 1958.

\bibitem[Tad19]{tad}
Yukihide Tadano.
\newblock {\em Long-range scattering for discrete {Schr{\"o}dinger} operators.}
\newblock { Annales Henri Poincar{\'e}}, 20:1439--1469, 05 2019.

\bibitem[Wey11]{Weyl1911}
Hermann Weyl.
\newblock {\em {\"U}ber die asymptotische {Verteilung} der {Eigenwerte}.}
\newblock {Nachrichten von der Gesellschaft der Wissenschaften zu
  G{\"o}ttingen, Mathematisch-Physikalische Klasse}, 1911:110--117, 1911.

\bibitem[Wey12]{Weyl1912}
Hermann Weyl.
\newblock {\em Das asymptotische {Verteilungsgesetz} der {Eigenwerte} linearer
  partieller {Differentialgleichungen} (mit einer {Anwendung} auf die {Theorie}
  der {Hohlraumstrahlung}).}
\newblock {Mathematische Annalen}, 71:441--479, 1912.

\bibitem[Yos78]{Yos78}
K{\=o}saku Yosida.
\newblock {\em Functional Analysis}.
\newblock Springer-Verlag, Berlin, fifth edition, 1978.
\newblock Grundlehren der Mathematischen Wissenschaften, Band 123.

\end{thebibliography}
\end{document}